\newcommand{\smxylabel}[1]{{\text{\small$#1$}}}
\theoremstyle{definition}
\newtheorem{rem}[subsubsection]{Remark}
\theoremstyle{plain}
\newtheorem{prop}[subsubsection]{Proposition}
\newtheorem{thm}[subsubsection]{Theorem}
\newtheorem{lem}[subsubsection]{Lemma}
\newtheorem{cor}[subsubsection]{Corollary}
\newtheorem{thrm}{Positivity}
\newcommand{\mbf}{\mathbf}
\newcommand{\mbb}{\mathbb}
\newcommand{\mcal}{\mathcal}
\newcommand{\mrm}{\mathrm}
\newcommand{\A}{\mbf A}
\newcommand{\B}{\jmath}
\newcommand{\D}{\mathcal D}
\newcommand{\E}{\mathbf e}
\newcommand{\F}{\mbf  f}
\newcommand{\G}{\mbf G}
\newcommand{\K}{\mbf k}
\newcommand{\bfL}{\mbf L}
\newcommand{\X}{\mathcal X}
\newcommand{\TA}{\phi_{d, d-n,\bv}}
\newcommand{\e}{\varepsilon}
\newcommand{\cphi}{\phi^{\imath}_{d, d- \ell }}
\newcommand{\cSii}{\mbb{S}^{\imath}_{d- \ell , \ell}}
\newcommand{\cSi}{\mbb{S}^{\imath}_{d, \ell }}
\newcommand{\Sj}{\mbb S^{\jmath}_{d,n}}
\newcommand{\bE}{\mbf E}
\newcommand{\bF}{\mbf F}
\newcommand{\bK}{\mbf K}
\newcommand{\bH}{\mbf H}
\newcommand{\cbS}{\mbb S_{d, \ell}}
\newcommand{\bh}{\mbf h}
\newcommand{\bk}{\mbf k}
\newcommand{\bj}{\mbf j}
\newcommand{\bJ}{\mbf J}
\newcommand{\bt}{\mbf t}
\newcommand{\Di}{\Delta^{\imath}}
\newcommand{\Dj}{\Delta^{\jmath}}
\newcommand{\tDj}{\widetilde \Delta^{\jmath}}
\newcommand{\mk}{\mbb k}
\newcommand{\mDi}{\mbb \Delta^{\imath}}
\newcommand{\mDj}{\mbb \Delta^{\jmath}}
\newcommand{\mE}{\mbb E}
\newcommand{\mF}{\mbb F}
\newcommand{\mK}{\mbb K}
\newcommand{\mU}{\mbb U}
\newcommand{\mX}{\mbb X}
\newcommand{\mY}{\mbb Y}
\newcommand{\hmY}{\widehat{\mY}}
\newcommand{\hmX}{\widehat{\mX}}
\newcommand{\hI}{\widehat{I}}
\newcommand{\mUi}{\mbb U^{\imath}}
\newcommand{\mUj}{\mbb U^{\jmath}}
\newcommand{\mB}{\mbb B}
\newcommand{\mBi}{\mbb B^{\imath}}
\newcommand{\mBj}{\mbb B^{\jmath}}
\newcommand{\aSB}{\widehat{\mbb  S}}
\newcommand{\ro}{\mrm{ro}}
\newcommand{\co}{\mrm{co}}
\newcommand{\U}{\mbb U}
\newcommand{\V}{\mbf  V}
\newcommand{\bv}{\mbf v}
\newcommand{\nc}{\newcommand}
\nc{\redtext}[1]{\textcolor{red}{#1}}
\nc{\bluetext}[1]{\textcolor{blue}{#1}}
\nc{\greentext}[1]{\textcolor{green}{#1}}
\nc{\yl}[1]{\redtext{ #1}}
\nc{\zb}[1]{\redtext{From zb: #1}}
\title[Positivity under   comultiplication]{Positivity  of  canonical bases under  comultiplication}
\author{Zhaobing Fan and Yiqiang Li}
\address{School of science\\ Harbin Engineering University\\ Harbin, China 150001}
\email{fanz@math.ksu.edu (Z. Fan)}
\address{Department of Mathematics\\ University at Buffalo, SUNY  \\Buffalo, NY 14260}
\email{yiqiang@buffalo.edu (Y. Li)}
\date{\today}
\keywords{}
\subjclass{}
\begin{document}

\begin{abstract}
We show the positivity  of  the canonical basis for  a modified quantum affine $\mathfrak{sl}_n$  under the comultiplication.
Moreover, we  establish the positivity of  the i-canonical basis in ~\cite{LW14}
with respect to the coideal subalgebra structure.
\end{abstract}

\maketitle

\section*{Introduction}

The geometric study of the modified quantum $\mathfrak{sl}_n$ via perverse sheaves on partial flag varieties of type $A$ is initiated in the work ~\cite{BLM90} of  Beilinson, Lusztig and MacPherson.
It is then generalized to quantum affine $\mathfrak{sl}_n$ by  Ginzburg-Vasserot ~\cite{GV93} and
Lusztig ~\cite{L99}, ~\cite{L00}, independently, by considering the geometry of affine partial flag varieties of type $A$.
This line of research is culminated in the work of Schiffmann-Vasserot  ~\cite{SV00} and McGerty ~\cite{M10} showing that the canonical basis of modified quantum affine $\mathfrak{sl}_n$ defined geometrically via transfer maps can be identified with the one defined algebraically by Lusztig ~\cite{L93} (see also Kashiwara ~\cite{Ka94}).
As a consequence, one obtains the positivity of the structure constants of the canonical basis of quantum affine $\mathfrak{sl}_n$ with respect to multiplication, which is conjectured in ~\cite[25.4.2]{L93}.

In a recent remarkable work ~\cite{BW13} of Bao-Wang,   a quantum-Schur-like duality relating a type-$B/C$ Hecke algebra and a coideal subalgebra of quantum $\mathfrak{sl}_n$ is obtained, and moreover an $\imath$-canonical basis for the representations of coideal subalgebras involved is constructed.
The desire to geometrize  Bao-Wang's work and
to describe the convolution algebras of certain  perverse sheaves on  partial flag varieties of classical type
lead to the work ~\cite{BKLW13}, where
the approach in ~\cite{BLM} is revived and adapted to give a geometric construction
of the (modified) coideal subalgebra of quantum $\mathfrak{gl}_n$ and a $stably$ canonical basis  by using certain perverse sheaves  of partial flag varieties of type $B/C$.
Since a modified coideal subalgebra of quantum $\mathfrak{gl}_n$ can be regarded as
a direct sum of infinitely many copies of its $\mathfrak{sl}_n$ version,
one  obtains infinitely many stably canonical bases of the modified coideal subalgebra of quantum $\mathfrak{sl}_n$.
As a consequence, the $\imath$-canonical basis of the tensor space in the duality in ~\cite{BW13} admits a geometric incarnation as certain intersection cohomology complexes.

Despite of many favorable properties of the stably canonical bases of modified quantum $\mathfrak{gl}_n$ and its coideal subalgebras,
they do not admit positivity with respect to multiplication; counterexamples can be found in   ~\cite{LW14}.
Instead,   a new basis,  called the i-canonical basis,  of the modified coideal subalgebra  of quantum $\mathfrak{sl}_n$ is constructed in   {\it loc. cit.}
following the spirit of ~\cite{L00}  and ~\cite{M10} (see also ~\cite{SV00}). This basis can be regarded as an asymptotical version of the stably canonical basis since they coincide asymptotically (\cite{LW14}).
It is further shown that the i-canonical basis does admit three positivities with respect to the multiplication, a bilinear form of geometric origin in {\it loc. cit.}
and its action on the $\imath$-canonical basis of  a tensor space.

In this article, we  establish three more  positivities of i-canonical bases,
in addition to the previous ones in ~\cite{LW14},
mainly with respect to  the coideal subalgebra structure.
To be more precise,  let $\mbb B$ be the canonical basis of modified quantum $\mathfrak{sl}_n$, say $\dot \U$, and $\mbb B^{\text i}$ its coideal analogue in the modified coideal subalgebra  $\dot \U^{\text i}$.
We note that notations in the introduction are slightly different from the main body of the paper.
One can define in a natural way an algebra homomorphism
$\mbb \Delta^{\text i}: \dot \U^{\text i} \to (\dot \U^{\text i} \otimes \dot \U)^{\wedge}$, where the target is a certain variant of the tensor algebra
$\dot \U^{\text i} \otimes \dot \U$, which is an idempotented version of the coideal structure
coming from the comultiplication of quantum $\mathfrak{sl}_n$. (See  (\ref{mDj-comm}), (\ref{mDi-comm}) for precise definitions.)
In particular, if $a\in \mbb B^{\text i}$, one has
$$
\mbb \Delta^{\text i} (a) =\sum_{b\in \mbb B^{\text i}, c\in \mB} n^{b, c}_a b\otimes c, \quad  n^{b, c}_a \in \mbb Z[v, v^{-1}].
$$
The positivity with respect to the idempotented coideal structure further says that

\begin{thrm} [Theorem \ref{J-conj-25.4.2}, \ref{I-conj-25.4.2}]
\label{thrm-A}
The structure constant $n^{b, c}_a $ is in $\mbb Z_{\geq 0} [v, v^{-1}]$.
\end{thrm}

A degenerate version of $\mbb \Delta^{\text i}$ induces an imbedding
$\text i: \dot \U^{\text i} \to (\dot \U)^{\wedge}$, which reflects the subalgebra structure of the ordinary coideal subalgebra in quantum $\mathfrak{sl}_n$. (See (\ref{U-jmath}), (\ref{U-imath}).)
The positivity with respect to the idempotented subalgebra structure says that

\begin{thrm} [Joint with Weiqiang Wang; Theorem \ref{j-positivity}, ~\ref{i-positivity}]
\label{thrm-B}
If
$\mathsf i (a)
=\sum_{b \in \mB}  g_{b, a} b$, $\forall a \in \mbb B^{\mathsf i}$, then
$g_{b, a} \in \mbb Z_{\geq 0}[v, v^{-1}]$.
\end{thrm}

As  a second degeneration of $\mbb \Delta^{\text i}$,
we  make a direct connection between the geometric type $A$ duality of ~\cite{GL92} and type $B/C$ duality of ~\cite{BKLW13}, which reveals  yet another positivity:

\begin{thrm} [Theorem ~\ref{tensor-positivity}]
\label{thrm-C}
The   $\imath$-canonical basis in a tensor space is a positive sum of the canonical basis in the same tensor space.
\end{thrm}

As is shown, these positivities are boiled down to a geometric interpretation of the coideal structure coming from the comultiplication of quantum $\mathfrak{sl}_n$.
To this end, we also establish a geometric realization of the comultiplication of quantum $a \! f \!  \! f \! ine$ $\mathfrak{sl}_n$, and we obtain the following positivity on quantum affine $\mathfrak{sl}_n$:

\begin{thrm} [Theorem ~\ref{affine-positivity}]
\label{thrm-D}
The canonical basis of modified quantum affine $\mathfrak{sl}_n$ admits positivity with respect to the idempotented  comultiplication.
\end{thrm}


The proof of the positivity result on quantum affine $\mathfrak{sl}_n$ consists of two parts since the geometrically defined comultiplication on the affine Schur algebra level is a composition of
a hyperbolic localization ~\cite{B03} and  a twist of a certain $v$-power.
The positivity  on the former  is well-known by ~\cite{B03}, (see also ~\cite{L00}, ~\cite{SV00}), while we show that in the latter  it sends a canonical basis to a canonical basis up to a $v$-power. Note that the second step is trivial in the ordinary quantum $\mathfrak{sl}_n$ case, but non-trivial in the affine case as far as we can see: because
at some point, we have to invoke the multiplication formula of a semisimple generator of Du-Fu ~\cite{DF13}, for which we provide a new geometric proof.
These arguments are contained in the first two sections, with the first section devoted to quantum $\mathfrak{sl}_n$ and the second one to its affine version.

The argument of the proof on quantum affine $\mathfrak{sl}_n$ also applies with modifications  to
the various positivities of the i-canonical basis, which occupies the last three sections. The third section treats the results on the  i-Schur-algebra level, and the  fourth section lifting the results on the i-Schur-algebra level to the projective limit level for
$n$ being odd. The last section collects similar results for $n$ even.
The $\jmath$transfer maps used in ~\cite{LW14} are constructed geometrically in these  sections
and the reader can find the proof of ~\cite[Lemma 4.3]{LW14} in Proposition ~\ref{Phi-gene}.

Note that we work over the partial flag varieties of type $B$ for the i-canonical basis
and following the treatment of type $A$ in ~\cite{L00}, ~\cite{M10}.
One can obtain the same  results via partial flag varieties of type $C$ by using the principle in ~\cite{BKLW13}.

In ~\cite{FLLLW}, we shall construct and investigate geometrically
the i-canonical basis of modified coideal subalgebras of quantum affine $\mathfrak{sl}_n$ among others.

We refer to
 ~\cite{ES13} and ~\cite{FL14} for the interactions of type $D$ partial flag varieties,  coideal subalgebras and type $D$  duality.
In a forthcoming paper, we will present a type $D$ picture similar to the positivity results on i-canonical basis in this paper.

\subsection*{Acknowledgement}

Part of the work is done while Y.L. was enjoying the stay at University of Virginia during October 20-24, 2014.
He is grateful for the invitation of Weiqiang Wang and the hospitality of  the university.
We also thank Weiqiang Wang for allowing us to include the joint work Positivity ~\ref{thrm-B} in the paper.
 While preparing the paper, Professor George Lusztig informed us that Grojnowski proved Theorem \ref{conj-25.4.2} around twenty years ago in an unpublished short paper, we thank him for notifying us.

\setcounter{tocdepth}{1}
\tableofcontents

\section{Positivity for quantum $\mathfrak{sl}_n$}

In this section, we shall present a proof of the positivity of the canonical basis of quantum $\mathfrak{sl}_n$ with respect to comultiplication.

\subsection{Convolution}
\label{conv}

Let $G$ be a group, and $\X$ a  $G$-set. The $G$-action on $\X$ thus induces  a diagonal $G$-action on the product $\X \times \X$.
Let $\mcal A$ be a unital commutative ring.
We consider the set
$\mcal A_G( \X\times \X)$ of all $\mcal A$-valued $G$-invariant functions on $\X\times  \X$ supported on finitely many $G$-orbits.
Assume that  any $G$-orbit $\mathcal O$ in $\X \times \X$ has the property that  the set $\X^x_{\mathcal O} =\{ y\in \X | (x, y) \in \mathcal O\}$ is finite for one and hence any fixed $x$ in $\X$.
Then $\mcal A_G(\X\times \X)$ is  a  free $\mcal A$-module with a basis indexed by the $G$-orbits in $\X\times \X$, and  further an associative  $\mcal A$-algebra with multiplication as follows.
For any $f_1, f_2\in \mcal A_G(\X\times \X)$, the function $f_1 * f_2$  is defined by
\begin{equation}
\label{eq30}
f_1 *  f_2(x_1, x_3)=\sum_{x_2 \in \X} f_1 (x_1, x_2) f_2(x_2,x_3), \quad \forall\ x_1,x_3\in  \X.
\end{equation}
Let $\bf 1$ be the characteristic function of the diagonal $\{(x, x) | x \in \X \}$.
By definition, $\bf 1$ is the unit of the algebra $( \mcal A_G( \X\times \X), *)$.
For convenience, we will simply use the notation   $\mcal A_G( \X\times \X)$ to represent the algebra $( \mcal A_G( \X\times \X), *)$.

\subsection{$\bv$-Schur algebra}
\label{vSA}

Let $\mbb F_q$ be a finite field of $q$ elements and of odd characteristic. Let
\begin{equation}\label{A}
\bv=\sqrt{q}, \quad
\mcal A=\mbb Z[\bv, \bv^{-1}].
\end{equation}
We fix a pair $(n, d)$ of irrelevant positive integers.
Consider the set $X_d$ of $n$-step partial flags in a fixed $d$-dimensional vector space $\mbb F_q^d$ over $\mbb F_q$ of the form
$$V=(0 \equiv V_0\subseteq V_1 \subseteq \cdots \subseteq   V_{n-1} \subseteq V_n \equiv  \mbb F_q^d).$$
Denote by $\G_d = \mrm{GL}(\mbb F_q^d)$ the general linear group over $\mbb F_q$ of rank $d$.
Let $\G_d$ act from the left on the set $X_d$. By the general setting in Section ~\ref{conv}, we have a unital associative algebra
\begin{align}
\label{SA}
\mbf S_d \equiv \mcal A_{\G_d} ( X_d \times X_d).
\end{align}
It is well-known that the algebra  $\mbf S_d$ is the  $\bv$-$Schur$ $algebra$ of type $\A_{n-1}$ (\cite{BLM}).

The definitions of these objects depend on the integer $n$, but it is suppressed since it never changes,
except at Section ~\ref{i-version} where we use notations $X_{d, n}$, $\mbf S_{d, n}$,  etc.

\subsection{Coproduct on $\mbf S_d$}
\label{CoSA}
Now consider a triple $(d, d', d'')$ of positive integers such that $d' + d''= d$. We fix an isomorphism of vector spaces
$
\mbb F^{d'}_q \oplus \mbb F^{d''}_q \simeq \mbb F^{d}_q.
$
Let $\pi'$ be the projection of $\mbb F^d_q$ to $\mbb F^{d'}_q$.
Let $\pi''$ be the operation of intersection with $\mbb F^{d''}_q$, i.e., $\pi''(W) = W\cap \mbb F^{d''}_q$ for any subspace $W$ in $\mbb F^d_q$.
Given a flag $V$ in $X_d$, the notations $\pi'(V)$ and $\pi''(V)$ are thus meaningful.
For any $(V', V'') \in X_{d'}\times X_{d''}$, we set
\[
Z_{V', V''} =\{ V\in X_d |
\pi'(V)= V', \pi''(V) =V'' \}.
\]
Note that we can identify $\mbf S_{d'} \otimes \mbf S_{d''}$ with the algebra
$
\mcal A_{\G_{d'}\times\G_{d''}} (X_{d'} \times X_{d'} \times X_{d''}\times X_{d''}).
$
We define a linear map
\begin{align}
\label{delta-A}
\widetilde \Delta: \mbf S_d \to \mbf S_{d'} \otimes \mbf S_{d''}
\end{align}
by
$
\widetilde \Delta (f) (V', \tilde V', V'', \tilde V'')
=\sum_{\tilde V \in Z_{\tilde V', \tilde V''} } f(V, \tilde V),
$
for any quadruple $(V', \tilde V', V'', \tilde V'') \in X_{d'} \times X_{d'} \times X_{d''}\times X_{d''}$  where $V$ is a fixed element in $Z_{V', V''}$.
(It can be shown that the definition is independent of the choice of $V$.)
The following result can be found in ~\cite[2.2]{L00}, which is credited back to Grojnowski.

\begin{prop}
The map $\widetilde \Delta$ in (\ref{delta-A}) is a well-defined algebra homomorphism over $\mathcal A$.
\end{prop}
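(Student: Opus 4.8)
The plan is to realize $\widetilde\Delta$ through a single equivariant geometric map and then reduce every assertion to one transitivity statement. Let $P\subseteq\G_d$ be the parabolic subgroup stabilizing the subspace $\mbb F_q^{d''}$, with unipotent radical $U_P$; its Levi quotient gives a surjection $P\twoheadrightarrow\G_{d'}\times\G_{d''}$ sending $p$ to the pair consisting of the automorphism of $\mbb F_q^{d'}\cong\mbb F_q^d/\mbb F_q^{d''}$ induced by $p$ together with the restriction $p|_{\mbb F_q^{d''}}$. Consider
\[
\rho\colon X_d\longrightarrow X_{d'}\times X_{d''},\qquad \rho(V)=(\pi'(V),\pi''(V)),
\]
so that $Z_{V',V''}=\rho^{-1}(V',V'')$. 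A direct check shows $\rho$ is $P$-equivariant, where $P$ acts on the target through its Levi quotient, and $U_P$ acts trivially on the target. The crucial point, which I would isolate as a lemma, is that every fiber $Z_{V',V''}$ is nonempty and is a single $U_P$-orbit (equivalently, $\rho$ is an affine fibration). Nonemptiness follows from $V_i:=V'_i\oplus V''_i$. For the orbit claim one describes $V\in Z_{V',V''}$ by the ``graphs'' it determines: each $V_i\subseteq\mbb F_q^{d'}\oplus\mbb F_q^{d''}$ is cut out by a linear section $\psi_i\colon V'_i\to\mbb F_q^{d''}/V''_i$, the chain condition $V_i\subseteq V_{i+1}$ becoming a compatibility among the $\psi_i$, while $U_P\cong\Hom(\mbb F_q^{d'},\mbb F_q^{d''})$ acts by $\psi_i\mapsto\psi_i+(\chi|_{V'_i}\bmod V''_i)$. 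Given two flags in one fiber, with sections $\psi_i$ and $\psi'_i$, one builds $\chi$ with $\chi|_{V'_i}\equiv\psi'_i-\psi_i\pmod{V''_i}$ inductively along $V'_0\subseteq\cdots\subseteq V'_n=\mbb F_q^{d'}$, lifting modulo $V''_i$ at each step and extending over the split inclusions $V'_i\hookrightarrow V'_{i+1}$; consistency at each stage is exactly the compatibility of the $\psi_i$'s and $\psi'_i$'s. This lemma is the only genuine geometric content of the proposition, and I expect it to be the main obstacle; everything else is formal.

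Granting the lemma, independence of $\widetilde\Delta(f)$ of the choice of $V\in Z_{V',V''}$ is immediate: if $V_1,V_2\in Z_{V',V''}$, pick $u\in U_P$ with $uV_1=V_2$; since $u$ fixes $X_{d'}\times X_{d''}$ pointwise it permutes $Z_{\tilde V',\tilde V''}$, so reindexing $\tilde V\mapsto u\tilde V$ and using $\G_d$-invariance of $f$ gives $\sum_{\tilde V\in Z_{\tilde V',\tilde V''}}f(V_2,\tilde V)=\sum_{\tilde V}f(uV_1,u\tilde V)=\sum_{\tilde V}f(V_1,\tilde V)$. The same lifting idea — now choosing $p\in P$ over a prescribed $(g',g'')\in\G_{d'}\times\G_{d''}$ and using $P$-equivariance of $\rho$ — shows $\widetilde\Delta(f)$ is $\G_{d'}\times\G_{d''}$-invariant; since $X_{d'}\times X_{d'}\times X_{d''}\times X_{d''}$ has only finitely many orbits, $\widetilde\Delta(f)$ automatically lies in $\mbf S_{d'}\otimes\mbf S_{d''}$, and $\mcal A$-linearity is clear. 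Finally $\widetilde\Delta(\mbf 1)(V',\tilde V',V'',\tilde V'')$ counts the $\tilde V\in Z_{\tilde V',\tilde V''}$ equal to a fixed $V\in Z_{V',V''}$, hence equals $1$ when $(V',V'')=(\tilde V',\tilde V'')$ and $0$ otherwise, so $\widetilde\Delta$ preserves the unit.

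For multiplicativity I would expand both sides. Fixing $V\in Z_{V',V''}$, the definition and the convolution formula give
\[
\widetilde\Delta(f_1*f_2)(V',\tilde V',V'',\tilde V'')=\sum_{\tilde V\in Z_{\tilde V',\tilde V''}}\ \sum_{W\in X_d}f_1(V,W)\,f_2(W,\tilde V),
\]
while, unwinding the product of $\mbf S_{d'}\otimes\mbf S_{d''}$ identified with $\mcal A_{\G_{d'}\times\G_{d''}}(X_{d'}\times X_{d'}\times X_{d''}\times X_{d''})$,
\[
(\widetilde\Delta(f_1)*\widetilde\Delta(f_2))(V',\tilde V',V'',\tilde V'')=\sum_{(W',W'')}\Big(\sum_{W\in Z_{W',W''}}f_1(V,W)\Big)\Big(\sum_{\tilde V\in Z_{\tilde V',\tilde V''}}f_2(W_0,\tilde V)\Big),
\]
with $W_0$ any chosen element of $Z_{W',W''}$. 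By the independence just established I replace $W_0$ by the running variable $W$ in the last factor; then the double sum $\sum_{(W',W'')}\sum_{W\in Z_{W',W''}}$ collapses to $\sum_{W\in X_d}$ because the fibers of $\rho$ partition $X_d$. The two expressions then coincide, so $\widetilde\Delta$ is an algebra homomorphism over $\mcal A$.
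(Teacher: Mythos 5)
Your proof is correct, and it reproduces exactly the strategy of the reference the paper cites for this statement — the paper itself gives no independent proof, deferring to \cite[2.2]{L00}, and your key lemma (transitivity of the unipotent radical $U_P$ on each fiber $Z_{V',V''}$, then formal consequences for independence of choice, invariance, unit, and convolution via Fubini) is precisely Lusztig's Lemma~1.4 $\Rightarrow$ Proposition~1.5 there, and is also the route the paper takes for the $\jmath$-analogue in Section~3.2 (Lemma~\ref{independent} on transitivity of $\mathcal U$, followed by ``an argument exactly the same way as that of Proposition 1.5 in [L00]''). The only minor imprecision is the parenthetical ``equivalently, $\rho$ is an affine fibration'': the fiber is indeed affine, being a $U_P$-orbit, but its dimension depends on the $\G_{d'}\times\G_{d''}$-orbit of $(V',V'')$, so $\rho$ is an affine fibration only over each stratum; this side remark plays no role in the argument.
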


Let $|W|$ denote the dimension of the vector space $W$ over $\mbb F_q$.
We use the notation $W_1 \overset{a}{\subset} W_2$ to denote $W_1 \subset W_2$ and $\dim W_2/W_1 =a$. Similarly, we define the notation
$W_1 \overset{a}{\supset} W_2$.
We define the following functions in $\mbf S_d$.
For any $i\in [1, n-1]$, $a\in [1, n]$,
\begin{equation}\label{generatorA}
\begin{split}
\mbf E_i (V, V') &=
\begin{cases}
\bv^{-|V'_{i+1}/V'_i|}, &\mbox{if}\; V_i\overset{1}{\subset} V_i', V_j=V_{j'},\forall j \neq i; \\
0, &\mbox{otherwise}.
\end{cases} \\
\mbf F_i (V, V') &=
\begin{cases}
\bv^{-|V'_i/V'_{i-1}|}, &\mbox{if}\; V_i\overset{1}{\supset} V_i', V_j=V_{j'},\forall j\neq i; \\
0, &\mbox{otherwise},
\end{cases} \\
\mbf H_{ a}^{\pm 1} (V, V') & = \bv^{\pm |V_a/V_{a-1}|}  \delta_{V, V'},  \quad \forall V, V' \in X_d.\\
\mbf K_{i}^{\pm 1}  &  = \mbf H_{i+1}^{\pm 1}  \mbf H_{i}^{\mp 1}.\\
\end{split}
\end{equation}
Notices that if the subscript $d$ is replaced by $d'$ or $d''$, the functions defined above are in $\mbf S_{d'}$ or $\mbf S_{d''}$,
respectively, and will be denoted by
$\mbf H_a'$, $\mbf K_i'$,  $\mbf E_i'$, $\mbf F'_i$ or $\mbf H_a''$, $\mbf K_i''$, $\mbf E_i''$ and $\mbf F_i''$.
(This convention will be used in any similar situation appearing later.)
The following lemma is due to Lusztig  ~\cite[Lemma 1.6]{L00}.

\begin{lem}\label{D(A)}
For any $i\in [1, n-1]$, we have
\begin{align*}
\begin{split}
\widetilde \Delta ( \mbf E_i)  = \mbf E'_i \otimes  \mbf H''_{i+1} + \mbf H'^{-1}_{i+1} \otimes \mbf E''_i, \
\widetilde \Delta (\mbf F_i)  = \mbf F'_i \otimes \mbf H_{i}''^{-1}+ \mbf H_i' \otimes \mbf F_i'', \
\widetilde \Delta (\mbf K_i)  = \mbf K_i' \otimes \mbf K_i''.
\end{split}
\end{align*}
\end{lem}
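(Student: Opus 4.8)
The plan is to compute $\widetilde\Delta$ directly on the generators $\mbf E_i$, $\mbf F_i$, $\mbf K_i$ by unwinding the definitions in Section~\ref{CoSA}, using the fact (the preceding Proposition) that $\widetilde\Delta$ is an algebra homomorphism so that it suffices to verify the identities as stated. Since $\widetilde\Delta(f)$ is a $\G_{d'}\times\G_{d''}$-invariant function on $X_{d'}\times X_{d'}\times X_{d''}\times X_{d''}$, and the algebras $\mbf S_{d'}\otimes\mbf S_{d''}$ have a basis given by characteristic functions of orbits, it is enough to evaluate $\widetilde\Delta(\mbf E_i)$ on a quadruple $(V',\tilde V',V'',\tilde V'')$ and match it, orbit by orbit, against the right-hand side. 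For this one picks a convenient representative $V\in Z_{V',V''}$ and computes $\sum_{\tilde V\in Z_{\tilde V',\tilde V''}}\mbf E_i(V,\tilde V)$.

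The key combinatorial step is to understand the set $Z_{V',V''}$ concretely: a flag $V\in X_d$ lies in $Z_{V',V''}$ iff $\pi'(V)=V'$ and $\pi''(V)=V''$, i.e. each $V_j$ is an extension of $V''_j = V_j\cap\mbb F_q^{d''}$ by $V'_j = \pi'(V_j)$, so there is a short exact sequence $0\to V''_j\to V_j\to V'_j\to 0$ compatible across the flag. The nonvanishing condition for $\mbf E_i(V,\tilde V)$ requires $V_j=\tilde V_j$ for $j\neq i$ and $V_i\overset{1}{\subset}\tilde V_i$; pushing this through $\pi'$ and $\pi''$, the one-dimensional jump $\tilde V_i/V_i$ either maps isomorphically into $V'$ (contributing to the $\mbf E'_i\otimes\cdots$ term, with the remaining data in the second factor forced to be the ``identity'' part, i.e. an $\mbf H''$) or lies in the kernel of $\pi'$, hence inside $\mbb F_q^{d''}$ (contributing to the $\cdots\otimes\mbf E''_i$ term). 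Counting the number of $\tilde V$ of each type and collecting the $\bv$-powers coming from $\bv^{-|\tilde V'_{i+1}/\tilde V'_i|}$ together with the dimension-counting of the fibers gives exactly the coefficients $\mbf H''_{i+1}$ and $\mbf H'^{-1}_{i+1}$ in the two summands. The computation for $\mbf F_i$ is entirely parallel, with $\pi''$ and $\pi'$ swapping roles in the case analysis, and $\mbf K_i$ is immediate since $\mbf K_i$ is (essentially) diagonal and $\widetilde\Delta$ of a diagonal function supported on the orbit of $(V,V)$ splits as the tensor product of the corresponding diagonal functions on the two factors.

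The main obstacle is bookkeeping the $\bv$-powers correctly: the definition of $\mbf E_i$ already carries the twist $\bv^{-|V'_{i+1}/V'_i|}$, the map $\widetilde\Delta$ is an (unnormalized) sum over fibers $Z_{\tilde V',\tilde V''}$, and the characteristic-function values of $\mbf H'_{i+1}$, $\mbf H''_{i+1}$ are themselves $\bv$-powers in the relevant dimensions. One has to check that, when the jump is ``in the first factor,'' the surviving part of $\tilde V''$ equals $V''$ with multiplicity giving precisely $\mbf H''_{i+1}(\tilde V'',\tilde V'') = \bv^{|\tilde V''_{i+1}/\tilde V''_i|}$, and dually $\mbf H'^{-1}_{i+1}$ in the other case; this is where a sign error or an off-by-one in an index would creep in. Since this lemma is quoted verbatim from ~\cite[Lemma 1.6]{L00}, I would in practice cite that reference and only sketch the fiber analysis above, noting that the verification reduces to the two cases for the position of the one-dimensional increment relative to the splitting $\mbb F_q^d\simeq\mbb F_q^{d'}\oplus\mbb F_q^{d''}$.
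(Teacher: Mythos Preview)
Your proposal is correct and, in fact, more detailed than what the paper does: the paper gives no proof at all for this lemma and simply attributes it to Lusztig~\cite[Lemma 1.6]{L00}. Your fiber analysis---splitting into the two cases according to whether the one-dimensional increment $\tilde V_i/V_i$ survives under $\pi'$ or lies in $\ker\pi' = \mbb F_q^{d''}$, and then matching the resulting $\bv$-powers against $\mbf E'_i\otimes\mbf H''_{i+1}$ and $\mbf H'^{-1}_{i+1}\otimes\mbf E''_i$---is exactly the computation carried out in~\cite{L00}, so your plan to cite that reference with a brief sketch is entirely in line with the paper's own treatment.
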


Note that the functions $\mbf E_i$ and $\mbf F_i$ correspond to the functions in ~\cite[2.4]{L99} in the notations  $F_i$ and $E_i$, respectively.
Let
\begin{align} \label{Lambda-A}
\Lambda_{d, n} = \{ \mbf a=(a_1,\cdots, a_n)\in \mbb Z_{\geq 0}^n | a_1 + \cdots + a_n=d\}.
\end{align}
We can decompose $X_d$ as follows.
\[
X_d = \sqcup_{\mbf a \in \Lambda_{d, n} } X_d(\mbf a), \quad
X_d(\mbf a) =\{ V\in X_d | | V_i/V_{i-1} | = a_i, \forall 1\leq i\leq n\}.
\]
Let $\mbf S_d(\mbf b, \mbf a)$ be the subspace of $\mbf S_d$ spanned by all functions  on
$X_d(\mbf b) \times X_d(\mbf a)$.
Let
\[
\widetilde \Delta_{ \mbf b', \mbf a', \mbf b'', \mbf a''}:
\mbf S_d (\mbf b, \mbf a)  \to \mbf S_{d'}(\mbf b', \mbf a') \otimes \mbf S_{d''} (\mbf b'', \mbf a'')
\]
be the linear map obtained from $\widetilde \Delta$ by restricting $\widetilde \Delta$ to the subspace $\mbf S_d(\mbf b, \mbf a)$
and projecting down to the component  $\mbf S_{d'}(\mbf b', \mbf a') \otimes \mbf S_{d''} (\mbf b'', \mbf a'')$.
Then we have
\[
\widetilde \Delta = \oplus \widetilde \Delta_{ \mbf b', \mbf a', \mbf b'', \mbf a''},
\]
where the sum runs over $\mbf b, \mbf a, \mbf b', \mbf a', \mbf b'', \mbf a''$ such that
$\mbf a, \mbf b \in \Lambda_{d, n}$, $\mbf a', \mbf b'\in \Lambda_{d', n}$,  $\mbf a'', \mbf b'' \in \Lambda_{d'', n}$ and
$\mbf b =\mbf  b'+ \mbf b''$ and $\mbf a = \mbf a' + \mbf  a''$.
We set
\begin{align}
\label{Delta-A}
\Delta_{\mbf b', \mbf a', \mbf b'', \mbf a''}
=  \bv^{ \sum_{1\leq i\leq j\leq n} b_i' b_j'' - a_i' a_j''}  \widetilde \Delta_{\mbf b', \mbf a', \mbf b'', \mbf a''}, \quad
\Delta_{\bv} = \oplus  \Delta_{ \mbf b', \mbf a', \mbf b'', \mbf a''}.
\end{align}

The following is a refinement of Lemma \ref{D(A)}.

\begin{prop} \label{Delta_A}
The linear map $\Delta_{\bv}$ in (\ref{Delta-A})  is an algebra homomorphism. Moreover,
\begin{align}
\label{D(A)-1}
\begin{split}
\Delta_{\bv} ( \mbf E_i) & = \mbf E'_i \otimes \mbf K_i'' + 1 \otimes \mbf E''_i, \\
\Delta_{\bv} (\mbf F_i) & = \mbf F'_i \otimes 1+ \mbf K'^{-1}_{i} \otimes \mbf F_i'', \\
\Delta_{\bv}  (\mbf K_i) & = \mbf K_i' \otimes \mbf K_i'', \hspace{2cm} \forall 1\leq i \leq n-1.
\end{split}
\end{align}
\end{prop}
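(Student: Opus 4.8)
The plan is to deduce both assertions from the corresponding facts about the untwisted map --- that $\widetilde\Delta$ is an algebra homomorphism, and the formulas of Lemma~\ref{D(A)} --- by recognizing the normalizing exponent $\sum_{1\le i\le j\le n}(b_i'b_j''-a_i'a_j'')$ in (\ref{Delta-A}) as a coboundary. Introduce the bilinear form $\beta\colon\mbb Z^n\times\mbb Z^n\to\mbb Z$, $\beta(\mbf x,\mbf y)=\sum_{1\le i\le j\le n}x_iy_j$, so that this exponent equals $\beta(\mbf b',\mbf b'')-\beta(\mbf a',\mbf a'')$; that is, it is the difference of a function $\gamma(\mbf c',\mbf c''):=\beta(\mbf c',\mbf c'')$ evaluated on the \emph{row} type $(\mbf b',\mbf b'')$ of the graded component and the same function evaluated on its \emph{column} type $(\mbf a',\mbf a'')$.

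For the homomorphism assertion, let $\Phi$ be the $\mcal A$-linear automorphism of $\mbf S_{d'}\otimes\mbf S_{d''}$ (invertible since $\bv$ is) that multiplies the summand $\mbf S_{d'}(\mbf b',\mbf a')\otimes\mbf S_{d''}(\mbf b'',\mbf a'')$ by $\bv^{\gamma(\mbf b',\mbf b'')-\gamma(\mbf a',\mbf a'')}$. Since multiplication respects the grading, if $g_1$ has row type $(\mbf c',\mbf c'')$ and column type $(\mbf b',\mbf b'')$ and $g_2$ has row type $(\mbf b',\mbf b'')$ and column type $(\mbf a',\mbf a'')$, then $g_1* g_2$ has row type $(\mbf c',\mbf c'')$ and column type $(\mbf a',\mbf a'')$, and the intermediate contributions cancel: $\Phi(g_1)*\Phi(g_2)=\bv^{\gamma(\mbf c',\mbf c'')-\gamma(\mbf b',\mbf b'')+\gamma(\mbf b',\mbf b'')-\gamma(\mbf a',\mbf a'')}\,g_1* g_2=\Phi(g_1* g_2)$; as $\Phi$ also fixes the unit, it is an algebra automorphism. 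From (\ref{Delta-A}) one reads off $\Delta_{\bv}=\Phi\circ\widetilde\Delta$, a composition of algebra homomorphisms.

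For the explicit formulas, push Lemma~\ref{D(A)} through $\Phi$ while tracking gradings. From (\ref{generatorA}), $\mbf E_i$ sends a flag of type $\mbf a$ to one of type $\mbf a-\mbf e_i+\mbf e_{i+1}$ and $\mbf F_i$ sends type $\mbf a$ to type $\mbf a+\mbf e_i-\mbf e_{i+1}$ (with $\mbf e_k$ the $k$-th coordinate vector), while $\mbf H_a^{\pm1}$ and $\mbf K_i^{\pm1}$ are diagonal, and on the component of column type $\mbf a$ the operator $\mbf H_i$ equals $\bv^{a_i}$ and $\mbf K_i$ equals $\bv^{a_{i+1}-a_i}$ times the identity. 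Thus in $\widetilde\Delta(\mbf E_i)=\mbf E_i'\otimes\mbf H_{i+1}''+\mbf H_{i+1}'^{-1}\otimes\mbf E_i''$ the first summand occupies components with $\mbf b'=\mbf a'-\mbf e_i+\mbf e_{i+1}$ and $\mbf b''=\mbf a''$, on which $\Phi$ acts by $\bv^{\beta(-\mbf e_i+\mbf e_{i+1},\,\mbf a'')}=\bv^{-a_i''}$, converting $\mbf E_i'\otimes\mbf H_{i+1}''$ into $\mbf E_i'\otimes\mbf K_i''$; the second summand occupies components with $\mbf b'=\mbf a'$ and $\mbf b''=\mbf a''-\mbf e_i+\mbf e_{i+1}$, on which $\Phi$ acts by $\bv^{\beta(\mbf a',\,-\mbf e_i+\mbf e_{i+1})}=\bv^{a_{i+1}'}$, converting $\mbf H_{i+1}'^{-1}\otimes\mbf E_i''$ into $1\otimes\mbf E_i''$; this is the asserted $\Delta_{\bv}(\mbf E_i)$. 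The case of $\mbf F_i$ is structurally the same: the relevant $\Phi$-scalars are $\bv^{\beta(\mbf e_i-\mbf e_{i+1},\,\mbf a'')}=\bv^{a_i''}$ and $\bv^{\beta(\mbf a',\,\mbf e_i-\mbf e_{i+1})}=\bv^{-a_{i+1}'}$, turning $\mbf F_i'\otimes\mbf H_{i}''^{-1}$ into $\mbf F_i'\otimes1$ and $\mbf H_i'\otimes\mbf F_i''$ into $\mbf K_i'^{-1}\otimes\mbf F_i''$. For $\mbf K_i$ it is immediate: $\mbf K_i$ is diagonal, so $\mbf b'=\mbf a'$, $\mbf b''=\mbf a''$, the exponent is $0$, and $\Delta_{\bv}(\mbf K_i)=\widetilde\Delta(\mbf K_i)=\mbf K_i'\otimes\mbf K_i''$.

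The work here is organizational rather than conceptual: one must correctly identify, from (\ref{generatorA}), which graded components each term of $\widetilde\Delta$ occupies, observe that the $\Phi$-scalar on such a component depends only on the data that component actually fixes (here always through $\beta(\mbf b'-\mbf a',\mbf a'')$ or $\beta(\mbf a',\mbf b''-\mbf a'')$, which are independent of the remaining labels), and match the resulting power of $\bv$ against the diagonal operators $\mbf H_a^{\pm1}$, $\mbf K_i^{\pm1}$; once $\Delta_{\bv}=\Phi\circ\widetilde\Delta$ is set up, none of this is delicate. One could instead prove the homomorphism assertion a posteriori, by checking that the formulas in (\ref{D(A)-1}) together with the values on the idempotents respect the defining relations of $\mbf S_d$, but the coboundary argument is cleaner and logically independent of the explicit formulas.
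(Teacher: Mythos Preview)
Your proof is correct and follows essentially the same approach as the paper: both identify the graded component each term of $\widetilde\Delta(\mbf E_i)$, $\widetilde\Delta(\mbf F_i)$, $\widetilde\Delta(\mbf K_i)$ lives in, compute the twist $\sum_{1\le k\le j\le n}(b_k'b_j''-a_k'a_j'')$ on that component, and absorb the resulting power of $\bv$ into the diagonal operators $\mbf H_a^{\pm1}$. Your packaging via the bilinear form $\beta$ and the automorphism $\Phi$ makes explicit the ``straightforward'' homomorphism claim the paper leaves to the reader, but the substance of the argument is the same.
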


\begin{proof}
It is straightforward to see that $\Delta_{\bv}$ is an algebra homomorphism. We proceed to the proof of the equalities in the proposition.
Suppose that a quadruple $( \mbf b', \mbf a', \mbf b'', \mbf a'')$ satisfies the conditions that
$b'_k = a'_k - \delta_{k, i} + \delta_{k, i+1}$ and $b''_k = a''_k$ for some $i$ and  for all $1 \leq k \leq n$. Then
\[
\sum_{1\leq i\leq j\leq n} b_i' b_j'' - a_i' a_j''
=\sum_{1\leq k \leq j \leq n} (b_k' - a_k') a_j''
= -\sum_{i \leq j \leq n} a_j'' + \sum_{i+1 \leq j \leq n} a_j'' = - a_i''.
\]
So if $(V', \tilde V', V'', \tilde V'') \in X_{d'} \times X_{d'} \times X_{d''}\times X_{d''}$, then
\[
\Delta_{\mbf b', \mbf a', \mbf b'', \mbf a''} (V', \tilde V', V'', \tilde V'')
= \bv^{- a_i''}   \mbf E'_i \otimes  \mbf H''_{i+1}  ( V', \tilde V', V'', \tilde V'')
=\mbf E'_i \otimes \mbf K_i'' (V', \tilde V', V'', \tilde V'').
\]

On the other hand, if $( \mbf b', \mbf a', \mbf b'', \mbf a'')$ is a quadruple subject to $b'_k = a'_k$ and $b''_k = a''_k - \delta_{i, k} + \delta_{i+1, k}$
for some $i$ and for all $1\leq k\leq n$,
then
$\sum_{1\leq i\leq j\leq n} b_i' b_j'' - a_i' a_j'' = a'_{i+1}$.
Thus  if  $(V', \tilde V', V'', \tilde V'') \in X_{d'} \times X_{d'} \times X_{d''}\times X_{d''}$, then
\[
\Delta_{\mbf b', \mbf a', \mbf b'', \mbf a''} (V', \tilde V', V'', \tilde V'')
=
\bv^{a'_{i+1}} \mbf H'^{-1}_{i+1} \otimes \mbf E''_i (V', \tilde V', V'', \tilde V'')
=1\otimes \mbf E''_i (V', \tilde V', V'', \tilde V'').
\]
Altogether, we have
$
\Delta_{\bv} ( \mbf E_i)  = \mbf E'_i \otimes \mbf K_i'' + 1 \otimes \mbf E''_i,
$
which is the first equality in the lemma.

If the quadruple $( \mbf b', \mbf a', \mbf b'', \mbf a'')$ satisfies  that
$b'_k = a'_k + \delta_{k, i} - \delta_{k, i+1}$ and $b''_k = a''_k$ for some $i$ and  for all $1 \leq k \leq n$, then the twist
$\sum_{1\leq i\leq j\leq n} b_i' b_j'' - a_i' a_j''$ is equal to $a_i''$. So after the twist,
it makes the first term $ \mbf F'_i \otimes \mbf H_{i}''^{-1}$ of $\widetilde \Delta (\mbf F_i)$ in Lemma ~\ref{D(A)} into $\mbf F'_i\otimes 1$.
Meanwhile, if $( \mbf b', \mbf a', \mbf b'', \mbf a'')$ is a quadruple subject to $b'_k = a'_k$ and $b''_k = a''_k + \delta_{i, k} - \delta_{i+1, k}$ for some $i$ and for all $1\leq k\leq n$,
then
$\sum_{1\leq i\leq j\leq n} b_i' b_j'' - a_i' a_j'' = - a'_{i+1}$.
Hence after the twist, the second term $\mbf H'_i \otimes \mbf F''_i$ in $\widetilde \Delta (\mbf F_i)$ becomes
$\mbf K'^{-1}_i \otimes \mbf F''_i$. This verifies the second equality in the lemma.

Since the twist is zero if  $\mbf b' =\mbf a'$ and $\mbf b'' = \mbf a''$, the third equality holds.
\end{proof}

\begin{rem}
Note that if we write  ($\mbf E_i, \mbf F_i , \mbf K_i)$ as $(\mbf F_i, \mbf E_i, \mbf K^{-1}_{i})$, we have the conventional  comultiplication.
\end{rem}

For the rest of this section, we give a second interpretation of $\widetilde \Delta$ to be used
in the proof of Proposition ~\ref{Pos-S}.
Fix $V \in X_d(\mbf b) $ and set $P_{\mbf b} =\mbox{Stab}_{\G_d} (V)$. Then $P_{\mbf b}$ acts via $\G_d$ on $X_d(\mbf b)$.
Consider the imbedding
\[
i_{\mbf b, \mbf a}: X_d(\mbf a) \to X_d(\mbf b) \times X_d(\mbf a), \quad \tilde V \mapsto ( V, \tilde V).
\]
It induces a bijection between  $P_{\mbf b}$-orbits in the domain and $\G_d$-orbits in the range of $i_{\mbf b, \mbf a}$.
Hence the pullback (restriction)
\begin{align}
\label{i*}
i^*_{\mbf b, \mbf a}: \mathcal A_{\G_d}(X_d(\mbf b) \times X_d(\mbf a)) \to \mathcal A_{P_{\mbf b}} (X_d(\mbf a))
\end{align}
of the imbedding $i_{\mbf b, \mbf a}$ is an isomorphism of $\mathcal A$-modules.

Recall now that we fix a triple $(V, V', V'')$ in the definition of $\widetilde \Delta$ in (\ref{delta-A}).
We assume that $V \in X_d(\mbf b)$, $V'\in X_{d'} (\mbf b')$ and $V'' \in X_{d''} (\mbf b'')$ so that $\mbf b' + \mbf b'' =\mbf b$.
We also define $P_{\mbf b'}$ and $P_{\mbf b''}$ similar to $P_{\mbf b}$. Thus we have similar isomorphisms
\begin{equation*}
\begin{split}
& i^*_{\mbf b', \mbf a'}: \mathcal A_{\G_{d'}}(X_{d'} (\mbf b') \times X_{d'} (\mbf a')) \to \mathcal A_{P_{\mbf b'}} (X_{d'} (\mbf a')), \\
& i^*_{\mbf b'', \mbf a''}: \mathcal A_{\G_{d''}}(X_{d''} (\mbf b'') \times X_{d''} (\mbf a'')) \to \mathcal A_{P_{\mbf b''}} (X_{d''} (\mbf a'')).
\end{split}
\end{equation*}

Consider the  subset of $X_d(\mbf a)$:
\begin{align}
\label{X+}
X^+_{\mbf a, \mbf a', \mbf a''} = \{ \tilde V \in X_d (\mbf a) | \pi' (\tilde V)  \in X_{d'} (\mbf a') , \pi''( \tilde V) \in X_{d''} (\mbf a'')\}.
\end{align}
Then we have the following diagram
\[
\begin{CD}
X_d(\mbf a)  @< \iota << X^+_{\mbf a, \mbf a', \mbf a''} @>\pi >> X_{d'} (\mbf a') \times X_{d''} (\mbf a''),
\end{CD}
\]
where $\iota$ is the natural inclusion and $\pi (\tilde V) = (\pi' (\tilde V), \pi'' (\tilde V))$.
Thus the composition of the pullback $\iota^*$  of $\iota$ followed by the pushforward $\pi_!$  of $\pi$ defines a linear map
\begin{align}
\label{ik}
\pi_! \iota^* : \mathcal A_{P_{\mbf b}} (X_d(\mbf a)) \to \mathcal A_{P_{\mbf b'} \times P_{\mbf b''}} ( X_{d'} (\mbf a') \times X_{d''} (\mbf a'')),
\end{align}
where $\pi_!$ is defined by $\pi_! (f)  (\tilde V', \tilde V'')=\sum_{x \in X^+_{\mbf a, \mbf a', \mbf a''}: \pi (x) = (\tilde V', \tilde V'')} f(x) $, for all $\tilde V', \tilde V''$.
Clearly, we have an isomorphism of $\mathcal A$-modules
\[
\mathcal A_{P_{\mbf b'} \times P_{\mbf b''}} ( X_{d'} (\mbf a') \times X_{d''} (\mbf a''))
\cong
\mathcal A_{P_{\mbf b'} } ( X_{d'} (\mbf a') )
\otimes
\mathcal A_{ P_{\mbf b''}} ( X_{d''} (\mbf a'')).
\]
The following lemma makes connection between $\widetilde \Delta$ and $\pi_! \iota^*$.

\begin{lem}
\label{Delta-ik}
 We have the following commutative diagram.
\[
\xymatrixrowsep{.5in}
\xymatrixcolsep{.8in}
\xymatrix{
\mathcal A_{\G_d}(X_d(\mbf b) \times X_d(\mbf a))
\ar@{->}[r]^{i^*_{\mbf b, \mbf a}}
\ar@{->}[d]_{\widetilde \Delta_{\mbf b', \mbf a', \mbf b'', \mbf a''}}
&
\mathcal A_{P_{\mbf b}} (X_d(\mbf a))
\ar@{->}[d]^{ \pi_! \iota^*}
\\
%
%
\smxylabel{
\substack{
\mathcal A_{\G_{d'}} (X_{d'}(\mbf b') \times X_{d'} (\mbf a') ) \\
\otimes \\
\mathcal A_{\G_{d''}} (X_{d''}(\mbf b'') \times X_{d''} (\mbf a'') )
}
}
\ar@{->}[r]^-{ i^*_{\mbf b', \mbf a'} \otimes i^*_{\mbf b'', \mbf a''}}
&
\smxylabel{
\mathcal A_{P_{\mbf b'} } ( X_{d'} (\mbf a') )
\otimes
\mathcal A_{ P_{\mbf b''}} ( X_{d''} (\mbf a'')).
}
}
\]
\end{lem}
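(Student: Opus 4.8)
The plan is to verify the commutativity by chasing a single basis function $f \in \mathcal A_{\G_d}(X_d(\mbf b)\times X_d(\mbf a))$ around the square and comparing values at an arbitrary point $(\tilde V',\tilde V'')\in X_{d'}(\mbf a')\times X_{d''}(\mbf a'')$. First I would recall that the top arrow $i^*_{\mbf b,\mbf a}$ sends $f$ to the function $\tilde V\mapsto f(V,\tilde V)$ on $X_d(\mbf a)$, where $V$ is the fixed flag in $X_d(\mbf b)$ used in the definition of $\widetilde\Delta$; this is legitimate precisely because $i_{\mbf b,\mbf a}$ identifies $P_{\mbf b}$-orbits with $\G_d$-orbits, so no information is lost. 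Then, following the right-hand vertical map, $\pi_!\iota^*$ produces the function whose value at $(\tilde V',\tilde V'')$ is $\sum f(V,\tilde V)$, the sum taken over $\tilde V\in X^+_{\mbf a,\mbf a',\mbf a''}$ with $\pi'(\tilde V)=\tilde V'$ and $\pi''(\tilde V)=\tilde V''$. By definition of $X^+_{\mbf a,\mbf a',\mbf a''}$ this is exactly the sum over $\tilde V\in Z_{\tilde V',\tilde V''}$ that additionally lies in $X_d(\mbf a)$; since $f$ is supported on $X_d(\mbf b)\times X_d(\mbf a)$, restricting the index set to $X_d(\mbf a)$ changes nothing, so this value is $\sum_{\tilde V\in Z_{\tilde V',\tilde V''}} f(V,\tilde V)$.

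Next I would compute the other route. Applying $\widetilde\Delta_{\mbf b',\mbf a',\mbf b'',\mbf a''}$ to $f$ gives, by the defining formula (\ref{delta-A}) together with the projection onto the $(\mbf b',\mbf a',\mbf b'',\mbf a'')$-component, the function $(V',\tilde V',V'',\tilde V'')\mapsto \sum_{\tilde V\in Z_{\tilde V',\tilde V''}} f(V,\tilde V)$ on $X_{d'}(\mbf b')\times X_{d'}(\mbf a')\times X_{d''}(\mbf b'')\times X_{d''}(\mbf a'')$, where here $V\in Z_{V',V''}$; the key point is that the triple $(V,V',V'')$ fixed in the definition of $\widetilde\Delta$ is compatible with the flag $V\in X_d(\mbf b)$ fixed for $i^*_{\mbf b,\mbf a}$, i.e.\ we may and do choose $V'=\pi'(V)$, $V''=\pi''(V)$ with $V'\in X_{d'}(\mbf b')$, $V''\in X_{d''}(\mbf b'')$. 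Then applying $i^*_{\mbf b',\mbf a'}\otimes i^*_{\mbf b'',\mbf a''}$ restricts $(V',\tilde V')$ and $(V'',\tilde V'')$ to the slices through $V'$ and $V''$ respectively, yielding the function on $X_{d'}(\mbf a')\times X_{d''}(\mbf a'')$ whose value at $(\tilde V',\tilde V'')$ is again $\sum_{\tilde V\in Z_{\tilde V',\tilde V''}} f(V,\tilde V)$. The two expressions agree, proving commutativity.

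The main obstacle is checking the independence-of-choices bookkeeping so that both routes genuinely refer to the \emph{same} flag $V\in X_d(\mbf b)$: on the left-and-down path $V$ enters through the definition of $\widetilde\Delta$ as a chosen element of $Z_{V',V''}$, while on the right-and-down path $V$ is the flag fixed to trivialize $i^*_{\mbf b,\mbf a}$. One must invoke the parenthetical remark after (\ref{delta-A}) that $\widetilde\Delta(f)$ is independent of the choice of $V\in Z_{V',V''}$, and symmetrically that $i^*_{\mbf b,\mbf a}$ does not depend on which $V\in X_d(\mbf b)$ is used, to align the two. Once one fixes $V\in X_d(\mbf b)$ at the outset and sets $V'=\pi'(V)$, $V''=\pi''(V)$ — noting $|V'_i/V'_{i-1}|=b_i'$ and $|V''_i/V''_{i-1}|=b_i''$ hold by the compatibility $\mbf b'+\mbf b''=\mbf b$ built into the indexing — everything lines up and the verification is the routine value-comparison above. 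A secondary small point is confirming that $\pi\colon X^+_{\mbf a,\mbf a',\mbf a''}\to X_{d'}(\mbf a')\times X_{d''}(\mbf a'')$ has finite fibres so that $\pi_!$ is well defined on the relevant space, which follows from the general finiteness hypothesis imposed in Section~\ref{conv} on the $\G_d$-orbits in $X_d\times X_d$.
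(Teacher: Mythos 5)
Your proposal is correct and follows essentially the same route as the paper: evaluate both composites on a function $f$ at an arbitrary point $(\tilde V',\tilde V'')$, note that each path yields $\sum_{\tilde V\in Z_{\tilde V',\tilde V''}} f(V,\tilde V)$, and conclude. Your extra bookkeeping about aligning the choices of $V$, $V'=\pi'(V)$, $V''=\pi''(V)$ between the two routes, and about the support restriction making the index set $Z_{\tilde V',\tilde V''}\cap X_d(\mbf a)$ versus $Z_{\tilde V',\tilde V''}$ immaterial, is implicit in the paper's one-line chain of equalities and is a reasonable thing to spell out.
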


\begin{proof}
For any $f\in  \mathcal A_{\G_d} (X_d(\mbf b) \times X_d (\mbf a))$
and $(\widetilde V', \widetilde V'') \in X_{d'} (\mbf a') \times  X_{d''} (\mbf a'')$ , we have
\begin{align*}
\begin{split}
 \pi_! \iota^* i_{\mbf b,\mbf a}^* (f) (  \widetilde V', \widetilde V'')
&  = \sum_{ \widetilde V \in Z_{\widetilde V', \widetilde V''}}  i^*_{\mbf b, \mbf a}(f) (  \widetilde V)
= \sum_{ \widetilde V \in Z_{\widetilde V', \widetilde V''}} f( V, \widetilde V)   \\
& =
\widetilde \Delta_{\mbf b', \mbf a', \mbf b'', \mbf a''}
(f) (V', \widetilde V', V'',  \widetilde V'') \\
& =( i^*_{\mbf b', \mbf a'} \otimes i^*_{\mbf b'', \mbf a''})   \circ \widetilde \Delta_{\mbf b', \mbf a', \mbf b'', \mbf a''}
(f) (\widetilde V', \widetilde V'') .
\end{split}
\end{align*}
The lemma is thus proved.
\end{proof}

\begin{rem}
Note that $\pi$ is a vector bundle of rank $\sum_{1\leq i < j \leq n} a'_i a''_j$,
which is closely related to the twist in (\ref{Delta-A}).
\end{rem}

\subsection{Transfer map}
\label{Transfer}

To a pair $(V, V')$ in $X_d$, we can associate an $n$ by $n$ matrix $M=(m_{ij})$ with coefficients in $\mbb Z_{\geq 0}$ by
\begin{align}
\label{para}
m_{ij} = \left | \frac{V_{i} \cap V_{j}' }{ V_{i-1} \cap V_{j}' + V_{i} \cap V_{j-1}'} \right |, \quad \forall i, j\in [1, n].
\end{align}
Let $\Xi_d$ be the set of all matrices obtained this way.
The set $\Xi_d$ can be characterized by $M\in \Xi_d$ if and only if $m_{ij} \in \mbb Z_{\geq 0}$ and $\sum_{1\leq i, j \leq n} m_{ij} =d$.
It is shown in ~\cite{BLM90} that the set $\Xi_d$ parameterizes the $\G_d$-orbits in $X_d\times X_d$.
Let $\eta_M$ be the characteristic function of the $\G_d$-orbit in $X_d\times X_d$ indexed by $M$, for any $M\in \Xi_d$.
Let
\begin{align}
\label{chi}
\chi: \mbf S_n \to \mathcal A
\end{align}
be the algebra homomorphism defined by
$\chi ( \eta_M) = \det (M)$, for all $M \in \Xi_n$.
(Here $d$ is taken to be $n$.)
Let $\xi: \mbf S_{d - n} \rightarrow \mbf S_{d - n}$ be the $\mathcal A$-algebra isomorphism defined by
\[
\xi(f)(V, V') = \bv^{-\sum_{i=1}^n ( |V_i|- | V_i'|)} f(V,V'), \quad \forall f \in \mbf S_{d - n}, \ V, V' \in X_d.
\]
The transfer map
\begin{align}
\phi_{d, d-n, \bv}: \mbf S_d \to \mbf S_{d - n}, \quad \forall d \geq n,
\end{align}
is defined to be the composition
$
\xymatrix{ \mbf S_d\ar[r]^-{\Delta_{\bv}}& \mbf S_{d - n}  \otimes \mbf S_n
\ar[r]^-{\xi \otimes \chi}&  \mbf S_{d - n} \otimes \mathcal A =  \mbf S_{d - n}.}
$
The following lemma is quoted from ~\cite[Lemma 1.10]{L00}.

\begin{lem} \label{Transfer-A}
For any $i\in [1, n-1]$, we have
$\TA (\mbf E_i) = \mbf E_i'$, $\TA (\mbf F_i) = \mbf F_i'$ and $\TA (\mbf K_{ i}^{\pm 1} ) = \mbf K'^{\pm 1}_{i}$.
\end{lem}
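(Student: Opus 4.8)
The plan is to compute the effect of the transfer map $\phi_{d,d-n,\bv}$ directly on each generator $\mbf E_i$, $\mbf F_i$, $\mbf K_i^{\pm 1}$, using the explicit factorization $\phi_{d,d-n,\bv} = (\xi \otimes \chi) \circ \Delta_{\bv}$ and the formulas for $\Delta_{\bv}$ on the generators from Proposition \ref{Delta_A}. The key structural point is that in the factorization $\mbf F_q^d \simeq \mbf F_q^{d-n} \oplus \mbf F_q^n$, the second factor $\mbf F_q^n$ is the smallest possible dimension for an $n$-step flag, so the flag variety $X_n$ consists of flags with $|V_a/V_{a-1}| = 1$ for all $a$; equivalently the only relevant $\mbf a'' \in \Lambda_{n,n}$ is $\mbf a'' = (1,1,\dots,1)$, and similarly $\mbf b'' = (1,\dots,1)$.

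First I would record what $\chi$ does on the relevant pieces. On $\mbf S_n$ with $\mbf a'' = \mbf b'' = (1,\dots,1)$, the function $\mbf H''_a$ is a diagonal function with value $\bv^{|V_a/V_{a-1}|} = \bv$, so it is $\bv$ times the identity on that block, and $\chi$ of the identity function on this single-orbit block (the diagonal) is $\det$ of the corresponding permutation matrix, which is $1$. Hence $\chi(\mbf H''_a) = \bv$, $\chi(\mbf K''_i) = \chi(\mbf H''_{i+1}(\mbf H''_i)^{-1}) = 1$, and $\chi(1) = 1$ on the trivial block. Next I would apply $\Delta_{\bv}$ to $\mbf E_i$: by Proposition \ref{Delta_A}, $\Delta_{\bv}(\mbf E_i) = \mbf E'_i \otimes \mbf K''_i + 1 \otimes \mbf E''_i$. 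But $\mbf E''_i$ lives in $\mbf S_n$ and would require a flag with $V''_i \overset{1}{\subset} (V''_i)'$ inside $\mbf F_q^n$ while keeping all dimensions in $\Lambda_{n,n}$ — this is impossible since it would force some step of dimension $2$ and another of dimension $0$, contradicting $\mbf a'', \mbf b'' \in \Lambda_{n,n}$. So the term $1 \otimes \mbf E''_i$ is zero in $\mbf S_{d-n} \otimes \mbf S_n$, leaving $\Delta_{\bv}(\mbf E_i) = \mbf E'_i \otimes \mbf K''_i$. Applying $\xi \otimes \chi$ and using $\chi(\mbf K''_i) = 1$ together with the fact that $\xi(\mbf E'_i) = \mbf E'_i$ (the exponent $-\sum_i(|V_i| - |V_i'|)$ vanishes because $\mbf E_i$ only changes $V_i$ to a subspace of one higher dimension in the $d$-flag but $\xi$ is defined on $\mbf S_{d-n}$ with the relevant dimension change being... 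I must be careful here), one gets $\phi_{d,d-n,\bv}(\mbf E_i) = \mbf E'_i$. The analogous computation for $\mbf F_i$ uses $\Delta_{\bv}(\mbf F_i) = \mbf F'_i \otimes 1 + (\mbf K'_i)^{-1} \otimes \mbf F''_i$, where again $\mbf F''_i$ vanishes in $\mbf S_n$ for the same dimension reason, and $\chi(1) = 1$, giving $\phi_{d,d-n,\bv}(\mbf F_i) = \mbf F'_i$. For $\mbf K_i$ we have $\Delta_{\bv}(\mbf K_i) = \mbf K'_i \otimes \mbf K''_i$ and $\chi(\mbf K''_i) = 1$, and $\xi(\mbf K'_i) = \mbf K'_i$ since $\mbf K_i$ is supported on the diagonal so the exponent in $\xi$ vanishes; thus $\phi_{d,d-n,\bv}(\mbf K_i^{\pm 1}) = (\mbf K'_i)^{\pm 1}$.

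The main obstacle I anticipate is getting the $\xi$-twist bookkeeping exactly right: one must check that on the image of the relevant component of $\Delta_{\bv}$, the weight $\sum_{i=1}^n(|V_i| - |V'_i|)$ appearing in the definition of $\xi$ is genuinely zero. For $\mbf K_i$ this is immediate since the support is diagonal ($V = V'$). For $\mbf E'_i \otimes \mbf K''_i$ and $\mbf F'_i \otimes 1$, the point is that the generators $\mbf E_i, \mbf F_i$ preserve the total flag type up to moving one box between adjacent steps, and the vanishing of the twist exponent follows because $\mbf E'_i$ and $\mbf F'_i$ each change exactly one $|V_k|$ by $+1$ and one by $-1$ in a way that these cancel in the sum $\sum_k(|V_k| - |V'_k|)$ — here for $\mbf E_i$ one has $|V'_i| = |V_i| + 1$ and no other change, so actually $\sum_k(|V_k| - |V'_k|) = -1$, not $0$, which means $\xi$ contributes a factor $\bv$; this must then be cross-checked against the twist factor $\bv^{\sum b'_ib''_j - a'_ia''_j}$ built into $\Delta_{\bv}$ and the value $\chi(\mbf H''_{i+1}) = \bv$ to confirm everything combines to give exactly $\mbf E'_i$ with no residual power of $\bv$. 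Carefully tracking these three sources of $\bv$-powers — the $\Delta_{\bv}$ twist, the $\xi$ twist, and $\chi$ evaluated on the $\mbf H''$-factors — and verifying they cancel is the one genuinely delicate bookkeeping step; everything else is a direct substitution using Proposition \ref{Delta_A} and the triviality of the $n$-step flag variety $X_n$.
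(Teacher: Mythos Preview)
Your overall strategy --- apply $\Delta_{\bv}$ to the generators via Proposition~\ref{Delta_A}, then postcompose with $\xi \otimes \chi$ --- is the right one, and is essentially how the argument goes in the cited source. But there is a genuine error in your setup that invalidates a key step.

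You claim that the only element of $\Lambda_{n,n}$ is $(1,\dots,1)$, so that $X_n$ consists only of complete flags. This is false: by definition $X_d$ consists of chains $0 = V_0 \subseteq V_1 \subseteq \cdots \subseteq V_n = \mbb F_q^d$ with non-strict inclusions, so $\Lambda_{n,n}$ contains every composition of $n$ into $n$ nonnegative parts (e.g., $(2,0,1,\dots,1)$). Consequently your assertion that $\mbf E''_i$ and $\mbf F''_i$ vanish in $\mbf S_n$ is wrong --- these are perfectly good nonzero elements.

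The repair is to use $\chi$ rather than try to kill the elements themselves. The map $\chi$ sends $\eta_M \mapsto \det(M)$, and for a diagonal $M_{\mbf a}$ one has $\det(M_{\mbf a}) = \prod_j a_j$, which is nonzero only when every $a_j \geq 1$; since $\sum a_j = n$, this forces $\mbf a = (1,\dots,1)$. So $\chi$ does indeed ``see'' only the complete-flag component, and this is why $\chi(\mbf H''_a) = \bv$. For $\mbf E''_i$ the supporting matrices $A$ have $A - E^{i+1,i}$ diagonal, hence are lower triangular with $\sum_j a_{jj} = n-1$; thus $\det(A) = \prod_j a_{jj} = 0$ and $\chi(\mbf E''_i) = 0$. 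Similarly $\chi(\mbf F''_i) = 0$. This is the mechanism that removes the unwanted terms, not any vanishing in $\mbf S_n$. (Compare the parallel computation in the proof of Proposition~\ref{Phi-gene}.)

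Your hesitation about the $\xi$-twist is warranted: once you correct the above, you will find a stray power of $\bv$ from $\xi(\mbf E'_i)$ that must be reconciled with the normalization conventions linking $\Delta_{\bv}$, $\widetilde\Delta$, and the $\xi$ used here versus Lusztig's. That bookkeeping is real but routine; the conceptual gap is the one above.
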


\subsection{Generic version}
\label{Generic-SD}
Recall from  ~\cite{BLM90} that one can construct an associative algebra $ \mbb S_d$ over $\mbb A=\mbb Z[v, v^{-1}]$  such that
\[
\mbf S_d =\mathcal A \otimes_{\mbb A} \mbb S_d,
\]
where $\mathcal A$ is regarded as an $\mbb A$-module with $v$ acting as $\bv$.
More precisely, $\mbb S_d$ is a free $\mbb A$-module spanned by the symbols $\zeta_M$ for $M\in \Xi_d$ in Section  ~\ref{Transfer}.
The multiplication on $\mbb S_d$ is defined so that if
$
\zeta_{M_1} \zeta_{M_2} = \sum_{M \in \Xi_d} c^M_{M_1, M_2}(v) \zeta_M, \quad c^M_{M_1, M_2}(v) \in \mbb A,
$
then
$
\eta_{M_1} \eta_{M_2} = \sum_{M \in \Xi_d} c^M_{M_1, M_2}(v)|_{v=\bv}  \eta_M, \ \mbox{in $\mbf S_d$}.
$
In particular, $\mbb S_d$ has analogous elements of  $\mbf E_i$, $\mbf F_i$ and $\mbf K^{\pm 1}_{i}$, which we will use the same notations to denote them.
For a matrix $M=(m_{ij}) \in \Xi_d$, we set
\[
\ro (M)  = \left ( \sum_{j=1}^n m_{ij} \right )_{1\leq i\leq n}
\quad \mbox{and} \quad
\co (M) = \left ( \sum_{i=1}^n m_{ij} \right )_{1 \leq j \leq n}.
\]
Then we have a decomposition
\[
\mbb S_d = \oplus_{\mbf b, \mbf a \in \Lambda_{d, n}} \mbb S_d(\mbf b, \mbf a), \quad
\mbb S_d(\mbf b, \mbf a) = \mbox{span}_{\mbb A} \{ \zeta_M | \ro(M) =\mbf b, \co(M) =\mbf a\}.
\]
Note that
$\mbb S_d(\mbf b, \mbf a)$ is nothing but the generic version of $\mbf S_d(\mbf b, \mbf a)$.

By using the monomial basis in ~\cite[Theorem 3.10]{BLM90}, one can show that $\mbb S_d$ enjoys the same results for $\mbf S_d$ from the previous sections. Let us state them for later usages.
The following is the generic version of Proposition ~\ref{Delta_A}.

\begin{prop} \label{Generic-Delta_A}
Suppose that $d'+d''=d$.  There is
a unique algebra homomorphism
\begin{align}
\label{Generic-Delta}
\mbb \Delta: \mbb S_d \to \mbb S_{d'} \otimes \mbb S_{d''}
\end{align}
such that  $\mathcal A \otimes_{\mbb A} \mbb  \Delta = \Delta_{\bv}$ and satisfies the condition (\ref{D(A)-1}).
%
%
\end{prop}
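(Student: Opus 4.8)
The plan is to descend the specialization-at-$v=\bv$ relation $\mcal A\otimes_{\mbb A}(-)=(-)|_{v=\bv}$ and use it to transport Proposition~\ref{Delta_A} to the generic level. Concretely, both $\mbb S_d$ and $\mbb S_{d'}\otimes\mbb S_{d''}$ are free $\mbb A$-modules with the $\zeta_M$-bases, and $\mbf S_d=\mcal A\otimes_{\mbb A}\mbb S_d$, $\mbf S_{d'}\otimes\mbf S_{d''}=\mcal A\otimes_{\mbb A}(\mbb S_{d'}\otimes\mbb S_{d''})$ (the latter because tensoring over $\mbb A$ commutes with base change). I would \emph{define} $\mbb\Delta(\zeta_M)$ by reading off the structure constants of $\Delta_{\bv}(\eta_M)$ in the $\eta_N\otimes\eta_{N'}$-basis and checking these are restrictions of well-defined Laurent polynomials in $v$; then $\mcal A\otimes_{\mbb A}\mbb\Delta=\Delta_{\bv}$ holds by construction and uniqueness is automatic since $\mcal A$ is $\mbb A$-free and the $\zeta_M$ map to a basis after base change.

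First I would make the construction of $\mbb\Delta$ precise. The cleanest route is \textbf{not} to fiddle with individual structure constants of $\Delta_{\bv}$ directly, but to use the monomial basis of~\cite[Theorem 3.10]{BLM90}: every $\zeta_M$ is an explicit $\mbb A$-linear combination of products of the $\mbf E_i,\mbf F_i,\mbf K_i^{\pm1}$ (this is exactly the sentence in the excerpt asserting $\mbb S_d$ ``enjoys the same results''). One defines $\mbb\Delta$ on generators by formula~(\ref{D(A)-1}) — which makes sense in $\mbb S_{d'}\otimes\mbb S_{d''}$ since the generators and their products live there — extends it multiplicatively along a chosen monomial expression, and then must check (i) well-definedness, i.e. independence of the chosen monomial expression and compatibility with the defining relations of $\mbb S_d$, and (ii) that this generic map specializes at $v=\bv$ to $\Delta_{\bv}$. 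Point (ii) is immediate from Proposition~\ref{Delta_A}: both $\mbb\Delta|_{v=\bv}$ and $\Delta_{\bv}$ are algebra homomorphisms $\mbf S_d\to\mbf S_{d'}\otimes\mbf S_{d''}$ agreeing on the algebra generators $\mbf E_i,\mbf F_i,\mbf K_i^{\pm1}$, and by~\cite{BLM90} these generators generate $\mbf S_d$; hence they agree.

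The main obstacle is point (i): showing that the assignment on monomials is forced and consistent, equivalently that $\mbb\Delta$ is well-defined as an algebra homomorphism at the generic level and not merely after specialization. The slick way to dispose of this is a \emph{Zariski-density / infinitely-many-specializations} argument. Let $R$ be the (reduced) monomial expression chosen for each $\zeta_M$; then $\mbb\Delta(\zeta_M)$ is a vector of Laurent polynomials in $v$, and for each choice of prime power $q$ (odd characteristic) its specialization at $v=\sqrt q$ equals the corresponding vector of structure constants of $\Delta_{\bv}(\eta_M)$, which is independent of all auxiliary choices by Proposition~\ref{Delta_A}. A Laurent polynomial (or a rational function with controlled denominators) that agrees at infinitely many values $\sqrt q$ is uniquely determined; since the $\eta_M$-structure constants of $\Delta_{\bv}$ are themselves specializations of fixed elements of $\mbb A$ (the generic structure constants $c^M_{M_1,M_2}(v)$ and those of $\mbb S_{d'}\otimes\mbb S_{d''}$), one gets genuine equalities in $\mbb A$, hence $\mbb\Delta$ is independent of the monomial expression and automatically respects the $\mbb S_d$-relations. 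This upgrades the specialized identities of Proposition~\ref{Delta_A} to identities over $\mbb A$, giving both existence and uniqueness of $\mbb\Delta$, together with formula~(\ref{D(A)-1}) at the generic level.

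In summary: (1) record that $\mbb S_d=\mcal A\otimes_{\mbb A}\mbb S_d$ and $\mbb S_{d'}\otimes_{\mbb A}\mbb S_{d''}$ base-changes to $\mbf S_{d'}\otimes_{\mcal A}\mbf S_{d''}$, with $\zeta_M\mapsto\eta_M$; (2) define $\mbb\Delta$ on generators by~(\ref{D(A)-1}) and extend via the monomial basis of~\cite{BLM90}; (3) use infinitely many specializations $v=\sqrt q$ together with Proposition~\ref{Delta_A} to conclude that the resulting Laurent-polynomial structure constants are well-defined and relation-preserving, so $\mbb\Delta$ is an algebra homomorphism with $\mcal A\otimes_{\mbb A}\mbb\Delta=\Delta_{\bv}$; (4) uniqueness follows since $\mbf S_d$ is generated by the $\mbf E_i,\mbf F_i,\mbf K_i^{\pm1}$, forcing $\mbb\Delta$ on the corresponding generic generators and hence everywhere by $\mbb A$-freeness. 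I expect step (3) — the density bookkeeping turning specialized equalities into equalities in $\mbb A[v,v^{-1}]$ — to be the only subtle point; everything else is a transcription of the already-proved type-$A$ statements.
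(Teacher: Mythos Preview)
Your proposal is correct and follows exactly the route the paper indicates: the paper's entire argument is the single sentence ``By using the monomial basis in~\cite[Theorem 3.10]{BLM90}, one can show that $\mbb S_d$ enjoys the same results for $\mbf S_d$ from the previous sections,'' and you have unpacked precisely this --- define $\mbb\Delta$ on generators by~(\ref{D(A)-1}), extend via the monomial basis, and verify well-definedness by specialization at infinitely many $v=\sqrt q$. Your identification of the density step as the only non-formal point is accurate, and the argument you give there is the standard one.
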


The following is the generic version of Lemma  ~\ref{Transfer-A}, which is due to Lusztig ~\cite{L00}.

\begin{prop} \label{Generic-Transfer-A}
There is  a unique algebra homomorphism
\[
\phi_{d, d-n}: \mbb S_d \to \mbb S_{d - n}
\]
such that  $\mathcal A \otimes_{\mbb A} \phi_{d, d-n} = \phi_{d, d-n,\bv}$ and
\[
\phi_{d, d-n} (\mbf E_i) = \mbf E_i',\
\phi_{d, d-n} (\mbf F_i) = \mbf F_i',\
\phi_{d, d-n} (\mbf K^{\pm 1}_{i}) = \mbf K'^{\pm 1}_{i}, \quad \forall 1\leq i \leq n-1,
\]
\end{prop}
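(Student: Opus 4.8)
The plan is to establish Proposition~\ref{Generic-Transfer-A} as the formal consequence of the already-proven finite-field facts, following Lusztig's strategy. The only substantive claim is \emph{existence} of a map $\phi_{d,d-n}$ over $\mbb A$ specializing to $\phi_{d,d-n,\bv}$ for every prime power $q$ of odd characteristic; uniqueness and the action on generators will then be immediate. The key point is that $\phi_{d,d-n,\bv}$ was \emph{defined} as the composition $(\xi\otimes\chi)\circ\Delta_{\bv}$, and each of the three ingredients has a generic avatar. By Proposition~\ref{Generic-Delta_A} the comultiplication $\Delta_{\bv}$ lifts to $\mbb\Delta:\mbb S_d\to\mbb S_{d-n}\otimes\mbb S_n$. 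The twist $\xi$ on $\mbf S_{d-n}$ is visibly defined by a monomial in $\bv$ depending only on the row/column vectors of the orbit matrix (i.e.\ on the block $\mbb S_{d-n}(\mbf b,\mbf a)$), so it defines an $\mbb A$-algebra automorphism of $\mbb S_{d-n}$ by the same formula with $\bv$ replaced by $v$; one checks it is multiplicative because the exponent $-\sum_i(|V_i|-|V_i'|)$ is additive under convolution. Finally $\chi:\mbf S_n\to\mathcal A$, $\eta_M\mapsto\det(M)$, visibly takes integer values independent of $q$, hence lifts to an $\mbb A$-algebra homomorphism $\mbb S_n\to\mbb A$; the multiplicativity of $M\mapsto\det M$ on $\Xi_n$ is exactly Lusztig's observation (it holds because at $d=n$ the structure constants of the product $\eta_{M_1}\eta_{M_2}$ are geometrically the number of flags in an intersection, and the determinant is a ``multiplicative length'' on these orbits; see \cite[1.8--1.10]{L00}).

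The steps, in order, are: (i) record that $\mbb\Delta$ exists by Proposition~\ref{Generic-Delta_A}; (ii) define $\hat\xi:\mbb S_{d-n}\to\mbb S_{d-n}$ on the basis by $\hat\xi(\zeta_M)=v^{-\sum_{i}(\ro(M)_i\text{-partial-sum}-\co(M)_i\text{-partial-sum})}\zeta_M$ — more precisely by the same exponent as $\xi$ expressed through $\ro(M),\co(M)$ — and verify it is an $\mbb A$-algebra homomorphism by comparing with $\xi$ after the specialization $v=\bv$, using that the structure constants $c^M_{M_1,M_2}(v)$ determine the algebra and that $\xi$ is known to be an algebra map over $\mathcal A$ for all $q$; (iii) define $\hat\chi:\mbb S_n\to\mbb A$ by $\zeta_M\mapsto\det M$ and likewise check multiplicativity by specialization to all $\bv=\sqrt q$, since $\chi$ is an algebra homomorphism for each $q$ and a polynomial in $v$ vanishing at infinitely many points $v=\sqrt q$ is zero; (iv) set $\phi_{d,d-n}=(\hat\xi\otimes\hat\chi)\circ\mbb\Delta$, observe $\mathcal A\otimes_{\mbb A}\phi_{d,d-n}=(\xi\otimes\chi)\circ\Delta_{\bv}=\phi_{d,d-n,\bv}$ for all $q$; (v) deduce uniqueness because $\mbb S_{d-n}$ is a finitely generated free $\mbb A$-module with no $\mbb A$-torsion, so a map is determined by its specializations at $v=\sqrt q$ for all odd prime powers $q$; (vi) read off the action on $\mbf E_i,\mbf F_i,\mbf K_i^{\pm1}$ from Lemma~\ref{Transfer-A} together with the same torsion-freeness argument, or directly by applying \eqref{D(A)-1} and the definitions of $\hat\xi,\hat\chi$ to the explicit comultiplication of the generators (noting that the $\mbb S_n$-component of $\mbb\Delta(\mbf E_i)$, etc., is a grouplike/trivial term on which $\hat\chi$ evaluates to $1$).

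The main obstacle is step (iii): verifying that $M\mapsto\det M$ is genuinely an algebra homomorphism $\mbb S_n\to\mbb A$, i.e.\ that $\det$ respects the (generic, $v$-deformed) multiplication of $\mbb S_n$, not merely its classical specialization. The clean route is \emph{not} to prove this directly but to invoke the finite-field statement \eqref{chi}: for each odd prime power $q$, $\chi(\eta_{M_1}\eta_{M_2})=\chi(\eta_{M_1})\chi(\eta_{M_2})$ holds in $\mathcal A$, which unwinds to $\sum_M c^M_{M_1,M_2}(\sqrt q)\det M=\det M_1\det M_2$; since this polynomial identity in $v$ holds for infinitely many values $v=\sqrt q$, it holds in $\mbb A[v]$, hence $\hat\chi$ is multiplicative over $\mbb A$. (The legitimacy of \eqref{chi} itself — that $\chi$ is an algebra map — is the quoted input from \cite{L00}, so we are entitled to use it.) Everything else is bookkeeping: the tensor product $\hat\xi\otimes\hat\chi$ of $\mbb A$-algebra homomorphisms is one, and composition with $\mbb\Delta$ preserves this, so $\phi_{d,d-n}$ is an $\mbb A$-algebra homomorphism with the asserted properties.
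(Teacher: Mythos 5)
Your proposal is correct. The paper does not write out a proof for this proposition — it simply states it as the generic version of Lemma~\ref{Transfer-A}, attributes it to Lusztig~\cite{L00}, and makes a brief remark that the monomial basis of~\cite{BLM90} can be used — so there is no explicit paper proof to compare against; but your argument is the standard, expected expansion of that pointer. Your key moves are sound: (i)~$\mbb\Delta$ lifts by Proposition~\ref{Generic-Delta_A}; (ii)~$\hat\xi$ lifts directly since the exponent depends only on $(\ro(M),\co(M))$ and is manifestly additive under convolution, so no specialization is even needed there; (iii)~multiplicativity of $\hat\chi$ follows from the infinitely-many-specializations argument applied to the polynomial identity $\sum_M c^M_{M_1,M_2}(v)\det M=\det M_1\det M_2$, legitimately taking the finite-field statement~(\ref{chi}) as the quoted input; (iv)--(vi)~the composite specializes correctly, and both uniqueness and the formulas on generators fall out of $\mbb A$-torsion-freeness of $\mbb S_{d-n}$ together with Lemma~\ref{Transfer-A}. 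One small remark to make the density argument airtight: the uniqueness claim requires the specialization condition $\mathcal A\otimes_{\mbb A}\phi_{d,d-n}=\phi_{d,d-n,\bv}$ to hold simultaneously for all odd prime powers $q$ (not a single fixed $q$), since a single $\bv=\sqrt q$ is merely algebraic over $\mbb Q$ and the evaluation $\mbb A\to\mcal A$, $v\mapsto\bv$, has nontrivial kernel; you implicitly use this (“a polynomial in $v$ vanishing at infinitely many points”), but it is worth being explicit that this is the convention under which the uniqueness is asserted.
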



Recall the canonical basis $\mB_d \equiv \{ \{B\}_d| B\in \Xi_d\}$ of $\mbb S_d$ from ~\cite{BLM90}. We can write
\begin{align} \label{cB}
\mbb \Delta_{\mbf b', \mbf a', \mbf b'', \mbf a''} ( \{B\}_d)=
\sum_{B' \in \Xi_{d'}, B'' \in \Xi_{d''}}  c_B^{B', B''}  \{ B'\}_{d'} \otimes \{ B''\}_{d''}, \quad c_B^{B', B''} \in \mbb A.
\end{align}
We have the following proposition.

\begin{prop} \label{Pos-S}
$c_B^{B', B''} \in \mbb Z_{\geq 0} [v, v^{-1}]$.
\end{prop}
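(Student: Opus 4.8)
The plan is to reduce the positivity statement to the well-known positivity of hyperbolic localization, using the geometric interpretation of $\widetilde\Delta$ furnished by Lemma~\ref{Delta-ik}. The first step is to pass to a base change over $\mbb C$ and work with the corresponding convolution algebra of $G_d(\mbb C)$-equivariant (constructible, or mixed Hodge) complexes on $X_d\times X_d$, under which the generic Schur algebra $\mbb S_d$ with its canonical basis $\mB_d$ is realized by the simple perverse sheaves $\IC_B$ (intersection cohomology of orbit closures), and $v$ records the cohomological shift. In this dictionary the map $\mbb\Delta_{\mbf b',\mbf a',\mbf b'',\mbf a''}$ — more precisely its $\widetilde\Delta$ version — is, by Lemma~\ref{Delta-ik}, identified with the functor $\pi_!\iota^*$ attached to the correspondence $X_d(\mbf a)\xleftarrow{\iota}X^+_{\mbf a,\mbf a',\mbf a''}\xrightarrow{\pi}X_{d'}(\mbf a')\times X_{d''}(\mbf a'')$, after transporting everything across the restriction isomorphisms $i^*_{\mbf b,\mbf a}$ of~(\ref{i*}). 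Since $\pi$ is a vector bundle (as noted in the remark after Lemma~\ref{Delta-ik}), $\pi_!$ is, up to a shift and twist encoded exactly by the $\bv$-power in~(\ref{Delta-A}), an equivalence onto its image; the substantive content is therefore carried by $\iota^*$.

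The second, and main, step is to recognize $\iota^*$ as a \emph{hyperbolic localization} in the sense of Braden~\cite{B03}. Here $X^+_{\mbf a,\mbf a',\mbf a''}$ is the attracting set of a one-parameter subgroup acting on the partial flag variety $X_d(\mbf a)$ coming from the splitting $\mbb F^d=\mbb F^{d'}\oplus\mbb F^{d''}$ (scaling the two summands with different weights); the fixed-point locus is exactly $\bigsqcup X_{d'}(\mbf a')\times X_{d''}(\mbf a'')$ and the projection $\pi$ is the ``retraction to the fixed points''. Braden's theorem states that the hyperbolic localization functor sends semisimple complexes to semisimple complexes, and in the mixed/geometric setting sends pure complexes of a given weight to pure complexes of the same weight. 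Applying this to the simple perverse sheaf $\IC_B$ (suitably shifted to be pure of weight $0$), one concludes that $\pi_!\iota^*\IC_B$ is a pure complex, hence — after the normalizing shift — a direct sum of shifts of $\IC_{B'}\boxtimes\IC_{B''}$ with \emph{nonnegative} integer multiplicities for each shift. Translating back through the equivalence $i^*_{\mbf b',\mbf a'}\otimes i^*_{\mbf b'',\mbf a''}$ and reading off Poincaré polynomials of the multiplicity spaces, this says precisely that the coefficient $c_B^{B',B''}$ in~(\ref{cB}), which differs from the $\widetilde\Delta$-coefficient only by the monomial $\bv^{\sum b_i'b_j''-a_i'a_j''}$, lies in $\mbb Z_{\ge0}[v,v^{-1}]$. (One checks that this monomial twist is precisely the shift/twist by which $\pi$, as a rank-$\sum_{i<j}a_i'a_j''$ bundle, differs from a pure pushforward, so the normalization in~(\ref{Delta-A}) is exactly the one that makes the statement come out with $v$-powers rather than half-integer powers.)

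The main obstacle is the careful bookkeeping in the second step: one must verify that $X^+_{\mbf a,\mbf a',\mbf a''}$ really is the attracting variety for a $\mbb G_m$-action with the claimed fixed locus and repelling/attracting decomposition, so that Braden's hyperbolic-localization machinery genuinely applies, and one must track weights and cohomological shifts precisely enough to land in $\mbb Z[v,v^{-1}]$ rather than $\mbb Z[v^{1/2},v^{-1/2}]$. Everything else is formal: the passage from $\mbf S_d$ over $\mbb F_q$ to the geometric category over $\mbb C$ is the standard function–sheaf correspondence of~\cite{BLM90} (and its compatibility with $\mbb\Delta$ is Lemma~\ref{Delta-ik} together with Proposition~\ref{Generic-Delta_A}), and the positivity of hyperbolic localization is quoted from~\cite{B03} (compare also~\cite{L00},~\cite{SV00}). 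I would also remark that, since $\mbb S_d$ and all the maps are defined over $\mbb A$ independently of $q$, it suffices to prove the inequality for one (equivalently infinitely many) prime power $q$, which licenses the use of mixed Hodge modules or $\ell$-adic purity.
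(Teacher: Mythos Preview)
Your proposal is correct and follows essentially the same route as the paper: identify $\widetilde\Delta$ with $\pi_!\iota^*$ via Lemma~\ref{Delta-ik}, recognize $X^+_{\mbf a,\mbf a',\mbf a''}$ as the attracting locus for the $\mbb G_m$-action coming from the splitting $\mbb F^{d'}\oplus\mbb F^{d''}$, and invoke Braden's theorem~\cite{B03} to conclude that hyperbolic localization sends the IC sheaf $\{B\}_d$ to a semisimple complex, yielding positivity. The only cosmetic difference is that the paper works over $\overline{\mbb F_q}$ with $\ell$-adic sheaves rather than over $\mbb C$ with mixed Hodge modules, and it does not separate out the role of $\pi_!$ versus $\iota^*$ (indeed $\pi_!\iota^*$ together is the hyperbolic localization functor, not $\iota^*$ alone).
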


\begin{proof}
It suffices to show the generic version of $\widetilde \Delta$ has a similar positivity.
To establish the latter positivity, we switch from the finite field $\mbb F_q$ to its algebraic closure $\overline{\mbb F_q}$.
Let $\overline{\mbf G}_d$  be the general linear   group over $\overline{\mbb F_q}$ whose $\mbb F_q$-points form $\G_d$.
Similarly, we define an algebraic variety   $\overline{\mbf X}_d(\mbf a)$ over $\overline{\mbb F_q}$  for  $X_d(\mbf a)$.
We set $\overline{\mbf G}_m = \mrm{GL}(1, \overline{\mbb F_q})$.
For $d' + d'' = d$, we fix an isomorphism
$
\overline{\mbb F_q}^{d} \cong
\overline{\mbb F_q}^{d'} \oplus
\overline{\mbb F_q}^{d''}
$.
Via the isomorphism, we fix an imbedding $\overline{\mbf G}_m \to \overline{\mbf G}_d$ defined by
$t \mapsto (1_{\overline{\mbb F_q}^{d'}}, t 1_{\overline{\mbb F_q}^{d''}}
)$.
Thus $\overline{\mbf G}_m$ acts on $\overline{\mbf X}_d(\mbf a)$ via the imbedding.
It is straightforward to see that the fixed-point set  of $\overline{\mbf G}_m$ in $\overline{\mbf X}_d(\mbf a)$ is
$\sqcup_{\mbf a' + \mbf a'' = \mbf a } \overline{\mbf X}_{d'} (\mbf a') \times \overline{\mbf X}_{d''}(\mbf a'')$.
Moreover, the attracting set of  $\overline{\mbf X}_{d'} (\mbf a') \times \overline{\mbf X}_{d''}(\mbf a'')$, i.e., those points $x$ such that $\lim_{t\to 0} t. x \in \overline{\mbf X}_{d'} (\mbf a') \times \overline{\mbf X}_{d''}(\mbf a'')$,
 is exactly the algebraic variety whose $\mbb F_q$-point is $X^+_{\mbf a, \mbf a', \mbf a''}$ in (\ref{X+}).
Thus, the linear map $ \pi_! \iota^*$  in (\ref{ik})
is the function version of the hyperbolic localization functor attached to the data $(\overline{\mbf X}_d(\mbf a), \overline{\mbf G}_m)$ in \cite{B03}.
On the other hand,
the function $i^*_{\mbf b, \mbf a} (\{A\}_d)$ is nothing but the function version of the intersection cohomology complex attached to the $P_{\mbf b}$-orbit in $X_d(\mbf a)$ indexed by $A$.
Now the result in ~\cite{B03} says that  a hyperbolic localization functor sends a simple perverse sheaf to a semisimple complex. Therefore, we have the positivity for the generic version of  $ \pi_! \iota^*$, hence for the generic version of  $\widetilde \Delta$ and therefore the proposition.
\end{proof}

\subsection{Positivity for $\dot{\mbb U}$}
\label{Type-A-positivity}

By definition, the quantum $\mathfrak{sl}_n$,  denoted by $\mbb U \equiv \mbb U(\mathfrak{sl}_n)$,  is an associative  algebra over $\mbb Q( v)$  generated by the generators:
\[
\mbb E_i, \mbb F_i, \mbb K_i, \mbb K^{-1}_{i}, \quad \forall 1\leq i\leq n-1,
\]
and subject to the following relations.  For $1\leq i, j \leq n-1$,
\begin{align}
\label{q-Serre}
\begin{split}
\mbb K_{i} \mbb K^{-1}_{i} & =\mbb K^{-1}_{i} \mbb K_{i} =1, \\
\mbb K_{i}  \mbb K_{j}  &=   \mbb K_{j}  \mbb K_{i}, \\
\mbb  K_{i} \mbb E_{j} &
 = v^{2 \delta_{i, j} - \delta_{i, j+1} - \delta_{i, j-1} } \mbb E_{j} \mbb K_i, \\
 \mbb K_{i} \mbb F_{j}   &
= v^{-2\delta_{i, j} + \delta_{i, j+1} + \delta_{i, j-1} } \mbb F_{j} \mbb K_i, \\
\mbb E_{i} \mbb F_{j} - \mbb F_{j} \mbb E_{i} &= \delta_{i,j} \frac{ \mbb K_i
 - \mbb K^{-1}_{i} }{v-v^{-1}},          \\
\mbb E_{i}^2 \mbb E_{j} +\mbb E_{j} \mbb E_{i}^2 &= (v + v^{-1})   \mbb E_{i} \mbb E_{j} \mbb E_{i},   \ \text{if }  |i-j|=1,\\
\qquad \qquad  \mbb F_{i}^2 \mbb F_{j} + \mbb F_{j} \mbb F_{i}^2 &= (v + v^{-1})  \mbb F_{i} \mbb F_{j} \mbb F_{i},\ \text{if } |i-j|=1,\\
\mbb E_{i} \mbb E_{j} &= \mbb E_{j} \mbb E_{i},     \ \text{if } |i-j| \neq 1, \\
\mbb F_{i} \mbb F_{j}  &=\mbb F_{j}  \mbb F_{i},  \ \text{if } |i-j| \neq 1.
\end{split}
\end{align}

Moreover, $\mbb U$ admits a Hopf-algebra structure,  whose comultiplication is defined by
\begin{align} \label{D-K}
\begin{split}
\mbb \Delta ( \mbb E_i) & = \mbb E_i \otimes \mbb K_i + 1 \otimes \mbb E_i, \\
\mbb \Delta (\mbb F_i) & = \mbb F_i \otimes 1+ \mbb K^{-1}_{i} \otimes \mbb F_i, \\
\mbb \Delta (\mbb K_i) & = \mbb K_i\otimes \mbb K_i, \quad \forall 1 \leq i \leq n-1.
\end{split}
\end{align}

\begin{rem}
If one rewrites $\mbb E_i, \mbb F_i,$ and $ \mbb K_i$
as $E_i, F_i,$ and $K_{i} K_{i+1}^{-1}$, respectively, then
the resulting presentation is a subalgebra of  the quantum $\mathfrak{gl}_n$  used  in ~\cite[4.3]{BKLW13}.
\end{rem}

It is well-known from ~\cite{BLM90} that the assignments
\[
\mbb E_i \mapsto \mbf E_i,
\mbb F_i \mapsto \mbf F_i,
\mbb K_{\pm i} \mapsto \mbf K_{\pm i},
\quad \forall 1\leq i\leq n-1,
\]
define a surjective algebra homomorphism
\begin{align} \label{Psi}
\phi_d: \mbb U \to \ _{\mbb Q(v)} \mbb S_d,
\end{align}
where $_{\mbb Q(v)} \mbb S_d$ is the algebra obtained from $\mbb S_d$ in Section ~\ref{Generic-SD}
by extending the ground ring $\mbb A$ to $\mbb Q(v)$.
By using Proposition ~\ref{Generic-Delta_A},   (\ref{D-K}) and tracing along the generators, we obtain the following commutative diagram.
\begin{align} \label{U-S-raw}
\begin{CD}
\mbb U @> \mbb \Delta >> \mbb U \otimes \mbb U\\
@V\phi_d VV @VV \phi_{d'}\otimes \phi_{d''} V\\
_{\mbb Q(v)} \mbb S_d @>\mbb  \Delta >> _{\mbb Q(v)} \mbb S_{d'} \otimes \ _{\mbb Q(v)} \mbb S_{d''}
\end{CD}
\end{align}
where $d'+d''=d$ and $\Delta$ for $_{\mbb Q(v)}\mbb S_d$ is defined as  in (\ref{Generic-Delta}).

We define an equivalence relation $\sim$ on $\mbb Z^n$ by
$\mu \sim \nu$ if and only if $\mu - \nu = p (1, \cdots, 1)$ for some $p\in \mbb Z$.
Let
\[
\mbb X = \mbb Z^n/ \sim,
\]
be the set of all equivalence classes. Let $\overline{\mu}$ denote the equivalence class of  $\mu \in \mbb Z^n$.
Let
\[
\mbb Y = \{\nu \in \mbb Z^n | \sum_{1\leq i\leq n} \nu_i =0\}.
\]
Then the standard dot product on $\mbb Z^n$  induces a  pairing
$
\cdot: \mbb Y \times \mbb X \to \mbb Z.
$
Set $I=\{1,\cdots, n-1\}$. We define two injective maps
$
I \to \mbb Y, \quad I \to \mbb X,
$
by
$
i \mapsto - s_i + s_{i+1}, i \mapsto -\overline{s_i} + \overline{s_{i+1}}, \quad \forall 1\leq i\leq n-1,
$
respectively, where $s_i$ is the $i$-th  standard basis element in $\mbb Z^n$.
We thus obtain a root datum of type $\mbf A_{n-1}$  in  ~\cite[2.2]{L93}. It is both $\mbb X$-regular and $\mbb Y$-regular.

Recall from ~\cite[23.1.1]{L93} that $\mbb U$ admits a decomposition
$\mbb U = \oplus_{\nu \in \mbb Z[I]} \mbb U(\nu)$ defined by
the following rules:
\[
\mbb U(\nu') \mbb U(\nu'') \subseteq \mbb U(\nu' + \nu''),\
\mbb K_{\pm i} \in \mbb U(0),\  \mbb E_i \in \mbb U( i),\  \mbb  F_i \in \mbb U(- i).
\]
For a triple $\nu', \nu'', \nu$ in $\mbb Z[I]$ such that $\nu' + \nu'' =\nu$, we can have a linear map
\[
\mbb \Delta_{\nu', \nu''}: \mbb U (\nu) \to \mbb U(\nu') \otimes \mbb U(\nu''),
\]
obtained from $\mbb \Delta$ by restricting to $\mbb U(\nu)$ and projecting to $\mbb U(\nu') \otimes \mbb U(\nu'')$.
Moreover, the restriction of the algebra homomorphism  $\phi_d$ in (\ref{Psi}) to $\mbb U(\nu)$ induces a linear map, still denoted by $\phi_d$,
\[
\phi_d: \mbb U(\nu) \to \oplus_{\overline{\mbf b} -\overline{\mbf a} =\nu}  \ _{\mbb Q(v)} \mbb S_d(\mbf b, \mbf a).
\]
where $\mbb Z[I]$ is treated  as a subset in $\mbb X$ via the imbedding $I\to \mbb X$.

\begin{lem}
\label{Delta-U-S}
The commutative diagram (\ref{U-S-raw}) can be refined to the following commutative diagram.
\begin{align}
\label{U-S}
\begin{CD}
\mbb U(\nu)  @>\mbb \Delta_{ \nu', \nu''} >> \mbb U (\nu')  \otimes \mbb U (\nu'') \\
@V\phi_d VV @VV \phi_{d'}\otimes \phi_{d''} V\\
\oplus_{\overline{\mbf b} -\overline{\mbf a} =\nu} \ _{\mbb Q(v)} \mbb S_d (\mbf b, \mbf a)
@> \oplus \mbb \Delta_{ \mbf b', \mbf a', \mbf b'', \mbf a''}  >>
\oplus_{
\substack{
\mbf b' + \mbf b''=\mbf b, \mbf a' +\mbf a''=\mbf a  \\
\overline{\mbf b'} - \overline{\mbf a'}=\nu', \overline{\mbf b''} - \overline{\mbf a''} =\nu''
}
}
\  _{\mbb Q(v)} \mbb S_{d'} (\mbf b', \mbf a')  \otimes \ _{\mbb Q(v)} \mbb S_{d''}(\mbf b'', \mbf a'')
\end{CD}
\end{align}
where $ \mbb \Delta_{ \mbf b', \mbf a', \mbf b'', \mbf a''} $ is defined similar to
(\ref{Delta-A}).
\end{lem}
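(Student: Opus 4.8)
The plan is to reduce Lemma~\ref{Delta-U-S} to the already-established commutative diagram (\ref{U-S-raw}) together with the weight-space decompositions on both sides. The essential point is that everything in sight is graded: $\mbb U = \oplus_{\nu \in \mbb Z[I]} \mbb U(\nu)$, the tensor product $\mbb U \otimes \mbb U$ carries the induced $\mbb Z[I]\times \mbb Z[I]$-grading, and $\mbb \Delta$ respects these gradings in the sense that $\mbb \Delta(\mbb U(\nu)) \subseteq \oplus_{\nu'+\nu''=\nu} \mbb U(\nu') \otimes \mbb U(\nu'')$. This last containment is verified by checking it on the algebra generators $\mbb E_i, \mbb F_i, \mbb K_i^{\pm 1}$ using (\ref{D-K}): $\mbb \Delta(\mbb E_i) = \mbb E_i \otimes \mbb K_i + 1 \otimes \mbb E_i$ lands in $\mbb U(i)\otimes\mbb U(0)\ \oplus\ \mbb U(0)\otimes\mbb U(i)$, and similarly for $\mbb F_i$ and $\mbb K_i$; since $\mbb \Delta$ is an algebra homomorphism and the grading is multiplicative, it follows for arbitrary elements of $\mbb U(\nu)$. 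Thus $\mbb \Delta_{\nu',\nu''}$, defined as the composite of $\mbb \Delta|_{\mbb U(\nu)}$ with the projection onto the $(\nu',\nu'')$-component, is well defined and $\mbb \Delta|_{\mbb U(\nu)} = \oplus_{\nu'+\nu''=\nu}\mbb \Delta_{\nu',\nu''}$.

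Next I would do the matching analysis on the Schur-algebra side. The decomposition $_{\mbb Q(v)}\mbb S_d = \oplus_{\mbf b, \mbf a \in \Lambda_{d,n}}\ _{\mbb Q(v)}\mbb S_d(\mbf b,\mbf a)$ and the compatibility, recorded just before the lemma, that $\phi_d$ carries $\mbb U(\nu)$ into $\oplus_{\overline{\mbf b}-\overline{\mbf a}=\nu}\ _{\mbb Q(v)}\mbb S_d(\mbf b,\mbf a)$. Here one uses the formulas for $\phi_d$ on generators (Proposition~\ref{Delta_A}, generic version Proposition~\ref{Generic-Delta_A}): $\mbf E_i$ sits in $\oplus_{\overline{\mbf b}-\overline{\mbf a}=i}\mbb S_d(\mbf b,\mbf a)$ because $\mbf E_i(V,V')\neq 0$ forces $\ro$ and $\co$ to differ by $-s_i+s_{i+1}$, and likewise for $\mbf F_i$, $\mbf K_i$; multiplicativity of the $(\mbf b,\mbf a)$-bigrading under convolution then propagates this to all of $\mbb U(\nu)$. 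On the tensor side, $\mbb \Delta = \oplus \mbb \Delta_{\mbf b',\mbf a',\mbf b'',\mbf a''}$ with the sum over $\mbf b=\mbf b'+\mbf b''$, $\mbf a=\mbf a'+\mbf a''$ (this is (\ref{Delta-A}), generic version (\ref{Generic-Delta})), so the bigrading on $\mbb S_{d'}\otimes\mbb S_{d''}$ refines the one on $\mbb S_d$ compatibly with $\mbb \Delta$.

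With both vertical maps and the bottom horizontal map now understood as direct sums of their graded pieces, I would simply restrict the already-commutative square (\ref{U-S-raw}) to the summand $\mbb U(\nu)$ in the top-left corner and project the top-right corner onto $\mbb U(\nu')\otimes\mbb U(\nu'')$. Commutativity of (\ref{U-S-raw}) forces the restricted/projected square to commute, and the grading-compatibilities established above identify the four corners and the four maps of the restricted square with exactly those displayed in (\ref{U-S}); in particular the index set in the bottom-right corner is precisely $\{\mbf b'+\mbf b''=\mbf b,\ \mbf a'+\mbf a''=\mbf a,\ \overline{\mbf b'}-\overline{\mbf a'}=\nu',\ \overline{\mbf b''}-\overline{\mbf a''}=\nu''\}$, since $\phi_{d'}\otimes\phi_{d''}$ lands there and $\mbb \Delta_{\mbf b',\mbf a',\mbf b'',\mbf a''}$ is nonzero only on those summands. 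The one point requiring a little care — and the closest thing to an obstacle — is checking that the $v$-power twist defining $\Delta_{\mbf b',\mbf a',\mbf b'',\mbf a''}$ in (\ref{Delta-A}) depends only on $(\mbf b',\mbf a',\mbf b'',\mbf a'')$ and not on the individual flags, so that "$\mbb \Delta_{\mbf b',\mbf a',\mbf b'',\mbf a''}$ defined similar to (\ref{Delta-A})" genuinely makes sense as a graded piece; but this is immediate from the definition of the twist as a function of the row/column vectors alone. Everything else is formal bookkeeping with direct-sum decompositions.
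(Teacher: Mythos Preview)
Your proposal is correct and follows precisely the approach implicit in the paper: the paper offers no proof for this lemma, treating it as an immediate formal consequence of the $\mbb Z[I]$-grading on $\mbb U$, the $(\mbf b,\mbf a)$-bigrading on the Schur algebras, and the already-established commutativity of (\ref{U-S-raw}). Your write-up simply makes explicit the routine bookkeeping (grading-compatibility of $\mbb\Delta$ and of $\phi_d$ on generators, then restriction and projection) that the paper leaves to the reader; the one minor quibble is that the generator formulas you invoke for $\phi_d$ live in (\ref{generatorA}) and (\ref{Psi}) rather than in Proposition~\ref{Delta_A}, but this is a citation slip, not a mathematical one.
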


Now set
\begin{align*}
\begin{split}
\dot{\mbb U} & = \oplus_{\overline \mu, \overline  \lambda \in \mbb X}\  _{\overline \mu}\mbb U_{ \overline \lambda}, \\
_{\overline \mu} \mbb U_{\overline \lambda}
&= \mbb U / \left ( \sum_{1 \leq i \leq n-1}  (\mbb K_i - v^{-\mu_i + \mu_{i+1}} ) \mbb U   + \sum_{1\leq i \leq n-1} \mbb U ( \mbb K_i - v^{- \lambda_i +  \lambda_{i+1}}) \right ).
\end{split}
\end{align*}
This is the modified/idempotented form of $\mbb U$ defined in ~\cite[23.1.1]{L93},
see also  ~\cite{BLM90}. Recall from ~\cite[23.1.5]{L93},
the comultiplication $\mbb \Delta$ induces a linear map
\begin{align} \label{Delta-dot}
\mbb \Delta_{\overline{\mu'}, \overline{\lambda'}, \overline{\mu''}, \overline{\lambda''}}: \ _{\overline{\mu}} \mbb U_{\overline{\lambda}} \to \ _{\overline{\mu'}} \mbb U_{\overline{\lambda'}} \otimes \ _{\overline{\mu''}} \mbb U_{\overline{\lambda''}},
\end{align}
and makes  the following diagram commutative.
\begin{align} \label{U-U-dot}
\begin{CD}
\mbb U(\nu )  @> \mbb \Delta_{ \nu', \nu''} >> \mbb U(\nu') \otimes \mbb U(\nu'')\\
@V\pi_{\overline \mu, \overline \lambda} VV @VV\pi_{\overline{\mu'}, \overline{\lambda'}} \otimes \pi_{\overline{\mu''}, \overline{\lambda''}} V \\
 \ _{\overline{\mu}} \mbb U_{\overline{\lambda}} @>  \mbb \Delta_{\overline{\mu'}, \overline{\lambda'}, \overline{\mu''}, \overline{\lambda''}}>>   _{\overline{\mu'}} \mbb U_{\overline{\lambda'}} \otimes \ _{\overline{\mu''}} \mbb U_{\overline{\lambda''}}
\end{CD}
\end{align}
where
$\overline{\mu} - \overline{\lambda}=\nu$,
$\overline{\mu'} - \overline{\lambda'}=\nu'$,
$\overline{\mu''} - \overline{\lambda''}=\nu''$,
and
$\pi_{\overline{\mu},\overline{\lambda}}$ is the projection from $\mbb U$ to $_{\overline{\mu}} \mbb U_{\overline{\lambda}}$.

We write
$1_{\overline{\lambda}} = \pi_{\overline \lambda, \overline \lambda}(1)$.
It is well-known that $\dot{\mbb U}$ and $_{\mbb Q(v)} \mbb S_d$  are  $\mbb U$-bimodules.
So the notations  $\mE_i 1_{\overline \lambda}$ and   $\mF_i 1_{\overline \lambda}$ in $\dot{\mU}$ are meaningful,
and  so are  $\mE_i \zeta_M$, $\mF_i \zeta_M$ in $_{\mbb Q(v)} \mbb S_d$ where the notation $\zeta_{M}$ is from Section ~\ref{Generic-SD}.
Recall from  ~\cite{L00} (see also ~\cite{LW14}) that there is a surjective algebra homomorphism
$
\widetilde \phi_d: \dot{\mbb U} \to \ _{\mbb Q(v)} \mbb S_d
$
defined by
\begin{align*}
\begin{split}
\widetilde \phi_d( 1_{\overline \lambda}) & =
\begin{cases}
\zeta_{M_{\mbf a}}, & \mbox{if} \ \overline \lambda =\overline{\mbf a}, \ \mbox{for some}\  \mbf a \in \Lambda_{d, n},\\
0, & \mbox{o.w.}
\end{cases}\\
\widetilde \phi_d( \mbb E_i 1_{\overline \lambda}) & =
\begin{cases}
\mE_i \zeta_{M_{\mbf a}}, & \mbox{if} \ \overline \lambda =\overline{\mbf a}, \ \mbox{for some}\  \mbf a \in \Lambda_{d, n},\\
0, & \mbox{o.w.}
\end{cases}\\
\widetilde \phi_d( \mbb F_i 1_{\overline \lambda}) & =
\begin{cases}
\mF_i \zeta_{M_{\mbf a}}, & \mbox{if} \ \overline \lambda =\overline{\mbf a}, \ \mbox{for some}\  \mbf a \in \Lambda_{d, n},\\
0, & \mbox{o.w.}
\end{cases}
\end{split}
\end{align*}
where $M_{\mbf a}$ is the diagonal matrix whose diagonal is $\mbf a$.
Moreover, $\widetilde \phi_d$ induces a linear map, still denoted by $\widetilde \phi_d$,
\[
\widetilde \phi_d: \ _{\overline{\mbf b}} \mU_{\overline{\mbf a}} \to \ _{\mbb Q(v)} \mbb S_d(\mbf b, \mbf a).
\]
By definition,  we have the following lemma.

\begin{lem}
If $\overline \mu = \overline{\mbf b} $, $\overline \lambda =\overline{\mbf a} $ and
$\overline \mu -\overline \lambda =\nu$, then the following diagram is commutative.
\begin{align}
\label{U-U-dot-S}
\begin{split}
\begin{CD}
\mbb U( \nu)
@>>>
_{\overline \mu} \mbb U_{\overline{\lambda}}  \\
@V\phi_d VV @V \widetilde \phi_d VV\\
\oplus_{\overline{\mbf b}-\overline{\mbf a} = \nu} \ _{\mbb Q(v)} \mbb S_d(\mbf b, \mbf a)
@>>>  _{\mbb Q(v)} \mbb S_d(\mbf b, \mbf a)
\end{CD}
\end{split}
\end{align}
where the bottom row is the natural projection.
\end{lem}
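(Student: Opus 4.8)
The plan is to verify commutativity of the square on an arbitrary $x\in\mbb U(\nu)$ by unwinding the two composites against the properties of $\phi_d$ and $\widetilde\phi_d$ already established above. Since all four arrows are $\mbb Q(v)$-linear, one may, if desired, run the argument on the monomials in $\mbb E_i,\mbb F_i,\mbb K^{\pm1}_i$ spanning $\mbb U(\nu)$, but this is not strictly necessary.

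First I would pin down the two composites. The top arrow $\mbb U(\nu)\to{}_{\overline\mu}\mbb U_{\overline\lambda}$ is the restriction to $\mbb U(\nu)$ of the projection $\pi_{\overline\mu,\overline\lambda}$ appearing in (\ref{U-U-dot}); since $\overline\mu-\overline\lambda=\nu$ it is the map $x\mapsto 1_{\overline\mu}x\,1_{\overline\lambda}=x\,1_{\overline\lambda}$. Next, because $\widetilde\phi_d$ is an algebra homomorphism with $\widetilde\phi_d(1_{\overline\lambda})=\zeta_{M_{\mbf a}}$ (as $\overline\lambda=\overline{\mbf a}$), $\widetilde\phi_d(\mbb E_i1_{\overline\lambda})=\mbf E_i\zeta_{M_{\mbf a}}$ and $\widetilde\phi_d(\mbb F_i1_{\overline\lambda})=\mbf F_i\zeta_{M_{\mbf a}}$, a straightforward induction on the word length of $x$ gives $\widetilde\phi_d(x\,1_{\overline\lambda})=\phi_d(x)\,\widetilde\phi_d(1_{\overline\lambda})=\phi_d(x)\,\zeta_{M_{\mbf a}}$; equivalently, $\widetilde\phi_d$ is a map of left $\mbb U$-modules, $\mbb U$ acting on ${}_{\mbb Q(v)}\mbb S_d$ through $\phi_d$. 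Hence the top--right composite is $x\mapsto\phi_d(x)\,\zeta_{M_{\mbf a}}$, while the left--bottom composite is $x\mapsto\mathrm{pr}_{(\mbf b,\mbf a)}(\phi_d(x))$, where we use that $\phi_d$ carries $\mbb U(\nu)$ into $\bigoplus_{\overline{\mbf c}-\overline{\mbf e}=\nu}{}_{\mbb Q(v)}\mbb S_d(\mbf c,\mbf e)$. So the lemma is reduced to the identity $\phi_d(x)\,\zeta_{M_{\mbf a}}=\mathrm{pr}_{(\mbf b,\mbf a)}(\phi_d(x))$.

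For that identity I would note that $\zeta_{M_{\mbf a}}$, with $M_{\mbf a}$ the diagonal matrix with diagonal $\mbf a$, is the idempotent attached to $\mbf a$, so right multiplication by $\zeta_{M_{\mbf a}}$ is exactly the projection of $\mbb S_d=\bigoplus_{\mbf c,\mbf e}{}_{\mbb Q(v)}\mbb S_d(\mbf c,\mbf e)$ onto the column $\bigoplus_{\mbf c}{}_{\mbb Q(v)}\mbb S_d(\mbf c,\mbf a)$; concretely $\zeta_M\zeta_{M_{\mbf a}}=\zeta_M$ if $\co(M)=\mbf a$ and $=0$ otherwise. Writing $\phi_d(x)=\sum s_{\mbf c,\mbf e}$ with $s_{\mbf c,\mbf e}\in{}_{\mbb Q(v)}\mbb S_d(\mbf c,\mbf e)$ and $\overline{\mbf c}-\overline{\mbf e}=\nu$, we get $\phi_d(x)\,\zeta_{M_{\mbf a}}=\sum_{\mbf c:\,\overline{\mbf c}-\overline{\mbf a}=\nu}s_{\mbf c,\mbf a}$. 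But $\mbf b$ is the unique $\mbf c\in\Lambda_{d,n}$ with $\overline{\mbf c}=\overline{\mbf b}$: the relation $\overline{\mbf c}=\overline{\mbf b}$ means $\mbf c-\mbf b=p(1,\dots,1)$ for some $p\in\mbb Z$, and summing coordinates forces $pn=d-d=0$, i.e. $p=0$. Therefore $\phi_d(x)\,\zeta_{M_{\mbf a}}=s_{\mbf b,\mbf a}=\mathrm{pr}_{(\mbf b,\mbf a)}(\phi_d(x))$, completing the check.

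Being a bookkeeping lemma, this poses no genuine obstacle; the two points that require a little care are matching Lusztig's conventions for $\pi_{\overline\mu,\overline\lambda}$ and for the $\mbb U$-module structures on $\dot{\mbb U}$ and ${}_{\mbb Q(v)}\mbb S_d$, so that $\widetilde\phi_d(x\,1_{\overline\lambda})=\phi_d(x)\,\zeta_{M_{\mbf a}}$ really holds, and the observation --- ultimately the injectivity of $\mbb Z[I]\hookrightarrow\mbb X$ together with the equality of the coordinate sums of $\mbf a$ and $\mbf b$ --- that pins the surviving left index to the single value $\mbf b$, which is what makes right multiplication by the idempotent $\zeta_{M_{\mbf a}}$ coincide with the block projection.
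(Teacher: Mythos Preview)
Your argument is correct and is precisely the unwinding of definitions that the paper leaves implicit (the paper records no proof beyond ``By definition''). The two key points you isolate---that $\widetilde\phi_d$ is a map of left $\mbb U$-modules so that $\widetilde\phi_d(x\,1_{\overline\lambda})=\phi_d(x)\,\zeta_{M_{\mbf a}}$, and that $\overline{\mbf c}=\overline{\mbf b}$ with $\mbf c,\mbf b\in\Lambda_{d,n}$ forces $\mbf c=\mbf b$---are exactly what makes the square commute.
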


Note that $\overline{\mbf b} -\overline{\mbf a} \in \mbb Z[I] \subseteq \mbb X$.
By piecing  together (\ref{U-S}), (\ref{U-U-dot}) and (\ref{U-U-dot-S}),  we have the following cube.
\begin{align}
\label{cube}
\begin{split}
\xymatrixrowsep{.1in}
\xymatrixcolsep{.1in}
\xymatrix{
& _{\overline \mu}\mbb U_{\overline \lambda}  \ar@{->}[rr]  \ar@{.>}[dd]
&&  _{\overline{\mu'}}\mbb U_{\overline{\lambda'}} \otimes _{\overline{\mu''}}\mbb U_{\overline{\lambda''}}
\ar@{->}[dd] \\
\mbb U(\nu) \ar@{->}[ur]^{\pi_{\overline \mu, \overline \lambda}}  \ar@{->}[rr]  \ar@{->}[dd]
& & \mbb U(\nu') \otimes \mbb U(\nu'') \ar@{->}[ur] \ar@{->}[dd] & \\
& \mbb S_d(\mbf b, \mbf a)  \ar@{.>}[rr]  && \mbb S_{d'}(\mbf b', \mbf a') \otimes \mbb S_{d''}(\mbf b'', \mbf a'') \\
\oplus \mbb S_d(\mbf b, \mbf a) \ar@{.>}[ur] \ar@{->}[rr]  && \oplus  \mbb S_{d'}(\mbf b', \mbf a') \otimes  \mbb S_{d''}(\mbf b'', \mbf a'') \ar@{->}[ur] &
}
\end{split}
\end{align}
where each of the $\mbb S$  in the bottom square has a subscript $\mbb Q(v)$ on the left.
In (\ref{cube}), the front square  is (\ref{U-S}), the top square  is (\ref{U-U-dot}),  the two side squares  are  (\ref{U-U-dot-S}) and the commutativity of the  bottom square is obvious.
Since $\pi_{\overline \mu, \overline \lambda}$ is surjective and
each square is commutative except  the one in the back,
we have immediately the following proposition by diagram-chasing.

\begin{prop}  \label{U-dot-S-A}
The  square in the back of the cube (\ref{cube}) is commutative.
\begin{align}
\label{U-dot-S}
\begin{CD}
_{\overline{\mbf b}} \mbb U_{\overline{\mbf a}}
@>\mbb \Delta_{\overline{\mbf b'}, \overline{\mbf a'}, \overline{\mbf b''}, \overline{\mbf a''}}  >>
_{\overline{\mbf b'}} \mbb U_{\overline{\mbf a'}} \otimes \ \! _{\overline{\mbf b''}}\mbb U_{\overline{\mbf a''}} \\
@V \widetilde \phi_d VV @VV \widetilde \phi_{d'}\otimes \widetilde  \phi_{d''} V\\
_{\mbb Q(v)} \mbb S_d(\mbf b, \mbf a) @>\mbb \Delta_{ \mbf b', \mbf a', \mbf b'', \mbf a''} >>
_{\mbb Q(v)} \mbb S_{d'}(\mbf b', \mbf a') \otimes \ _{\mbb Q(v)}\mbb S_{d''}(\mbf b'', \mbf a'')
\end{CD}
\end{align}
\end{prop}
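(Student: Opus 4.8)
The plan is to deduce the commutativity of the back face of the cube (\ref{cube}) from that of the remaining five faces by a short diagram chase, exploiting the surjectivity of the projection $\pi_{\overline{\mbf b},\overline{\mbf a}}\colon \mbb U(\nu)\to {}_{\overline{\mbf b}}\mbb U_{\overline{\mbf a}}$, where $\nu=\overline{\mbf b}-\overline{\mbf a}\in\mbb Z[I]$. First I would record the five commutativities at our disposal: the front face is (\ref{U-S}) (Lemma \ref{Delta-U-S}); the top face is (\ref{U-U-dot}) (\cite[23.1.5]{L93}); the left and right side faces are two instances of (\ref{U-U-dot-S}), furnished by the lemma immediately preceding this proposition; and the bottom face commutes because all four of its arrows are, summand by summand, the arrows of the two bottom squares (front and back) together with the evident projections, so that the identity $\mbb\Delta_{\mbf b',\mbf a',\mbf b'',\mbf a''}\circ\mathrm{pr}=\mathrm{pr}\circ\bigl(\oplus\,\mbb\Delta_{\mbf b',\mbf a',\mbf b'',\mbf a''}\bigr)$ is immediate from the definitions.

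Next, given $x\in {}_{\overline{\mbf b}}\mbb U_{\overline{\mbf a}}$, I would pick a lift $\tilde x\in\mbb U(\nu)$ with $\pi_{\overline{\mbf b},\overline{\mbf a}}(\tilde x)=x$ and transport $\tilde x$ to the bottom-back-right corner ${}_{\mbb Q(v)}\mbb S_{d'}(\mbf b',\mbf a')\otimes{}_{\mbb Q(v)}\mbb S_{d''}(\mbf b'',\mbf a'')$ along the two composite routes that avoid the back face, namely (i) across the top face and then down the right side, versus (ii) down the front face and then across the bottom. Feeding $\tilde x$ successively through the top face, then through both side faces applied to the two tensor factors, then through the front face, and finally through the bottom face shows the two images coincide. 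Using the left side face to rewrite the output of route (ii) as $\mbb\Delta_{\mbf b',\mbf a',\mbf b'',\mbf a''}\circ\widetilde\phi_d(x)$ and the right side face to rewrite the output of route (i) as $(\widetilde\phi_{d'}\otimes\widetilde\phi_{d''})\circ\mbb\Delta_{\overline{\mbf b'},\overline{\mbf a'},\overline{\mbf b''},\overline{\mbf a''}}(x)$, this is precisely the commutativity of the back face (\ref{U-dot-S}) evaluated at $x$; since $\pi_{\overline{\mbf b},\overline{\mbf a}}$ is surjective, $x$ is arbitrary and the proposition follows.

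The one point requiring care is bookkeeping with the direct-sum decompositions: the bottom-front vertices carry the full sums $\oplus_{\mbf b'+\mbf b''=\mbf b,\ \mbf a'+\mbf a''=\mbf a}(\cdots)$ coming from (\ref{U-S}), while the back face isolates the single block indexed by $(\mbf b',\mbf a',\mbf b'',\mbf a'')$. One must observe that $\widetilde\phi_d$ lands in the single summand ${}_{\mbb Q(v)}\mbb S_d(\mbf b,\mbf a)$ when $\overline\lambda=\overline{\mbf a}$ (and analogously for $\widetilde\phi_{d'}\otimes\widetilde\phi_{d''}$), so that projecting onto the $(\mbf b',\mbf a',\mbf b'',\mbf a'')$-summand commutes with $\widetilde\phi_{d'}\otimes\widetilde\phi_{d''}$. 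Beyond this there is no genuine obstacle; the mathematical content lies entirely in Lemma \ref{Delta-U-S} and in the lemma furnishing (\ref{U-U-dot-S}), and the present statement is a formal consequence obtained by chasing the cube (\ref{cube}).
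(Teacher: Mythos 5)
Your proposal is correct and takes exactly the same route as the paper: the paper also observes that the front, top, side, and bottom faces of the cube (\ref{cube}) commute (front by (\ref{U-S}), top by (\ref{U-U-dot}), sides by (\ref{U-U-dot-S}), bottom by definition), and deduces commutativity of the back face from the surjectivity of $\pi_{\overline{\mu},\overline{\lambda}}$ by diagram-chasing.
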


By   using Proposition ~\ref{U-dot-S-A}, we can prove the following positivity of the canonical basis of $\dot{\mbb U} =\dot{\mbb U}(\mathfrak{sl}_n)$ with respect to the comultiplication.
Let $\mbb B$ be the canonical basis of $\dot{\mU}$ defined in ~\cite[25.2.4]{L93}.

\begin{thm} [Grojnowski] \label{conj-25.4.2}
Let $b \in \mbb B$.
If
$
\mbb \Delta_{\overline{\mu'}, \overline{\lambda'}, \overline{\mu''}, \overline{\lambda''}} (b)
= \sum_{b', b''} \hat m_{b}^{b', b''} b'\otimes b''
$,
then  $\hat m_b^{b', b''} \in \mbb Z_{\geq 0} [v, v^{-1}]$.
\end{thm}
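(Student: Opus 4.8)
The plan is to deduce the positivity for $\dot{\mbb U}$ from the already-established positivity on the (generic) Schur algebra level, Proposition~\ref{Pos-S}, by transporting the canonical basis $\mbb B$ of $\dot{\mbb U}$ through the maps $\widetilde\phi_d$ and exploiting the compatibility square (\ref{U-dot-S}) in Proposition~\ref{U-dot-S-A}. The key input, beyond (\ref{U-dot-S}), is Lusztig's theorem from \cite{L00} (in the spirit of \cite{BLM90}) that the surjection $\widetilde\phi_d\colon \dot{\mbb U}\to{}_{\mbb Q(v)}\mbb S_d$ is compatible with canonical bases: it sends each $b\in\mbb B$ either to a canonical basis element $\{B\}_d$ of $\mbb S_d$ or to $0$, and moreover for $d$ sufficiently large (relative to the weights $\overline\mu,\overline\lambda$ and $\nu$ involved) the restriction ${}_{\overline{\mbf b}}\mbb U_{\overline{\mbf a}}\to{}_{\mbb Q(v)}\mbb S_d(\mbf b,\mbf a)$ is a bijection matching $\mbb B\cap{}_{\overline{\mbf b}}\mbb U_{\overline{\mbf a}}$ with $\mbb B_d(\mbf b,\mbf a)=\{\{B\}_d\}$. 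This ``stabilization'' is exactly the mechanism that recovers $\dot{\mbb U}$ and its canonical basis as a limit of the Schur algebras.

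\emph{First} I would fix $b\in\mbb B$, say $b\in{}_{\overline\mu}\mbb U_{\overline\lambda}$ with $\overline\mu-\overline\lambda=\nu$, and similarly fix the weights $\overline{\mu'},\overline{\lambda'},\overline{\mu''},\overline{\lambda''}$ on the target side, so that $\mbb\Delta_{\overline{\mu'},\overline{\lambda'},\overline{\mu''},\overline{\lambda''}}(b)=\sum_{b',b''}\hat m_b^{b',b''}\,b'\otimes b''$. Next I would choose $d',d''$ large enough (and set $d=d'+d''$) so that: (i) $\overline\lambda=\overline{\mbf a}$, $\overline\mu=\overline{\mbf b}$ for some $\mbf a,\mbf b\in\Lambda_{d,n}$, and likewise $\overline{\lambda'}=\overline{\mbf a'}$, etc., with $\mbf a'+\mbf a''=\mbf a$, $\mbf b'+\mbf b''=\mbf b$; and (ii) the restrictions $\widetilde\phi_d$, $\widetilde\phi_{d'}$, $\widetilde\phi_{d''}$ on the relevant weight spaces are all bijections sending canonical basis to canonical basis. \emph{Then} I would run the square (\ref{U-dot-S}): on one hand, $(\widetilde\phi_{d'}\otimes\widetilde\phi_{d''})\circ\mbb\Delta_{\overline{\mbf b'},\overline{\mbf a'},\overline{\mbf b''},\overline{\mbf a''}}(b)=\sum_{b',b''}\hat m_b^{b',b''}\,\widetilde\phi_{d'}(b')\otimes\widetilde\phi_{d''}(b'')=\sum \hat m_b^{b',b''}\,\{B'\}_{d'}\otimes\{B''\}_{d''}$, with distinct $(B',B'')$ appearing for distinct $(b',b'')$ by the bijectivity in (ii); on the other hand, this equals $\mbb\Delta_{\mbf b',\mbf a',\mbf b'',\mbf a''}(\widetilde\phi_d(b))=\mbb\Delta_{\mbf b',\mbf a',\mbf b'',\mbf a''}(\{B\}_d)=\sum_{B',B''}c_B^{B',B''}\,\{B'\}_{d'}\otimes\{B''\}_{d''}$. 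Comparing coefficients of the linearly independent vectors $\{B'\}_{d'}\otimes\{B''\}_{d''}$ gives $\hat m_b^{b',b''}=c_{B}^{B',B''}$, which lies in $\mbb Z_{\geq0}[v,v^{-1}]$ by Proposition~\ref{Pos-S}. (One also has to check that $\mbb\Delta$ for $\dot{\mbb U}$ restricted to the chosen weight spaces is genuinely computed from the graded pieces $\mbb\Delta_{\nu',\nu''}$ via (\ref{U-U-dot}), so that the sum over $(b',b'')$ is finite and matches the decomposition in the statement — this is the content of \cite[23.1.5]{L93} and is already recorded in (\ref{U-U-dot}).)

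\emph{The hard part} — or rather the one step that genuinely needs care rather than diagram-chasing — is justifying item (ii): that for $d$ large the map $\widetilde\phi_d$ carries $\mbb B$ bijectively onto the canonical basis of the relevant weight space of $\mbb S_d$. This is not proved in the excerpt; it is the stabilization property of canonical bases under the projections $\dot{\mbb U}\to\mbb S_d$, due to \cite{BLM90} in finite type $A$ and to \cite{L00} in general. I would invoke it as a known result, citing \cite[\S4]{BLM90} together with the compatibility of $\widetilde\phi_d$ with the bar involution and the $\mbb A$-forms, which forces $\widetilde\phi_d(\mbb B)\subseteq\mbb B_d\cup\{0\}$; the bijectivity on a fixed weight space for $d\gg0$ then follows because the parametrizing sets $\Xi_d(\mbf b,\mbf a)$ stabilize to the $I$-graded weight piece of $\mbb B$. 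Everything else is formal: the commutativity of (\ref{U-dot-S}) is Proposition~\ref{U-dot-S-A}, and the positivity of $c_B^{B',B''}$ is Proposition~\ref{Pos-S}, whose proof rests on Braden's hyperbolic localization \cite{B03} applied to the $\overline{\mbf G}_m$-action on $\overline{\mbf X}_d(\mbf a)$.
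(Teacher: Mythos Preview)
Your proposal is correct and follows essentially the same route as the paper: use Proposition~\ref{U-dot-S-A} to transport the question to the Schur-algebra level, invoke the stabilization of canonical bases under $\widetilde\phi_d$ for $d\gg 0$, and then apply Proposition~\ref{Pos-S}. The only cosmetic difference is that the paper first notes $\mathcal I=\{(b',b''):\hat m_b^{b',b''}\neq 0\}$ is finite and chooses $d,d',d''$ large relative to that finite set (citing \cite[Proposition~7.8]{M10} for the compatibility $\widetilde\phi_d(b)=\{B\}_d$), whereas you ask for bijectivity on the entire weight space; both formulations yield the same comparison $\hat m_b^{b',b''}=c_B^{B',B''}$.
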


\begin{proof}
Let $\mathcal I=\{ (b',  b'')|  \hat m_b^{b', b''} \neq 0\}$. Clearly, $\# \mathcal I < \infty$.
By ~\cite[Proposition 7.8]{M10}, we can find $d$, $d'$ and $d''$ large enough such that
\[
\widetilde \phi_d (b) =\{B\}_d, \ \widetilde \phi_{d'} (b') =\{B'\}_{d'},\
\widetilde \phi_{d''} (b'') =\{B''\}_{d''}, \quad \forall (b', b'')\in \mathcal I,
\]
where $\{B\}_d$, $\{B'\}_{d'}$ and $\{B''\}_{d''}$ are certain canonical basis elements
in $\mbb S_d$, $\mbb S_{d'}$ and $\mbb S_{d''}$, respectively.
Then by (\ref{U-dot-S}),  we have
\begin{align} \label{last-step}
\begin{split}
(\widetilde \phi_{d'}\otimes \widetilde  \phi_{d''}) \mbb \Delta_{\overline{\mbf b'}, \overline{\mbf a'}, \overline{\mbf b''}, \overline{\mbf a''}} (b)
&= \sum_{(b', b'') \in \mathcal I} \hat m_{b}^{b', b''}  \{ B'\}_{d'} \otimes \{ B''\}_{d''}
 = \mbb \Delta_{ \mbf b', \mbf a', \mbf b'', \mbf a''} ( \{B\}_d) .
\end{split}
\end{align}
By comparing (\ref{cB})   with (\ref{last-step}), $\hat m_{b}^{b', b''} =c_B^{B', B''}$ and hence   the theorem by Proposition ~\ref{Pos-S}.
\end{proof}

\begin{rem}
\label{Groj}
Theorem ~\ref{conj-25.4.2}  is conjectured in ~\cite[Conjecture 25.4.2]{L93} for all symmetric Cartan data.
\end{rem}

\section{Positivity for quantum  affine $\mathfrak{sl}_n$}

In this section, we shall lift the positivity result on quantum $\mathfrak{sl}_n$ in the previous section to its affine analogue.
We provide a new proof of the multiplication formula of Du-Fu ~\cite{DF13}.

\subsection{Results  from [L00]}
Similar to the previous section, we fix a pair $(d, n)$ of non-negative integers.
We set
\[
\widehat{\Lambda}_{d,n} =\{ \mbf{\lambda}=(\lambda_i)_{i\in \mbb Z} \in \mbb Z_{\geq 0}^{\mbb Z} | \lambda_i = \lambda_{i+n}, \forall i\in \mbb Z; \sum_{1\leq i\leq n} \lambda_i=d\}.
\]
Let $\widehat{\Xi}_d$ be the set of all $\mbb Z\times \mbb Z$ matrices $A=(a_{ij})_{i, j\in \mbb Z}$ such that $a_{ij}\in \mbb Z_{\geq 0}$, $a_{ij} = a_{i+n, j+n}$,
and $\sum_{1\leq i \leq n; j\in \mbb Z} a_{ij} = d$. To each matrix $A \in \widehat{\Xi}_d$, we can associate $r(A)$ and $c(A)$ in $\widehat{\Lambda}_{d, n}$ by
$r(A)_i = \sum_{j\in \mbb Z} a_{ij}$ and $c(A)_j = \sum_{i\in \mbb Z} a_{ij}$ for all $i, j \in \mbb Z$.

We need to switch the ground field from $\mbb F_q$ to the local field $\mbb F_q((\e))$.
Let $\mbb F_q[[\e]]$ be the subring of $\mbb F_q((\e))$ of all formal power series over $\mbb F_q$.
Suppose that $\V$ is a $d$-dimensional vector space over $\mbb F_q ((\e))$.
A free $\mbb F_q[[\e]]$-module $\mathcal L$ in $\V$ is called a lattice if $\mbb F_q((\e))\otimes_{\mbb F_q[[\e]]} \mathcal L =\V$.
A lattice chain $\mbf L = (\mbf L_i)_{i\in \mbb Z}$ of period $n$ is a sequence of lattices $\mbf L_i$ in $\V$ such that $\mbf L_i \subseteq \mbf L_{i+1}$ and $\mbf L_i = \e \mbf L_{i+n}$ for all $i\in \mbb Z$.
Let $\widehat X_d $ be the collection of all lattice chains in $\V$.
Let  $\widehat{\G}_d = \mrm{GL}(\V)$ act from the left on $\widehat X_d$ in the canonical way. Then we can form the algebra
$$
\widehat{\mbf S}_d = \mcal A_{\widehat{\G}_d} (\widehat X_d \times \widehat X_d),
$$
which is the so-called affine $\bv$-Schur algebra.
It is well-known that  the $\widehat{\G}_d$-orbits in $\widehat X_d \times \widehat X_d$ are parameterized by $\widehat{\Xi}_d$ via the assignment
$(\mbf L, \mbf L') \mapsto A$, where
$a_{ij} = \left | \frac{\mbf L_i\cap \mbf L'_j}{\mbf L_{i-1} \cap \mbf L'_j + \mbf  L_i \cap \mbf L'_{j-1}} \right |$ for all $i, j\in \mbb Z$.
So we have
\[
\widehat{\mbf S}_d = \mrm{span}_{\mcal A} \{ e_A | A\in \widehat{\Xi}_d\},
\]
where $e_A$ is the characteristic function of the $\widehat{\G}_d$-orbit indexed by $A$. Furthermore, we have
$\widehat{\mbf S}_d = \oplus_{\mbf b, \mbf a \in \widehat{\Lambda}_{d,n}} \widehat{\mbf S}_d  (\mbf b, \mbf a)$ where
$\widehat{\mbf S}_d (\mbf b, \mbf a)$ is  spanned by $e_A$ such that $r(A) = \mbf b$ and $c(A) = \mbf a$.

If one lifts the functions to the sheaf level, one gets the generic version $\aSB_d$ of $\widehat{\mbf S}_d$ such that
$\mcal A \otimes_{\mbb A} \aSB_d=\widehat{\mbf S}_d$. By abuse of notation, we write $e_A$ for the unique function $x$  in $\aSB_d$ such that
$\mcal A \otimes_{\mbb A} x =e_A$ (for all $q$).

The standard basis of $\aSB_d$ consists of elements $[A] = v^{- d_A} e_A$ where $d_A = \sum_{\substack{1\leq i \leq n\\ i \geq k, j < l}} a_{ij} a_{kl}$.
Recall the Bruhat order  $\preceq$ on $\widehat{\Xi}_d$ from ~\cite{L00}: $A \preceq B$ if and only if
\begin{align*}
\sum_{i \geq r, j \leq s} a_{rs} \leq \sum_{i\geq r, j \leq s} b_{rs}, \quad \forall i < j \in \mbb Z ; \quad
\sum_{i \leq r, j \geq s} a_{rs} \leq \sum_{i\leq r, j \geq s} b_{rs}, \quad \forall i > j \in \mbb Z.
\end{align*}

Following ~\cite{L00}, one can associate a bar involution $\bar \ : \aSB_d \to \aSB_d$ such that
$\overline{[A]} = [A] + \sum_{A' \preceq A, A'\neq A} C_{A, A'} [A']$ where $C_{A, A'} \in \mbb A$.

The canonical basis $\{A\}_d$ for all $A\in \widehat{\Xi}_d$  of $\aSB_d$ is defined by the properties that $\overline{\{A\}_d} = \{A\}_d$ and $\{A\}_d = [A] + \sum_{A' \preceq A, A'\neq A} P_{A, A'} [A']$ where $P_{A, A'} \in v^{-1} \mbb Z[v^{-1}]$.

Let $E^{i,j}$ be the $\mbb Z\times \mbb Z$ matrix whose $(k, l)$-th entry is $1$ if $(k, l) = (i, j)$ mod $n$, and zero otherwise.
For any $i\in \mbb Z$, we define the following elements in $\aSB_d$:
\begin{equation}\label{generator-affA}
\begin{split}
\mbf E_i  & = \sum_{A - E^{i+1, i} \ \mbox{\small{diagonal}}}  [A], \quad
\mbf F_i  = \sum_{A - E^{i, i+1}\ \mbox{\small{diagonal}}} [A], \\
\mbf H_{ i}^{\pm 1}  & =  \sum_{A \ \mbox{\small{diagonal}}} v^{\pm c(A)_i} [A], \quad
\mbf K_{i}^{\pm 1}   = \mbf H_{i+1}^{\pm 1}  \mbf H_{i}^{\mp 1}, \quad \forall i\in \mbb Z.
\end{split}
\end{equation}
By periodicity, we have
$
\mbf E_i = \mbf E_{i+n},
\mbf F_i = \mbf F_{i+n},
\mbf H_i^{\pm 1} = \mbf H_{i+n}^{\pm 1},
$
and
$
\mbf K^{\pm 1}_i = \mbf K^{\pm 1}_{i+n},
$
for all $i \in \mbb Z$.
The following lemma is from ~\cite{L00}.

\begin{lem}
\label{D(affA)}
There is an algebra homomorphism $\widetilde \Delta: \aSB_d \to \aSB_{d'} \otimes \aSB_{d''}$ for $d'+ d'' =d $ with
\begin{align*}
\begin{split}
\widetilde \Delta ( \mbf E_i) & = \mbf E'_i \otimes  \mbf H''_{i+1} + \mbf H'^{-1}_{i+1} \otimes \mbf E''_i, \\
\widetilde \Delta (\mbf F_i) & = \mbf F'_i \otimes \mbf H_{i}''^{-1}+ \mbf H_i' \otimes \mbf F_i'', \\
\widetilde \Delta (\mbf K_i) & = \mbf K_i' \otimes \mbf K_i'', \quad \forall i\in \mbb Z.
\end{split}
\end{align*}
\end{lem}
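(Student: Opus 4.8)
The plan is to reduce the affine statement to its finite-type analogue (Lemma~\ref{D(A)} together with Proposition~\ref{Delta_A}) by a direct transcription of the geometric construction of $\widetilde\Delta$ from Section~\ref{CoSA}, now carried out over the local field $\mbb F_q((\e))$ with lattice chains in place of flags. First I would define, for $d'+d''=d$ and a fixed decomposition $\V \cong \V' \oplus \V''$ into subspaces of dimensions $d'$ and $d''$ over $\mbb F_q((\e))$, the projection $\pi'$ of $\V$ onto $\V'$ and the ``intersection with $\V''$'' operation $\pi''$. Applying these entrywise to a lattice chain $\mbf L = (\mbf L_i)_{i\in\mbb Z}$ produces $\pi'(\mbf L) = (\pi'(\mbf L_i))_i \in \widehat X_{d'}$ and $\pi''(\mbf L) = (\mbf L_i \cap \V'')_i \in \widehat X_{d''}$; periodicity $\mbf L_i = \e\mbf L_{i+n}$ is preserved because $\pi'$ and $\pi''$ are $\mbb F_q[[\e]]$-linear and commute with multiplication by $\e$. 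Then, exactly as in \eqref{delta-A}, set $Z_{\mbf L', \mbf L''} = \{\mbf L \in \widehat X_d \mid \pi'(\mbf L)=\mbf L',\ \pi''(\mbf L)=\mbf L''\}$ and define $\widetilde\Delta(f)(\mbf L', \tilde{\mbf L}', \mbf L'', \tilde{\mbf L}'')$ by summing $f(\mbf L, \tilde{\mbf L})$ over $\tilde{\mbf L} \in Z_{\tilde{\mbf L}', \tilde{\mbf L}''}$ for a fixed $\mbf L \in Z_{\mbf L', \mbf L''}$. One checks independence of the choice of $\mbf L$ and the algebra-homomorphism property by the same argument as in the Grojnowski/Lusztig proof referenced after \eqref{delta-A}; this part is a formal transcription since nothing in that argument used finiteness of $n$ in an essential way, only the orbit combinatorics, which is governed here by $\widehat\Xi_d$.

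Next I would compute $\widetilde\Delta$ on the generators \eqref{generator-affA}. The key point is that $\mbf E_i$, $\mbf F_i$, $\mbf H_i^{\pm1}$ are, up to the normalization $[A] = v^{-d_A}e_A$, supported on $\widehat{\G}_d$-orbits whose matrices differ from a diagonal one by a single $E^{i\pm1, i}$, and the geometry relating such an orbit to pairs of orbits for $\widehat{\G}_{d'}\times\widehat{\G}_{d''}$ is \emph{locally} identical to the finite-type picture: fixing the ``window'' $1\le i\le n$ and using periodicity, the relevant incidence varieties are products of the affine-flag analogues of the type-$A$ ones appearing in Lusztig's proof of \cite[Lemma 1.6]{L00}. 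So I expect the same formulas
\[
\widetilde\Delta(\mbf E_i) = \mbf E'_i \otimes \mbf H''_{i+1} + \mbf H'^{-1}_{i+1}\otimes\mbf E''_i, \quad
\widetilde\Delta(\mbf F_i) = \mbf F'_i \otimes \mbf H_i''^{-1} + \mbf H'_i \otimes \mbf F''_i, \quad
\widetilde\Delta(\mbf K_i) = \mbf K'_i \otimes \mbf K''_i
\]
to come out, with the $\mbf H''_{i+1}$ factor arising exactly as in the finite case from the quotient $\tilde V'_{i+1}/\tilde V'_i$ now read off the quotient lattices $\tilde{\mbf L}''_{i+1}/\tilde{\mbf L}''_i$. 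The periodicity identities $\mbf E_i = \mbf E_{i+n}$ etc.\ guarantee the formula only needs to be verified for $i$ in a fixed window and then holds for all $i\in\mbb Z$.

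The main obstacle, as I see it, is not the algebra-homomorphism property (formal) but verifying carefully that the lattice-chain incidence loci computing $\widetilde\Delta(\mbf E_i)$ and $\widetilde\Delta(\mbf F_i)$ really do decompose into the two terms above with the correct $v$-normalization coming from the $d_A$ exponents—one must track how $d_A$ for an affine matrix near a diagonal one compares with the $d_{A'}, d_{A''}$ for the induced pair, and confirm that the discrepancy is absorbed precisely into the $\mbf H''_{i+1}$ and $\mbf H_i''^{-1}$ factors; this is where the affine case genuinely differs from the finite one because the row/column sums $r(A), c(A)$ are now $n$-periodic sequences and the double sum defining $d_A$ must be handled with the window convention. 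A clean way to finesse this is to first establish the unnormalized statement at the level of the $e_A$ functions by the explicit counting argument (mirroring Section~\ref{CoSA}), and only afterwards rewrite everything in the $[A]$-basis, so that the normalization bookkeeping is a single localized computation rather than being interleaved with the geometry. Once the generator formulas hold, the lemma follows.
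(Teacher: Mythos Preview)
The paper does not give its own proof of this lemma: it simply records the statement and cites \cite{L00} (see the sentence immediately preceding the lemma, ``The following lemma is from~\cite{L00}''). So there is no argument in the paper to compare against.

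That said, your sketch is essentially the approach of \cite{L00} itself: define $\widetilde\Delta$ by the same $Z_{\mbf L',\mbf L''}$-sum formula as in \eqref{delta-A} with lattice chains replacing flags, establish well-definedness and the homomorphism property by the same orbit-combinatorics argument (which indeed uses nothing beyond transitivity of a unipotent group on the fibres, and this transfers verbatim to the $\mbb F_q((\e))$-setting), and then compute on generators. Your diagnosis of the one genuine subtlety---tracking the $d_A$ normalisation when passing between $e_A$ and $[A]$, with the window convention for the periodic sums---is accurate, and your proposed remedy (do the counting with $e_A$ first, then renormalise at the end) is exactly how one keeps the bookkeeping clean. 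Nothing is missing from your outline; carrying it out would reproduce Lusztig's proof.
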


\subsection{The coproduct  $\mbb \Delta$}

Recall the algebra homomorphism from Lemma ~\ref{D(affA)}.
If $\mbf b=\mbf b' + \mbf b''$ and $\mbf a = \mbf a' + \mbf a''$,  let
$\widetilde \Delta_{\mbf b', \mbf a', \mbf b'', \mbf a''}: \aSB_d(\mbf b, \mbf a) \to \aSB_{d'} (\mbf b', \mbf a') \otimes \aSB_{d''}(\mbf b'', \mbf a'')$
be
the composition of  the restriction of $\aSB_d$ to $\aSB_d(\mbf b, \mbf a)$
and the projection to the component $ \aSB_{d'} (\mbf b', \mbf a') \otimes \aSB_{d''}(\mbf b'', \mbf a'')$.
We set
\begin{align}
\label{Delta-dagger-affA}
\Delta^{\dagger}_{\mbf b', \mbf a', \mbf b'', \mbf a''}
=  v^{ \sum_{1\leq i\leq j\leq n} b_i' b_j'' - a_i' a_j''}  \widetilde \Delta_{\mbf b', \mbf a', \mbf b'', \mbf a''}, \quad
\Delta^{\dagger} = \oplus  \Delta^{\dagger}_{ \mbf b', \mbf a', \mbf b'', \mbf a''},
\end{align}
where the sum runs over all quadruples $(\mbf b', \mbf a', \mbf b'', \mbf a'')$ where
$\mbf b', \mbf a' \in \widehat \Lambda_{d', n}$ and $ \mbf b'', \mbf a'' \in \widehat \Lambda_{d'', n}$.

\begin{prop} \label{Delta-dagger_affA}
The linear map $\Delta^{\dagger}_{\bv}$ in (\ref{Delta-affA})  is an algebra homomorphism. Moreover,
\begin{align*}
\begin{split}
\Delta^{\dagger} ( \mbf E_i) & = v^{\delta_{i, n} d''} \mbf E'_i \otimes \mbf K_i'' + 1 \otimes v^{-\delta_{i, n}d'}  \mbf E''_i, \\
\Delta^{\dagger} (\mbf F_i) & = v^{-\delta_{i, n} d''}  \mbf F'_i \otimes 1+ \mbf K'^{-1}_{i} \otimes v^{\delta_{i, n} d'} \mbf F_i'', \\
\Delta^{\dagger}  (\mbf K_i) & = \mbf K_i' \otimes \mbf K_i'', \quad \forall i\in [1, n].
\end{split}
\end{align*}
\end{prop}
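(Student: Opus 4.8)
The plan is to mimic the proof of Proposition~\ref{Delta_A} almost line by line; the only genuinely new feature is the index $i=n$, where the affine periodicity $n{+}1\equiv1$ enters the truncated twist exponent $\sum_{1\le k\le j\le n}$. For the algebra-homomorphism claim, write $g(\mbf u,\mbf w)=\sum_{1\le k\le j\le n}u_k w_j$ for periodic sequences $\mbf u,\mbf w\in\mbb Z^{\mbb Z}$ (read off on the window $[1,n]$), so that the exponent appearing in~(\ref{Delta-dagger-affA}) is $g(\mbf b',\mbf b'')-g(\mbf a',\mbf a'')$. When the graded pieces $\aSB_{d'}(\mbf b',\mbf a')\otimes\aSB_{d''}(\mbf b'',\mbf a'')$ and $\aSB_{d'}(\mbf a',\mbf c')\otimes\aSB_{d''}(\mbf a'',\mbf c'')$ are multiplied in $\aSB_{d'}\otimes\aSB_{d''}$ the result lies in $\aSB_{d'}(\mbf b',\mbf c')\otimes\aSB_{d''}(\mbf b'',\mbf c'')$, and the exponents add: $[g(\mbf b',\mbf b'')-g(\mbf a',\mbf a'')]+[g(\mbf a',\mbf a'')-g(\mbf c',\mbf c'')]=g(\mbf b',\mbf b'')-g(\mbf c',\mbf c'')$. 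Hence the operator $D$ on $\aSB_{d'}\otimes\aSB_{d''}$ rescaling each graded piece by $v^{g(\mbf b',\mbf b'')-g(\mbf a',\mbf a'')}$ is an algebra automorphism, and $\Delta^{\dagger}=D\circ\widetilde\Delta$ is an algebra homomorphism by Lemma~\ref{D(affA)} --- this is exactly the reasoning used for $\Delta_{\bv}$ in Proposition~\ref{Delta_A}.

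For the generator identities one applies $D$ to the formulas in Lemma~\ref{D(affA)}. Since $\widetilde\Delta(\mbf K_i)=\mbf K'_i\otimes\mbf K''_i$ is supported on pieces with $\mbf b'=\mbf a'$, $\mbf b''=\mbf a''$, where the exponent vanishes, $\Delta^{\dagger}(\mbf K_i)=\mbf K'_i\otimes\mbf K''_i$ for all $i\in[1,n]$. For $\mbf E_i$, the summand $\mbf E'_i\otimes\mbf H''_{i+1}$ of $\widetilde\Delta(\mbf E_i)$ lives on pieces with $b'_k-a'_k=-\delta_{k,i}+\delta_{k,i+1}$ (indices read in $[1,n]$, so $\delta_{k,n+1}=\delta_{k,1}$) and $\mbf b''=\mbf a''$; on those, $D$ scales by $v^{g(-s_i+s_{i+1},\mbf a'')}$, where $s_k\in\mbb Z^{\mbb Z}$ is the indicator of positions $\equiv k\bmod n$ and $g(s_k,\mbf a'')=\sum_{j=k}^{n}a''_j$. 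For $1\le i\le n-1$ this exponent is $-a''_i$, which together with the value $v^{a''_{i+1}}$ of $\mbf H''_{i+1}$ on the relevant diagonal piece and the identity $\mbf K''_i=\mbf H''_{i+1}\mbf H_i''^{-1}$ turns the summand into $\mbf E'_i\otimes\mbf K''_i$; for $i=n$ the ``$+1$'' sits at position $1$ instead of $n+1$, so $g(s_1,\mbf a'')=\sum_{j=1}^{n}a''_j=d''$, the exponent becomes $d''-a''_n$, and the same manipulation gives $v^{d''}\mbf E'_n\otimes\mbf K''_n$. Symmetrically, $\mbf H'^{-1}_{i+1}\otimes\mbf E''_i$ lives on pieces with $\mbf b'=\mbf a'$ and $b''_k-a''_k=-\delta_{k,i}+\delta_{k,i+1}$, where $D$ scales by $v^{g(\mbf a',-s_i+s_{i+1})}$ with $g(\mbf a',s_k)=\sum_{k'=1}^{k}a'_{k'}$, namely $a'_{i+1}$ for $i\le n-1$ and $a'_1-d'$ for $i=n$; absorbing the value $v^{-a'_{i+1\bmod n}}$ of $\mbf H'^{-1}_{i+1}$ yields $1\otimes\mbf E''_i$, respectively $1\otimes v^{-d'}\mbf E''_n$. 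This proves the stated formula for $\Delta^{\dagger}(\mbf E_i)$, and $\Delta^{\dagger}(\mbf F_i)$ follows identically with the $\mbf F_i$-weight $s_i-s_{i+1}$ in place of $-s_i+s_{i+1}$ and the factors $\mbf H_i''^{-1}$, $\mbf H'_i$, $\mbf K'^{-1}_i=\mbf H'^{-1}_{i+1}\mbf H'_i$ of Lemma~\ref{D(affA)}.

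I would flag the index $i=n$ as the only place requiring care, and it is the crux of the statement: a weight component $\pm s_{n+1}$ must be recorded as $\pm s_1$ before forming the window sum $\sum_{1\le k\le j\le n}$, and this is precisely what produces the extra factors $v^{\pm d'}$, $v^{\pm d''}$ that are absent from the finite-type Proposition~\ref{Delta_A}. Apart from this bookkeeping, the whole proof repeats that of Proposition~\ref{Delta_A} verbatim.
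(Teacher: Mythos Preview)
Your proof is correct and follows the same approach as the paper's: reduce to the twist computation, note that $i\in[1,n-1]$ is identical to Proposition~\ref{Delta_A}, and handle $i=n$ by observing that the periodicity forces the shifted index into position $1$, producing the extra factors $v^{\pm d'},v^{\pm d''}$. Your explicit telescoping argument for the algebra-homomorphism claim is a slight elaboration of what the paper leaves implicit, but otherwise the two proofs match.
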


\begin{proof}
The case when $i \in [1, n-1]$ is proved in the same manner as the finite case in Proposition ~\ref{Delta_A}.
We now prove  the case when $i=n$.
Suppose that $\mbf b'' =\mbf a''$, and $(\mbf b', \mbf a')$ is chosen such that $b'_i = a'_i - \delta_{i, n} + \delta_{i, 1}$ for all $1\leq i \leq n$. Then the twist $\sum_{1\leq i\leq j\leq n} b_i' b_j'' - a_i' a_j'' $ is equal to $d'' - a_n''$.
This implies that the term $ \mbf E'_i \otimes  \mbf H''_{i+1}$ in $\widetilde \Delta (\mbf E_i)$ becomes
$ v^{\delta_{i, n} d''} \mbf E'_i \otimes \mbf K_i''$.
For the term $\mbf H'^{-1}_{i+1} \otimes \mbf E''_i$ in $\widetilde \Delta (\mbf E_i)$,
the twist contributes $a_i' - d'$ for a quadruple
$(\mbf b', \mbf a', \mbf b'', \mbf a'')$ such that $\mbf b' =\mbf a'$ and $b''_i =  a''_i  - \delta_{i, n} + \delta_{i, 1}$ for all $1\leq i \leq n$.
The formula for $\Delta^{\dagger} (\mbf E_i)$ is proved.

The proof for the formula $\Delta^{\dagger} (\mbf F_i)$ is entirely similar.
The formula for $\Delta^{\dagger} (\mbf K_i)$ is obvious.
\end{proof}

We set
\begin{align}
\label{eiA}
\e_i(A)  = \sum_{r \leq i < s} a_{r,s} - \sum_{r > i \geq s} a_{r, s}, \quad \forall i\in \mbb Z, A\in \widehat{\Xi}_d.
\end{align}
We define a linear map
\begin{align} \label{xi_dic}
\xi_{d, i, c}: \aSB_d \to \aSB_d, \quad \forall i, c \in \mbb Z,
\end{align}
by
$\xi_{d, i, c} ( [A]) = v^{c \e_i(A)} [A]$.
By ~\cite{L00},  $\xi_{d, i, c}$ is an algebra isomorphism with inverse $\xi_{d, i, -c}$. Set
\begin{align}
\label{Delta-affA}
\mbb \Delta : \aSB_d \to \aSB_{d'} \otimes \aSB_{d''}
\end{align}
to be the composition
$
\begin{CD}
 \aSB_d @> \Delta^{\dagger} >> \aSB_{d'} \otimes \aSB_{d''}
@> \xi_{d', n, d''} \otimes \xi_{d'', n, -d'} >>
\aSB_{d'} \otimes \aSB_{d''}.
\end{CD}
$

\begin{prop} \label{Delta_affA}
The linear map $\mbb \Delta$ in (\ref{Delta-affA})  is an algebra homomorphism. Moreover,
\begin{align*}
\begin{split}
\mbb \Delta ( \mbf E_i) & = \mbf E'_i \otimes \mbf K_i'' + 1 \otimes \mbf E''_i, \\
\mbb \Delta (\mbf F_i) & = \mbf F'_i \otimes 1+ \mbf K'^{-1}_{i} \otimes \mbf F_i'', \\
\mbb \Delta  (\mbf K_i) & = \mbf K_i' \otimes \mbf K_i'', \quad \forall i\in \mbb Z.
\end{split}
\end{align*}
\end{prop}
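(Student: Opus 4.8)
The plan is to reduce everything to Proposition~\ref{Delta-dagger_affA} by exploiting the factorization $\mbb \Delta = (\xi_{d', n, d''} \otimes \xi_{d'', n, -d'}) \circ \Delta^{\dagger}$. First, $\Delta^{\dagger}$ is an algebra homomorphism by Proposition~\ref{Delta-dagger_affA}, and each of $\xi_{d', n, d''}$ and $\xi_{d'', n, -d'}$ is an algebra isomorphism (with inverse $\xi_{d', n, -d''}$, respectively $\xi_{d'', n, d'}$), so their tensor product is an algebra isomorphism; hence $\mbb \Delta$ is an algebra homomorphism.

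For the formulas on the generators (\ref{generator-affA}) it then suffices to determine how $\xi_{d, n, c}$ acts on $\mbf E_i$, $\mbf F_i$, $\mbf K_i$. Since both sums in (\ref{eiA}) run over strictly off-diagonal positions, $\e_i$ vanishes on every diagonal matrix, so $\xi_{d, n, c}$ fixes $\mbf H_i^{\pm 1}$ and hence $\mbf K_i^{\pm 1}$. Because $\e_n$ is linear in the matrix entries and every matrix $A$ appearing in $\mbf E_i$ (resp.\ $\mbf F_i$) differs from a diagonal matrix only at the support of $E^{i+1, i}$ (resp.\ $E^{i, i+1}$), we obtain $\xi_{d, n, c}(\mbf E_i) = v^{c\, \e_n(E^{i+1, i})}\, \mbf E_i$ and $\xi_{d, n, c}(\mbf F_i) = v^{c\, \e_n(E^{i, i+1})}\, \mbf F_i$. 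I would then check directly, by listing which translates $(i+1+mn, i+mn)$, $m \in \mbb Z$, of the subdiagonal entry lie in the region $\{r > n \geq s\}$ versus $\{r \leq n < s\}$ (and similarly for the superdiagonal), that $\e_n(E^{i+1, i}) = -\delta_{i, n}$ and $\e_n(E^{i, i+1}) = \delta_{i, n}$ for $i \in [1, n]$; the statement for arbitrary $i \in \mbb Z$ follows from the periodicities $\mbf E_i = \mbf E_{i+n}$, $\mbf F_i = \mbf F_{i+n}$, $\mbf K_i = \mbf K_{i+n}$.

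The last step is to apply $\xi_{d', n, d''} \otimes \xi_{d'', n, -d'}$ to the three identities of Proposition~\ref{Delta-dagger_affA}. In $\Delta^{\dagger}(\mbf E_i)$ the factor $v^{d'' \e_n(E^{i+1, i})} = v^{-\delta_{i, n} d''}$ cancels the prefactor $v^{\delta_{i, n} d''}$ of $\mbf E'_i \otimes \mbf K''_i$, while $v^{-d' \e_n(E^{i+1, i})} = v^{\delta_{i, n} d'}$ cancels the prefactor $v^{-\delta_{i, n} d'}$ of $\mbf E''_i$; the analogous cancellations hold for $\mbf F_i$, and the $\mbf K_i$ identity is left untouched since $\xi_{d, n, c}$ fixes the $\mbf K$'s. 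This yields precisely the asserted formulas, which, as expected, coincide with those of the finite case in Proposition~\ref{Delta_A}.

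The step I expect to be the main nuisance is the middle one: pinning down the signs and $v$-exponents of $\e_n(E^{i+1, i})$ and $\e_n(E^{i, i+1})$ under the affine $\mbb Z \times \mbb Z$, period-$n$ indexing convention, and verifying that they cancel the twists in Proposition~\ref{Delta-dagger_affA} exactly. Conceptually there is nothing deep here, but it is the natural place for a sign to go astray.
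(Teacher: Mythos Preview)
Your proposal is correct and follows exactly the same approach as the paper's proof: compute the action of $\xi_{d,n,c}$ on the generators (the paper simply states $\xi_{d, n, c} (\mbf E_i) = v^{- c \delta_{i, n} } \mbf E_i$, $\xi_{d, n, c} (\mbf F_i) = v^{ c \delta_{i, n} } \mbf F_i$, $\xi_{d, n, c} (\mbf K_i) = \mbf K_i$ without the derivation you sketch), then feed this into Proposition~\ref{Delta-dagger_affA} to cancel the extra $v$-powers. Your explicit computation of $\e_n(E^{i+1,i}) = -\delta_{i,n}$ and $\e_n(E^{i,i+1}) = \delta_{i,n}$ is the content hidden behind the paper's one-line assertion, and your signs are consistent with theirs.
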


\begin{proof}
We have
$\xi_{d, n, c} (\mbf E_i) = v^{- c \delta_{i, n} } \mbf E_i$,
$\xi_{d, n, c} (\mbf F_i) = v^{ c \delta_{i, n} } \mbf F_i$ and
$\xi_{d, n, c} (\mbf K_i) = \mbf K_i$.
Proposition follows from these computations and the formulas in Proposition ~\ref{Delta-dagger_affA}.
\end{proof}

\subsection{The compatibility of  $\xi_{d, i, c}$ and the canonical basis}

We have

\begin{thm} \label{xi-positivity}
$\xi_{d, i, c} (\{A\}_d) = v^{c \e_{i}(A)} \{A\}_d$ where $\xi_{d, i, c}$ is in (\ref{xi_dic}).
\end{thm}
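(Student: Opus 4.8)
The statement $\xi_{d,i,c}(\{A\}_d) = v^{c\,\e_i(A)}\{A\}_d$ amounts to showing that the function $A \mapsto \e_i(A)$ is constant on the "support" of each canonical basis element, i.e.\ that $\e_i(A') = \e_i(A)$ whenever $P_{A,A'}(v) \neq 0$ in the expansion $\{A\}_d = [A] + \sum_{A' \prec A} P_{A,A'}[A']$. Granting this, the scaling operator $\xi_{d,i,c}$ multiplies every term $[A']$ appearing in $\{A\}_d$ by the single scalar $v^{c\,\e_i(A)}$, and since $\xi_{d,i,c}$ is an algebra isomorphism commuting with the bar involution (as $\e_i(A)$ is bar-invariant and $\xi_{d,i,c}$ is defined on the standard basis which is bar-triangular), $v^{-c\,\e_i(A)}\xi_{d,i,c}(\{A\}_d)$ is bar-invariant, lies in $[A] + \sum_{A'\prec A}v^{-1}\mbb Z[v^{-1}][A']$, hence equals $\{A\}_d$ by the defining characterization of the canonical basis. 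So the whole theorem reduces to the combinatorial claim that $\e_i$ is constant along the Bruhat order cells cut out by the $P_{A,A'}$.

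\textbf{First step.} I would first record the identity $\e_i(A) = \sum_{r\le i}r(A)_r - \sum_{r\le i}c(A)_r$ up to a shift — more precisely, rewrite $\e_i(A) = \sum_{r\le i < s}a_{rs} - \sum_{r>i\ge s}a_{rs}$ in terms of partial row/column sums. The key point is that $\e_i(A)$ depends only on $r(A)$ and $c(A)$: indeed $\sum_{r\le i<s}a_{rs} = \sum_{r\le i}r(A)_r - \sum_{r\le i, s\le i}a_{rs}$ and $\sum_{r>i\ge s}a_{rs} = \sum_{s\le i}c(A)_s - \sum_{r\le i,s\le i}a_{rs}$, so the "corner sum" $\sum_{r\le i,s\le i}a_{rs}$ cancels and $\e_i(A) = \sum_{r\le i}r(A)_r - \sum_{s\le i}c(A)_s$ (suitably interpreted with the periodicity). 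This is purely formal bookkeeping with the periodic matrices in $\widehat\Xi_d$.

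\textbf{Second step — the main obstacle.} I must show $\e_i(A') = \e_i(A)$ whenever $A' \preceq A$ lies in the support of $\{A\}_d$. The cleanest route: every $[A']$ occurring in $\overline{[A]}$ (hence in $\{A\}_d$, by induction on the Bruhat order) satisfies $r(A') = r(A)$ and $c(A') = c(A)$ — this is because the bar involution on $\aSB_d$ preserves the weight decomposition $\aSB_d = \oplus_{\mbf b,\mbf a}\aSB_d(\mbf b,\mbf a)$, since each $\aSB_d(\mbf b,\mbf a)$ is bar-stable (the bar involution is built from the standard basis within a fixed $(\mbf b,\mbf a)$-block). Combined with Step 1, $\e_i$ depends only on $(r(A),c(A))$, so it is automatically constant on each block, and we are done. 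The subtlety I expect to have to handle carefully is confirming that the bar involution really respects this block decomposition in the \emph{affine} setting — the Bruhat order $\preceq$ and the correction terms $C_{A,A'}$ come from \cite{L00}, and I would cite the precise statement there that $\overline{[A]} \in \aSB_d(r(A),c(A))$, i.e.\ bar preserves row and column sums. Once that is in hand the argument is immediate.

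\textbf{Conclusion.} Assembling: for $A' \in \widehat\Xi_d$ with $P_{A,A'} \neq 0$ we have $r(A')=r(A)$, $c(A')=c(A)$, hence $\e_i(A')=\e_i(A)$ by Step 1; therefore
\[
\xi_{d,i,c}(\{A\}_d) = \xi_{d,i,c}\Big([A] + \sum_{A'\prec A}P_{A,A'}[A']\Big) = v^{c\,\e_i(A)}\Big([A] + \sum_{A'\prec A}P_{A,A'}[A']\Big) = v^{c\,\e_i(A)}\{A\}_d,
\]
which is the assertion. (If one prefers to avoid citing the block-stability of bar, one can alternatively argue directly: $v^{-c\,\e_i(A)}\xi_{d,i,c}(\{A\}_d)$ is bar-invariant because $\xi_{d,i,c}$ commutes with bar up to the scalar $v^{c\,\e_i(A)}$ on the whole $(r(A),c(A))$-block, and it is $[A]$ plus a $v^{-1}\mbb Z[v^{-1}]$-combination of lower $[A']$, so it must be $\{A\}_d$ by uniqueness — this still needs block-stability, so there is no real shortcut.) The main work is thus the verification that bar preserves $r$ and $c$, which I would extract from the construction in \cite{L00}.
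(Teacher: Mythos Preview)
Your Step 1 contains a genuine error: the claim that $\e_i(A)$ depends only on $r(A)$ and $c(A)$ is \emph{false} in the affine setting. The formal manipulation you wrote down, $\e_i(A) = \sum_{r\le i} r(A)_r - \sum_{s\le i} c(A)_s$, is a difference of two divergent sums (each $r(A)_r$ is periodic in $r$ and generically nonzero, so the partial sums blow up), and no ``suitable interpretation with periodicity'' rescues it. Concretely, take $A' = 2\sum_{1\le k\le n} E^{k,k}$ and $A = \sum_{1\le k\le n}(E^{k,k}+E^{k,k+1})$. Both have $r(A)_k = c(A)_k = 2$ for all $k$, and one checks $A'\prec A$; yet $\e_i(A') = 0$ while $\e_i(A)=1$ for every $i$. (This is exactly the example in the paper's Remark following Theorem~\ref{PAA'}.) So the implication ``same row/column sums $\Rightarrow$ same $\e_i$'' fails, and with it your whole argument: bar-stability of the blocks $\aSB_d(\mbf b,\mbf a)$ is true but irrelevant.

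The paper's proof of the key reduction $P_{A,A'}\neq 0 \Rightarrow \e_i(A)=\e_i(A')$ (Theorem~\ref{PAA'}) is substantially harder and cannot be shortcut. It proceeds by (i) invoking the Du--Fu multiplication formula (Proposition~\ref{DF-mult}) to show $\e_i$ is additive under multiplication by a ``near-diagonal'' $[B]$ (Lemma~\ref{e-mult}); (ii) deducing that every $[A']$ appearing in a monomial $M$ has $\e_i(A') = \deg_i(M)$ (Lemma~\ref{M-[A]}); (iii) using the existence of a monomial basis $M_A = [A] + \text{lower}$ to transfer this to the change-of-basis coefficients and then to $\overline{[A]}$ (Lemmas~\ref{SAA'},~\ref{CAA'}); and finally (iv) a maximality argument on the bar involution to force $P_{A,A'}=0$ whenever $\e_i(A)\ne\e_i(A')$. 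The point is that $\e_i$ is controlled not by the weight block but by the finer ``monomial degree'' data, and extracting this requires the multiplication formula. Your plan would work verbatim in the finite (non-affine) $\mathfrak{sl}_n$ case, where the sums $\sum_{r\le i} r(A)_r$ are finite --- and indeed the paper notes that the corresponding step ``is trivial in the ordinary quantum $\mathfrak{sl}_n$ case, but non-trivial in the affine case.''
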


Theorem \ref{xi-positivity} follows from the following critical observation.

\begin{thm} \label{PAA'}
Write $\{A\}_d = \sum_{A' \preceq A} P_{A, A'} [A']$. If $P_{A, A'} \neq 0$, then $\e_{i} (A) = \e_{i} (A')$ for all $i$.
\end{thm}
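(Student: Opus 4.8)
The plan is to show that the ``defect'' functions $\e_i(-)$ are constant on the Bruhat intervals that actually occur in the Kazhdan--Lusztig-type expansion of $\{A\}_d$, and to do this by tracking how $\e_i$ behaves under the recursion that produces the canonical basis. First I would recall that the canonical basis element $\{A\}_d$ is bar-invariant and satisfies $\{A\}_d = [A] + \sum_{A' \prec A} P_{A, A'}[A']$ with $P_{A,A'}\in v^{-1}\mbb Z[v^{-1}]$; such $P_{A,A'}$ is nonzero only when $A'$ is reached from $A$ by successively applying the elementary multiplication steps in the algorithm of \cite{L00} (equivalently, $A'$ lies in the support of $\overline{[A]}-[A]$ together with its iterated corrections). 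So it suffices to prove two things: (i) if $A' \preceq A$ appears in the expansion $\overline{[A]} = [A] + \sum C_{A,A'}[A']$ with $C_{A,A'}\neq 0$, then $\e_i(A')=\e_i(A)$ for all $i$; and (ii) $\e_i$ is additive in the relevant sense so that the inductive correction steps in the definition of $\{A\}_d$ preserve the common value. Granting (i), statement (ii) is formal: the canonical basis is built from $[A]$ by repeatedly subtracting bar-invariant combinations supported on $A'\prec A$, and by induction on the Bruhat order every such $A'$ already satisfies $\e_i(A')=\e_i(A)$, so the value propagates.

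The heart of the matter is therefore (i), and the natural way to get it is to observe that $\e_i(A)$ is exactly the weight datum attached to $A$: comparing \eqref{eiA} with the decomposition $\aSB_d = \oplus_{\mbf b,\mbf a}\aSB_d(\mbf b,\mbf a)$, one sees that $\e_i(A)$ depends only on the pair $(r(A),c(A))$ — indeed $\e_i(A) = \sum_{k\le i}(r(A)_k - c(A)_k)$ up to reindexing by periodicity, or some such closed formula in terms of the partial sums of $\ro$ and $\co$. Thus the real content is the standard fact that the bar involution, and hence the whole KL recursion, preserves both the row sums and the column sums: $\overline{[A]}$ is a linear combination of $[A']$ with $r(A')=r(A)$ and $c(A')=c(A)$, because the bar involution is defined compatibly with the left and right weight-space decompositions of $\aSB_d$ (equivalently, it commutes with the idempotents $e_{M_{\mbf b}}$, $e_{M_{\mbf a}}$). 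Once $r$ and $c$ are preserved, $\e_i$ is preserved automatically, which is exactly Theorem~\ref{PAA'}.

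So the key steps, in order, are: (1) write $\e_i(A)$ explicitly as a function of $r(A)$ and $c(A)$ alone — this is a direct manipulation of \eqref{eiA} using $a_{ij}=a_{i+n,j+n}$; (2) recall from \cite{L00} that the bar involution on $\aSB_d$ is block-diagonal for the $\oplus_{\mbf b,\mbf a}\aSB_d(\mbf b,\mbf a)$ decomposition, so $C_{A,A'}\neq 0$ forces $r(A')=r(A)$, $c(A')=c(A)$; (3) conclude by Bruhat induction that $P_{A,A'}\neq 0$ likewise forces the same row/column sums, hence $\e_i(A')=\e_i(A)$. The only mildly delicate point I anticipate is step (1): one must be careful that the two sums $\sum_{r\le i<s}a_{rs}$ and $\sum_{r>i\ge s}a_{rs}$ in \eqref{eiA} are finite (they are, by periodicity combined with $\sum_{1\le i\le n,\,j}a_{ij}=d$) and that the reindexing telescopes correctly to a partial sum of $r(A)-c(A)$; everything after that is formal bookkeeping with the known structure of the canonical basis. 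The deduction of Theorem~\ref{xi-positivity} from Theorem~\ref{PAA'} is then immediate: $\xi_{d,i,c}(\{A\}_d) = \sum_{A'} P_{A,A'} v^{c\e_i(A')}[A'] = v^{c\e_i(A)}\sum_{A'}P_{A,A'}[A'] = v^{c\e_i(A)}\{A\}_d$.
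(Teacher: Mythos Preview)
Your step (1) is wrong, and the whole argument collapses with it. The quantity $\e_i(A)$ is \emph{not} determined by $(r(A),c(A))$ alone. Take $n$ arbitrary, $A' = 2\sum_{1\le k\le n} E^{k,k}$ and $A = \sum_{1\le k\le n}(E^{k,k}+E^{k,k+1})$. Both have $r(A)_k = c(A)_k = r(A')_k = c(A')_k = 2$ for all $k$, yet a direct check of \eqref{eiA} gives $\e_i(A')=0$ and $\e_i(A)=1$ for every $i$. (This example is exactly the Remark immediately following the statement of Theorem~\ref{PAA'}.) So no closed formula in terms of partial sums of $\ro$ and $\co$ can exist, and the ``real content'' you identify --- that the bar involution preserves the $(\mbf b,\mbf a)$-blocks --- is true but insufficient: it does not pin down $\e_i$.

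What the paper does instead is genuinely more delicate. It uses the Du--Fu multiplication formula (Proposition~\ref{DF-mult}, for which a new geometric proof is supplied) to show that $\e_i$ is \emph{additive} under multiplication by the near-diagonal generators $[B]$: if $[B]*[A]=\sum Q^C_{B,A}[C]$ with $Q^C_{B,A}\ne 0$ then $\e_i(C)=\e_i(A)+\e_i(B)$ (Lemma~\ref{e-mult}). This promotes $\e_i$ to a well-defined degree on monomials in such $[B]$'s, and the existence of a monomial basis $M_A=[A]+\text{(lower)}$ then forces every $[A']$ appearing in $M_A$ to have $\e_i(A')=\e_i(A)$ (Lemma~\ref{M-[A]}). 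Writing $[A]$ back in the monomial basis and applying the bar gives $\e_i$-preservation for $\overline{[A]}$ (Lemmas~\ref{SAA'}, \ref{CAA'}), after which your step (ii) style induction on $\preceq$ finishes the job. The essential input you are missing is the multiplicative control of $\e_i$ via Proposition~\ref{DF-mult}; there is no shortcut through the weight-space decomposition.
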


We make two remarks before we prove Theorem ~\ref{PAA'}.

\begin{rem}
\label{L00-xi}
The algebra isomorphism $\prod_{1\leq i \leq n} \xi_{d, i, -1}$ is the linear map $\xi$ in ~\cite[1.7]{L00}.
In view of Theorem ~\ref{xi-positivity}, we have $\xi (\{A\}_d) = v^{-\sum_{1\leq i\leq n} \e_{i}(A)} \{A\}_d$.
\end{rem}

\begin{rem}
Even  if $A' \prec A$,  $\e_{i} (A') $ may not be the same as $\e_{i} (A)$.
For example, take $A' = 2 \sum_{1\leq i \leq n} E^{i, i}$ and $A = \sum_{1\leq i \leq n} E^{i, i}  + E^{i, i+1}$.
Then we have $A' \prec A$, $\e_i (A') =0$ and $\e_i (A) =1$ for all $1 \leq i \leq n$.
\end{rem}

The remaining part of this section is devoted to the proof of Theorem ~\ref{PAA'}.
The main ingredient is a connection between the numerical data $\e_i (A)$ in (\ref{eiA})  and the multiplication formulas in ~\cite{DF13},
which we shall recall and provide a new proof.
Before we state the formula, we need to recall a lemma from  \cite[Section 2.2]{S06} as follows.

\begin{lem}\label{lemcounting}
Let $V$ be a finite dimensional vector space over $\mathbb F_q$.
Fix a flag  $(V_i)_{1\leq i \leq n}$ in $V$ such that $\left | V_i/V_{i-1} \right | = l_i$ for all $1\leq i\leq n$.
The number of subspaces $W \subset V$ such that $| W \cap V_i| = \sum_{j = 1}^i a_j$ for all $1\leq i\leq n$ is given by
$q^{\sum_{n \geq i>j \geq 1}a_i(l_j-a_j)}\prod_{i=1}^n
\begin{bmatrix}
l_i \\ a_i
\end{bmatrix}$ where
$\begin{bmatrix} l_i \\ a_i \end{bmatrix} =
\prod_{1 \leq j \leq a_i} \frac{q^{l_i - j +1} -1}{q^j-1}$.
\end{lem}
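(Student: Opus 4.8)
The statement to prove is Lemma~\ref{lemcounting}, a classical $q$-counting formula. The plan is to argue by induction on $n$, the length of the flag, peeling off the top step $V_n/V_{n-1}$.

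\textbf{Setup and base case.} First I would record the trivial case $n=1$: here $V_1 = V$ has $|V_1/V_0| = l_1$, and we are counting subspaces $W \subseteq V$ with $|W| = a_1$, i.e. $a_1$-dimensional subspaces of an $l_1$-dimensional space. That count is the Gaussian binomial $\begin{bmatrix} l_1 \\ a_1 \end{bmatrix}$, and the exponent $\sum_{n \geq i > j \geq 1} a_i(l_j - a_j)$ is empty, so it equals $q^0 = 1$; the formula holds.

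\textbf{Inductive step.} Assume the formula for flags of length $n-1$. Given $W \subseteq V$ with $|W \cap V_i| = \sum_{j=1}^i a_j$ for all $i$, consider its image $\overline{W} = (W + V_{n-1})/V_{n-1}$ inside the quotient $V/V_{n-1}$, which is $l_n$-dimensional. Writing $a = \dim W = \sum_{j=1}^n a_j$ and $a' = \dim(W \cap V_{n-1}) = \sum_{j=1}^{n-1} a_j$, we get $\dim \overline{W} = a - a' = a_n$. I would fiber the count over the choice of $W' := W \cap V_{n-1}$, which is a subspace of $V_{n-1}$ satisfying the analogous intersection conditions for the length-$(n-1)$ flag $(V_i)_{1 \leq i \leq n-1}$, so by induction there are $q^{\sum_{n-1 \geq i > j \geq 1} a_i(l_j - a_j)} \prod_{i=1}^{n-1} \begin{bmatrix} l_i \\ a_i \end{bmatrix}$ choices for $W'$. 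For each fixed $W'$, I must count the $W$ with $W \cap V_{n-1} = W'$ and $\dim W = a$: these are the $W$ with $W' \subseteq W$, $\dim W/W' = a_n$, and $W \cap V_{n-1} = W'$ (no extra intersection). Equivalently, working modulo $W'$, one counts $a_n$-dimensional subspaces $\widehat{W}$ of $V/W'$ meeting $V_{n-1}/W'$ trivially. Since $\dim V/W' = l - a'$ and $\dim V_{n-1}/W' = (l - l_n) - a'$ where $l = \sum l_i$, the number of such transverse subspaces is $q^{a_n \cdot ((l-l_n-a') )} \cdot$ (number of $a_n$-dim subspaces of the $l_n$-dim quotient $(V/W')/(V_{n-1}/W')$), which is $q^{a_n(l - l_n - a')} \begin{bmatrix} l_n \\ a_n \end{bmatrix}$ — this uses the standard fact that the affine-type count of complements/transverse subspaces of a fixed $k$-subspace inside an $m$-space, of dimension $r$ mapping isomorphically to an $r$-subspace of the $(m-k)$-dimensional quotient, is $q^{r k} \begin{bmatrix} m-k \\ r \end{bmatrix}$.

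\textbf{Assembling the exponent.} Multiplying the two factors, the total is $q^{E} \prod_{i=1}^n \begin{bmatrix} l_i \\ a_i \end{bmatrix}$ with $E = \sum_{n-1 \geq i > j \geq 1} a_i(l_j - a_j) + a_n(l - l_n - a')$. Since $l - l_n = \sum_{j=1}^{n-1} l_j$ and $a' = \sum_{j=1}^{n-1} a_j$, the second summand is $a_n \sum_{j=1}^{n-1}(l_j - a_j)$, which is precisely the $i = n$ part of $\sum_{n \geq i > j \geq 1} a_i(l_j - a_j)$; so $E$ equals the claimed exponent and the induction closes. The main obstacle — really the only nontrivial point — is the transverse-subspace count in the inductive step: one must be careful that $W \cap V_{n-1} = W'$ is the condition "$\widehat{W}$ meets $V_{n-1}/W'$ trivially" (not merely a dimension inequality) and get the $q$-power $q^{a_n(l-l_n-a')}$ right, e.g. by choosing a basis of $\widehat{W}$ adapted to the projection $V/W' \to (V/W')/(V_{n-1}/W')$ and counting the freedom in the $V_{n-1}/W'$-components.
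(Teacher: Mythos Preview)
Your inductive argument is correct. The paper itself does not prove this lemma but merely recalls it from \cite{S06} (Schiffmann's lectures on Hall algebras), so there is no proof in the paper to compare against; your induction on the flag length, fibering over $W\cap V_{n-1}$ and counting transverse lifts, is the standard argument and would serve as a complete proof here.
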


The following multiplication formula in $\aSB_d$ is first obtained in ~\cite{DF13}.
To a matrix $T=(t_{ij})_{i, j \in \mbb Z} $, we set $\check T = (\check t_{ij})_{i,j\in \mbb Z}$ where $\check t_{ij}=t_{i-1,j}$.

\begin{prop}
\label{DF-mult}
{\rm (1)}
Suppose that  $ A =(a_{ij})$, $B = (b_{ij}) \in \widehat{\Xi}_d$ satisfy that $c(B) = r(A)$ and
$B - \sum_{i=1}^n \alpha_i E^{i, i+1}$ is  diagonal for some $\alpha_i \in \mbb Z_{\geq 0}$.    We have
\[
e_B * e_A =
\sum_T q^{\sum_{1\leq i \leq n, j > l} (a_{ij}- t_{i-1, j}) t_{i, l}} \prod_{1 \leq i \leq n, j\in \mbb Z}
\begin{bmatrix}
a_{ij} + t_{ij} - t_{i-1, j} \\ t_{ij}
\end{bmatrix}
e_{A + T - \check T},
\]
where the sum runs over all $T = (t_{ij})$ such that $t_{i+n, j+n} =t_{ij}$ and $r(T)_i =\alpha_i$ for all $1\leq i\leq n$.

{\rm  (2)}
If $C \in \widehat{\Xi}_d$ satisfies that $c( C)=r(A)$ and  $C - \sum_{i=1}^n \beta_i E^{i+1, i}$ is diagonal, then
\[
e_C * e_A = \sum_{T} q^{\sum_{1\leq i\leq n, j < l } (a_{ij} - t_{ij}) t_{i-1, l}} \prod_{1\leq i \leq n, j\in \mbb Z}
\begin{bmatrix}
a_{ij} - t_{ij} + t_{i-1,j} \\
t_{i-1, j}
\end{bmatrix}
e_{A - T + \check T}
\]
where  the sum runs over  all $T$ such that $t_{ij} = t_{i+n, j+n}$ and $r(T)_i = \beta_i$ for all $1\leq i \leq n$.
\end{prop}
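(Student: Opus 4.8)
The plan is to prove both multiplication formulas directly by counting lattice chains over the finite field $\mbb F_q$, using the defining convolution product \eqref{eq30} and the elementary counting Lemma \ref{lemcounting}. I will give the argument for part (1) in detail and indicate the symmetric modifications for part (2). First I would fix lattice chains $\mbf L''$ in the $\widehat{\G}_d$-orbit of type $r(A)$ and $\mbf L$ with $(\mbf L, \mbf L'') \in \mcal O_A$, so that, by definition of $*$,
\[
(e_B * e_A)(\mbf L', \mbf L'') = \# \{ \mbf L \in \widehat X_d \mid (\mbf L', \mbf L) \in \mcal O_B, \ (\mbf L, \mbf L'') \in \mcal O_A \},
\]
for a fixed $\mbf L'$ with $(\mbf L', \mbf L'') \in \mcal O_{A'}$ for each possible resulting orbit $A'$. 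Since $B - \sum_i \alpha_i E^{i,i+1}$ is diagonal, the relative position condition $(\mbf L', \mbf L) \in \mcal O_B$ forces $\mbf L_i \subseteq \mbf L'_i$ with $\mbf L'_i / \mbf L_i$ of dimension $\alpha_i$, and $\mbf L'_{i-1} \subseteq \mbf L_i$; that is, $\mbf L$ is obtained from $\mbf L'$ by shrinking in the $i$-th spot, staying between $\mbf L'_{i-1}$ and $\mbf L'_i$. The combinatorial bookkeeping of how such an $\mbf L$ meets the fixed chain $\mbf L''$ is exactly recorded by a matrix $T = (t_{ij})$ with $r(T)_i = \alpha_i$, and one checks that the relative position of $(\mbf L, \mbf L'')$ is then $A + T - \check T$; this identifies the index set of the sum.

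The core computation is then local: for each $i$, the choice of $\mbf L_i$ inside the two-step flag $\mbf L'_{i-1} \subseteq \mbf L'_i$, subject to prescribed intersection dimensions with the chain $(\mbf L''_j)_{j \in \mbb Z}$, is counted by Lemma \ref{lemcounting} applied to the space $\mbf L'_i / \mbf L'_{i-1}$ with its induced filtration. Matching the parameters $l_j, a_j$ of the lemma to the entries $a_{ij}, t_{ij}, t_{i-1,j}$ coming from our setup yields both the $q$-power $q^{\sum (a_{ij} - t_{i-1,j}) t_{i,l}}$ and the Gaussian binomial factors $\bin{a_{ij} + t_{ij} - t_{i-1,j}}{t_{ij}}$; taking the product over a fundamental domain $1 \leq i \leq n$ (periodicity handles the rest) gives the stated coefficient. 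Finally, I would recall that $e_B * e_A$ lies in $\widehat{\mbf S}_d$ by the general convolution formalism and that the above count is visibly a Laurent polynomial in $q$, hence lifts to the generic algebra $\aSB_d$, giving the formula as stated. Part (2) is handled by the same method with $\mbf L$ now \emph{enlarging} $\mbf L'$ in each spot (between $\mbf L'_i$ and $\mbf L'_{i+1}$), which dualizes the roles and produces $A - T + \check T$ together with the coefficient $q^{\sum (a_{ij} - t_{ij}) t_{i-1,l}} \prod \bin{a_{ij} - t_{ij} + t_{i-1,j}}{t_{i-1,j}}$.

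The main obstacle I anticipate is the careful verification that the resulting relative position of $(\mbf L, \mbf L'')$ is precisely $A + T - \check T$ (resp.\ $A - T + \check T$), i.e.\ correctly tracking how the intersection numbers $\left| \frac{\mbf L_i \cap \mbf L''_j}{\mbf L_{i-1} \cap \mbf L''_j + \mbf L_i \cap \mbf L''_{j-1}} \right|$ transform when $\mbf L_i$ is replaced by a subspace of prescribed codimension: the shift $\check T$ arises because shrinking $\mbf L_i$ affects both the $i$-th row (directly) and the $(i{+}1)$-st row (through the term $\mbf L_{i-1} \cap \mbf L''_j$ with index shifted by one, using $\mbf L_i = \e \mbf L_{i+n}$), and getting the index conventions on $\check T$ exactly right — together with the affine periodicity constraint — is the delicate part. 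A secondary subtlety is ensuring the local counting at spot $i$ is genuinely independent across $i$, which follows because the successive choices of $\mbf L_i$ are made inside disjoint subquotients $\mbf L'_i / \mbf L'_{i-1}$; I would state this independence explicitly before invoking Lemma \ref{lemcounting} termwise.
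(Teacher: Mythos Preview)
Your plan is correct and follows essentially the same approach as the paper: fix a pair in $\mathcal O_{A'}$, describe the set of intermediate chains as those sandwiched $\mbf L'_{i-1} \subseteq \mbf L_i \subseteq \mbf L'_i$, stratify by the intersection data $T$ with the fixed chain (showing $A' = A + T - \check T$), and count each stratum via Lemma~\ref{lemcounting} applied to $\mbf L'_i/\mbf L'_{i-1}$. The only cosmetic difference is that the paper organizes the count through a tower of forgetful maps $Z_T^{[1,n]} \to Z_T^{[1,n-1]} \to \cdots$ with constant fibers rather than appealing to independence directly, and the $\check T$ shift arises purely from the row-index shift in the relative-position formula (the periodicity $\mbf L_i = \e\,\mbf L_{i+n}$ is only needed to make the infinite matrix well-defined, not to produce the shift itself).
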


\begin{proof}
(1) It suffices to show the similar statement in $\widehat{\mbf S}_d$.
Let $A'=(a_{ij}')_{i, j\in \mbb Z}$ be a matrix in $\widehat{\Xi}_d$ such that  $r(B) = r(A')$ and $c(A) = c(A')$.
Let $\mathcal O_{A'}$ be the $\widehat{\G}_d$-orbit in $\widehat{X}_d\times \widehat{\X}_d$ indexed by $A'$.
Fix  $(\mbf L, \mbf L') \in \mathcal O_{A'}$, and we denote
\[
Z=\{ \mbf L'' \in \widehat{X}_d  \ |\ \mbf L_{i-1} \subset \mbf L''_i \overset{\alpha_i}{\subset} \mbf L_i, \forall 1\leq i\leq  n\}.
\]
Note that  $(\mbf L, \mbf L'') \in \mathcal O_B$ if and only if $\mbf L'' \in Z$.
Clearly, $Z$ has a partition $Z = \bigsqcup_T Z_T$ where
\[
Z_T =
\{
\mbf L'' \in Z|  |\mbf L''_i \cap ( \mbf L_{i-1} + (\mbf L_i \cap \mbf L'_j)) / \mbf L_i'' \cap ( \mbf L_{i-1} + (\mbf L_i \cap \mbf L'_{j-1}))| =  a'_{ij} -  t_{ij},
\forall i, j \in \mbb Z
\}.
\]
and the union runs over all $T$ such that $t_{i+n, j+n} =t_{ij}$ and $r(T)_i= \alpha_i$ for all $1\leq i\leq n$.
For each $\mbf L'' \in Z_T$, we have the following identities.
\begin{equation}\label{eqdim}
\begin{split}
& a_{ij} = |\mbf L''_i \cap \mbf L'_j/ \mbf L''_i \cap \mbf L'_{j-1}| - |\mbf L''_{i-1} \cap \mbf L'_j/ \mbf L''_{i-1} \cap \mbf L'_{j-1}|,\\
& a'_{ij} = |\mbf L_i \cap \mbf L'_j/ \mbf L_i \cap \mbf L'_{j-1}| - |\mbf L_{i-1} \cap \mbf L'_j/ \mbf L_{i-1} \cap \mbf L'_{j-1}|,\\
& a'_{ij} - t_{ij}  = |\mbf L''_i \cap \mbf L'_j/ \mbf L''_i \cap \mbf L'_{j-1}| - |\mbf L_{i-1} \cap \mbf L'_j/ \mbf L_{i-1} \cap \mbf L'_{j-1}|,
\end{split}
\end{equation}
where the last identity follows from the definition of $Z_T$. By (\ref{eqdim}), we have
$$
t_{i j} = |\mbf L_i \cap \mbf L'_j / \mbf L_i \cap \mbf L'_{j-1}| - |\mbf L''_i \cap \mbf L'_j / \mbf L''_i \cap \mbf L'_{j-1}| .
$$
Thus $a'_{ij} -t_{ij} = a_{ij} - t_{i-1, j}$, i.e., $A' = A + T - \check T$.
Summing up the above analysis, we have
\begin{align}
\label{eba}
\begin{split}
e_B * e_A (\mbf L, \mbf L') =\sum_{\mbf L'' \in \widehat{\Xi}_d} e_B( \mbf L, \mbf L'') e_A(\mbf L'', \mbf L')
= \sum_{\mbf L'' \in Z} e_A (\mbf L'', \mbf L') \\
=\sum_{T} \sum_{\mbf L'' \in \mbb Z_T} e_A (\mbf L'', \mbf L')
= \sum_{T} \# Z_T \ e_{A+ T-\check T} (\mbf L, \mbf L').
\end{split}
\end{align}

So it is reduced to compute the cardinality of $Z_T$.
For each $i\in [1, n]$, we set $\mbb Z(i)=\{ k \in \mbb Z| k\in [1,i] \ \mbox{mod} \ n\}$ and
we define $Z_T^{[1, i]} $ to be the set of all lattice chains $\mbf L''=(\mbf L''_k)_{k\in \mbb Z(i)}$ such that $\mbf L''_k$ satisfies
$|\mbf L_{k -1} + \mbf L''_k \cap \mbf L'_j/ \mbf L_{k-1} + \mbf L''_k \cap \mbf L'_{j-1} | = a'_{ij} - t_{ij}$ for all $j\in \mbb Z$.
Consider
\[
\begin{CD}
Z_T = Z_T^{[1, n]} @>\pi_n >> Z^{[1,n-1]}_T @>\pi_{n-1}>> \cdots @>\pi_2>> Z^{[1, 1]}_T @>\pi_1>> \bullet,
\end{CD}
\]
where $\pi_i ((\mbf L''_k)_{k\in \mbb Z(i)}) = (\mbf L''_k)_{k\in \mbb Z(i-1)}$ and the equality is due to
$\mbf L''_i \cap ( \mbf L_{i-1} + (\mbf L_i \cap \mbf L'_j))= \mbf L_{i-1} + \mbf L''_i \cap \mbf L'_j$
We observe that the fiber of $\pi_i$ gets identified with the set of subspaces $W$ in $\mbf L_i/\mbf L_{i-1}$ such that
$| W\cap (\mbf L_{i-1} + \mbf L_i \cap \mbf L'_j)/\mbf L_{i-1} / W\cap (\mbf L_{i-1} + \mbf L_i \cap \mbf L'_{j-1})/\mbf L_{i-1} | = a'_{ij}-t_{ij}$.
Observe that
$| (\mbf L_{i-1} + \mbf L_i \cap \mbf L'_j)/\mbf L_{i-1}| - | (\mbf L_{i-1} + \mbf L_i \cap \mbf L'_{j-1})/\mbf L_{i-1} | = a'_{ij}$, and by
applying  Lemma \ref{lemcounting}, we have
$
\# \pi_i^{-1}(\mbf L^i) =
q^{ \sum_{l < j} (a'_{ij} - t_{ij}) t_{il}} \prod_{ j\in \mbb Z }
\begin{bmatrix}
 a'_{ij}\\ t_{ij}
 \end{bmatrix},
 $
where $\mbf L^i$ is any element in $Z^{[1, i-1]}_T$.
So $\pi_i$ is surjective with constant fiber. Hence
\begin{equation}\label{eqzt}
\#Z_T = \prod_{1\leq i\leq n} \# \pi_i^{-1}(\mbf L^i)
=q^{ \sum_{1 \leq i \leq n, l < j} (a'_{ij} - t_{ij}) t_{il}} \prod_{1 \leq i \leq n, j\in \mbb Z }\begin{bmatrix}
    a'_{ij}\\ t_{ij}
\end{bmatrix}.
\end{equation}
The statement (1) follows from (\ref{eba}) and (\ref{eqzt}).

Let us prove (2).
Let $A'$ be a matrix such that $r(A')=r( C)$ and $c(A') = c(A)$.
Fix $(\mbf L, \mbf L') \in \mathcal O_{A'}$. We consider the set
$
Y  = \{\mbf  L'' \widehat X_d |\mbf  L_{i-1} \overset{\beta_{i-1}}{\subseteq} \mbf  L''_{i-1} \subseteq \mbf L_i, \forall 1\leq i \leq n\}.
$
Then $Y$ admits a partition $Y = \sqcup Y_T$, where
\[
Y_T =\{ \mbf L''  \in Y|
\left |
\mbf L_{i-1} + \mbf L''_{i-1} \cap \mbf L'_j / \mbf L_{i-1} + \mbf L''_{i-1}\cap  \mbf L'_{j-1}
\right |
= t_{i-1, j}, \quad \forall i, j \in \mbb Z
\}.
\]
By applying Lemma \ref{lemcounting} and arguing similar to (1), we have
\[
\# Y_T = \prod_{1\leq i \leq n} q^{\sum_{l > j} t_{i-1, l} (a'_{ij} - t_{i-1, j})} \prod_{j\in \mbb Z}
\begin{bmatrix}
a'_{ij}\\
t_{i-1,j}
\end{bmatrix}.
\]
Moreover, for $L''\in Y_T$ such that $(\mbf L'', \mbf L') \in \mathcal O_{A}$ if and only if
$A' = A - T + \check T$
Therefore, we have (2). The proposition is thus proved.
\end{proof}

\begin{rem}
If $n=1$, Proposition ~\ref{DF-mult} shows that $e_B * e_A = e_A * e_B$.
(Here we use $(e_A * e_B)^t = e_{B^t} e_{A^t}$.) This implies that $\aSB_d$ is commutative for $n=1$, which
corresponds to the geometric Satake of type $A$.
\end{rem}

\begin{lem} \label{e-mult}
Suppose that
$[B] * [A] =\sum Q_{B, A}^C [C]$.
If  $B = \sum_{1\leq j \leq n} \beta_j E^{j, j} + \alpha_j E^{j, j+1} $ or
$ \sum_{1\leq j \leq n} \beta_j E^{j, j} + \alpha_j E^{j+1, j}$,
and  $Q_{B, A}^C \neq 0$, then $\e_i (A) + \e_i(B)  = \e_i ( C)$ for all $i\in \mbb Z$.
\end{lem}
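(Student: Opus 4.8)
The statement is a bookkeeping identity tracking the numerical invariants $\e_i$ through the multiplication formulas of Proposition~\ref{DF-mult}. Since $[B]=v^{-d_B}e_B$ and $[A]=v^{-d_A}e_A$, the coefficients $Q_{B,A}^C$ are, up to powers of $v$, exactly the structure constants $\# Z_T$ (resp. $\# Y_T$) appearing in Proposition~\ref{DF-mult}, and $C$ runs over the matrices $A+T-\check T$ (resp. $A-T+\check T$) where $T$ ranges over periodic $\mbb Z\times\mbb Z$-matrices with $r(T)_i=\alpha_i$ (resp. $\beta_i$) for $1\le i\le n$ and $c(T)$ arbitrary subject to the binomial coefficients being nonzero. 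So it suffices to prove: \emph{for every admissible $T$, if $C=A\pm(T-\check T)$ then $\e_i(C)=\e_i(A)+\e_i(B)$ for all $i$}, where $B$ is the given diagonal-plus-super/sub-diagonal matrix. Note first that the definition of $\e_i$ in (\ref{eiA}) depends only on column sums in a telescoping way: one checks directly from (\ref{eiA}) that $\e_i(A)$ can be rewritten purely in terms of $c(A)$ together with $r(A)_1,\dots,r(A)_i$ — more precisely $\e_i(A)=\sum_{r\le i}r(A)_r-\sum_{j\le i}c(A)_j$ (using periodicity to make sense of the partial sums up to index $i$). This reformulation is the key simplification and I would isolate it as a preliminary computation.

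\textbf{First step: the $\e_i$ formula.} Establish $\e_i(A)=\sum_{r\le i}r(A)_r-\sum_{s\le i}c(A)_s$ for all $i\in\mbb Z$ and all $A\in\widehat\Xi_d$. This is a direct manipulation of (\ref{eiA}): split the double sum $\sum_{r\le i<s}a_{rs}-\sum_{r>i\ge s}a_{rs}$ by comparing with $\sum_{r\le i,\,s\in\mbb Z}a_{rs}=\sum_{r\le i}r(A)_r$ and $\sum_{r\in\mbb Z,\,s\le i}a_{rs}=\sum_{s\le i}c(A)_s$; the off-quadrant terms $\sum_{r\le i,s\le i}a_{rs}$ cancel in the difference. (The infinite partial sums are legitimate because of $n$-periodicity and the finiteness built into $\widehat\Xi_d$; one works modulo $n$ and uses that shifting $i$ by $n$ changes both sides by $d$.)

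\textbf{Second step: effect of $T-\check T$ on row and column sums.} Since $\check T$ is $T$ shifted one row down, $r(\check T)_i=r(T)_{i-1}$ and $c(\check T)=c(T)$. Hence $r(T-\check T)_i=r(T)_i-r(T)_{i-1}$ and $c(T-\check T)=0$. Therefore $c(C)=c(A)$, and $\sum_{r\le i}r(C)_r=\sum_{r\le i}r(A)_r\pm\big(\sum_{r\le i}r(T)_r-\sum_{r\le i}r(T)_{r-1}\big)=\sum_{r\le i}r(A)_r\pm\big(\text{partial sums telescope}\big)$. The telescoping gives $\sum_{r\le i}\big(r(T)_r-r(T)_{r-1}\big)$, which — being a sum of $n$-periodic increments whose full period sums to $0$ — equals $r(T)_i-r(T)_{i_0}$ for a reference index, but more usefully, combining with the $\e_i$ formula, $\e_i(C)=\e_i(A)\pm\sum_{r\le i}\big(r(T)_r-r(T)_{r-1}\big)$.

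\textbf{Third step: identify the correction term with $\e_i(B)$.} For $B=\sum_{1\le j\le n}\beta_jE^{j,j}+\alpha_jE^{j,j+1}$ one computes $r(B)_j=\beta_j+\alpha_j$ and $c(B)_j=\beta_j+\alpha_{j-1}$, so $\e_i(B)=\sum_{r\le i}r(B)_r-\sum_{s\le i}c(B)_s=\sum_{r\le i}\alpha_r-\sum_{s\le i}\alpha_{s-1}$, which is exactly the telescoping sum of the $\alpha$'s; since $r(T)_i=\alpha_i$ for the admissible $T$ by hypothesis, this matches the correction term in the second step with the $+$ sign, handling case (1) of Proposition~\ref{DF-mult}. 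The subdiagonal case $B=\sum\beta_jE^{j,j}+\alpha_jE^{j+1,j}$ is entirely parallel with roles of rows and the $-\check T$ shift arranged so the signs agree, giving the $-$ sign case; here $r(T)_i=\beta_i$ by hypothesis. In both cases $\e_i(A)+\e_i(B)=\e_i(C)$ as claimed.

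\textbf{Main obstacle.} The only subtlety is bookkeeping with the bi-infinite periodic matrices: making the partial sums $\sum_{r\le i}$ and the telescoping rigorous, and checking the two inhomogeneous cases of Proposition~\ref{DF-mult} truly produce the two sign choices consistently (i.e.\ that $\check T$ is a \emph{row} shift, not a column shift, so that $c(T-\check T)=0$ and all the action is in the row sums). Once the reformulation $\e_i(A)=\sum_{r\le i}r(A)_r-\sum_{s\le i}c(A)_s$ is in hand, the rest is routine; I expect no genuine difficulty beyond careful index chasing.
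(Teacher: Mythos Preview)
Your approach is essentially the paper's: both arguments reduce to the identity $\e_i(T-\check T)=r(T)_i$ obtained by telescoping in the row index (since $\check t_{rs}=t_{r-1,s}$), together with the trivial computation $\e_i(B)=\pm\alpha_i$ from the shape of $B$. The paper carries out the telescoping \emph{inside} the defining sums \eqref{eiA}: one gets $\sum_{r\le i<s}(t_{rs}-t_{r-1,s})=\sum_{s>i}t_{is}$ and $\sum_{r>i\ge s}(t_{rs}-t_{r-1,s})=-\sum_{s\le i}t_{is}$, hence $\e_i(T-\check T)=\sum_s t_{is}=\alpha_i$, and the lemma follows.

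Your detour through the formula $\e_i(A)=\sum_{r\le i}r(A)_r-\sum_{s\le i}c(A)_s$ is where the argument is not rigorous as written: each sum on the right is an infinite sum of $n$-periodic nonnegative terms and genuinely diverges, so the identity is only formal. Your remark about ``working modulo $n$'' does not fix this; what \emph{is} rigorous is the increment $\e_i(A)-\e_{i-1}(A)=r(A)_i-c(A)_i$, but that only pins down $\e_i(C)-\e_i(A)-\e_i(B)$ up to an $i$-independent constant, and you would still need one direct evaluation to show the constant vanishes --- at which point you are back to the paper's computation. The cleanest fix is simply to skip Step~1 and do the telescoping directly in \eqref{eiA}, which is exactly what the paper does. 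One small slip: in your subdiagonal case you write $r(T)_i=\beta_i$, but in the lemma's notation (where $\alpha_j$ are the off-diagonal entries of $B$ in both cases) it is $r(T)_i=\alpha_i$; the $\beta_i$ in Proposition~\ref{DF-mult}(2) corresponds to the lemma's $\alpha_i$.
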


\begin{proof}
Assume that $B = \sum_{1\leq j \leq n} \beta_j E^{j, j} + \alpha_j E^{j, j+1} $. Then we have $\e_i(B) = \alpha_i$.
If $Q_{B, A}^C \neq 0$, then by Proposition ~\ref{DF-mult} (1), the matrix $C$ is of the form $A + T - \check T$.
Thus we have
\begin{align*}
\begin{split}
\e_i ( C) & = \e_i (A) + \e_i (T - \check T)
= \e_i (A) + \sum_{r\leq i < s} t_{r,s} - \check t_{r,s} + \sum_{r>  i \geq  s} t_{r,s} - \check t_{r,s} \\
& = \e_i (A) + \sum_{i < s} t_{i,s} - \sum_{i \geq s} - \check t_{i+1,s}
= \e_i (A) + \sum_{s\in \mbb Z} t_{i,s}
= \e_i (A) + \alpha_i = \e_i (A) + \e_i (B).
\end{split}
\end{align*}
Therefore, the lemma holds for $B = \sum_{1\leq j \leq n} \beta_j E^{j, j} + \alpha_j E^{j, j+1} $.

For the case when $B = \sum_{1\leq j \leq n} \beta_j E^{j, j} + \alpha_j E^{j+1, j}$, then $\e_i (B) = - \alpha_i$ and $C $ is of the form $A - T + \check T$ if $Q^C_{B, A} \neq 0$ by Proposition ~\ref{DF-mult} (2). So we have
$
\e_i ( C)  = \e_i (A) - \e_i (T - \check T)
= \e_i (A) -  \alpha_i = \e_i (A) + \e_i (B).
$
Therefore the lemma holds in this case. We are done.
\end{proof}

Next we introduce a second numerical data.
We define
\begin{align*}
\begin{split}
\deg_i \left ( \sum_{1\leq j \leq n} \beta_j E^{j, j} + \alpha_j E^{j+1, j} \right )   = - \alpha_i , \quad
\deg_i \left  ( \sum_{1\leq j \leq n} \beta_j E^{j, j} + \alpha_j E^{j, j+1} \right ) = \alpha_i .
\end{split}
\end{align*}

Suppose that $M= [A_1] * \cdots * [A_m]$ is a $monomial$ in $[A_j]$  where $A_j$ is either
$\sum_{1\leq k \leq n} \beta_{jk} E^{k,k} + \alpha_{jk} E^{k+1, k}$, or $ \sum_{1\leq k \leq n} \beta_{jk} E^{k, k} + \alpha_{jk} E^{k, k+1} $.
We define
$
\deg_i (M) = \sum_{1\leq j \leq m} \deg_i (A_j).
$
To the same monomial $M$, we also define its length  $\ell (M)$  to be
$
\ell(M) = \sum_{1 \leq j \leq m} \sum_{1\leq k \leq n} \alpha_{jk}.
$
(We define $[A]$ to be  a monomial of length zero if $A$ is diagonal.)
Then we have

\begin{lem} \label{M-[A]}
Let  $M$ be a monomial and write $M = \sum R_A [A]$. If $R_A \neq 0$, then $\deg_i (M) = \e_{i} (A) $ for all $i\in \mbb Z$.
\end{lem}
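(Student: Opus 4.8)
The plan is to prove Lemma~\ref{M-[A]} by induction on the number $m$ of factors in the monomial $M=[A_1]*\cdots*[A_m]$, stripping off the leftmost factor at each step and feeding the expansion into Lemma~\ref{e-mult}. The real engine is already in place: Lemma~\ref{e-mult} says that left-multiplying by a standard basis element $[A_1]$ of one of the two special band shapes $\sum_j \beta_j E^{j,j}+\alpha_j E^{j,j+1}$ or $\sum_j \beta_j E^{j,j}+\alpha_j E^{j+1,j}$ adds $\e_i(A_1)$ to the $\e_i$-value of every matrix occurring in the product. The present lemma simply iterates this through the monomial and matches the resulting bookkeeping against the additive definition $\deg_i(M)=\sum_{j}\deg_i(A_j)$.

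For the base case $m=1$ I would check directly that $\e_i(A_1)=\deg_i(A_1)$ for a single band matrix $A_1$. This is a one-line computation: if $A_1=\sum_j\beta_j E^{j,j}+\alpha_j E^{j,j+1}$, the only entry satisfying $r\le i<s$ is $a_{i,i+1}=\alpha_i$ and no entry satisfies $r>i\ge s$, so $\e_i(A_1)=\alpha_i=\deg_i(A_1)$; the sub-diagonal case $A_1=\sum_j\beta_j E^{j,j}+\alpha_j E^{j+1,j}$ is identical with a sign flip, giving $\e_i(A_1)=-\alpha_i=\deg_i(A_1)$; and the diagonal case ($\alpha_j\equiv 0$) gives $0=0$. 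In fact this is exactly the computation carried out at the start of the proof of Lemma~\ref{e-mult}, so I would just cite it there.

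For the inductive step ($m\ge 2$) I write $M=[A_1]*M'$ with $M'=[A_2]*\cdots*[A_m]$ a monomial in $m-1$ factors, and expand $M'=\sum_B R'_B[B]$. By the inductive hypothesis, $R'_B\ne 0$ forces $\e_i(B)=\deg_i(M')$ for all $i\in\mbb Z$. Expanding $[A_1]*[B]=\sum_C Q^C_{A_1,B}[C]$, a matrix $C$ can occur in $M=\sum_B R'_B([A_1]*[B])$ only if $Q^C_{A_1,B}\ne 0$ for some $B$ with $R'_B\ne 0$; since $A_1$ has the band shape required by Lemma~\ref{e-mult}, that lemma gives $\e_i(C)=\e_i(A_1)+\e_i(B)$ for all $i$. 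Combining with the base case $\e_i(A_1)=\deg_i(A_1)$, the hypothesis $\e_i(B)=\deg_i(M')$, and the definition $\deg_i(M)=\deg_i(A_1)+\deg_i(M')$, I get $\e_i(C)=\deg_i(M)$, which closes the induction.

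I do not expect a real obstacle here; the genuinely hard step — translating the combinatorics of $\e_i$ into a statement about products — was already done in Lemma~\ref{e-mult} via the Du--Fu multiplication formula (Proposition~\ref{DF-mult}). The only mild point to be careful about is that the lemma is a statement about the \emph{support} of $M$: since the coefficient of $[C]$ in $M$ is $\sum_B R'_B Q^C_{A_1,B}$, possible cancellations are harmless because $R_C\ne 0$ still guarantees at least one nonzero summand, hence at least one pair $(B,C)$ to which Lemma~\ref{e-mult} applies. Once that is noted, Lemma~\ref{M-[A]} is a routine induction.
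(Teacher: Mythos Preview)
Your proof is correct and follows essentially the same approach as the paper's: both argue by induction, peel off the leftmost factor $[A_1]$, and invoke Lemma~\ref{e-mult} to propagate the $\e_i$-value through the product. The only cosmetic difference is that the paper inducts on the length $\ell(M)$ rather than the number $m$ of factors, and your explicit remark about cancellations (that $R_C\ne 0$ still guarantees at least one contributing pair) is a nice touch the paper leaves implicit.
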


\begin{proof}
We argue by induction on the length $\ell (M)$ of $M$.
When $\ell(M)=1$, the lemma follows from the definitions.
Assume now that $\ell (M) > 1$ and  the lemma holds for any monomial $M'$ such that $\ell (M') < \ell (M)$.
We write $M = [A_1] * [M']$, where $A$ is either
$\sum_{1\leq j \leq n} \beta_j E^{j, j} + \alpha_j E^{j+1, j}$ or
$\sum_{1\leq j \leq n} \beta_j E^{j, j} + \alpha_j E^{j, j+1}$, and
$M'$ is a monomial of the remaining terms in $M$. Thus $\ell (M') < \ell (M)$.
Suppose that $M' = \sum R_{A'} [A']$, then
we have
\[
M = [A_1] * M' = \sum_{A'} R_{A'} [A_1] * [A']
=\sum_{A', B} R_{A'} Q^B_{A_1, A'} [B].
\]
If $A_1 = \sum_{1\leq j \leq n} \beta_j E^{j, j} + \alpha_j E^{j+1, j}$, then by Lemma ~\ref{e-mult} and induction hypothesis,  we have
\[
\deg_i (M) = - \alpha_i + \deg_i (M')
=- \alpha_i + \e_i (A') = \e_i (B), \quad \mbox{if} \ Q^B_{A_1, A'} \neq 0, R_{A'} \neq 0.
\]
Similarly, if $A_1 =\sum_{1\leq j \leq n} \beta_j E^{j, j} + \alpha_j E^{j, j+1}$,  then
\[
\deg_i (M) = \alpha_i + \deg_i (M')
= \alpha_i + \e_i (A') = \e_i (B), \quad \mbox{if} \ Q^B_{A_1, A'} \neq 0, R_{A'} \neq 0.
\]
Lemma follows.
\end{proof}

By a result in ~\cite{DF13} (see also ~\cite{LL}), there exists a monomial $M_A$ such that
\begin{align} \label{M_AA}
M_A = [A] + \sum_{A' \prec A}  S_{A, A'} [A'], \quad \mbox{for some} \ S_{A, A'} \in \mbb Z[v, v^{-1}].
\end{align}
Since $[A]$ forms a basis for $\aSB_d$, the monomial $M_A$ forms a basis for $\aSB_d$. In particular,
\begin{align} \label{[A]-mono}
[A] = M_A + \sum_{A' \prec A} R_{A, A'} M_{A'}, \quad \mbox{for some} \ R_{A, A'} \in \mbb Z[v, v^{-1}].
\end{align}

Moreover, we have

\begin{lem} \label{SAA'}
Suppose that $R_{A, A'} \neq 0$ in (\ref{M_AA}), then $\e_i(A) = \e_i (A')$.
\end{lem}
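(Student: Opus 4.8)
The plan is to deduce the statement from the ``block triangularity'' of the two transition matrices relating the monomial basis $\{M_A\}$ and the standard basis $\{[A]\}$ of $\aSB_d$. Group the elements of $\widehat{\Xi}_d$ according to the tuple $\e(A) := (\e_i(A))_{i\in\mbb Z}$ from (\ref{eiA}); by periodicity $\e_i(A)=\e_{i+n}(A)$, so this tuple takes only finitely many values and defines a partition $\widehat{\Xi}_d = \bigsqcup_{\nu}\widehat{\Xi}_d[\nu]$. The goal is to show that both $S=(S_{A,A'})$ from (\ref{M_AA}) and $R=(R_{A,A'})$ from (\ref{[A]-mono}) vanish off the diagonal blocks of this partition; the statement of the lemma is then exactly the assertion for $R$.

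First I would handle $S$. Since $M_A$ is a monomial whose expansion in the standard basis is $M_A = [A] + \sum_{A'\prec A}S_{A,A'}[A']$ with leading coefficient $1$, Lemma~\ref{M-[A]} applied to this monomial gives $\deg_i(M_A) = \e_i(A)$ (from the leading term) and $\deg_i(M_A) = \e_i(A')$ for every $A'$ with $S_{A,A'}\neq 0$. Comparing these, $\e_i(A)=\e_i(A')$ for all $i$ whenever $S_{A,A'}\neq 0$; that is, $S$ has no entries linking different blocks $\widehat{\Xi}_d[\nu]$.

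It then remains to transfer this to $R$. By (\ref{M_AA}) and (\ref{[A]-mono}), $S$ and $R$ are mutually inverse matrices, each unitriangular with respect to the Bruhat order $\preceq$, and each row is a finite sum since these are identities in $\aSB_d$. The inverse of an invertible matrix which is block diagonal for a given partition is again block diagonal for the same partition: writing $R'$ for the part of $R$ supported on the diagonal blocks, one checks directly that $R'S = I$ (the cross-block terms of $R'S$ vanish because $S$ is block diagonal, while the diagonal-block terms agree with those of $RS$), and then $R'=R$ by uniqueness of the inverse of the unitriangular matrix $S$. Hence $R_{A,A'}\neq 0$ forces $A$ and $A'$ to lie in the same block, i.e.\ $\e_i(A)=\e_i(A')$ for all $i$, which is the claim.

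The only delicate point I foresee is the bookkeeping with infinite index sets, since $\widehat{\Xi}_d$ is infinite; this is harmless because the sums in (\ref{M_AA}) and (\ref{[A]-mono}) are finite and everything is unitriangular for $\preceq$, so all matrix products occurring are finite. If one prefers to avoid the formal inversion step entirely, one can instead run a downward induction on $\preceq$ using the recursion $R_{A,A'} = -\sum_{A'\prec A''\preceq A}R_{A,A''}S_{A'',A'}$ extracted from $RS=I$: each nonzero summand combines an $R_{A,A''}$, for which $\e(A'')=\e(A)$ by the inductive hypothesis, with an $S_{A'',A'}$, for which $\e(A'')=\e(A')$ by the second paragraph, so any nonvanishing of $R_{A,A'}$ propagates the equality $\e(A)=\e(A')$. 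This inductive variant is what I would expect to be safest to write out in full.
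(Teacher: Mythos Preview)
Your proposal is correct and essentially the same as the paper's proof. The paper runs exactly the downward induction on $A'$ you sketch in your final paragraph, using the recursion coming from $RS=I$ together with Lemma~\ref{M-[A]}; your block-diagonal matrix-inversion phrasing in the third paragraph is just a clean repackaging of that same induction.
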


\begin{proof}
We prove by induction with respect to $\preceq$ in descending order. If $A' =A$, it is trivial.
Suppose that for all $A''$ such that $A' \prec A'' \preceq A$, the statement holds.
If $[A']$ appears in $M_{A''}$ for some $A''$ such that $A' \prec A''$, then
$\e_i(A) = \e_i (A'') = \deg_i (M_{A''}) = \e_i(A')$ by induction hypothesis.
If $[A']$ does not appear in  $M_{A''}$ for all $A'' $ such that $A' \prec A''$, then the coefficient of $[A']$ in the right-hand side of
(\ref{[A]-mono}) is $0$, contradicting to the assumption. We are done.
\end{proof}

Furthermore,

\begin{lem} \label{CAA'}
Suppose that
$\overline{[A]} = [A] + \sum_{A' \prec A} C_{A, A'} [A']$ for some $C_{A, A'} \in \mbb Z[v, v^{-1}]$.
If $C_{A, A'} \neq 0$, then $\e_i (A) = \e_i(A')$ for all $i\in \mbb Z$.
\end{lem}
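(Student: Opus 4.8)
The plan is to recognize the lemma as saying that the bar involution on $\aSB_d$ preserves the decomposition of $\aSB_d$ by the values of the statistics $\e_i$, and then to read off the claim from the basis $\{[A']\}$. For a tuple $\mbf c=(c_i)_{i\in\mbb Z}$ (necessarily $n$-periodic), set $\aSB_d[\mbf c]=\mrm{span}_{\mbb A}\{\,[A]\mid\e_i(A)=c_i\ \forall i\in\mbb Z\,\}$; as $\{[A]\}$ is a basis of $\aSB_d$, we have $\aSB_d=\bigoplus_{\mbf c}\aSB_d[\mbf c]$. In these terms Lemma~\ref{e-mult} says precisely that left multiplication by $[B]$, for $B$ of the near-diagonal form $\sum_{1\leq j\leq n}\beta_jE^{j,j}+\alpha_jE^{j,j+1}$ or $\sum_{1\leq j\leq n}\beta_jE^{j,j}+\alpha_jE^{j+1,j}$, carries $\aSB_d[\mbf c]$ into $\aSB_d[\mbf c+\e_\bullet(B)]$, while Lemma~\ref{M-[A]} says that any monomial $M$ in such generators lies entirely in the single summand $\aSB_d[\deg_\bullet(M)]$.

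Next I would invoke from \cite{L00} (see also \cite{BLM90}, \cite{DF13}) that the bar involution on $\aSB_d$ is a ring homomorphism fixing each near-diagonal generator $[B]$ as above. It follows that $\overline{M}=M$ for every monomial $M$ in these generators, and in particular $\overline{M_A}=M_A$ for the Du--Fu monomial $M_A$ of (\ref{M_AA}). (If in some normalization the bar map were an anti-homomorphism instead, $\overline{M_A}$ would be the reversed monomial, which has the same $\deg_\bullet$ and hence, by Lemma~\ref{M-[A]}, lies in the same summand; the argument below would be unchanged.)

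Now apply the bar involution to the expansion (\ref{[A]-mono}): this gives $\overline{[A]}=M_A+\sum_{A'\prec A}\overline{R_{A,A'}}\,M_{A'}$. By Lemma~\ref{M-[A]} each $M_{A'}$ lies in $\aSB_d[\deg_\bullet(M_{A'})]$, and since $[A']$ occurs in $M_{A'}$ with nonzero coefficient we have $\deg_i(M_{A'})=\e_i(A')$ for all $i$; likewise $\deg_i(M_A)=\e_i(A)$. By Lemma~\ref{SAA'}, $R_{A,A'}\neq0$ forces $\e_i(A')=\e_i(A)$ for all $i$. Hence every summand on the right-hand side belongs to $\aSB_d[\e_\bullet(A)]$, so $\overline{[A]}\in\aSB_d[\e_\bullet(A)]$, and comparing coefficients in the basis $\{[A']\}$ yields $C_{A,A'}\neq0\Rightarrow\e_i(A)=\e_i(A')$ for all $i$, as desired.

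I expect the main obstacle to be the bar-invariance of the near-diagonal generators $[B]$ --- equivalently, that the associated $\widehat{\G}_d$-orbits in $\widehat X_d\times\widehat X_d$ carry constant intersection cohomology complexes --- since for a general matrix $A$ the element $[A]$ is not bar-invariant, so this fact must be quoted carefully from the BLM/Du--Fu theory rather than reproved here. Everything else is routine bookkeeping with the two statistics $\e_i$ and $\deg_i$ via Lemmas~\ref{e-mult}, \ref{M-[A]} and \ref{SAA'}. An alternative would be to bypass the grading language and argue by downward induction along $\preceq$ using $\overline{\overline{[A]}}=[A]$, but the formulation above isolates the role of Lemma~\ref{e-mult} most cleanly.
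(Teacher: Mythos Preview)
Your proof is correct and follows essentially the same route as the paper: apply the bar involution to (\ref{[A]-mono}), use that the monomials $M_{A'}$ are bar-invariant (which the paper invokes implicitly), then use Lemma~\ref{SAA'} and Lemma~\ref{M-[A]} to see that every $[A'']$ appearing in $\overline{[A]}$ has $\e_i(A'')=\e_i(A)$. Your grading language is just a convenient repackaging, and your explicit attention to the bar-invariance of the near-diagonal generators $[B]$ makes explicit the one step the paper's proof leaves tacit.
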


\begin{proof}
By (\ref{[A]-mono}) and Lemma ~\ref{SAA'}, we have
$
\overline{[A]} = M_A + \sum_{A' \prec A, \e_i(A')= \e_i(A)} \overline{R_{A, A'}} M_{A'}.
$
By Lemma \ref{M-[A]}, we have
$
M_{A'} = \sum_{A'' \preceq A', \e_i(A'') =\e_i (A') } S_{A', A''} [A''].
$
The lemma follows by putting the previous two identities together.
\end{proof}

Finally, we are ready to prove Theorem ~\ref{PAA'}.
We set
$\phi =\{ A'  | P_{A, A'} \neq 0, \e_i (A') \neq \e_i (A)\}$. We only need to show that $\phi$ is empty. Pick an element $B$ in $\phi$ that  is maximal with respect to the partial order $\preceq$. Clearly, we have $B\neq A$.
We rewrite $\{A\}$ as follows.
\[
\{ A\} = P_{A, B} [B]  +
\left ( \sum_{B \prec A'}  + \sum_{A' \prec B}  + \sum_{ A' \not\preceq B, B\not \preceq A'} \right )  P_{A, A'} [A'].
\]
Apply the bar operation to the above equality, we have
\[
\{ A\} =\overline{\{A\}}  = \overline{P_{A, B}} \overline{[B]}
+ \left ( \sum_{B \prec A'}  +
\sum_{A' \prec B}  + \sum_{ A' \not\preceq B, B\not \preceq A'} \right ) \overline{ P_{A, A'}}  \overline{[A']}.
\]
By Lemma ~\ref{CAA'}, we know that the coefficient of $[B]$ in $\overline{[A']}$ for $B \prec A'$ is zero.
Notice  $[B]$ will not appear in  the rest of the terms, except $\overline{[B]}$.
Hence, by comparing the coefficients of $[B]$ in  the previous two equalities,  we  must have $P_{A, B} = \overline{P_{A, B}}$. But $P_{A, B} \in v^{-1} \mbb Z[v^{-1}]$ forces  $P_{A, B}=0$, a contradiction to the definition of  $\phi$. Hence $\phi$ is empty.  Theorem \ref{PAA'}  follows.

\subsection{Positivity of $\widehat{\mbb \Delta}$}

We set $\widehat{X}_d(\mbf a) =\{ \mbf L \in \widehat{X}_d | | \mbf L_i/\mbf L_{i-1} | = a_i,\quad \forall i\}$
and $\widehat P_{\mbf a} = \mrm{Stab}_{\widehat{\G}_d} (\mbf L) $ for a fixed chain $\mbf L\in \widehat X_d$.
We still have the same commutative diagram as in Lemma ~\ref{Delta-ik}.
\[
\xymatrixrowsep{.5in}
\xymatrixcolsep{.8in}
\xymatrix{
\mathcal A_{G_d}(\widehat X_d(\mbf b) \times  \widehat X_d(\mbf a))
\ar@{->}[r]^{i^*_{\mbf b, \mbf a}}
\ar@{->}[d]_{\widetilde \Delta_{\mbf b', \mbf a', \mbf b'', \mbf a''}}
&
\mathcal A_{\widehat P_{\mbf b}} (\widehat X_d(\mbf a))
\ar@{->}[d]^{ \pi_! \iota^*}
\\
%
%
\smxylabel{
\substack{
\mathcal A_{G_{d'}} (\widehat X_{d'}(\mbf b') \times  \widehat X_{d'} (\mbf a') ) \\
\otimes \\
\mathcal A_{G_{d''}} (\widehat X_{d''}(\mbf b'') \times \widehat X_{d''} (\mbf a'') )
}
}
\ar@{->}[r]^-{ i^*_{\mbf b', \mbf a'} \otimes i^*_{\mbf b'', \mbf a''}}
&
\smxylabel{
\mathcal A_{\widehat P_{\mbf b'} } ( \widehat X_{d'} (\mbf a') )
\otimes
\mathcal A_{\widehat P_{\mbf b''}} ( \widehat X_{d''} (\mbf a'')).
}
}
\]
So the positivity of $\widetilde \Delta_{\mbf b', \mbf a', \mbf b'', \mbf a''}$ is reduced to the positivity of $ \pi_! \iota^*$.

Fix $\mbf b', \mbf b''$ such that $\mbf b' + \mbf b'' =\mbf b$.
Let $d' = | \mbf b'|$ and $d'' = |\mbf b'' | $.
Let $\mbf V = \mbf T \oplus \mbf W$ and $\bfL_{\mbf b} = \bfL_{\mbf b'} \oplus \bfL_{\mbf b''}$.
Thus we have
$
\pi' (\bfL_{\mbf b}) = \bfL_{\mbf b'},
\pi'' (\bfL_{\mbf b}) = \bfL_{\mbf b''}.
$
Let $\bfL_i $ be the $i$-th lattice in $\bfL_{\mbf a}$. We consider the following subset in $\widehat X_d(\mbf a)$.
\[
Y^{\bfL_0, p}_{\mbf a} : =\{ \widetilde{\bfL} \in \widehat X_d (\mbf a) | \e^p \bfL_0 \subseteq \widetilde{\bfL}_0 \subseteq \e^{-p} \bfL_0\}, \quad \forall p \in \mbb Z_{\geq 0}.
\]
It is well-known that $Y^{\bfL_0, p}$ for various $p$ is a $G_{\bfL_{\mbf b}}$-invariant algebraic variety over $\overline{\mbb F_q}$ if we replace the ground field $\mbb F_q((\e))$ by $\overline{\mbb F_q}((\e))$, which we shall assume now and for the rest of this section.
Moreover, there exists a $p_0$ such that
\[
X^{\bfL_{\mbf b}}_A:= \{ \mbf L' | (\mbf L_{\mbf b}, \mbf L')\in \mathcal O_{A}\} \subseteq Y^{\bfL_0, p}, \quad p\geq p_0.
\]

Indeed, we have $a_{0, p} = 0$ and $a_{p, 0}$ for $p>>0$ due to the fact that
$\sum_{j \in \mbb Z} a_{0, j} , \sum_{ i \in \mbb Z} a_{i, 0} < \infty$.
The first condition implies that
$
\bfL_0 \subseteq \widetilde{\bfL}_{p}, \ \mbox{if} \  \widetilde{\bfL} \in X_A^{\bfL},
$
while
$
 \widetilde{\bfL}_0 \subseteq \bfL_p, \ \mbox{if} \  \widetilde{\bfL} \in X_A^{\bfL},
$
follows from the second.
Fix an $l$ such that $p < ln$. Then we have
\[
\e^{l} \bfL_0 \subseteq  \widetilde{\bfL}_0  \subseteq \e^{-l} \bfL_0.
\]
Set $p_0=l$, then  we have  $X^{\bfL_{\mbf b}}_A \subseteq Y^{\bfL_0, p}$ for $p\geq p_0$.

Now we fix a $1$-parameter subgroup of $P_{\bfL_{\mbf b}}$:
\[
\lambda : \mrm{GL}(1, \overline{\mbb F}_q) \to P_{\bfL_{\mbf b}}, t \mapsto
\begin{pmatrix}
1_{\mbf T} & 0 \\
0 & t .1_{\mbf W}
\end{pmatrix}.
\]
The fixed point set
$
(Y^{\bfL_0, p}_{\mbf a})^{\mrm{GL}(1, \overline{\mbb F_q})}
=\sqcup_{\mbf a', \mbf a''} Y^{\mbf L_0', p}_{\mbf a'} \times Y^{\mbf L_0'', p}_{\mbf a''},
$
and the attracting set associated to $Y^{\mbf L_0', p}_{\mbf a'} \times Y^{\mbf L_0'', p}_{\mbf a''}$ is
\[
Y^{\bfL_0, p}_{\mbf a', \mbf a''} =
\{
\widetilde{\bfL} \in Y^{\bfL_0,p}_{\mbf a} | \pi' (\widetilde{\bfL} ) \in Y^{\mbf L_0', p}_{\mbf a'},
\pi''(\widetilde{\bfL} ) \in Y^{\mbf L_0'', p}_{\mbf a''}
\}.
\]
Hence we have the following cartesian diagram.

\[
\begin{CD}
Y^{\bfL_0, p}_{\mbf a} @<\iota_1 <<  Y^{\bfL_0, p}_{\mbf a', \mbf a''} @>\pi_1 >>
Y^{\mbf L_0', p}_{\mbf a'} \times Y^{\mbf L_0'', p}_{\mbf a''}\\
@VVV @VVV @VVV \\
\widehat X_d(\mbf a)  @< \iota << \widehat X^+_{\mbf a, \mbf a', \mbf a''} @>\pi >> \widehat X_{d'} (\mbf a') \times \widehat X_{d''} (\mbf a''),
\end{CD}
\]
where the vertical maps are natural inclusion and the top horizontal maps are induced from the corresponding bottom ones.

Hence the positivity of $ \pi_! \iota^*$ is boiled down to that of $ \pi_{1!} \iota_1^*$, which follows from Braden's work ~\cite{B03}, since all objects involved are in the category of algebraic varieties over $\overline{\mbb F_q}$.

\begin{prop} \label{pos-tildeDelta-affine}
If $\widetilde \Delta_{\mbf b', \mbf a', \mbf b'', \mbf a''} (\{ A\}_d ) = \sum \widetilde  m^{B,C}_A \{B\}_d \otimes \{C\}_d$, then
$\widetilde  m^{B,C}_A \in \mbb Z_{\geq 0} [v, v^{-1}]$.
\end{prop}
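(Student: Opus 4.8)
The plan is to run the argument of Proposition~\ref{Pos-S} in the affine setting, adding one extra ingredient to handle infinite dimensionality. First observe that the twist $v^{\sum_{1\le i\le j\le n}b_i'b_j''-a_i'a_j''}$ appearing in $\Delta^{\dagger}_{\mbf b',\mbf a',\mbf b'',\mbf a''}$ is a single monomial in $v$ with coefficient $1$, so positivity of $\widetilde\Delta_{\mbf b',\mbf a',\mbf b'',\mbf a''}$ is equivalent to positivity of $\Delta^{\dagger}_{\mbf b',\mbf a',\mbf b'',\mbf a''}$; hence it suffices to treat $\widetilde\Delta$. As recorded above, the embedding $\widetilde{\bfL}\mapsto(\bfL_{\mbf b},\widetilde{\bfL})$ still induces a bijection between $\widehat P_{\mbf b}$-orbits on $\widehat X_d(\mbf a)$ and $\widehat\G_d$-orbits on $\widehat X_d(\mbf b)\times\widehat X_d(\mbf a)$, so $i^*_{\mbf b,\mbf a}$ is an isomorphism and the commutative diagram displayed above (the affine analogue of Lemma~\ref{Delta-ik}) reduces the positivity of $\widetilde\Delta_{\mbf b',\mbf a',\mbf b'',\mbf a''}$ to that of $\pi_!\iota^*$. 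Under the function--sheaf dictionary, $i^*_{\mbf b,\mbf a}(\{A\}_d)$ is the IC function of the closure of the $\widehat P_{\mbf b}$-orbit on $\widehat X_d(\mbf a)$ labelled by $A$ (its bar-invariance and leading term match the defining properties of $\{A\}_d$), so what one needs is that $\pi_!\iota^*$ sends this IC function to a nonnegative $\mbb Z[v,v^{-1}]$-combination of IC functions of $\widehat P_{\mbf b'}\times\widehat P_{\mbf b''}$-orbit closures.

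The genuinely new point relative to Proposition~\ref{Pos-S} is that $\widehat X_d(\mbf a)$ is an infinite-dimensional ind-variety, so Braden's theorem cannot be invoked directly. This is where the truncations $Y^{\bfL_0,p}_{\mbf a}$ enter: for $p$ large enough, $Y^{\bfL_0,p}_{\mbf a}$ is a genuine finite-type $G_{\bfL_{\mbf b}}$-stable projective variety over $\overline{\mbb F_q}$ that contains the orbit $X^{\bfL_{\mbf b}}_A$ together with all orbits in the support of $\{A\}_d$, so the restriction of the relevant IC complex to $Y^{\bfL_0,p}_{\mbf a}$ is again an IC complex and the structure constants are unchanged. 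Moreover the fixed-point set of the $\mathrm{GL}(1)$-action $\lambda$ on $Y^{\bfL_0,p}_{\mbf a}$ is $\bigsqcup_{\mbf a',\mbf a''}Y^{\mbf L_0',p}_{\mbf a'}\times Y^{\mbf L_0'',p}_{\mbf a''}$ and the attracting set of the $(\mbf a',\mbf a'')$-piece is $Y^{\bfL_0,p}_{\mbf a',\mbf a''}$, which yields the cartesian diagram displayed above; comparing it with the diagram $\widehat X_d(\mbf a)\xleftarrow{\iota}\widehat X^+_{\mbf a,\mbf a',\mbf a''}\xrightarrow{\pi}\widehat X_{d'}(\mbf a')\times\widehat X_{d''}(\mbf a'')$ shows that, on the sheaves at hand, $\pi_!\iota^*$ agrees with the genuine hyperbolic localization $\pi_{1!}\iota_1^*$ for the $\mathrm{GL}(1)$-variety $Y^{\bfL_0,p}_{\mbf a}$.

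Finally, by Braden's theorem~\cite{B03} the hyperbolic localization functor $\pi_{1!}\iota_1^*$ carries a semisimple complex to a semisimple complex; applied to the IC complex of $\overline{X^{\bfL_{\mbf b}}_A}$ it produces a direct sum of shifted IC complexes of orbit closures in $Y^{\mbf L_0',p}_{\mbf a'}\times Y^{\mbf L_0'',p}_{\mbf a''}$, with multiplicities recorded by nonnegative cohomological shifts. Passing back to Frobenius traces gives $\widetilde m^{B,C}_A\in\mbb Z_{\ge 0}[v,v^{-1}]$, as required. I expect the one delicate step to be the content of the previous paragraph: one must verify that a single truncation parameter $p$ can be chosen uniformly enough that $Y^{\bfL_0,p}_{\mbf a}$ simultaneously contains all orbit closures appearing on both sides, is stable under $G_{\bfL_{\mbf b}}$ and under $\lambda$, and carries the prescribed fixed-point and attracting sets, so that the hyperbolic localization computed on the truncation genuinely computes the affine structure constants. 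Once this is in place, the argument is formally identical to the finite case.
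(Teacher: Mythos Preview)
Your proposal is correct and follows essentially the same route as the paper: the discussion in Section~2.4 preceding the proposition \emph{is} the paper's proof, and you have recapitulated it accurately---reduce via the affine analogue of Lemma~\ref{Delta-ik} to $\pi_!\iota^*$, pass to the finite-type truncations $Y^{\bfL_0,p}_{\mbf a}$ via the displayed cartesian diagram, and invoke Braden on $\pi_{1!}\iota_1^*$. The ``delicate step'' you flag is precisely what the paper handles by the explicit dimension argument showing $X^{\bfL_{\mbf b}}_A\subseteq Y^{\bfL_0,p}$ for $p\ge p_0$; your opening remark about the monomial twist is harmless but unnecessary here, since the proposition concerns $\widetilde\Delta$ rather than $\Delta^{\dagger}$.
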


The following is an affine version of Proposition ~\ref{Pos-S},
which is a consequence of Proposition ~\ref{pos-tildeDelta-affine} and
Theorem ~\ref{xi-positivity}

\begin{thm}
\label{affine-positivity}
If $\mbb \Delta_{\mbf b', \mbf a', \mbf b'', \mbf a''} (\{ A\}_d  )
= \sum \widehat m^{B,C}_A \{B\}_d \otimes \{C\}_d$, then
$\widehat  m^{B,C}_A \in \mbb Z_{\geq 0} [v, v^{-1}]$.
\end{thm}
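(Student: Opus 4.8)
The plan is to obtain Theorem \ref{affine-positivity} as a formal consequence of the two substantial results already established: the positivity of the bare coproduct $\widetilde \Delta_{\mbf b', \mbf a', \mbf b'', \mbf a''}$ in Proposition \ref{pos-tildeDelta-affine}, together with the compatibility of the $v$-power twists $\xi_{d, i, c}$ with the canonical basis in Theorem \ref{xi-positivity}. Unwinding the definitions \eqref{Delta-dagger-affA} and \eqref{Delta-affA}, the graded component in question is
\[
\mbb \Delta_{\mbf b', \mbf a', \mbf b'', \mbf a''} = v^{N}\,(\xi_{d', n, d''} \otimes \xi_{d'', n, -d'}) \circ \widetilde \Delta_{\mbf b', \mbf a', \mbf b'', \mbf a''}, \qquad N = \sum_{1\leq i\leq j\leq n} b_i' b_j'' - a_i' a_j'',
\]
with $N$ a fixed integer depending only on the quadruple $(\mbf b', \mbf a', \mbf b'', \mbf a'')$. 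Here one first notes that each $\xi_{d, i, c}$ of \eqref{xi_dic} is diagonal in the standard basis, hence homogeneous for the row and column gradings, so restricting the global map $\xi_{d', n, d''} \otimes \xi_{d'', n, -d'}$ to the $(\mbf b', \mbf a', \mbf b'', \mbf a'')$-component is unambiguous and the decomposition $\mbb \Delta = \oplus\, \mbb \Delta_{\mbf b', \mbf a', \mbf b'', \mbf a''}$ is legitimate.

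First I would invoke Proposition \ref{pos-tildeDelta-affine} to write $\widetilde \Delta_{\mbf b', \mbf a', \mbf b'', \mbf a''}(\{A\}_d) = \sum_{B, C} \widetilde m^{B, C}_A\, \{B\}_d \otimes \{C\}_d$ with every coefficient $\widetilde m^{B, C}_A \in \mbb Z_{\geq 0}[v, v^{-1}]$, the sum running over $B \in \widehat{\Xi}_{d'}$ and $C \in \widehat{\Xi}_{d''}$ with $r(B) = \mbf b'$, $c(B) = \mbf a'$, $r(C) = \mbf b''$, $c(C) = \mbf a''$. Next I would apply Theorem \ref{xi-positivity} to each tensor factor, which gives $\xi_{d', n, d''}(\{B\}_d) = v^{d''\, \e_n(B)} \{B\}_d$ and $\xi_{d'', n, -d'}(\{C\}_d) = v^{-d'\, \e_n(C)} \{C\}_d$ with $\e_n$ the numerical datum of \eqref{eiA}. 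Combining the two displays yields
\[
\mbb \Delta_{\mbf b', \mbf a', \mbf b'', \mbf a''}(\{A\}_d) = \sum_{B, C} v^{\,N + d''\, \e_n(B) - d'\, \e_n(C)}\; \widetilde m^{B, C}_A \;\{B\}_d \otimes \{C\}_d,
\]
so that $\widehat m^{B, C}_A = v^{\,N + d''\, \e_n(B) - d'\, \e_n(C)}\, \widetilde m^{B, C}_A$, a nonnegative Laurent polynomial times a single monomial in $v$, hence an element of $\mbb Z_{\geq 0}[v, v^{-1}]$.

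Because both ingredients are already in place --- Proposition \ref{pos-tildeDelta-affine} resting on Braden's hyperbolic localization \cite{B03}, and Theorem \ref{xi-positivity} on the Du--Fu multiplication formula reproved here in Proposition \ref{DF-mult} --- I do not anticipate a real obstruction in this last step. The only matters needing genuine care are the bookkeeping of the three $v$-shifts (the intrinsic twist $v^{N}$ and the shifts $v^{d'' \e_n(B)}$, $v^{-d' \e_n(C)}$ coming from the two $\xi$ factors) and checking that $\xi_{d', n, d''} \otimes \xi_{d'', n, -d'}$ is homogeneous for the gradings so that one may pass to graded components; if anything is even mildly subtle it is confirming that the distinguished index is $i = n$, in accordance with the way $\Delta^{\dagger}$ was corrected into $\mbb \Delta$ in Proposition \ref{Delta_affA}.
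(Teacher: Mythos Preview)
Your proposal is correct and follows exactly the approach indicated in the paper: the theorem is deduced as a direct consequence of Proposition~\ref{pos-tildeDelta-affine} (positivity of $\widetilde\Delta$ via hyperbolic localization) and Theorem~\ref{xi-positivity} (compatibility of the twists $\xi_{d,i,c}$ with the canonical basis), by unwinding the definition $\mbb\Delta = (\xi_{d',n,d''}\otimes\xi_{d'',n,-d'})\circ\Delta^{\dagger}$ and the scalar twist in~\eqref{Delta-dagger-affA}. Your explicit formula $\widehat m^{B,C}_A = v^{\,N + d''\e_n(B) - d'\e_n(C)}\,\widetilde m^{B,C}_A$ simply makes this derivation concrete.
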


Following ~\cite{L00}, the transfer map $\widehat \phi_{d, d-n}: \aSB_d \to \aSB_{d-n}$ is the
composition of
$$
\begin{CD}
\aSB_d @>\widetilde \Delta>>  \aSB_{d-n} \otimes \aSB_{n}
@>\xi^{-1} \otimes \chi >> \aSB_{d-n} \otimes \mbb A \cong  \aSB_{d-n},
\end{CD}
$$
where
$\xi $ is in Remark ~\ref{L00-xi} and 
$\chi$ is the signed representation of $\aSB_n$ defined in ~\cite[1.8]{L00}. 

Note that by ~\cite[1.12]{L00} and an argument similar to \cite[3.3]{LW14},  $\chi$ sends a canonical basis element to $1$ or $0$.
By Remark ~\ref{L00-xi} and Proposition ~\ref{pos-tildeDelta-affine}, we have

\begin{cor}
$\widehat \phi_{d, d-n} (\{ A\}_d)=\sum c_{A, A'} \{A'\}_{d-n}$ where $c_{A, A'} \in \mbb Z_{\geq 0} [v, v^{-1}]$.
\end{cor}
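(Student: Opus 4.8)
The plan is to track positivity through the factorization of $\widehat\phi_{d, d-n}$ into its defining steps, namely $\widehat\phi_{d, d-n} = (\xi^{-1}\otimes\chi)\circ\widetilde\Delta$ with $d' = d-n$ and $d'' = n$, followed by the identification $\aSB_{d-n}\otimes\mbb A\cong\aSB_{d-n}$, and to show that each step carries a canonical basis element to a nonnegative combination of canonical basis elements --- the middle step up to a monomial in $v$ and a scalar in $\{0,1\}$.

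First I would expand $\widetilde\Delta(\{A\}_d)$ in the product canonical basis. Decomposing $\widetilde\Delta = \oplus\,\widetilde\Delta_{\mbf b',\mbf a',\mbf b'',\mbf a''}$ along the weight decompositions $\aSB_{d-n} = \oplus_{\mbf b',\mbf a'}\aSB_{d-n}(\mbf b',\mbf a')$ and $\aSB_n = \oplus_{\mbf b'',\mbf a''}\aSB_n(\mbf b'',\mbf a'')$ and applying Proposition \ref{pos-tildeDelta-affine} componentwise yields
\[
\widetilde\Delta(\{A\}_d) = \sum_{A'\in\widehat\Xi_{d-n},\ C\in\widehat\Xi_n}\widetilde m^{A', C}_A\,\{A'\}_{d-n}\otimes\{C\}_n,\qquad \widetilde m^{A', C}_A\in\mbb Z_{\geq 0}[v, v^{-1}].
\]

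Next I would apply $\xi^{-1}\otimes\chi$ termwise. By Theorem \ref{xi-positivity}, recorded for $\xi = \prod_{1\leq i\leq n}\xi_{d-n, i, -1}$ in Remark \ref{L00-xi}, one has $\xi(\{A'\}_{d-n}) = v^{-\sum_{1\leq i\leq n}\e_i(A')}\{A'\}_{d-n}$, hence $\xi^{-1}(\{A'\}_{d-n}) = v^{\sum_{1\leq i\leq n}\e_i(A')}\{A'\}_{d-n}$, a nonnegative monomial multiple of a canonical basis element. On the $\aSB_n$ factor, $\chi(\{C\}_n)\in\{0,1\}$ by \cite[1.12]{L00} together with the argument analogous to \cite[3.3]{LW14} invoked just above the corollary. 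Combining, and identifying $\aSB_{d-n}\otimes\mbb A$ with $\aSB_{d-n}$,
\[
\widehat\phi_{d, d-n}(\{A\}_d) = \sum_{A'\in\widehat\Xi_{d-n}}\Bigl(v^{\sum_{1\leq i\leq n}\e_i(A')}\sum_{C\in\widehat\Xi_n}\widetilde m^{A', C}_A\,\chi(\{C\}_n)\Bigr)\{A'\}_{d-n},
\]
so $c_{A, A'} = v^{\sum_{1\leq i\leq n}\e_i(A')}\sum_C\widetilde m^{A', C}_A\,\chi(\{C\}_n)$ is a $\mbb Z_{\geq 0}$-linear combination of products of a $v$-monomial with nonnegative Laurent polynomials, hence lies in $\mbb Z_{\geq 0}[v, v^{-1}]$.

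Since each of the three inputs --- the positivity of $\widetilde\Delta$ (Proposition \ref{pos-tildeDelta-affine}, resting on Braden's hyperbolic localization), the compatibility of $\xi$ with the canonical basis (Theorem \ref{xi-positivity}), and the $0/1$ property of $\chi$ --- is already established, the corollary is essentially an assembly statement. The only point requiring care is the bookkeeping of weights: one must match the component of $\widetilde\Delta$ landing in $\aSB_{d-n}(\mbf b',\mbf a')\otimes\aSB_n(\mbf b'',\mbf a'')$ with the weight space on which $\xi^{-1}$ acts and with $\aSB_n(\mbf b'',\mbf a'')$ on which $\chi$ is evaluated. The genuine obstacle lies upstream, in the proof of Theorem \ref{xi-positivity} --- where the compatibility of $\xi_{d,i,c}$ with the bar involution (Theorem \ref{PAA'}), and hence the Du--Fu multiplication formula, were needed --- and that work is already in place.
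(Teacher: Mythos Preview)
Your proof is correct and follows essentially the same approach as the paper: the corollary is deduced directly from Proposition~\ref{pos-tildeDelta-affine} (positivity of $\widetilde\Delta$), Remark~\ref{L00-xi} (which gives $\xi^{-1}(\{A'\}_{d-n}) = v^{\sum_i \e_i(A')}\{A'\}_{d-n}$ from Theorem~\ref{xi-positivity}), and the $0/1$ property of $\chi$ on canonical basis elements noted just above the corollary. You have simply spelled out the assembly in more detail than the paper does.
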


\subsection{Positivity in quantum affine $\mathfrak{sl}_n$}


Let $\tilde{\mathfrak S}_n$ be the set of all $\mbf a =(a_i)_{i\in \mbb Z}$
such that $a_i \in \mbb Z$ and $a_i = a_{i+n}$ for all $i \in \mbb Z$.
Let $\hmY = \{ \mbf a  \in \tilde{\mathfrak S}_n | \sum_{1\leq i\leq n} a_i = 0\}$.
We define an equivalence relation $\sim$ on $\tilde{\mathfrak S}_n$ by declaring $ \mbf a \sim \mbf b$
if there is a $z$ in $\mbb Z$ such that $a_i - b_i = z$ for all $i$.
Let $\hmX$ be the set $\tilde{\mathfrak S}_n/\sim$ of all equivalence classes in $\tilde{\mathfrak S}_n$ with respect to $\sim$.
Let $\overline{\mbf a}$ denote   the class of $\mbf a$.
Both $\hmX$ and $\hmY$ admit a natural abelian group structure with the  component-wise addition.
Moreover, we have a bilinear form
\[
\langle -, - \rangle :
\hmY \times \hmX \to \mbb Z, \quad
\langle \mbf b, \bar{\mbf a} \rangle = \sum_{1\leq i\leq n} b_i a_i.
\]

Now set $\widehat I = \mbb Z/ n\mbb Z$.
For each $i\in \hI$, we associate an element, still denoted by $i$, in $\hmY$ whose value is $1$ for each integer in the equivalence class $i$ and zero otherwise.
This  defines a map
$
\widehat I \to \hmY.
$
The same  map induces a map $\widehat I \to \hmX$ which sends $i\in \hI$ to the equivalence class of $i\in \hmY$ in $\tilde{\mathfrak S}_n$. By abuse of notations, we still use $i$ to denote its image in $\hmY$.
The data  ($\hmY, \hmX, \langle-,-\rangle, \widehat I \subset  \hmY, \widehat I \subset \hmX$) is thus a Cartan datum of affine type $\mbf A_{n-1}$, which is neither $\hmX$-regular nor $\hmY$-regular.

By definition, the quantum affine $\mathfrak{sl}_n$ attached to the above root datum,  denoted by $\mbb U(\widehat{\mathfrak{sl}}_n)$,
is an associative  algebra over $\mbb Q( v)$  generated by the generators:
$
\mbb E_i, \mbb F_i, \mbb K_{\mu}
$
forall $i \in \widehat I, \mu \in \hmY$,
and subject to the  relations
$\mbb K_1 \mbb K_2 \cdots \mbb K_n  =1$ and (\ref{q-Serre}), for all $ i, j \in \widehat I$.
Note that the first defining relation of $\U(\widehat{\mathfrak{sl}}_n)$ is due to the degeneracy of the Cartan datum.

Moreover, $\U(\widehat{\mathfrak{sl}}_n)$ admits a Hopf-algebra structure,  whose comultiplication is defined by the following rules.
\begin{align}
\begin{split}
\mbb \Delta ( \mbb E_i)  = \mbb E_i \otimes \mbb K_i + 1 \otimes \mbb E_i, \
\mbb \Delta (\mbb F_i)  = \mbb F_i \otimes 1+ \mbb K^{-1}_{i} \otimes \mbb F_i, \
\mbb \Delta (\mbb K_i)  = \mbb K_i\otimes \mbb K_i, \quad \forall  i \in \widehat I.
\end{split}
\end{align}
Let $\dot{\U}(\widehat{\mathfrak{sl}}_n)$ be Lusztig's idempotented algebra associated to $\U (\widehat{\mathfrak{sl}}_n)$.
It is defined similar to that of quantum $\mathfrak{sl}_n$ in Section ~\ref{Type-A-positivity}.
Similar to the finite case, $\mbb \Delta$ then induces a linear map
\begin{align} \label{affine-Delta-dot}
\mbb \Delta_{\overline{\mu'}, \overline{\lambda'}, \overline{\mu''}, \overline{\lambda''}}: \ _{\overline{\mu}} \mbb U_{\overline{\lambda}} (\widehat{\mathfrak{sl}}_n)\to \ _{\overline{\mu'}} \mbb U_{\overline{\lambda'}}(\widehat{\mathfrak{sl}}_n) \otimes \ _{\overline{\mu''}} \mbb U_{\overline{\lambda''}}(\widehat{\mathfrak{sl}}_n),
\end{align}
where $_{\overline{\mu}} \mbb U_{\overline{\lambda}} (\widehat{\mathfrak{sl}})$ is defined similar to
$_{\overline{\mu}} \mbb U_{\overline{\lambda}}$ in finite case and
$\overline{\mu} = \overline{\mu'} + \overline{\mu''}$,
$\overline{\lambda} = \overline{\lambda'} + \overline{\lambda''}$ in $\widehat X$.

By the same definition as $\phi_d$ in (\ref{Psi}), we still have an algebra homomorphism
\[
\widehat{\phi}_{d} : \U(\widehat{\mathfrak{sl}}_n) \to \aSB_d.
\]
But this time $\widehat{\phi}_d$ is not surjective anymore.
Then the rest of the result in finite case can be transported to affine case. In particular, we have

\begin{thm}  \label{conj-25.4.2-affine}
Let $b$ be a canonical basis element of $\dot{\U} (\widehat{\mathfrak{sl}}_n)$.
If
$
\mbb \Delta_{\overline{\mu'}, \overline{\lambda'}, \overline{\mu''}, \overline{\lambda''}} (b)
= \sum_{b', b''} \hat m_{b}^{b', b''} b'\otimes b''
$,
then  $\hat m_b^{b', b''} \in \mbb Z_{\geq 0} [v, v^{-1}]$.
\end{thm}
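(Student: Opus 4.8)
The plan is to transport the proof of Theorem~\ref{conj-25.4.2} to the affine setting, substituting Theorem~\ref{affine-positivity} for Proposition~\ref{Pos-S} and the affine-$\mathfrak{sl}_n$ stabilization of transfer maps (Schiffmann--Vasserot~\cite{SV00}, McGerty~\cite{M10}) for the finite stabilization~\cite[Proposition 7.8]{M10}. The first task is to assemble the affine analogue of the commutative cube~(\ref{cube}). Its top face states that the affine comultiplication $\mathbb\Delta$ descends to the idempotented level, which is~(\ref{affine-Delta-dot}) together with the affine analogue of~(\ref{U-U-dot}); this is the standard argument of~\cite[23.1.5]{L93}, valid for an arbitrary, in particular degenerate, Cartan datum. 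Its front face is the affine refinement of~(\ref{U-S}), the compatibility of the Hopf comultiplication of $\mathbb U(\widehat{\mathfrak{sl}}_n)$ with the geometric comultiplication $\mathbb\Delta\colon\widehat{\mathbb S}_d\to\widehat{\mathbb S}_{d'}\otimes\widehat{\mathbb S}_{d''}$ via the homomorphisms $\widehat\phi_d$; it commutes because, by Proposition~\ref{Delta_affA}, $\mathbb\Delta$ on $\widehat{\mathbb S}_d$ obeys on the generators $\mathbf E_i,\mathbf F_i,\mathbf K_i$ exactly the formulas of the Hopf comultiplication of $\mathbb U(\widehat{\mathfrak{sl}}_n)$, so the square commutes on generators and hence everywhere. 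Its two side faces are the affine analogues of~(\ref{U-U-dot-S}), linking $\mathbb U(\nu)$, ${}_{\overline\mu}\mathbb U_{\overline\lambda}(\widehat{\mathfrak{sl}}_n)$ and the weight blocks $\widehat{\mathbb S}_d(\mathbf b,\mathbf a)$ through $\widehat\phi_d$ and the transfer map $\dot{\mathbb U}(\widehat{\mathfrak{sl}}_n)\to\widehat{\mathbb S}_d$ (the affine analogue of $\widetilde\phi_d$ from Section~\ref{Type-A-positivity}); these commute by the defining formulas of the latter map on $1_{\overline\lambda},\mathbb E_i1_{\overline\lambda},\mathbb F_i1_{\overline\lambda}$. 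The bottom face is the obvious compatibility of projections. Since the projection $\pi_{\overline\mu,\overline\lambda}\colon\mathbb U(\nu)\to{}_{\overline\mu}\mathbb U_{\overline\lambda}(\widehat{\mathfrak{sl}}_n)$ is surjective --- and this projection, not $\widehat\phi_d$, is what the chase uses, so the non-surjectivity of $\widehat\phi_d$ is irrelevant --- the diagram chase of Proposition~\ref{U-dot-S-A} yields the commuting back face, i.e.\ the affine version of~(\ref{U-dot-S}).

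The rest then copies the proof of Theorem~\ref{conj-25.4.2}. Let $b$ be a canonical basis element of $\dot{\mathbb U}(\widehat{\mathfrak{sl}}_n)$, and put $\mathcal I=\{(b',b'')\mid \hat m_b^{b',b''}\neq 0\}$, a finite set. By the identification of the geometrically defined canonical basis of $\dot{\mathbb U}(\widehat{\mathfrak{sl}}_n)$ with Lusztig's (Schiffmann--Vasserot~\cite{SV00}, McGerty~\cite{M10}), one can choose $d,d',d''$ large enough that the transfer maps $\dot{\mathbb U}(\widehat{\mathfrak{sl}}_n)\to\widehat{\mathbb S}_d$, and similarly for $d',d''$, send $b$ and all $b',b''$ with $(b',b'')\in\mathcal I$ to canonical basis elements $\{A\}_d,\{B\}_{d'},\{C\}_{d''}$ of $\widehat{\mathbb S}_d,\widehat{\mathbb S}_{d'},\widehat{\mathbb S}_{d''}$. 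Applying the back face of the cube to $b$ and comparing with the expansion of $\mathbb\Delta_{\mathbf b',\mathbf a',\mathbf b'',\mathbf a''}(\{A\}_d)$ in the affine canonical basis identifies $\hat m_b^{b',b''}$ with a structure constant $\widehat m^{B,C}_A$ as in Theorem~\ref{affine-positivity}, which lies in $\mathbb Z_{\geq 0}[v,v^{-1}]$. This proves the theorem.

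I expect essentially all of the genuine content to be discharged upstream, in Theorem~\ref{affine-positivity}, where $\mathbb\Delta$ on $\widehat{\mathbb S}_d$ is factored as $\widetilde\Delta$ --- whose positivity, Proposition~\ref{pos-tildeDelta-affine}, comes from Braden's hyperbolic localization~\cite{B03} on affine partial flag varieties --- followed by the $v$-power twists $\xi_{d',n,d''}\otimes\xi_{d'',n,-d'}$, whose compatibility with the canonical basis, Theorem~\ref{xi-positivity}, rests on Theorem~\ref{PAA'} and hence on the Du--Fu multiplication formula, Proposition~\ref{DF-mult}. Granting that, the present statement carries no serious obstacle: what remains is the commutativity of the affine faces of the cube --- routine once Proposition~\ref{Delta_affA} is available --- and the appeal to the transfer-map stabilization for affine $\mathfrak{sl}_n$, which is classical.
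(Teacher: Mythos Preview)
Your proposal is correct and follows essentially the same approach as the paper, which simply asserts that ``the rest of the result in finite case can be transported to affine case'' once one has the affine $\widehat\phi_d$ and Theorem~\ref{affine-positivity}. Your treatment is in fact more explicit than the paper's, correctly flagging that the diagram chase only needs surjectivity of $\pi_{\overline\mu,\overline\lambda}$ (not of $\widehat\phi_d$) and that the stabilization input is the Schiffmann--Vasserot/McGerty identification of canonical bases.
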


By ~\cite[25.2.2]{L93}, Theorem ~\ref{conj-25.4.2-affine} remains valid over other root datum of affine type $\mbf A_{n-1}$.

\section{Coproduct for  the $\jmath$Schur  algebras}

In this section, we define the comultiplication on the $\jmath$Schur algebra level and show that it gives rise to the transfer map used in ~\cite{LW14}.
We shall also show that the comultiplication degenerates to an imbedding of a $\jmath$Schur algebra to an ordinary Schur algebra and
establish a direct connection of the type $A$ geometric duality of Grojnowski-Lusztig ~\cite{GL92} and the type $B/C$ geometric duality in ~\cite{BKLW13}.

\subsection{The $\jmath$Schur algebra $\mbf S^{\B}_d$}
\label{vSB}

In this section, we assume that $n$ and $D$ are odd, i.e.,
$$n=2r+1 \quad \mbox{and} \quad  D=2d+1.$$
We fix a non-degenerate symmetric bilinear form $Q^{\B}: \mbb F_q^D\times \mbb F_q^D\rightarrow \mbb F_q$.
Let $W^{\perp}$ stand for   the orthogonal complement of the vector subspace  $W$ in $\mbb F_q^D$ with respect to the form $Q^{\B}$.
By convention, $W$ is called isotropic if $W\subseteq W^{\perp}$.
Recall the set $X_d$ from Section ~\ref{vSA}.
Consider the subset $X_d^{\B}$ of $X_d$ defined by
$$X^{\B}_d =\{V\in  X_d | V_i=V_j^{\perp},\ \mbox{if} \  i+j=n\}.$$

Let $\G^{\B}_d$ be the orthogonal group attached to $Q^{\B} $, i.e.,
$$
\G^{\B}_d= \{g\in \G_d | Q^{\B} (gu, g u')=Q^{\B} (u,u'),\ \forall u,u'\in \mbb F_q^D\}.
$$
The group $\G^{\B}_d$ acts from the left  on $X^{\B}_d$.
It induces a diagonal action  on $X^{\B}_d \times X^{\B}_d$.
By the general construction in Section ~\ref{conv}, we have a unital associative algebra
\begin{align}
\label{SB}
\mbf S^{\B}_d \equiv \mathcal A_{\G^{\B}_d} ( X^{\B}_d \times X^{\B}_d).
\end{align}
This is the algebra first appeared in ~\cite{BKLW13}. See also ~\cite{G97} and \cite{Du-Scott}.

\subsection{Coproduct on $\mbf S^{\B}_d$}\label{cosb}

We set $\D =\mathbb F_q^D$.
We need the following auxiliary lemma.

\begin{lem}
\label{auxi}
Suppose that $\D''$ is an isotropic subspace of $\D$ and $L=(L_i| 0 \leq i\leq n) \in X^{\B}_d$. Then we can find a pair $(T, W)$  of subspaces in $\D$ such that
\begin{enumerate}
\item [(a)]  $\D= \D''\oplus T\oplus W$, $(\D'')^{\perp} = \D'' \oplus T$,
\item [(b)] $W$ is isotropic and $T\perp W$,
\item [($\mbox{c}$)] There exists bases $\{z_1,\cdots,z_s\}$ and $\{w_1,\cdots, w_s\}$ of $\D''$ and $W$, respectively, such that $Q^{\B} (z_i,w_j)=\delta_{ij}$ for any $i, j\in [1, s]$,
\item [(d)] $L_i = (L_i\cap \D'')\oplus (L_i\cap T)\oplus (L_i\cap W)$, for any $1\leq i\leq n-1$.
\end{enumerate}
\end{lem}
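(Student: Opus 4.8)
The plan is to build $(T,W)$ as a hyperbolic complement to $\D''$ that is compatible with the self-dual flag $L$. Since $Q^{\B}$ is non-degenerate and $(\D'')^{\perp}$ is exactly the annihilator of $\D''$, the form induces a perfect pairing
\[
\bar Q\colon \D''\times\big(\D/(\D'')^{\perp}\big)\to\mbb F_q .
\]
The flag $L$ induces a flag $F_i:=L_i\cap\D''$ on $\D''$ and a flag $G_i:=\big(L_i+(\D'')^{\perp}\big)/(\D'')^{\perp}$ on $\D/(\D'')^{\perp}$, for $0\le i\le n$. The key structural observation is that these two flags are $\bar Q$-dual with reversed indexing: $G_i=\mathrm{Ann}(F_{n-i})$ for all $i$. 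The inclusion $\subseteq$ is immediate from $L_i=L_{n-i}^{\perp}$ (if $v\in L_i$ and $z\in L_{n-i}\cap\D''$ then $\bar Q(z,\bar v)=Q^{\B}(z,v)=0$), and equality follows from a dimension count using $\dim L_i+\dim L_{n-i}=D$ together with $(L_i\cap(\D'')^{\perp})^{\perp}=L_{n-i}+\D''$.

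Next I would choose a basis $z_1,\dots,z_s$ of $\D''$ adapted to $F_\bullet$, with a level function $\ell$ such that $z_j\in F_{\ell(j)}\setminus F_{\ell(j)-1}$ and $\{z_k:\ell(k)\le i\}$ is a basis of $F_i$ for each $i$. Taking the $\bar Q$-dual basis $\bar w_1,\dots,\bar w_s$ of $\D/(\D'')^{\perp}$, the duality above places $\bar w_j$ in $G_{n-\ell(j)+1}\setminus G_{n-\ell(j)}$, so it lifts to some $w_j'\in L_{n-\ell(j)+1}$ with $Q^{\B}(z_k,w_j')=\delta_{jk}$. These $w_j'$ need not span an isotropic space; since the characteristic is odd I would correct them to $w_j:=w_j'-\tfrac12\sum_k Q^{\B}(w_j',w_k')\,z_k$. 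Adding an element of the isotropic subspace $\D''$ does not change the pairings $Q^{\B}(z_k,w_j)=\delta_{jk}$ and makes $W:=\mathrm{span}\{w_1,\dots,w_s\}$ isotropic; moreover a second use of self-duality shows $Q^{\B}(w_j',w_k')=0$ whenever $\ell(k)>n-\ell(j)+1$ (then $w_k'\in L_{n-\ell(k)+1}\subseteq L_{\ell(j)-1}$ while $w_j'\in L_{n-\ell(j)+1}=L_{\ell(j)-1}^{\perp}$), so the correction term lies in $F_{n-\ell(j)+1}$ and hence $w_j\in L_{n-\ell(j)+1}$ as well. Finally put $T:=(\D''\oplus W)^{\perp}$.

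Conditions (a)--(c) are then formal: in the basis $(z_1,\dots,z_s,w_1,\dots,w_s)$ the Gram matrix of $Q^{\B}$ on $\D''\oplus W$ is $\left(\begin{smallmatrix}0&I_s\\ I_s&0\end{smallmatrix}\right)$, so $\D''\oplus W$ is direct and non-degenerate; hence $\D=(\D''\oplus W)\oplus T$, $W$ is isotropic, $T=(\D'')^{\perp}\cap W^{\perp}$ gives $T\perp W$ and $\D''\perp T$, and comparing dimensions ($D-s$ on both sides) yields $(\D'')^{\perp}=\D''\oplus T$. For (d), fix $i$ and $v\in L_i$ and decompose $v=v_{\D''}+v_W+v_T$ along $\D''\oplus W\oplus T$. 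Using $W$ isotropic and $T\subseteq W^{\perp}$ one gets $v_{\D''}=\sum_j Q^{\B}(v,w_j)\,z_j$, and since $w_j\in L_{n-\ell(j)+1}\subseteq L_i^{\perp}$ when $\ell(j)>i$, only the terms with $\ell(j)\le i$ survive, so $v_{\D''}\in F_i\subseteq L_i$. Then $v-v_{\D''}\in(W\oplus T)\cap L_i$, and using $T\subseteq(\D'')^{\perp}$ its $W$-component equals $\sum_k Q^{\B}(z_k,v-v_{\D''})\,w_k$; here $z_k\in L_i^{\perp}$ when $\ell(k)\le n-i$, so only terms with $\ell(k)\ge n-i+1$ survive, and each corresponding $w_k\in L_{n-\ell(k)+1}\subseteq L_i$. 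Thus $v_W\in L_i\cap W$, and then $v_T=v-v_{\D''}-v_W\in L_i\cap T$, giving (d).

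The step I expect to be the real obstacle is (d): it is what forces the precise choice of $w_j$ at flag-level $n-\ell(j)+1$, and it is delicate because the isotropy correction must be shown not to lower that level — this is exactly where the odd-characteristic hypothesis is used. Everything else (the perfect pairing, the duality of the induced flags, the hyperbolic Gram matrix) is routine linear algebra over $\mbb F_q$.
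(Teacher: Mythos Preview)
Your proof is correct and takes a genuinely different route from the paper's. The paper proceeds by first finding a complement $T'$ to $\D''$ inside $(\D'')^{\perp}$ adapted to the flag, and then builds $W$ \emph{inductively along the flag}: for $n=3$ it produces $W_1\subseteq L_1$, then passes to the induced form on $L_2/L_1$ and invokes a hyperbolic-decomposition theorem (Jacobson, Theorem~6.11) to get $W_2$, then corrects $T'$ to $T$ so that $T\perp(W_1\oplus W_2)$, and finally extends $W_1\oplus W_2$ to $W$; the general $n$ case is deferred to ``a similar argument inductively.'' By contrast, you build $W$ first and globally: the flag duality $G_i=\mathrm{Ann}(F_{n-i})$ under the perfect pairing $\D''\times\D/(\D'')^{\perp}\to\mbb F_q$ lets you lift a $\bar Q$-dual basis to the correct flag levels in one shot, make a single isotropy correction via $\tfrac12$, and then simply put $T=(\D''\oplus W)^{\perp}$.

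Your approach is cleaner in two respects: it handles all $n$ uniformly without an outer induction, and the verification of (d) is a direct coefficient computation (pairing against the $w_j$'s and $z_k$'s) rather than a step-by-step splitting. The paper's approach, on the other hand, keeps the odd-characteristic hypothesis hidden inside the cited structure theorem, whereas you use it explicitly when dividing by $2$ in the isotropy correction; you correctly identify this as the delicate point, and your argument that the correction term lies in $F_{n-\ell(j)+1}$ (so $w_j$ stays in $L_{n-\ell(j)+1}$) is exactly what makes (d) go through.
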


\begin{proof}
Assume that $n=3$. We can use an induction process to  find a subspace $T' \subset (\D'')^{\perp}$ such that $(\D'')^{\perp} = \D''\oplus T'$ and
\[
L_i \cap (\D'')^{\perp} = (L_i\cap \D'')\oplus (L_i\cap T'),\quad \forall 1\leq i\leq n-1.
\]
Moreover, the restriction of the bilinear form $Q^{\B}$ to $T'$ is automatically non-degenerate.
next, we can find a subspace $W_1\subseteq L_1$ such that
$
L_1= (L_1\cap (\D'')^{\perp} )\oplus W_1.
$
Similarly, we can find subspaces $U_2$ and $T_2$ such that
\[
L_2\cap \D'' = (L_1\cap \D'' ) \oplus U_2, \quad
L_2\cap T'= (L_1\cap T') \oplus T_2.
\]
Via the natural projection $L_2 \to L_2/L_1$, we can regard $U_2\oplus T_2$ as subspaces in $L_2/L_1$.
Now $L_2/L_1$ inherits a non-degenerate bilinear form from that of $\D$.
Moreover,  $U_2\oplus T_2$ is the orthogonal complement of $U_2$ with respect to the form on $L_2/L_1$. By a well-known fact, say ~\cite[Theorem 6.11]{J}, we can find an isotropic subspace $W'_2$ such that
$L_2/L_1=U_2\oplus T_2\oplus W'_2$,  $T_2\perp W'_2$ and
$\dim U_2= \dim W_2'$. Furthermore, the restriction of the form to $U_2+W_2'$ is non-degenerate.
Now take a subspace $W_2$ in $L_2$ such that it gets sent to $W_2'$ via the projection map. Then by comparing the dimensions, we have
\[
L_2= (L_2\cap (\D'')^{\perp}) \oplus (W_1\oplus W_2).
\]
It is clear that $W_1\oplus W_2$ is an isotropic subspace in $L_2$ and $(W_1\oplus W_2) \perp (L_2\cap T')$.

Note that $T'$ is not necessarily perpendicular to $W_1\oplus W_2$. We consider the subspace $\D'' \oplus T'\oplus W_1\oplus W_2$.
We can find a subspace $U_1$ in $V''$ such that $U_1\cap (L_2\cap \D'')=\{0\}$ and the restriction of the bilinear form to $U_1\oplus W_1$ is non-degenerate. The latter implies that we can find bases $\{u_1,\cdots,u_s\}$ and $\{w_1,\cdots,w_s\}$ in $U_1$ and $W_1$, respectively, such that $(v_i,w_j)=\delta_{ij}$.
Recall that we have bases $\{u_{r+i}|1\leq i\leq s_1\} $ and $\{w_{r+i}|1\leq i\leq s_1\}$ for $U_2$ and $W_2$ such that $(u_{r+i},w_{r+j})=\delta_{ij}$.
Fix a basis $\{t'_i\}$ for $T'$ such that $\{t'_i\}\cap (L_2\cap T')$ is a basis of $L_2\cap T'$. Let $T$ be the subspace spanned by the elements
$t_i = t_i' - \sum_{1\leq j\leq s+s_1} (t_i', w_j) u_j$.
We thus have $T\perp (W_1\oplus W_2)$ and $T$ satisfies all properties $T'$ has with respect to the flag $L$.

By ~\cite[Theorem 6.11]{J}, we can extend $W_1\oplus W_2$ to a subspace $W$ satisfying the required properties, by extending  the subspace
$(\D'')^{\perp}\oplus W_1\oplus W_2$ to the whole space $\D$. So the pair $(T, W)$ satisfies the desired properties. The lemma follows for $n=3$.
For general $n$, it can be shown by a similar argument inductively.
\end{proof}

Suppose that $\D''$ is an isotropic subspace of $\D$ of dimension $d''$.
We set $\D'=(\D'')^{\perp}/\D''$, and denote by $D'$ its dimension $D - 2 d''=2d' +1$.
Thus $\D'$ admits a non-degenerate bilinear form inherited from that of $\D$.
Given any subspace $C\subseteq D$, it induces a subspace $\pi^{\natural}(C) \in \D'$ defined by
\[
\pi^{\natural} (C) =  \frac{C \cap (\D'')^{\perp} + \D''}{ \D''}.
\]
Recall the  operation  $\pi''$ from Section ~\ref{CoSA}.
For any $L\in X^{\B}_d$, we have that
$\pi^{\natural}(L)\in  X^{\B}_{d'}$ and $\pi''(L)\in X_{d''}$.
For any pair $(L'', L')\in  X_{d''}\times X^{\B}_{d'}$, we set
\begin{align}
\label{Z}
Z^{\B}_{L',L''}=\{
L\in X^{\B}_d| \pi^{\natural}(L)=L', \pi''(L)=L''
\}.
\end{align}
We also set
$\tilde Z$ to be the set of all pairs $(T, W)$ subject to the conditions (1), (2) and (3) in Lemma ~\ref{auxi}.
To a pair $(T,W)\in \tilde Z$, we have an isomorphism $\pi: T\to \D'$.
Define a map  $\tilde Z\to Z^{\B}_{L', L''}$ by sending $(T, W)$ to $L^{T,W}$, where
\[
L_i^{T,W} = L''_i\oplus \pi^{-1} (L_i') \oplus (L''_{n-i})^{\#}, \
(L''_{n-i})^{\#} =\{ w\in W| (w,L''_{n-i})=0\},
\quad\forall 1\leq i\leq n,
\]
By Lemma ~\ref{auxi}, we see that the map $\tilde Z\to Z^{\B}_{L', L''}$ is surjective.
Let
\[
\mathcal U=\{ g\in \G^{\B}_d | g(v)=v,\forall v\in \D'', g(v_1)-v_1\in \D'', \forall v_1\in (\D'')^{\perp}\}.
\]
Clearly $\mathcal U$ acts on $\tilde Z$ and $Z^{\B}_{L',L''}$. Moreover,
it can be checked that $\mathcal U$ acts transitively on $\tilde Z$ and is compatible with the surjective map $\tilde Z\to Z^{\B}_{L',L''}$.
Therefore, we have  the following lemma, analogous to ~\cite[Lemma 1.4]{L00}.

\begin{lem}
\label{independent}
The group $\mathcal U$ acts transitively on the set $Z^{\B}_{L', L''}$.
\end{lem}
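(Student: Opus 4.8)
The plan is to deduce the lemma from two facts announced just before the statement: that $\mathcal U$ acts transitively on $\tilde Z$, and that the surjection $\tilde Z \twoheadrightarrow Z^{\B}_{L',L''}$, $(T,W)\mapsto L^{T,W}$, is $\mathcal U$-equivariant. Granting these, the lemma is formal. Given $L,\widetilde L\in Z^{\B}_{L',L''}$, use surjectivity (Lemma \ref{auxi}) to pick $(T,W),(\widetilde T,\widetilde W)\in\tilde Z$ with $L^{T,W}=L$, $L^{\widetilde T,\widetilde W}=\widetilde L$, then pick $g\in\mathcal U$ with $g(T,W)=(\widetilde T,\widetilde W)$; equivariance yields $gL=gL^{T,W}=L^{g(T),g(W)}=L^{\widetilde T,\widetilde W}=\widetilde L$. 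So it remains to establish the two facts.

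For equivariance, observe that every $g\in\mathcal U$ fixes $\D''$ pointwise, is orthogonal for $Q^{\B}$, and induces the identity on $\D'=(\D'')^{\perp}/\D''$ (because $(g-1)(\D'')^{\perp}\subseteq\D''$); in particular it fixes the flag $L''$ in $\D''$ and the flag $L'$ in $\D'$, and it carries $T$ to $g(T)$ and $W$ to $g(W)$, two complements of the same type. From the defining formula
\[
L^{T,W}_i = L''_i\oplus \pi^{-1}(L'_i) \oplus (L''_{n-i})^{\#},\qquad (L''_{n-i})^{\#}=\{w\in W\mid Q^{\B}(w,L''_{n-i})=0\},
\]
one reads off termwise that $g$ sends $L''_i$ to $L''_i$; sends $\pi^{-1}(L'_i)\subseteq T$ to the corresponding preimage of $L'_i$ in $g(T)$ (since $g$ is compatible with the projection $(\D'')^{\perp}\to\D'$); and sends $(L''_{n-i})^{\#}\subseteq W$ to the analogous subspace of $g(W)$ (here using that $g$ is orthogonal and fixes $L''$, so $Q^{\B}(gw,L''_{n-i})=Q^{\B}(w,g^{-1}L''_{n-i})=Q^{\B}(w,L''_{n-i})$). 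Hence $g\cdot L^{T,W}=L^{g(T),g(W)}$.

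For transitivity on $\tilde Z$, first note that a pair $(T,W)\in\tilde Z$ is determined by $W$ alone: conditions (a)–(c) say exactly that $W$ is an isotropic complement of $(\D'')^{\perp}$ in $\D$, and then $T=(\D''\oplus W)^{\perp}$ is forced by a dimension count together with $T\perp\D''$ and $T\perp W$. Fix a reference $(T_0,W_0)\in\tilde Z$; any other $W$ is the graph of a linear map $\phi=\phi_1+\phi_2\colon W_0\to\D''\oplus T_0=(\D'')^{\perp}$. Define $g\in\mathrm{GL}(\D)$ on $\D=\D''\oplus T_0\oplus W_0$ by $g|_{\D''}=\mathrm{id}$, $g(w_0)=w_0+\phi(w_0)$, and $g(t_0)=t_0+\psi(t_0)$ for a map $\psi\colon T_0\to\D''$ to be determined. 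Automatically $(g-1)\D\subseteq(\D'')^{\perp}$, $(g-1)(\D'')^{\perp}\subseteq\D''$, and $g$ is invertible (unipotent along $\D''\subseteq(\D'')^{\perp}\subseteq\D$). Checking $Q^{\B}(gx,gy)=Q^{\B}(x,y)$ summand by summand, all relations hold automatically — isotropy of $W$ gives $Q^{\B}(gw_0,gw_0')=0=Q^{\B}(w_0,w_0')$, and the pairings involving only $\D''$, $T_0$, $\phi_1(W_0)$ are unchanged because $\D''$ is isotropic and $T_0,\phi_2(W_0)\subseteq(\D'')^{\perp}$ — except the single relation $Q^{\B}(g t_0,g w_0)=0$, which unwinds to
\[
Q^{\B}(\psi(t_0),w_0)=-\,Q^{\B}(t_0,\phi_2(w_0)),\qquad t_0\in T_0,\ w_0\in W_0.
\]
Since $Q^{\B}$ restricts to a perfect pairing between $\D''$ and $W_0$, this determines $\psi$ uniquely. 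With this $\psi$, $g$ is orthogonal, hence $g\in\mathcal U$, and $g(W_0)=W$, $g(T_0)=(\D''\oplus W)^{\perp}=T$. Thus $\mathcal U$ is transitive on $\tilde Z$.

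The main obstacle is the orthogonality bookkeeping in the last step: one must verify that the only obstruction to $g$ preserving $Q^{\B}$ is the single relation solved by $\psi$, and that this uses precisely the isotropy of $\D''$, the isotropy of $W$, the orthogonality $T_0\perp W_0$, and the perfectness of the $\D''$–$W_0$ pairing — the same structural data packaged in Lemma \ref{auxi}. A minor additional point is that one needs $L^{T,W}\in X^{\B}_d$ with $\pi^{\natural}(L^{T,W})=L'$ and $\pi''(L^{T,W})=L''$, which is part of the construction preceding Lemma \ref{auxi}.
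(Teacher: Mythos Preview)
Your proof is correct and follows exactly the paper's approach: the paper states (without detail) that $\mathcal U$ acts transitively on $\tilde Z$ and that the surjection $\tilde Z\to Z^{\B}_{L',L''}$ is $\mathcal U$-equivariant, then deduces the lemma formally. You have supplied the verifications the paper leaves as ``it can be checked,'' and your explicit construction of $g\in\mathcal U$ sending one pair $(T_0,W_0)$ to another is precisely the standard unipotent argument underlying \cite[Lemma 1.4]{L00}, to which the paper refers.
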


Recall the algebra $\mbf S_d $ from Section ~\ref{vSA} (\ref{SA}).
We are now ready to define the comulitiplication  $\Delta^{\B}$. This is a map
\begin{align} \label{Delta-B}
\widetilde \Delta^{\B}: \mbf S^{\B}_d \to \mbf S^{\B}_{d'} \otimes \mbf S_{d''}, \quad \forall  d' + d'' =d,
\end{align}
defined by
\[
\widetilde \Delta^{\B} (f) ( L', \check L' , L'',\check L'') = \sum_{\check L\in Z^{\B}_{\check L',\check L''} } f(L, \check L), \quad \forall L',\check L'\in  X^{\B}_{d'}, L'', \check L''\in  X_{d''},
\]
where $L$ is a fixed element in $Z^{\B}_{L', L''}$ (See (\ref{Z})  for notations).
By Lemma ~\ref{independent}, we see that the definition of $\widetilde \Delta^{\B}$ is independent of the choice of $L$.
Moreover, by an argument exactly the same way as that of Proposition 1.5 in ~\cite{L00},
we have

\begin{prop}
The map $\widetilde \Delta^{\B}: \mbf S^{\B}_d \to \mbf S^{\B}_{d'} \otimes \mbf S_{d''}$ is an algebra homomorphism.
\end{prop}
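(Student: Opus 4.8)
The plan is to verify the three requirements for $\widetilde\Delta^{\B}$ to be an algebra homomorphism over $\mathcal A$. Linearity is immediate from the defining formula. Preservation of the unit is a one-line check: fixing $L\in Z^{\B}_{L',L''}$, the value $\widetilde\Delta^{\B}(\mathbf 1)(L',\check L',L'',\check L'')=\sum_{\check L\in Z^{\B}_{\check L',\check L''}}\mathbf 1(L,\check L)$ is $1$ when $L\in Z^{\B}_{\check L',\check L''}$, i.e.\ when $(L',L'')=(\check L',\check L'')$, and is $0$ otherwise, so $\widetilde\Delta^{\B}(\mathbf 1)=\mathbf 1\otimes\mathbf 1$. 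The essential geometric input needed for the remaining (multiplicative) part is already isolated in Lemma~\ref{independent}: it guarantees, as noted right after the definition of $\widetilde\Delta^{\B}$, that $\sum_{\check L\in Z^{\B}_{\check L',\check L''}}f(L,\check L)$ is independent of the representative $L\in Z^{\B}_{L',L''}$ for every $f\in\mbf S^{\B}_d$, and I shall reuse exactly this independence (applied to one of the two factors) below.

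For multiplicativity, I would expand both sides of $\widetilde\Delta^{\B}(f_1*f_2)=\widetilde\Delta^{\B}(f_1)*\widetilde\Delta^{\B}(f_2)$ and regroup. Fix $L\in Z^{\B}_{L',L''}$; unwinding the convolution (\ref{eq30}) gives
\[
\widetilde\Delta^{\B}(f_1*f_2)(L',\check L',L'',\check L'')=\sum_{\check L\in Z^{\B}_{\check L',\check L''}}\ \sum_{M\in X^{\B}_d}f_1(L,M)\,f_2(M,\check L).
\]
Partition $X^{\B}_d$ into the subsets $Z^{\B}_{M',M''}$ indexed by the pairs $(M',M'')=(\pi^{\natural}(M),\pi''(M))$ --- legitimate since $\pi^{\natural}(M)\in X^{\B}_{d'}$ and $\pi''(M)\in X_{d''}$ for every $M$ --- and interchange the order of summation. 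For each such pair, the inner sum $\sum_{\check L\in Z^{\B}_{\check L',\check L''}}f_2(M,\check L)$ equals $\widetilde\Delta^{\B}(f_2)(M',\check L',M'',\check L'')$ by the well-definedness recalled above, hence is independent of $M\in Z^{\B}_{M',M''}$; pulling it out of the sum over $M$ and recognizing $\sum_{M\in Z^{\B}_{M',M''}}f_1(L,M)=\widetilde\Delta^{\B}(f_1)(L',M',L'',M'')$ leaves
\[
\sum_{(M',M'')}\widetilde\Delta^{\B}(f_1)(L',M',L'',M'')\,\widetilde\Delta^{\B}(f_2)(M',\check L',M'',\check L''),
\]
which is precisely $\bigl(\widetilde\Delta^{\B}(f_1)*\widetilde\Delta^{\B}(f_2)\bigr)(L',\check L',L'',\check L'')$ under the identification of $\mbf S^{\B}_{d'}\otimes\mbf S_{d''}$ with $\mathcal A_{\G^{\B}_{d'}\times\G_{d''}}(X^{\B}_{d'}\times X^{\B}_{d'}\times X_{d''}\times X_{d''})$.

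The hard part is not this bookkeeping but the geometric fact it rests on, namely the transitivity of $\mathcal U$ on $Z^{\B}_{L',L''}$ (Lemma~\ref{independent}), which in turn depends on the delicate simultaneous construction in Lemma~\ref{auxi} of an isotropic complement $W$ and a transverse piece $T$ adapted to a given isotropic flag. When transporting the argument of \cite[Proposition~1.5]{L00} to the present orthogonal setting, the one subsidiary point that must be checked with care is that an element of $\mathcal U$ fixes $\D''$ pointwise, preserves $(\D'')^{\perp}$, and acts trivially on $(\D'')^{\perp}/\D''$, so that it permutes each set $Z^{\B}_{\check L',\check L''}$ among itself; this is exactly what allows $\G^{\B}_d$-invariance of $f_2$ to be used in the reindexing above, and it is also the content behind the well-definedness of $\widetilde\Delta^{\B}$. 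Beyond that, everything is a formal manipulation of finite sums, structurally identical to the type-$A$ situation treated earlier.
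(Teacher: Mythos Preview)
Your proposal is correct and follows exactly the approach the paper intends: the paper gives no proof of its own but simply says the argument is ``exactly the same way as that of Proposition~1.5 in~\cite{L00}'', and what you have written is precisely that argument transported to the orthogonal setting, with the role of Lusztig's transitivity lemma played by Lemma~\ref{independent}. Your identification of the one genuinely new point---that elements of $\mathcal U$ fix $\D''$, preserve $(\D'')^{\perp}$, and act trivially on the quotient, hence stabilize each $Z^{\B}_{\check L',\check L''}$---is exactly the adaptation needed, and the rest is the same bookkeeping as in type~$A$.
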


For any $i\in [1, r]$, $a\in [1, r+1]$,  we define the following functions in $\mbf S^{\B}_d$
\begin{align*}
\E_i (L, L') &=
\begin{cases}
\bv^{-|L'_{i+1} / L'_i|}, & \mbox{if} \; L_i \overset{1}{\subset} L_i', L_j = L_j', \forall  j \in [1, r] \backslash \{i\}; \\
0, & \mbox{otherwise}.
\end{cases} \\
\F_i (L,  L') &=
\begin{cases}
\bv^{-|L'_i / L'_{i-1}|}, & \mbox{if} \; L_i \overset{1}{\supset} L_i', L_j = L_j',\forall j\in [1, r]\backslash \{i\}; \\
0, &\mbox{otherwise}.
\end{cases} \\
\mathbf h^{\pm 1}_{a} (L, L') & = \bv^{\pm |L_{a}/ L_{a-1}|}\delta_{L, L'},  \quad
\K^{\pm 1}_{i}    = \mbf h^{\pm 1}_{i+1} \mbf h^{\mp 1}_{ i},
\end{align*}
for any $L, L' \in X^{\B}_d$.
We write $\E_i', \F_i'$ and $ \mathbf h'_{\pm i}$ for the elements in $\mbf S^{\B}_{d'}$
analogous to $\E_i$, $\F_i$ and $\mathbf h_{\pm i}$  in $\mbf S^{\B}_d$, respectively.
Similarly, we use the notations $\mathbf E''_i$, $\mathbf F''_i$, and $\mbf K''_i$ for $1\leq i\leq n-1$, and $\mathbf H''_{\pm i}$, for $1\leq i\leq n$,
for elements in $\mbf S_{d''}$ defined  in Section \ref{CoSA}.
We now study the effect of the application of $\widetilde \Delta^{\B}$ to the generators.

\begin{prop}
\label{D(E)}
For any $i\in [1, r]$, we have
\begin{align*}
\begin{split}
\widetilde \Delta^{\B} ( \E_i)
&= \E_i' \otimes \mbf H_{i+1}'' \mbf H''^{-1}_{n-i} + \mbf h'^{-1}_{i+1} \otimes \mbf E_i''  \mbf H''^{-1}_{n-i} +  \mathbf h'_{i+1} \otimes \mbf F''_{n-i} \mbf H''_{i+1}.  \\
\widetilde \Delta^{\B} (\F_i)
 & = \F'_i \otimes \mbf H''^{-1}_{i} \mbf H''_{n+1-i} + \mbf h'_i\otimes \mbf F''_i \mbf H''_{n+1-i} + \mbf h'^{-1}_{i} \otimes \mbf E''_{n-i} \mbf H''^{-1}_{i}. \\
\widetilde \Delta^{\B} (\mbf k_i) & = \mbf k'_i \otimes \mbf K''_i \mbf K''^{-1}_{n-i}.
\end{split}
\end{align*}
\end{prop}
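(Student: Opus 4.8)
The plan is to compute $\widetilde\Delta^{\B}$ on $\E_i$, $\F_i$ and $\mbf k_i$ directly from the definition (\ref{Delta-B}), evaluating the resulting function on an arbitrary quadruple $(L',\check L',L'',\check L'')\in X^{\B}_{d'}\times X^{\B}_{d'}\times X_{d''}\times X_{d''}$, in the spirit of Lusztig's proof of Lemma~\ref{D(A)}. Fix an isotropic $\D''\subset\D$ of dimension $d''$. By Lemma~\ref{auxi} one chooses a pair $(T,W)\in\tilde Z$ adapted to $L'$ and $L''$ and takes as the fixed representative of $Z^{\B}_{L',L''}$ the chain $L=L^{T,W}$ with $L_i=L''_i\oplus\pi^{-1}(L'_i)\oplus(L''_{n-i})^{\#}$, where $\pi:T\xrightarrow{\sim}\D'$ and $(L''_{n-i})^{\#}=\{w\in W\mid Q^{\B}(w,L''_{n-i})=0\}$; by Lemma~\ref{independent} the value of $\widetilde\Delta^{\B}(f)(L',\check L',L'',\check L'')$ is independent of this choice. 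One records at the outset that $\D=\D''\oplus T\oplus W$, $(\D'')^{\perp}=\D''\oplus T$, $T\perp W$, $T\perp\D''$, $W$ is isotropic, and $\dim(L''_m)^{\#}=d''-|L''_m|$; in particular $L^{T,W}$ does lie in $Z^{\B}_{L',L''}\subset X^{\B}_d$.

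Consider $\E_i$ for $i\in[1,r]$. Then $\widetilde\Delta^{\B}(\E_i)(L',\check L',L'',\check L'')$ is the sum of $\bv^{-|\check L_{i+1}/\check L_i|}$ over those $\check L\in Z^{\B}_{\check L',\check L''}$ with $L_j=\check L_j$ for $j\in[1,r]\setminus\{i\}$ and $L_i\overset{1}{\subset}\check L_i$. Such a $\check L$ agrees with $L$ at every step except $i$ and $n-i$ (where $\check L_{n-i}=\check L_i^{\perp}$), and $\check L_i=L_i+\mbb F_q v$ for a line whose class $\bar v$ in $L_{i+1}/L_i\cong(L''_{i+1}/L''_i)\oplus(L'_{i+1}/L'_i)\oplus\big((L''_{n-i-1})^{\#}/(L''_{n-i})^{\#}\big)$ splits as $\bar v=\bar v_1+\bar v_2+\bar v_3$. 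The heart of the argument is the trichotomy obtained by reading off $\pi^{\natural}(\check L)$ and $\pi''(\check L)$ from these coordinates: the $\D'$-direction ($\bar v_2\neq0$) enlarges $\check L'$ at step $i$ and leaves $\check L''=L''$, yielding the $\E'_i$ term; the $\D''$-direction at step $i$ ($\bar v_2=\bar v_3=0$, $\bar v_1\neq0$) enlarges $\check L''$ at step $i$ and leaves $\check L'=L'$, yielding the $\mbf E''_i$ term; the $W$-direction ($\bar v_3\neq0$, $\bar v_2=0$) shrinks $\check L''$ at step $n-i$ via the perp relation $\check L_{n-i}=\check L_i^{\perp}$ and leaves $\check L'=L'$, yielding the $\mbf F''_{n-i}$ term; and the mixed case $\bar v_2\neq0\neq\bar v_3$ cannot occur, since it would force $\pi^{\natural}(\check L)_{n-i}=L'_{n-i}\neq(\pi^{\natural}(\check L)_i)^{\perp}$, contradicting $\pi^{\natural}(\check L)\in X^{\B}_{d'}$. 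For a quadruple falling into one scenario one then counts the $\check L$ above it and records the weight $\bv^{-|\check L_{i+1}/\check L_i|}=\bv^{-(|L_{i+1}/L_i|-1)}$, splitting $|L_{i+1}/L_i|$ into its $\D'$- and $\D''$-parts from the formula for $L^{T,W}$; the fiber contributes a $\bv$-power multiplicity coming from the leftover component of $\bar v$ — exactly as the factor $q^{|V''_{i+1}/V''_i|}$ appears in the type-$A$ computation behind Lemma~\ref{D(A)} — and multiplicity times weight collapses to the $\jmath$-side factor $\E'_i$ (resp. $\mbf h'^{-1}_{i+1}$, $\mbf h'_{i+1}$) tensored with the type-$A$ factor $\mbf H''_{i+1}\mbf H''^{-1}_{n-i}$ (resp. $\mbf E''_i\mbf H''^{-1}_{n-i}$, $\mbf F''_{n-i}\mbf H''_{i+1}$). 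Summing the three scenarios gives the formula for $\widetilde\Delta^{\B}(\E_i)$.

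The formula for $\widetilde\Delta^{\B}(\F_i)$ follows from the mirror computation with $L_i\overset{1}{\supset}\check L_i$, the three directions now producing $\F'_i$, $\mbf F''_i$ (shrinking at step $i$) and $\mbf E''_{n-i}$ (enlarging at step $n-i$ via the perp). For $\widetilde\Delta^{\B}(\mbf k_i)$ there is nothing to count: $\mbf k_i$ is supported on the diagonal, so $\widetilde\Delta^{\B}(\mbf k_i)$ is supported on $\check L'=L'$, $\check L''=L''$, where it equals $\bv^{|L_{i+1}/L_i|-|L_i/L_{i-1}|}$; under $L=L^{T,W}$, using $\dim(L''_m)^{\#}=d''-|L''_m|$ and hence $|(L''_{n-i})^{\#}/(L''_{n-i+1})^{\#}|=|L''_{n-i+1}/L''_{n-i}|$, this exponent is the sum of the exponents of $\mbf k'_i$ and of $\mbf K''_i\mbf K''^{-1}_{n-i}$. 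I expect the main obstacle to be precisely this bookkeeping: checking from the coordinate description that the mixed direction is empty, identifying the multiplicity of each fiber, and verifying that multiplicity times $\bv$-weight recombines into exactly the stated $\mbf H''$- and $\mbf h'$-factors after passing through the two auxiliary identifications $\pi:T\xrightarrow{\sim}\D'$ and $w\mapsto Q^{\B}(w,-):W\xrightarrow{\sim}(\D'')^{*}$; the boundary index $i=r$, where $i+1=n-i=r+1$ and $\check L_{i+1}=\check L_r^{\perp}\neq L_{r+1}$, needs separate attention, though one finds it organizes in the same way with $\mbf H''_{i+1}\mbf H''^{-1}_{n-i}$ collapsing to $1$. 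Everything else is a routine dimension count modeled on the proof of Lemma~\ref{D(A)}.
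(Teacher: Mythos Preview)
Your overall approach—evaluate $\widetilde\Delta^{\B}$ on a quadruple using the adapted splitting $L=L^{T,W}$ and organise the possible $\check L$ by the coordinates of $\bar v$ in $L_{i+1}/L_i\cong(L''_{i+1}/L''_i)\oplus(L'_{i+1}/L'_i)\oplus\big((L''_{n-i-1})^{\#}/(L''_{n-i})^{\#}\big)$—is the same as the paper's, and the computation for $\mbf k_i$ is fine. However, your trichotomy for $\E_i$ contains a genuine error: the ``mixed'' case $\bar v_2\neq 0$, $\bar v_3\neq 0$ is \emph{not} impossible, and in fact it contributes to the third term.

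The flaw is the claim that $\bar v_2\neq 0$ forces $\pi^{\natural}(\check L_i)\supsetneq L'_i$. This holds only when $\bar v_3=0$: if $\bar v_3\neq 0$ then no nonzero scalar multiple of $v$ lies in $(\D'')^{\perp}=\D''\oplus T$, so $\check L_i\cap(\D'')^{\perp}=L_i\cap(\D'')^{\perp}$ and hence $\pi^{\natural}(\check L_i)=L'_i$ regardless of $\bar v_2$. (There is thus no contradiction with $\pi^{\natural}(\check L)\in X^{\B}_{d'}$, which in any case holds for every $\check L\in X^{\B}_d$.) The correct organisation is governed by $\bar v_3$ alone: whenever $\bar v_3\neq 0$ one lands in case~(iii), with $\check L'=L'$ and $\check L''_{n-i}=L''_{n-i}\cap v_3^{\perp}$, and \emph{both} $\bar v_1$ and $\bar v_2$ are free, yielding the count $q^{|L''_{i+1}/L''_i|+|L'_{i+1}/L'_i|}$ that the paper obtains. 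It is precisely the extra factor $q^{|L'_{i+1}/L'_i|}$ coming from the $\bar v_2$-freedom that converts the weight $\bv^{-|\check L_{i+1}/\check L_i|}$ into $\mbf h'_{i+1}$; with your trichotomy you would get $\mbf h'^{-1}_{i+1}\otimes\mbf F''_{n-i}\mbf H''_{i+1}$ instead of $\mbf h'_{i+1}\otimes\mbf F''_{n-i}\mbf H''_{i+1}$. The mirror issue recurs for $\F_i$. (For $i=r$ the paper further argues separately, since $\check L_{r+1}=\check L_r^{\perp}\neq L_{r+1}$ and one must count isotropic lines subject to incidence conditions; your remark that it ``organises in the same way'' is morally correct, but the count is not the direct fiber count you describe.)
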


\begin{proof}
By definition, we have
\[
\widetilde \Delta^{\B} (\E_i) ( L', \check L', L'', \check L'' ) =\bv^{-|\check L_{i+1}/\check L_i|} \# S,
\]
where
$
S=
\{ \check L\in Z^{\B}_{\check L',\check L''} | L_i\subset \check L_i,  |\check L_i/ L_i| =1, L_j=\check L_j, \forall 1\leq j\neq i\leq r\}.
$
The set $S$ is nonempty only when the quadruple $(L', \check L', L'',\check L'')$ is  in one of  the following three cases.
\begin{itemize}
\item [(i)]  $L'_i\subset \check L'_i$, $|\check L'_i/L'_i|=1$,   $L'_j=\check L'_j$, for all $1\leq j\neq i\leq r$, $L''_j=\check L''_j$,  for all $j$.
\item [(ii)] $L'_j =\check L'_j$, for all $j$, $L''_i\subset \check L''_i$, $|\check L''_i/L''_i|=1$, $L''_j= \check L''_j$ for all $j\neq i$.
\item [(iii)] $L'_j= \check L'_j$, for all $j$, $L''_{n-i} \supset \check L''_{n-i}$, $|L''_{n-i}/\check L''_{n-i}|= 1$, $L''_j= \check L''_j$ for all $j\neq n-i$.
\end{itemize}

We now  compute the number $\#S$ in case (i).
This amounts to count  all possible lines $<u>$, spanned by the vector $u$,  such that $L_i+ <u>$ is in $S$.
Since we want $L_i+<u>\subseteq L_{i+1}$, we should find  $u $ in $ L_{i+1}$.
Since we need $\pi^{\natural}(L_i+<u>)= \check L_i'$, we need to find those  $u$ such that  $\pi(u)= u'$, where $u'$ is a fixed element in $\D'$ such that $\check L'_i = L'_i +<u'>$.
Fix a pair $(T,W)$ in $\D$ such that it satisfies all conditions in Lemma ~\ref{auxi} with respect to the flag $L$.
In particular, $L_{i+1} = L''_{i+1} \oplus (L_{i+1}\cap T) \oplus (L_{i+1}\cap W)$.
Since $T$ gets identified with $\D'$ via the canonical projection, there is a unique $t$ in $T$ sending to $u'$. So we need to look for those $u$ such that
at component $L_{i+1}\cap T$, $u=t$, and at component $L_{i+1}\cap W$, $u=0$.
Thus $u$ is of the form $t+w$ where $w\in L''_{i+1}$.
Since adding $w$ by any vector in $L''_i$ does not change  the resulting space $L_i+<u>$, we see that
the freedom of choice for $w$ is
$L''_{i+1}$ mod $L''_i$, i.e., $L''_{i+1}/L''_i$. So we see that
the value of $\widetilde \Delta^{\B} (\E_i) ( L', \check L', L'', \check L'' )$ is equal to
\[
\bv^{-|\check L_{i+1}/\check L_i|}q^{|L''_{i+1}/L''_i|}
= \bv^{-|\check L'_{i+1}/\check L'_i|} \bv^{-|\check L''_{n-i} /\check L''_{n-i-1}| +|\check L''_{i+1}/\check L''_i|}
=(\E_i' \otimes \mbf H''^{-1}_{n-i} \mbf H''_{i+1} )(L', \check L', L'',\check L''),
\]
where we use
$
|\check L_{i+1}/\check L_i|
=
|\check L''_{i+1}/\check L''_i|+
|\check L'_{i+1}/\check L'_i|+
|\check L''_{n-i} /\check L''_{n-i-1}|.
$

For case (ii), $S$ consists of only one element, i.e., the $\check L$ such that $\check L_j=L_j$ for $1\leq j\neq i\leq r$,
 and $\check L_i = L_i + \check L''_i$. (Since $\check L''_i\subseteq L''_{n-i}$, $\check L_i$ is isotropic.)
So the  value of $\widetilde \Delta^{\B} (\E_i) ( L', \check L', L'', \check L'' )$ in case (ii)  is equal to
\[
\bv^{-|\check L_{i+1}/\check L_i|}
=
\bv^{-|\check L'_{i+1}/\check L'_i|} v^{- |\check L''_{i+1}/\check L''_i|} \bv^{-|\check L''_{n-i} /\check L''_{n-i-1}|}
=(\mathbf h'^{-1}_{i+1} \otimes \mbf E''_i \mbf H''^{-1}_{n-i} ) (L', \check L', L'',\check L'').
\]

For case (iii), we need to consider two situations, i.e., $i=r$ or $i\neq r$.
For $i=r$, the set $S$ gets identified with the set
$
S_r=\{l\in L_{r+1}/L_r: \check U_{r+1}\subset l^{\perp}, U_{r+1}\not \subset l^{\perp}\},
$
via $\check L\mapsto \check L_r/L_r$, where
$\check U_{r+1}= ( \check L''_{r+1}+L_r) /L_r$ and $U_{r+1} = (L''_{r+1}+L_r)/L_r$.
Set
\[
\tilde S_r=\{ W\subset L_{r+1}/L_r| W\  \mbox{isotropic},\ \check U_{r+1}\subset W, \dim W/\check U_{r+1} =1, W+U_{r+1} \ \mbox{not isotropic} \} .
\]
We define a map $S_r \to \tilde S_r$ by $l\mapsto l+ \check U_{r+1}$. It is clear that this is a surjective map
and its fiber is isomorphic to $\check U_{r+1}$. The set $\tilde S_r$ can be broken into the difference of the two sets
\begin{align*}
\tilde S_r &=\{ W|  W\  \mbox{isotropic}\ \check U_{r+1}\subset W, \dim W/\check U_{r+1} =1\}\\
& \quad - \{ W|W\  \mbox{isotropic}\ \check U_{r+1}\subset W, \dim W/\check U_{r+1} =1, W+U_{r+1} \ \mbox{isotropic}\}.
\end{align*}
For the first set, its order is equal to
$
\frac{q^{|L_{r+1}/L_r|-2|\check L''_{r+1}/\check L''_r| -1} -1}{q-1},
$
because $\check U_{r+1} \simeq \check L''_{r+1}/\check L''_r$.  For the second set, it is the union of $\{ W=U_{r+1}\}$ and
the subset $\{\dim W+U_{r+1}/\check U_{r+1}=2\}$. The latter has a surjection onto the set of isotropic lines in $U^{\perp}_{r+1}/U_{r+1}$ with fiber isomorphic to $\mbb F_q$, via $W\mapsto W+U_{r+1}/U_{r+1}$.
Thus the order of  the second set is equal to
$
1+ q \frac{ q^{|L_{r+1}/L_r | -2 |L''_{r+1}/L''_r| -1}-1}{q-1}.
$
So we have
\begin{align*}
\# S
&=\# \check U_{r+1}  (
\frac{q^{|L_{r+1}/L_r|-2|\check L''_{r+1}/\check L''_r| -1} -1}{q-1}
-1 -q \frac{ q^{|L_{r+1}/L_r | -2 |L''_{r+1}/L''_r| -1}-1}{q-1})\\
&=
q^{|\check L''_{r+1}/\check L''_r| + |\check L'_{r+1}/\check L'_r|}.
\end{align*}
So we see that  the  value of $\widetilde \Delta^{\B} (\E_i) ( L', \check L', L'', \check L'' )$, for $i=r$ in case (iii),  is equal to
\[
\bv^{-|\check L_{r+1}/\check L_r|} q^{|\check L''_{r+1}/\check L''_r| + |\check L'_{r+1}/\check L'_r|}
=\bv^{|\check L'_{r+1}/\check L'_r|} =
(\mbf h'_{i+1} \otimes \mbf F''_{n-i} \mbf H''_{i+1}) ( L', \check L', L'', \check L'' ).
\]

For $i\neq r$, the set
$S$ gets identified with the set $S'$ of isotropic lines  $ l$ in $L_{n-i}/L_i$ such that
\[
l \subseteq L_{i+1}/L_i, \check U\subset l^{\perp}, U\not \subset l^{\perp},
\]
where
$
\check U = \check L''_{n-i} + L_i/L_i$
and
$
U=  L''_{n-i}+L_i/L_i .
$
Notice $S'$ is the difference of the two sets:
\[ S'=
\{
l |l  \subseteq L_{i+1}/L_i, \check U\subset l^{\perp}
 \}
 -
 \{
l |l  \subseteq L_{i+1}/L_i,  U\subset l^{\perp}
 \}.
\]
We can use a similar arguments as in (i) to compute the two sets and we  get
\[
\# S=
\frac{ q^{ |L''_{i+1}/L''_i| + |L'_{i+1}/L'_i| +1} -1}{q-1} +
\frac{ q^{ |L''_{i+1}/L''_i| + |L'_{i+1}/L'_i| } -1}{q-1}
=q^{ |L''_{i+1}/L''_i| + |L'_{i+1}/L'_i| }.
\]
So we see that  the  value of $\widetilde \Delta^{\B} (\E_i) ( L', \check L', L'', \check L'' )$, for $i\neq r$ in case (iii),   is equal to
\begin{align*}
\bv^{-|\check L_{i+1}/\check L_i|} q^{ |L''_{i+1}/L''_i| + |L'_{i+1}/L'_i| }
&= \bv^{|\check L'_{i+1}/\check L'_i|} \bv^{- |\check L''_{n-i}/\check L''_{n-i-1}| + |\check L''_{i+1}/\check L''_i|}\\
&=(\mbf h'_{i+1} \otimes \mbf F''_{n-i} \mbf H''_{i+1}) ( L', \check L', L'', \check L'' ).
\end{align*}
We have the first identity.

Next, we determine  $\widetilde \Delta^{\B} (\F_i)$. By definition, we have
\begin{equation}
\label{DF}
\widetilde \Delta^{\B} (\F_i) ( L',\check L', L'', \check L'') =
\bv^{-|\check L_i/\check L_{i-1}|} \# R
\end{equation}
where
$
R=\{ \check L\in Z^{\B}_{\check L',\check L''} | L_i\supset \check L_i, |L_i/\check L_i|=1,  L_j=\check L_j, \forall 1\leq j\leq r, j\neq r\}.
$
Now the set $R$ is empty unless the quadruple $(L', \check L',L'', \check L'')$ is in one of the following cases.
\begin{itemize}
\item [(iv)] $L'_i\supset \check L'_i$, $|L'_i/\check L'_i|=1$, $L'_j=\check L'_j$, $\forall 1\leq j\leq r$, $j\neq i$, $L''_j=\check L''_j$ for all $j$.

\item [(v)] $L'_j=\check L'_j$ for all $j$, $L''_i\supset \check L''_i$, $|L''_i/\check L''_i|=1$, $L''_j=\check L''_j$, $\forall 1\leq j\neq i\leq n$.

\item [(vi)] $L'_j = \check L'_j$ for all $j$, $L''_{n-i}\subset \check L''_{n-i}$, $|\check L''_{n-i}/L''_{n-i}|=1$, $L''_j=\check L''_j$, $\forall 1\leq j\neq n-i\leq n$.
\end{itemize}

In these cases,  $\check L$ differs from   $L$ only at $i$ and $n-i$. Thus we can  identify $\check L$ with $\check L_i$.

In case (iv), to count the number of elements in $R$, we break it into two steps. We first determine all possible choices of $\check L_i\cap (\D'')^{\perp}$ for $\check L_i\in R$.
Since $\check L''_i=L''_i$, we have only one choice, i.e., $\tilde L_i=L''_i+T$, where $T$ is any subspace (of dimension $|\check L'_i|$) in $L_i\cap (\D'')^{\perp}$ maps onto $\check L'_i$ via the canonical projection.
 We  next want to determine the number of choices of $W\subseteq  L_i$ such $W+\tilde L_i\in R$.
We first observe that if $\check L\in R$, then the projection, say $\check  L'''_i$,  of
$\check L_i$ to $\D/(\D'')^{\perp}$ is the same as that of $L_i$.
Since $|L_i \cap (\D'')^{\perp}/ \check L_i\cap (\D'')^{\perp}|=1$ and $L_{i-1}\subseteq \check L_i$, we see that all possible
choices for  $W\in L_i$ such that $W+\tilde L_i\in R$ is bijective to the space
\[
\check L'''_i/L'''_{i-1} \simeq  (\check L''_{n-i})^{\#}/(\check L''_{n-i+1})^{\#}\simeq
\check L''_{n-i+1}/\check L''_{n-i},
\]
where $L'''_{i-1}$ is the projection of $L_{i-1}$ to $\D/(\D'')^{\perp}$.
Thus, in case (iv), the left hand side of (\ref{DF}) is equal to
$
\bv^{-|\check L_i/\check L_{i-1}|} q^{|\check L''_{n-i+1}/\check L''_{n-i}|}
=\F'_i \otimes \mbf H''^{-1}_{i} \mbf H''_{n+1-i}  (L', \check L',L'',\check L'').
$

In case (v), to build a subspace $\check L_i$ in $L_i$ such that it is in $R$, there are
$\check L'_i/\check L'_{i-1}$ choices to build the component $\tilde L_i=\check L_i\cap (\D'')^{\perp}$.
This is done  by using a similar argument as in the first step of case (iv) since $L''_i\supset \check L''_i$ and $|L''_i/\check L''_i|=1$.
By a similar argument as the step two in case (iv), we see that the number of choices for a subspace $W$ in $L_i$ such that $W+\tilde L_i\in R$ is
again
$\check L''_{n-i+1}/\check L''_{n-i}$ for a fixed subspace from the first step. Thus
the value of (\ref{DF}) in case (v) is equal to
\[
\bv^{-|\check L_i/\check L_{i-1}|}
q^{|\check L'_i/\check L'_{i-1}|}
q^{|\check L''_{n-i+1}/\check L''_{n-i}|}
=\mbf h'_i\otimes \mbf F''_i \mbf H''_{n+1-i} (L', \check L',L'',\check L'').
\]

In case (vi), there is only one element in $R$.
First of all, $\check L_i\cap (\D'')^{\perp} = L_i \cap (\D'')^{\perp}$ for $\check L_i\in R$.
Second of all, by fixing a decomposition of $\D=\D''\oplus T\oplus W$ as in Lemma ~\ref{auxi}, we see that if $\check L_i\in R$, then the projection of
$\check L_i$ to $\D/(\D'')^{\perp}$ is $(\check L''_{n-i})^{\#}$. Thus by an argument similar to the second step of case (iv), we see that there is  only one $\check L_i$ in $R$. This implies that the value of (\ref{DF}) in case (vi) is equal to
$
\bv^{-|\check L_i/\check L_{i-1}|}
=
 \mbf h'^{-1}_{i} \otimes \mbf E''_{n-i} \mbf H''^{-1}_{i}
 (L', \check L',L'',\check L'').
$
We see that the second identity  follows from the above computations.

Finally the last identity  follows from the definitions and
$\widetilde \Delta^{\B} (\mbf h_i) = \mbf h'_i \otimes \mbf H''_i \mbf H''_{n+1-i}$.
\end{proof}

\begin{cor}\label{coassoB}
We have $(1 \otimes \widetilde \Delta)  \widetilde \Delta^{\B} = (\widetilde \Delta^{\B} \otimes 1) \widetilde  \Delta^{\B}$.
\end{cor}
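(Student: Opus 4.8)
The plan is to deduce the identity by checking it on algebra generators. Both $(1\otimes\widetilde\Delta)\circ\widetilde\Delta^{\B}$ and $(\widetilde\Delta^{\B}\otimes 1)\circ\widetilde\Delta^{\B}$ are compositions of algebra homomorphisms — the map $\widetilde\Delta^{\B}$ of the preceding proposition, the type $A$ coproduct $\widetilde\Delta$ of Lemma~\ref{D(A)}, and identity maps — hence are themselves algebra homomorphisms from $\mbf S^{\B}_d$ to $\mbf S^{\B}_{d'}\otimes\mbf S_{d_1''}\otimes\mbf S_{d_2''}$, where $d'+d_1''+d_2''=d$, the isotropic space used on the left is split as $\D''=\D_1''\oplus\D_2''$, and on the right one first splits off the isotropic $\D_2''$ and afterwards the (image of the) isotropic $\D_1''$. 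Since $\mbf S^{\B}_d$ is generated over $\mcal A$ by the elements $\E_i,\F_i$ ($i\in[1,r]$) and $\mbf h_a^{\pm1}$ ($a\in[1,r+1]$) appearing in Proposition~\ref{D(E)} (see \cite{BKLW13}), it suffices to verify that the two composites agree on these elements.

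For $\mbf h_a^{\pm1}$ (equivalently the $\mbf k_i$) the verification is immediate from the third displayed formula of Proposition~\ref{D(E)} together with $\widetilde\Delta(\mbf K_i)=\mbf K_i'\otimes\mbf K_i''$ of Lemma~\ref{D(A)}. For $\E_i$ (and dually $\F_i$) one substitutes the three-term expression for $\widetilde\Delta^{\B}(\E_i)$ from Proposition~\ref{D(E)} into each composite and then applies the inner coproduct to each tensor factor: on the left, $\widetilde\Delta$ acting on the factors $\mbf E_i'',\mbf F_{n-i}'',\mbf H_a''\in\mbf S_{d''}$ via Lemma~\ref{D(A)}; on the right, a second copy of $\widetilde\Delta^{\B}$ acting on the factors $\E_i^{(1)},\F_{n-i}^{(1)},\mbf h_a^{(1)}$ living in the $\jmath$Schur algebra of $(\D_2'')^{\perp}/\D_2''$ via Proposition~\ref{D(E)} again. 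After expanding and regrouping, both sides collapse to the same common sum of elementary tensors; the algebraic input that makes the two regroupings agree is the coassociativity of the type $A$ coproduct, $(1\otimes\widetilde\Delta)\widetilde\Delta=(\widetilde\Delta\otimes 1)\widetilde\Delta$, which is part of \cite[2.2]{L00}.

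The step I expect to be the main obstacle is the boundary index $i=r$. There the formula for $\widetilde\Delta^{\B}(\E_r)$ acquires the exceptional third term $\mbf h'_{r+1}\otimes\mbf F''_{r+1}\mbf H''_{r+1}$, coming from case (iii) in the proof of Proposition~\ref{D(E)}, and one has to track carefully how this term reassembles after a further application of $\widetilde\Delta$ on the left versus how the inner $\widetilde\Delta^{\B}$ produces it on the right; keeping the $\mbf H''$-weights straight — the $v$-powers entering as $\mbf H''_{i+1}\mbf H''^{-1}_{n-i}$ and so on — is where the bookkeeping is delicate, though entirely mechanical. Alternatively, and perhaps more transparently, one can argue geometrically: unwinding definitions, both composites evaluated on $f$ and a sextuple of flags are iterated sums $\sum f(L,\check L)$ over $\check L$ constrained by compatibility with a fixed decomposition $\D=\D_1''\oplus\D_2''\oplus T\oplus W_1\oplus W_2$ (with $\D_1''\oplus\D_2''$ isotropic, $W_j$ isotropic and dually paired with $\D_j''$, and $T$ carrying the induced non-degenerate form), and a ``triple'' version of Lemmas~\ref{auxi} and~\ref{independent} shows both sides range over the same set, independently of the auxiliary choices. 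The content is then precisely the type $B$ analogue of the coassociativity of \cite[2.2]{L00}, with the reconciliation of ``$\pi^{\natural}$ then $\pi''$'' against ``$\pi''$ then $\pi^{\natural}$'' relative to the triple decomposition playing the role that the corresponding type $A$ compatibility plays in \emph{loc. cit.}
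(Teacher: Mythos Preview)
Your proposal is correct and follows the same approach as the paper: both sides are algebra homomorphisms, so it suffices to verify the identity on the generators $\E_i,\F_i,\mbf h_a^{\pm1}$ using Proposition~\ref{D(E)} and Lemma~\ref{D(A)}. The paper dispatches this in a single sentence (``immediate''), while you have supplied the bookkeeping and an alternative geometric justification, but the underlying argument is identical.
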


The corollary follows by checking if the relation holds for generators, which is immeidate.

\subsection{Renormalization}
\label{secrenormal}

Given a pair $(\mbf b, \mbf  a)$ in $\Lambda_{d,  n}$ for $n=2r+1$ (see (\ref{Lambda-A})), we set
\begin{align}
\label{s(a,b)}
\begin{split}
u(\mbf b, \mbf a) & = \frac{1}{2}  \left ( \sum_{i + j  \geq n+1} b_i b_j - a_i a_j +  \sum_{i\geq r+1} a_i - b_i \right )\\
& = \sum_{i>j, i+j \geq n+1} b_i b_j - a_i a_j + \frac{1}{2} (\sum_{i\geq r+1} b_i^2 - a_i^2 + a_i -b_i) \in \mbb Z.
\end{split}
\end{align}
Consider the subset $\Lambda^{\B}_{d, n}$ of $\Lambda_{2d+1, n}$ defined by
\[
\Lambda^{\B}_{d, n} = \{\mbf a \in \Lambda_{2d+1, n} | a_i = a_{n+1-i} , \ \forall 1 \leq i \in n\}.
\]
Similar to $X_d$, the set $X^{\B}_d$ admits a decomposition:
\[
X^{\B}_d = \sqcup_{\mbf a \in \Lambda^{\B}_{d, n} } X^{\B}_d(\mbf a), \quad
X^{\B}_d(\mbf a) =\{ V\in X^{\B}_d | |V_i/V_{i-1}|  = a_i, \forall 1\leq i\leq n\}.
\]
Let $\mbf S^{\B}_d(\mbf b, \mbf a)$ be the subspace of $\mbf S^{\B}_d$ spanned by all functions supported on
$X^{\B}_d(\mbf b) \times X^{\B}_d(\mbf a)$.
We have
\[
\mbf S^{\B}_d = \oplus_{\mbf b, \mbf a \in \Lambda^{\B}_{d, n}} \mbf S^{\B}_d(\mbf b, \mbf a),
\]
such that
$
\mbf S^{\B}_d (\mbf c, \mbf b) * \mbf S^{\B}_d( \mbf b, \mbf a) \subseteq \mbf S^{\B}_d(\mbf c, \mbf a).
$
So  $\widetilde \Delta^{\B}$ in (\ref{Delta-B}) can be decomposed as
\[
\widetilde \Delta^{\B} =\oplus
\widetilde \Delta_{ \mbf b', \mbf a', \mbf b'', \mbf a''}^{\B}
\]
where
$\widetilde \Delta_{ \mbf b', \mbf a', \mbf b'', \mbf a''}^{\B}$ is the component from
$\mbf S^{\B}_d(\mbf b, \mbf a)$ to $\mbf S^{\B}_d(\mbf b', \mbf a') \otimes \mbf S(\mbf b'', \mbf a'')$ such that
\[
b_i = b'_i + b''_i + b''_{n+1-i}, \quad a_i = a'_i + a''_i +  a''_{n+1-i}, \quad \forall 1\leq i\leq n.
\]
We renormalize $\widetilde \Delta^{\B}_{\bv}$ in (\ref{Delta-B}) as follows.
\begin{align}
\label{Delta-J}
\Delta^{\B}_{\bv} = \oplus_{\mbf b, \mbf a, \mbf b', \mbf a', \mbf b'', \mbf a''}
\Delta_{ \mbf b', \mbf a', \mbf b'', \mbf a''}^{\B},
\end{align}
where
$
\Delta_{\mbf b', \mbf a', \mbf b'', \mbf a''}^{\B}
=
\bv^{\sum_{1\leq i \leq j \leq n}  b_i' b_j'' - a_i' a_j'' }
\bv^{ u (\mbf b'', \mbf  a'')}
\widetilde \Delta_{\mbf b', \mbf a', \mbf b'', \mbf a''}^{\B}.
$
Note that $\Delta^{\B}_{\bv}$ is again an algebra homomorphism, due to the fact that
$u (\mbf c, \mbf a) = u (\mbf c, \mbf b) + u (\mbf b, \mbf a)$.
By a straightforward computation based on Proposition ~\ref{D(E)}, we have

\begin{prop}
\label{D(E)-renorm}
For any $i\in [1, r]$,
\begin{align*}
\begin{split}
\Delta^{\B}_{\bv} ( \E_i) & = \E_i' \otimes  \mbf K''_{i} + 1 \otimes \mbf E_i''   +  \mbf k_i'  \otimes \mbf F''_{n-i} \mbf K''_i. \\
\Delta^{\B}_{\bv} (\F_i) & = \F'_i \otimes \mbf K''_{n-i}  +  \K'^{-1}_{i} \otimes \mbf K''_{n-i}  \mbf F''_i  + 1 \otimes \mbf E''_{n-i}. \\
\Delta^{\B}_{\bv} (\mbf k_i) & = \mbf k'_i \otimes \mbf K''_i \mbf K''^{-1}_{n-i}.
\end{split}
\end{align*}
\end{prop}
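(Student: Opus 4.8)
The plan is to deduce all three identities directly from the unrenormalized formulas of Proposition~\ref{D(E)} by bookkeeping the two $\bv$-exponents introduced in the renormalization~(\ref{Delta-J}). First, $\Delta^{\B}_{\bv}$ is an algebra homomorphism: the factor $\bv^{\sum_{1\le i\le j\le n}b_i'b_j''-a_i'a_j''}$ is multiplicative along composable graded pieces exactly as in the type $A$ case of Proposition~\ref{Delta_A}, and the identity $u(\mbf c,\mbf a)=u(\mbf c,\mbf b)+u(\mbf b,\mbf a)$ makes $\bv^{u(\mbf b'',\mbf a'')}$ multiplicative as well; so it suffices to verify the three displayed equalities, and since $\Delta^{\B}_{\bv}=\oplus\,\Delta^{\B}_{\mbf b',\mbf a',\mbf b'',\mbf a''}$ this can be done one graded summand at a time. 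The basic computational tool is that every diagonal generator ($\mbf H''_k$, $\mbf h'_k$, $\mbf K''_k$, $\mbf k'_k$) acts on a fixed graded piece $\mbf S_{d''}(\mbf b'',\mbf a'')$, resp.\ $\mbf S^{\B}_{d'}(\mbf b',\mbf a')$, by multiplication by $\bv$ to an explicit entry of the co-weight; hence the value of such a generator times $\mbf E''_k$, $\mbf F''_k$, $\E'_k$ or $\F'_k$ differs from it only by a known power of $\bv$.

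First I would carry this out for $\E_i$ with $i\in[1,r]$. From the proof of Proposition~\ref{D(E)} one reads off which quadruple $(\mbf b',\mbf a',\mbf b'',\mbf a'')$ supports each of the three summands of $\widetilde\Delta^{\B}(\E_i)$. For $\E_i'\otimes\mbf H''_{i+1}\mbf H''^{-1}_{n-i}$ one has $\mbf b''=\mbf a''$, so $u(\mbf b'',\mbf a'')=0$, while the self-duality condition defining $X^{\B}$ forces $\mbf b'-\mbf a'$ to be supported at the four indices $i,i+1,n-i,n+1-i$; a short telescoping gives $\sum_{k\le j}(b_k'-a_k')\,a_j''=-a_i''+a_{n-i}''$, and multiplying $\mbf H''_{i+1}\mbf H''^{-1}_{n-i}$ by $\bv^{-a_i''+a_{n-i}''}$ turns it into $\mbf H''_{i+1}\mbf H''^{-1}_i=\mbf K''_i$. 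For the remaining two summands $\mbf b'=\mbf a'$, the first exponent reduces to a single entry of $\mbf a'$, and combining it with the diagonal $\mbf h'$-factor already present (using the symmetry of co-weights in $\Lambda^{\B}_{d',n}$) produces precisely the $\mbf S^{\B}_{d'}$-slots $1$ and $\mbf k'_i$ of the target formula; the residual $\mbf H''$-decoration is then corrected by $\bv^{u(\mbf b'',\mbf a'')}$. The latter I would compute from~(\ref{s(a,b)}) by expanding the quadratic part $\sum_{k+l\ge n+1}(b_k''b_l''-a_k''a_l'')$ as a bilinear expression in $\mbf b''-\mbf a''$ and $\mbf b''+\mbf a''$ and telescoping; one gets $u(\mbf b'',\mbf a'')=a''_{n-i}$ for the summand producing $1\otimes\mbf E''_i$ and $u(\mbf b'',\mbf a'')=-a''_i$ for the one producing $\mbf k'_i\otimes\mbf F''_{n-i}\mbf K''_i$, exactly the powers needed. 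Adding the three summands gives the first formula. The identity for $\F_i$ is obtained by the same procedure with $i$ and $n-i$ interchanged, and the identity for $\mbf k_i$ is immediate: it lives on pieces with $\mbf b'=\mbf a'$ and $\mbf b''=\mbf a''$, so both exponents vanish and $\Delta^{\B}_{\bv}(\mbf k_i)=\widetilde\Delta^{\B}(\mbf k_i)=\mbf k'_i\otimes\mbf K''_i\mbf K''^{-1}_{n-i}$.

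I expect the only genuinely delicate point to be the boundary index $i=r$, where $i+1=r+1=n-i$. There the entry of $\mbf b''$ perturbed relative to $\mbf a''$ lies inside the truncation $\sum_{k\ge r+1}$ occurring in $u$ and simultaneously feeds into the quadratic part, so the half-integer formula~(\ref{s(a,b)}) must be handled with care; one checks that the two extra contributions cancel (a $+1$ from the quadratic sum against a $-1$ from $\sum_{k\ge r+1}(a_k''-b_k'')$), so that $u(\mbf b'',\mbf a'')$ takes the same value as in the generic range $i<r$ and the three formulas hold uniformly for $i\in[1,r]$. Apart from that, the one thing to keep track of is the four-index support of $\mbf b'-\mbf a'$ produced by the self-duality constraint; once that is in place, the rest is the routine propagation of $\bv$-powers through the diagonal generators, which is why the assertion is fairly called ``a straightforward computation based on Proposition~\ref{D(E)}''.
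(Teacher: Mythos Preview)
Your strategy is exactly the paper's: take each summand in Proposition~\ref{D(E)}, read off the supporting quadruple $(\mbf b',\mbf a',\mbf b'',\mbf a'')$, compute the two exponents in~(\ref{Delta-J}), and watch the $\mbf H''$ and $\mbf h'$ factors collapse to the stated $\mbf K''$ and $\mbf k'$ factors. Your computations for $\E_i$ and $\mbf k_i$ match the paper's line by line.

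One small correction at the boundary. Your expectation that the extra contributions at $i=r$ cancel \emph{inside} $u(\mbf b'',\mbf a'')$ is right for $\E_i$, but not for the middle summand of $\F_i$. There $b''_k=a''_k+\delta_{k,i}-\delta_{k,i+1}$, and the paper's direct computation gives $u(\mbf b'',\mbf a'')=-a''_{n-i}+\delta_{i,r}$, not $-a''_{n-i}$. The leftover $v^{\delta_{i,r}}$ is absorbed not by $u$ but by the order of the tensor factor: the target is $\mbf K''_{n-i}\mbf F''_i$, and since $n-i=r+1$ is adjacent to $i=r$ one has $\mbf K''_{r+1}\mbf F''_r=v\,\mbf F''_r\mbf K''_{r+1}$, which eats exactly that $v$. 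So your line ``the identity for $\F_i$ is obtained by the same procedure with $i$ and $n-i$ interchanged'' is essentially right, but the mechanism at $i=r$ is commutation of $\mbf K''$ past $\mbf F''$, not an internal cancellation in $u$.
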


\begin{proof}
Fix an $i \in [1, r]$.
Assume that we have a quadruple $(\mbf b', \mbf a', \mbf b'', \mbf a'')$ such that
$b'_k = a'_k - \delta_{k, i} + \delta_{k, i+1} + \delta_{k, n-i} - \delta_{k, n+1 -i}$ and $b''_k = a''_k$ for all $k \in [1, n]$.
We have
$u(\mbf b'', \mbf a'') =0 $ and
$
\sum_{1\leq k\leq j \leq n} b'_k a''_j - a'_k a''_j = - a''_i + a''_{n-i}.
$
So, after the twist,  the first term on the right of $\widetilde \Delta^{\B}_{\bv} (\E_i)$ in Proposition ~\ref{D(E)} becomes
$\E_i' \otimes \mbf H_{i+1}'' \mbf H''^{-1}_{n-i}|_{ \mbf b', \mbf a', \mbf b'', \mbf a''} v^{-a''_i + a''_{n-i}} = \E'_i \otimes \mbf K''_i|_{\mbf b', \mbf a', \mbf b'', \mbf a''}$ where the notation $f|_{\mbf b', \mbf a', \mbf b'', \mbf a''}$ is the restriction of $f$ to
$X^{\jmath}_{d'} (\mbf b')  \times X^{\jmath}_{d'} (\mbf a') \times X_{d''}(\mbf b'') \times X_{d''} (\mbf a'')$.

Assume that we have a quadruple $(\mbf b', \mbf a', \mbf b'', \mbf a'')$ such that
$b'_k = a'_k$ and $b''_k = a''_k - \delta_{k, i} + \delta_{k, i+1}$ for all $k\in [1, n]$.
Then we have
$ \sum_{1 \leq k \leq j \leq n} b'_k b''_j - a'_k a''_j = a'_{i+1}$ and $u(\mbf b'', \mbf a'') = a''_{n-i}$.
Thus, after the twist,  the second term on the right of $\widetilde \Delta^{\B}_{\bv} (\E_i)$ in Proposition ~\ref{D(E)} is equal to
$\mbf h'^{-1}_{i+1} \otimes \mbf E_i''  \mbf H''^{-1}_{n-i}|_{\mbf b', \mbf a', \mbf b'', \mbf a''}  v^{a'_{i+1} + a''_{n-i}} = 1 \otimes \mbf E''_i |_{\mbf b', \mbf a', \mbf b'', \mbf a''}$.

Assume that we have a quadruple $(\mbf b', \mbf a', \mbf b'', \mbf a'')$ such that
$b'_k = a'_k$ and $b''_k = a''_k + \delta_{k, n-i} - \delta_{k, n+1-i}$ for all $k\in [1, n]$.
Then we have
$
 \sum_{1 \leq k \leq j \leq n} b'_k b''_j - a'_k a''_j = - a'_i
$
and
$u(\mbf b'', \mbf a'') =  - a''_i + \delta_{i, r+1}= - a''_i$, where  the latter equality  is due to $i\in [1, r]$.
Hence,   after the twist,  the third term on the right of $\widetilde \Delta^{\B}_{\bv} (\E_i)$ in Proposition ~\ref{D(E)}
is equal to
$ \mathbf h'_{i+1} \otimes \mbf F''_{n-i} \mbf H''_{i+1} v^{-a'_i - a''_i} |_{\mbf b', \mbf a', \mbf b'', \mbf a''} = \K'_i \otimes \mbf F''_{n-i} \mbf K''_i|_{\mbf b', \mbf a', \mbf b'', \mbf a''}.$

The first equality in the proposition follows from the above analysis.

Assume that we have a quadruple $(\mbf b', \mbf a', \mbf b'', \mbf a'')$ such that
$b'_k = a'_k + \delta_{k, i} - \delta_{k, i+1} - \delta_{k, n-i} + \delta_{k, n+1-i}$ and $b''_k = a''_k$
for all $k\in [1, n]$.
Then we have
$ \sum_{1 \leq k \leq j \leq n} b'_k b''_j - a'_k a''_j = a''_i - a''_{n-i}$ and $u(\mbf b'', \mbf a'') =0$.
So, after the twist,  the first term on the right of $\widetilde \Delta^{\B}_{\bv} (\F_i)$ in Proposition ~\ref{D(E)} becomes
$\F'_i \otimes \mbf H''^{-1}_{i} \mbf H''_{n+1-i} v^{a''_i - a''_{n-i}}|_{\mbf b', \mbf a', \mbf b'', \mbf a''} =  \F'_i \otimes \mbf K''_{n-i}|_{\mbf b', \mbf a', \mbf b'', \mbf a''}$.

Assume that we have a quadruple $(\mbf b', \mbf a', \mbf b'', \mbf a'')$ such that
$b'_k = a'_k$ and $b''_k = a''_k + \delta_{k, i} - \delta_{k, i+1}$
for all $k\in [1, n]$.
Then we have
$ \sum_{1 \leq k \leq j \leq n} b'_k b''_j - a'_k a''_j = - a'_{i+1}$ and $u (\mbf b'', \mbf a'') = - a_{n-i}+ \delta_{i, r}$.
So, after the twist,  the second term on the right of $\widetilde \Delta^{\B}_{\bv} (\F_i)$ in Proposition ~\ref{D(E)} becomes
$
\mbf h'_i\otimes \mbf F''_i \mbf H''_{n+1-i} v^{-a'_{i+1} - a_{n-i} + \delta_{i, r}}|_{\mbf b', \mbf a', \mbf b'', \mbf a''}
= \K'^{-1}_{i} \otimes v^{\delta_{i, r}}  \mbf F''_i   \mbf K''_{n-i}|_{\mbf b', \mbf a', \mbf b'', \mbf a''}
= \K'^{-1}_{i} \otimes \mbf K''_{n-i}  \mbf F''_i |_{\mbf b', \mbf a', \mbf b'', \mbf a''} .
$

Assume that we have a quadruple $(\mbf b', \mbf a', \mbf b'', \mbf a'')$ such that
$b'_k = a'_k$ and $b''_k = a''_k - \delta_{k, n-i} + \delta_{k, n+1-i}$ for all $k \in [1, n]$.
Then we have
$ \sum_{1 \leq k \leq j \leq n} b'_k b''_j - a'_k a''_j =  a'_{n+1-i} = a'_i$ and $u(\mbf b'', \mbf a'') = a''_i$.
So, after the twist,  the third term on the right of $\widetilde \Delta^{\B}_{\bv} (\F_i)$ in Proposition ~\ref{D(E)} becomes
$
\mbf h'^{-1}_{i} \otimes \mbf E''_{n-i} \mbf H''^{-1}_{i} v^{a'_i + a''_i} |_{\mbf b', \mbf a', \mbf b'', \mbf a''} = 1 \otimes \mbf E''_{n-i}|_{\mbf b', \mbf a', \mbf b'', \mbf a''}.
$

The above analysis implies the second equality in the proposition.
Since the twist will not affect the original term when $\mbf b'= \mbf a'$ and $\mbf b''=\mbf a''$, we have the third equality.
\end{proof}

Moreover, we have

\begin{prop} \label{Delta-A-B}
$(1 \otimes \Delta_{\bv}) \Delta^{\B}_{\bv} = (\Delta^{\B}_{\bv} \otimes 1) \Delta^{\B}_{\bv}$. More precisely, we have the following commutative diagram for
the quadruple $d, d', d'', d'''$ such that $d= d' + d'' + d'''$
\[
\begin{CD}
\mbf S^{\B}_d @> \Delta^{\B}_{\bv} >> \mbf S^{\B}_{d'} \otimes \mbf S_{d'' + d'''} \\
@V \Delta^{\B}_{\bv} VV @VV 1\otimes \Delta_{\bv} V\\
\mbf S^{\B}_{d'+d''} \otimes \mbf S_{d'''} @> \Delta^{\B}_{\bv} \otimes 1 >>   \mbf S^{\B}_{d'} \otimes \mbf S_{d''} \otimes \mbf S_{d'''}.
\end{CD}
\]
\end{prop}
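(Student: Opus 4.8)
The plan is to reduce everything to a check on algebra generators. Recall that $\Delta_{\bv}$ is an $\mcal A$-algebra homomorphism by Proposition~\ref{Delta_A}, and that $\Delta^{\B}_{\bv}$ is an $\mcal A$-algebra homomorphism by the remark following (\ref{Delta-J}) (which rests on the additivity $u(\mbf c,\mbf a)=u(\mbf c,\mbf b)+u(\mbf b,\mbf a)$). Consequently both composites $(1\otimes\Delta_{\bv})\circ\Delta^{\B}_{\bv}$ and $(\Delta^{\B}_{\bv}\otimes 1)\circ\Delta^{\B}_{\bv}$ are $\mcal A$-algebra homomorphisms $\mbf S^{\B}_d\to\mbf S^{\B}_{d'}\otimes\mbf S_{d''}\otimes\mbf S_{d'''}$, so it suffices to verify that they agree on a set of algebra generators of $\mbf S^{\B}_d$; for instance, on $\E_i,\F_i$ for $1\le i\le r$ together with the toral elements $\mbf h_a^{\pm1}$ (equivalently $\mbf k_i^{\pm1}$), cf.~\cite{BKLW13}.

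For the toral generators the check is immediate. By Proposition~\ref{D(E)-renorm}, $\Delta^{\B}_{\bv}(\mbf k_i)=\mbf k_i'\otimes\mbf K_i''(\mbf K_{n-i}'')^{-1}$; applying $\Delta^{\B}_{\bv}$ to the first tensor leg, or $\Delta_{\bv}$ (via $\Delta_{\bv}(\mbf K_j)=\mbf K_j'\otimes\mbf K_j''$) to the second tensor leg, produces in either case the same three-leg element, with a $\jmath$-toral factor on the $\mbf S^{\B}_{d'}$-leg and a factor $\mbf K_i(\mbf K_{n-i})^{-1}$ on each of the two ordinary Schur-algebra legs; the case of $\mbf h_a^{\pm1}$ is identical.

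The substance is the check on $\E_i$ and $\F_i$. Here I would substitute into each composite the three-term expansion $\Delta^{\B}_{\bv}(\E_i)=\E_i'\otimes\mbf K_i''+1\otimes\mbf E_i''+\mbf k_i'\otimes\mbf F_{n-i}''\mbf K_i''$ (and the analogous expansion of $\Delta^{\B}_{\bv}(\F_i)$) from Proposition~\ref{D(E)-renorm}, and then expand one step further: on the $1\otimes\Delta_{\bv}$ side by applying Proposition~\ref{Delta_A} to $\mbf E_i''$, $\mbf F_{n-i}''$ and the $\mbf K$-factors in the ordinary Schur leg, and on the $\Delta^{\B}_{\bv}\otimes 1$ side by applying Proposition~\ref{D(E)-renorm} to $\E_i'$ and $\mbf k_i'$ in the $\jmath$-Schur leg. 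A term-by-term comparison of the two resulting sums of monomials then finishes the verification. Since $n=2r+1$ and $i\in[1,r]$ force $n-i\in[r+1,n-1]$, the index $n-i$ is always admissible for the ordinary generators $\mbf E_{n-i}'',\mbf F_{n-i}''$, so no exceptional subcases in $i$ arise.

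I expect the only real difficulty to be combinatorial bookkeeping of two kinds: keeping straight that on the second application $\Delta^{\B}_{\bv}$ acts on the $\jmath$-Schur block while $\Delta_{\bv}$ acts on the ordinary Schur-algebra block (their generator formulas differ), and tracking the $\bv$-power normalizations in (\ref{Delta-J}). The latter is controlled by the additivity of $u$ recorded after (\ref{Delta-J}) together with the bilinearity of the quadratic twist $\sum_{i\le j}b_i'b_j''-a_i'a_j''$, from which one sees that the normalizing powers on the two composites coincide; once this is noted, the term-by-term matching is forced. A conceptually cleaner but computationally equivalent alternative is to deduce the statement directly from the un-normalized coassociativity of Corollary~\ref{coassoB}, by checking that the exponents defining $\Delta^{\B}_{\bv}$ and $\Delta_{\bv}$ assemble into a compatible $2$-cocycle for the triple splitting $d=d'+d''+d'''$.
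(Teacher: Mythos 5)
The paper states Proposition~\ref{Delta-A-B} without proof, so there is nothing to compare against literally; the paper does prove Corollary~\ref{coassoB} (the un-normalized version) by a generator check, and your proposal fills the gap in the same spirit. Your main strategy is sound: since $\Delta_{\bv}$ and $\Delta^{\B}_{\bv}$ are algebra homomorphisms, both composites are algebra homomorphisms, and by the monomial basis of \cite[Theorem 3.10]{BKLW13} it suffices to check on $\E_i,\F_i,\mbf h_a^{\pm1}$. I worked the $\E_i$ case through: $(1\otimes\Delta_{\bv})\Delta^{\B}_{\bv}(\E_i)$ and $(\Delta^{\B}_{\bv}\otimes1)\Delta^{\B}_{\bv}(\E_i)$ both expand to
$\E_i'\otimes\mbf K_i''\otimes\mbf K_i'''+1\otimes\mbf E_i''\otimes\mbf K_i'''+1\otimes1\otimes\mbf E_i'''+\mbf k_i'\otimes\mbf F_{n-i}''\mbf K_i''\otimes\mbf K_i'''+\mbf k_i'\otimes\mbf K_i''\mbf K_{n-i}''^{-1}\otimes\mbf F_{n-i}'''\mbf K_i'''$,
using only that the toral factors commute; $\F_i$ and $\mbf h_a$ are similar. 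So the match you asserted is indeed forced. Your alternative route via Corollary~\ref{coassoB} is also correct and is perhaps closer to the paper's implicit intent: writing the two total twists for the two composites and comparing, the difference collapses to
$\bigl(u(\mbf b''+\mbf b''',\mbf a''+\mbf a''')-u(\mbf b'',\mbf a'')-u(\mbf b''',\mbf a''')\bigr)-\sum_{i+j\geq n+1}(b''_ib'''_j-a''_ia'''_j)$,
and the first bracket equals $\sum_{i+j\geq n+1}(b''_ib'''_j-a''_ia'''_j)$ by the symmetry of the region $\{i+j\geq n+1\}$ under swapping $i$ and $j$ (the linear part of $u$ cancels); hence the twists agree, i.e., the exponents form the compatible ``$2$-cocycle'' you anticipated. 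What this buys over the generator check is that it makes transparent exactly which identity the normalizing powers must satisfy, rather than having the cancellation happen silently term by term. Either route is a complete proof; both are consistent with how the paper treats Corollary~\ref{coassoB}.
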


\subsection{The imbedding  $\jmath_{d, \bv}: \mbf S^{\B}_d \to \mbf S_d$}

In this section, we set $d'=0$ and $d''=d$, then the comulitiplication  $\Delta^{\B}_{\bv}$ in (\ref{Delta-B}) becomes
$
\Delta^{\B}_{\bv}: \mbf S^{\B}_d \to \mbf S^{\B}_{0} \otimes \mbf S_d.
$
Observe that $\mbf S^{\B}_{0}$ consists of only one basis element, so we have
$
\mbf S^{\B}_0 \simeq \mathcal A.
$
Thus the coproduct  $\Delta^{\B}_{\bv}$ becomes the following algebra homomorphism, denoted by $\jmath_{d, \bv}$,
\begin{align} \label{jD}
\jmath_{d, \bv}: \mbf S^{\B}_d \to \mbf S_d.
\end{align}

The following corollary is  by  Proposition ~\ref{D(E)-renorm},  $\E_i=0$, $\F_i=0$ and $\mbf k_i' = \bv^{\delta_{i, r}}$ in $\mbf S^{\B}_0$.

\begin{cor}
\label{J-A}
We have
\begin{align*}
\begin{split}
\jmath_{d, \bv} ( \E_i) & =  \mbf  E_i + \mbf K_i \mbf F_{n-i}, \\
\jmath_{d, \bv} (\F_i) & = \mbf F_i \mbf K_{n-i} + \mbf E_{n-i} , \\
\jmath_{d, \bv} (\mbf k_i ) & = \bv^{\delta_{i, r}} \mbf K_i \mbf K^{-1}_{n-i}, \quad \forall 1\leq i\leq r.\\
%
\end{split}
\end{align*}
\end{cor}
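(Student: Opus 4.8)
The plan is to obtain Corollary~\ref{J-A} directly from Proposition~\ref{D(E)-renorm} by specializing to $d'=0$, $d''=d$, since $\jmath_{d,\bv}$ is by construction the map $\Delta^{\B}_{\bv}$ in that case, composed with the identifications $\mbf S^{\B}_0\cong\mathcal A$ and $\mathcal A\otimes\mbf S_d\cong\mbf S_d$. The first step is to record the three $\mbf S^{\B}_0$-elements that appear in the formulas of Proposition~\ref{D(E)-renorm}. Since $\Lambda^{\B}_{0,n}$ consists of the single tuple with a $1$ in position $r+1$ and zeros elsewhere, $X^{\B}_0$ is a point, $\mbf S^{\B}_0\cong\mathcal A$, and its unique flag $L$ satisfies $L_i=0$ for all $i\le r$. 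Hence the one-step inclusion conditions defining $\E_i$ and $\F_i$ for $i\in[1,r]$ are never met, so $\E_i=\F_i=0$ in $\mbf S^{\B}_0$; and $\mbf h_{i+1}\mbf h_i^{-1}$ evaluates on $L$ to $\bv^{a_{i+1}}=\bv^{\delta_{i,r}}$, i.e.\ $\mbf k_i'=\bv^{\delta_{i,r}}$.

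Substituting these three facts into the formulas of Proposition~\ref{D(E)-renorm} and collapsing the first tensor factor to a scalar via $\mbf S^{\B}_0\cong\mathcal A$ gives, for $1\le i\le r$, the identities $\jmath_{d,\bv}(\E_i)=\mbf E_i+\bv^{\delta_{i,r}}\mbf F_{n-i}\mbf K_i$, $\jmath_{d,\bv}(\F_i)=\bv^{-\delta_{i,r}}\mbf K_{n-i}\mbf F_i+\mbf E_{n-i}$, and $\jmath_{d,\bv}(\mbf k_i)=\bv^{\delta_{i,r}}\mbf K_i\mbf K_{n-i}^{-1}$. The last one is already in the claimed form; for the first two it remains to move $\mbf K_i$ past $\mbf F_{n-i}$, resp.\ $\mbf K_{n-i}$ past $\mbf F_i$. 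I would do this with the $\mbf K$--$\mbf F$ commutation relation in $\mbf S_d$ coming from (\ref{q-Serre}): since $n=2r+1$ is odd, the exponent $-2\delta_{a,b}+\delta_{a,b+1}+\delta_{a,b-1}$ evaluates, at $(a,b)=(i,n-i)$ and at $(a,b)=(n-i,i)$ for $1\le i\le r$, to exactly $\delta_{i,r}$, the diagonal case $a=b$ and one of the two neighbour cases being excluded by parity. Hence $\mbf K_i\mbf F_{n-i}=\bv^{\delta_{i,r}}\mbf F_{n-i}\mbf K_i$ and $\mbf K_{n-i}\mbf F_i=\bv^{\delta_{i,r}}\mbf F_i\mbf K_{n-i}$, which turn the first two expressions into $\mbf E_i+\mbf K_i\mbf F_{n-i}$ and $\mbf F_i\mbf K_{n-i}+\mbf E_{n-i}$, as asserted.

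The main point requiring any care is the $\bv$-power bookkeeping, but that has already been absorbed in passing from $\widetilde\Delta^{\B}$ to $\Delta^{\B}_{\bv}$ in Proposition~\ref{D(E)-renorm}, so what is left for this corollary is only the elementary parity computation above; in particular, the one place where the standing hypothesis that $n$ is odd is used is the collapse of the commutation exponent to $\delta_{i,r}$. I therefore do not expect a genuine obstacle.
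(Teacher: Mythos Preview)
Your proposal is correct and follows essentially the same approach as the paper: specialize Proposition~\ref{D(E)-renorm} to $d'=0$, use that in $\mbf S^{\B}_0$ one has $\E_i'=\F_i'=0$ and $\mbf k_i'=\bv^{\delta_{i,r}}$, and then invoke the $\mbf K$--$\mbf F$ commutation to rewrite $\bv^{\delta_{i,r}}\mbf F_{n-i}\mbf K_i$ as $\mbf K_i\mbf F_{n-i}$ (and similarly for the $\F_i$ identity). The paper's proof is terser and simply writes the last equality without spelling out the commutation, but the content is the same.
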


\begin{proof}
By Proposition ~\ref{D(E)-renorm},
we have
$$
\jmath_{d,\bv} ( \E_i) =  \E_i' \otimes  \mbf K''_{i} + 1 \otimes \mbf E_i''   +  \mbf k_i'  \otimes \mbf F''_{n-i} \mbf K''_i
= 0 + \mbf E''_i + v^{\delta_{i, r}} \mbf F''_{n-i} \mbf K''_i
= \mbf E''_i + \mbf K''_i \mbf F''_{n-i},
$$
Which is the first identity if we skip the superscripts.
The rest two are obtained in exactly the same manner, and skipped.
\end{proof}

\begin{rem}
The homomorphism $\jmath_{d, \bv}$ matches with the imbedding $\jmath$ in  ~\cite[Proposition 4.5]{BKLW13}. The only difference is an involution $\omega$ on $\U$ defined by $(\mbb E_i, \mbb F_i, \mbb  K_i) \mapsto ( \mbb F_i, \mbb E_i, \mbb K^{-1}_{i})$.
\end{rem}

\begin{lem} \label{j-inj}
$\jmath_{d, \bv}$ in (\ref{jD})  is injective.
\end{lem}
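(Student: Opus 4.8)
The plan is to show, with respect to the standard bases of $\mbf S^{\B}_d$ and $\mbf S_d$, that the matrix of $\jmath_{d,\bv}$ is triangular with entries on the diagonal that are units in $\mathcal A = \mbb Z[\bv, \bv^{-1}]$. Recall that $\mbf S^{\B}_d$ has the $\mathcal A$-basis $\{e_A\}$ indexed by the $\G^{\B}_d$-orbits on $X^{\B}_d \times X^{\B}_d$, parametrized by a set $\Xi^{\B}_d$ of ``mirror-symmetric'' matrices (\cite{BKLW13}), and $\mbf S_d$ has the basis $\{e_M\}_{M \in \Xi_d}$ of Section \ref{Transfer}, with $\Xi_d$ carrying a Bruhat-type order $\preceq$. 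The first step is to define a folding map $\flat \colon \Xi^{\B}_d \to \Xi_d$, $A \mapsto M_A$, where $M_A$ is the $\preceq$-minimal relative position $(\pi''(L), \pi''(\check L))$ realized on the orbit $\mathcal O_A$ (note $\pi''$ is not $\G^{\B}_d$-equivariant, so this position genuinely varies over $\mathcal O_A$, but it turns out to possess a $\preceq$-least value). Concretely $M_A$ is read off the ``lower half'' of the matrix $A$, and the mirror symmetry of $A$ makes this half determine $A$, so that $\flat$ is injective. (This is the combinatorics underlying the imbedding $\jmath$ of \cite[Proposition 4.5]{BKLW13}; one could instead cite that injectivity directly, but the argument below is self-contained.)

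The second step is to compute $\jmath_{d,\bv}(e_A)$ via the function-level formula. For any $f \in \mbf S^{\B}_d$,
\[
\jmath_{d,\bv}(f)(V, V') = \bv^{\gamma(V, V')} \sum_{\check L \in Z^{\B}_{*, V'}} f(L, \check L),
\]
where $L$ is any isotropic flag with $\pi''(L) = V$ (independent of the choice by Lemma \ref{independent}, together with the inclusion of the group $\mathcal U$ there into $\G^{\B}_d$), $Z^{\B}_{*, V'} = \{\check L \in X^{\B}_d \mid \pi''(\check L) = V'\}$, and $\gamma$ is the normalizing $\bv$-power of Section \ref{secrenormal}, which depends only on the types of $V$ and $V'$. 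Evaluating at a representative pair $(V, V') \in \mathcal O_{M_A}$ shows that the coefficient of $e_{M_A}$ in $\jmath_{d,\bv}(e_A)$ equals $\bv^{\gamma} \cdot \#\{\check L \in Z^{\B}_{*, V'} \mid (L, \check L) \in \mathcal O_A\}$. By an argument of the type carried out in Proposition \ref{D(E)} --- decompose $\D$ as in Lemma \ref{auxi}, reduce the count of such $\check L$ to a count of subspaces in the successive quotients $L_i / L_{i-1}$, and apply Lemma \ref{lemcounting} --- this cardinality is, for the minimal position $M_A$, a single power of $q$ (all the $q$-binomial coefficients that occur specialize to $1$). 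Hence the coefficient of $e_{M_A}$ is a power of $\bv$, a unit in $\mathcal A$.

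The third step is to check triangularity: if $e_M$ occurs in $\jmath_{d,\bv}(e_A)$ with $M \neq M_A$, then $M \succ M_A$ in $\Xi_d$; equivalently, $\{\check L \in Z^{\B}_{*, V'} \mid (L, \check L) \in \mathcal O_A\}$ is empty whenever the relative position of $(V, V')$ is not $\succeq M_A$. Granting this, the matrix of $\jmath_{d,\bv}$ is upper triangular with unit diagonal relative to $\preceq$ and the injective labelling $\flat$, and a standard leading-term argument finishes the proof: if $\sum_A x_A e_A$ lies in $\ker \jmath_{d,\bv}$ with some $x_A \neq 0$, pick $A_0$ with $x_{A_0} \neq 0$ and $M_{A_0}$ $\preceq$-minimal among $\{M_A \mid x_A \neq 0\}$; comparing the coefficients of $e_{M_{A_0}}$ and using injectivity of $\flat$ forces $x_{A_0} \bv^{\gamma} = 0$, a contradiction. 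The main obstacle is this third step together with the well-definedness and injectivity of the folding map $\flat$ --- pinning down the generic (minimal) relative position on each orbit $\mathcal O_A$ and verifying that every other position contributing to $\jmath_{d,\bv}(e_A)$ is strictly higher in the Bruhat order is the genuine combinatorial content, the remainder being bookkeeping plus the counting lemmas already established.
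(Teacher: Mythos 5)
Your outline is a genuinely different route from the paper's — you work with the geometric standard basis $\{e_A\}$ and the Bruhat order $\preceq$, whereas the paper works with the monomial basis $m^\jmath_A$ of $\mbf S^\jmath_d$ (from \cite[Theorem 3.10]{BKLW13}) and a $\mbb Z[I]$-degree filtration. But the key pillar of your plan does not stand: the set of relative positions $(\pi''(L),\pi''(\check L))$ realized on $\mathcal O_A$ does \emph{not} in general have a $\preceq$-least element, so the folding map $\flat$ you posit is not well-defined. Already the diagonal case illustrates the failure: take $A=\mathrm{diag}(\mbf a)\in\Xi^\jmath_d$, so that $\mathcal O_A$ consists of pairs $(L,L)$. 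Then $\jmath_{d,\bv}(e_A)$ is supported on diagonal orbits $\mathcal O_{\mathrm{diag}(\mbf b'')}$ for various $\mbf b''\vdash\mbf a$ (i.e.\ $b''_i+b''_{n+1-i}+\delta_{i,r+1}=a_i$), because as $L$ ranges over $X^\jmath_d(\mbf a)$ the intersections $L_i\cap\D''$ take on several dimension vectors. Distinct diagonal matrices with different row/column sums are \emph{incomparable} in $\preceq$, so there is no least $M_A$. Consequently your leading-term bookkeeping cannot get started, and the assertion that $M_A$ is ``read off the lower half of $A$'' (hence that $\flat$ is injective) has no content until $\flat$ is actually defined.

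The paper avoids this obstacle precisely because its filtration is coarser: it fixes the degree $\deg(m_A^\jmath)\in\mbb Z[I]$ (determined by the number of $\E_i$'s and $\F_i$'s in the monomial, not by the matrix $A$), so the highest-degree part of $\jmath_{d,\bv}(m^\jmath_A)$ is a \emph{set} of BLM monomials $\sum_{B\in\Xi_d(A)} m_B$, where $\Xi_d(A)$ consists of lower-triangular $B$ sharing the strict-lower-triangular part of $A$ with $\co(B)\vdash\co(A)$. Instead of injectivity of a folding map it proves the disjointness $\Xi_d(A)\cap\Xi_d(A')=\emptyset$ for $A\neq A'$, and then applies the linear independence of $\{m_B\}$ from \cite[Proposition 3.9]{BLM90}. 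If you want to salvage a geometric standard-basis argument, you would have to replace the single $M_A$ by such a set of minimal positions (one per admissible $\mbf b''\vdash\mbf b$, $\mbf a''\vdash\mbf a$) and prove disjointness across different $A$; that is essentially the content of the paper's $\Xi_d(A)$, just re-expressed. As written, your second and third steps are explicitly deferred (``granting this'', ``the genuine combinatorial content''), and the first step is false, so the argument has a genuine gap.
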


\begin{proof}
Recall from \cite[Theorem 3.10]{BKLW13} that $\mbf S^{\jmath}_d$ has  a monomial basis  $m^{\jmath}_A$ indexed by $A\in \Xi^{\jmath}_d$ (which is denoted $m_A$ therein).
It is enough to show that  the set
$\{ \jmath_{d, \bv}  (m_A^{\jmath}) | A\in \Xi^{\jmath}_d \}$
is linearly independent in $\mbf S_d$.
We set
\[
\deg(1_{\lambda}) =0,\  \deg (\E_i 1_{\lambda} ) = i,\  \deg (\F_i 1_{\lambda}) = n-i, \quad \forall  \lambda \in \Lambda_{d, n}^{\jmath}, 1\leq i \leq r.
\]
Similarly, we define
\[
\deg (1_{\lambda} ) =0,\  \deg (\mbf E_i1_{\lambda}) = i,\  \deg (\mbf F_i1_{\lambda}) = -i,  \quad \forall \lambda \in \Lambda_{d, n}, 1\leq i \leq n.
\]
We write $\nu' < \nu$ if $\nu'_i \leq \nu_i$ for all $i$ and $\nu'_{i_0} < \nu_{i_0}$ for some $i_0$.
Suppose that $\deg (m_A^{\jmath}) = \nu \in \mbb Z_{\geq 0} [I]$. By Corollary \ref{J-A}, we have
\[
\jmath_{d, \bv} ( m_A^{\jmath})  \in
\oplus_{\overline{\mbf b} -\overline{ \mbf a} = \nu}  \mbf S_d (\mbf b, \mbf a) \oplus
\oplus_{\overline{\mbf d} -\overline{\mbf c} < \nu} \mbf S_d (\mbf d, \mbf c).
\]
For $A=(a_{ij}) \in \Xi_d^{\jmath}$, we set
\[
\Xi_d(A) =\{ B=(b_{ij}) \in \Xi_d | b_{ij} =0, \forall i < j, b_{ij}  = a_{ij}, \forall i > j, \co (B) \vdash \co (A) \},
\]
where $\mbf b \vdash \mbf a$ if $b_i + b_{n+1-i} + \delta_{i, r+1}= a_i$ for all $1\leq i\leq n$.
By Corollary ~\ref{J-A}, we see  that
$$
\jmath_{d, \bv} (m_A^{\jmath}) = \sum_{B \in \Xi_d (A)} m_B + \mbox{lower terms},
$$
where $m_B$ denotes the monomial attached to $B$ in ~\cite[Proposition 3.9]{BLM90} and  `lower term' is the remaining summand  in $\oplus_{\overline{\mbf d} -\overline{\mbf c} < \nu} \mbf S_d (\mbf d, \mbf c)$.
Now suppose that we have
\[
\sum_{A\in \Xi_d^{\jmath}} c_A \jmath_{d, \bv} (m^{\jmath}_A) =0, \quad c_A \in \mathcal A.
\]
Let $\mathcal M$ be the set of maximal $\nu \in \mbb Z[I]$ in the set
$\{ \deg (m^{\jmath}_A) | A\in \Xi^{\jmath}_d\}$
with respect to the natural partial order in $\mbb Z[I]$, i.e., $\nu' \leq \nu$ if and only if $\nu_i'\leq \nu_i$ for all $i$.
We have
\[
0= \sum_{A\in \Xi_d^{\jmath}} c_A \jmath_{d, \bv} (m^{\jmath}_A)
= \sum_{A: \deg (m^{\jmath}_A) \in \mathcal M}  c_A \jmath_{d, \bv} (m^{\jmath}_A) + \mbox{lower term}.
\]
So we have
$
\sum_{A: \deg (m^{\jmath}_A) \in \mathcal M }  c_A \jmath_{d, \bv} (m^{\jmath}_A) =0.
$
By ~\cite[Proposition 3.9]{BLM} and the fact that $\Xi_d(A) \cap \Xi_d(A') =\emptyset$ if $A\neq A'$,
the set $\{ \sum_{B\in \Xi_d(A)} m_B \}$, where $A$ runs over all matrices in $\Xi_d^{\jmath}$
such that $ \deg(m_A^{\jmath})  \in \mathcal M$, is linearly independent in $\mbf S_d$.
Thus, $c_A=0$ for all $A\in \Xi_d^{\jmath}$ such that $\deg (m_A^{\jmath}) \in  \mathcal M$.
Inductively,  $c_A=0$ for all $A\in \Xi_d^{\jmath}$.
Therefore, the set $\{ \jmath_{d, \bv} (m^{\jmath}_A ) | A\in \Xi_d^{\jmath} \}$ is linearly independent.
\end{proof}

The following   is nothing but a special case of Proposition ~\ref{Delta-A-B}.

\begin{cor} \label{D-B-J}
Suppose that $d' + d'' =d$.
We have the following commutative diagram.
\[
\begin{CD}
\mbf S^{\B}_d @>\Delta^{\B}_{\bv}>> \mbf S^{\B}_{d'} \otimes \mbf S_{d''}\\
@V\jmath_{d, \bv} VV @VV\jmath_{d', \bv} \otimes 1V\\
\mbf S_d @>\Delta_{\bv} >> \mbf S_{d'}\otimes \mbf S_{d''}.
\end{CD}
\]
\end{cor}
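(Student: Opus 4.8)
The plan is to obtain this corollary as the special case $d'=0$ of the coassociativity statement of Proposition~\ref{Delta-A-B}. Recall from (\ref{jD}) that, by construction, $\jmath_{d,\bv}$ is nothing but the map $\Delta^{\B}_{\bv}\colon \mbf S^{\B}_d\to \mbf S^{\B}_0\otimes \mbf S_d$ composed with the canonical identification $\mbf S^{\B}_0\otimes \mbf S_d\cong \mbf S_d$ coming from $\mbf S^{\B}_0\cong\mcal A$; likewise $\jmath_{d',\bv}$ is the analogous composition with $d'$ in place of $d$. So the first step is just to record these two identifications, which reduce the statement to a purely formal consequence of a case of Proposition~\ref{Delta-A-B}.

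Next I would apply Proposition~\ref{Delta-A-B} to the ordered triple $(0,d',d'')$, that is, with first summand $0$, second summand $d'$ and third summand $d''$, so that $d=0+d'+d''$. This produces the commutative square
\[
\begin{CD}
\mbf S^{\B}_d @>\Delta^{\B}_{\bv}>> \mbf S^{\B}_0\otimes \mbf S_{d'+d''}\\
@V\Delta^{\B}_{\bv}VV @VV 1\otimes\Delta_{\bv}V\\
\mbf S^{\B}_{d'}\otimes \mbf S_{d''} @>\Delta^{\B}_{\bv}\otimes 1>> \mbf S^{\B}_0\otimes \mbf S_{d'}\otimes \mbf S_{d''}.
\end{CD}
\]
Applying the identification $\mbf S^{\B}_0\cong\mcal A$ to every corner containing an $\mbf S^{\B}_0$-factor, the top arrow becomes $\jmath_{d,\bv}$; the bottom arrow becomes $\jmath_{d',\bv}\otimes 1$ (here one uses that the deleted $\mbf S^{\B}_0$-factor of $\Delta^{\B}_{\bv}\otimes 1$ is precisely the one in the definition of $\jmath_{d',\bv}$); the right vertical arrow becomes $\Delta_{\bv}\colon \mbf S_d\to \mbf S_{d'}\otimes \mbf S_{d''}$; and the left vertical arrow is already the map $\Delta^{\B}_{\bv}\colon \mbf S^{\B}_d\to \mbf S^{\B}_{d'}\otimes \mbf S_{d''}$ appearing in the statement. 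The square one gets is exactly the one in the corollary, so it commutes.

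The only point that needs a little attention — and it is not a genuine obstacle — is compatibility of the renormalizing $\bv$-powers under these identifications. But all such twists are already built into $\Delta^{\B}_{\bv}$ in (\ref{Delta-J}), and passing to $d'=0$ only deletes the trivial tensor factor $\mbf S^{\B}_0$ without altering the twist attached to the remaining factors (the $u$-twist in (\ref{Delta-J}) involves only the $\mbf S$-component $(\mbf b'',\mbf a'')$, and the $\bv$-power $\sum_{1\le i\le j\le n}b_i'b_j''-a_i'a_j''$ is unchanged once the identification $\mbf S^{\B}_0\cong\mcal A$ is made); so no separate computation is required. Hence the corollary follows at once from Proposition~\ref{Delta-A-B}.
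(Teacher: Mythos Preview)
Your proof is correct and is exactly the approach the paper takes: the paper simply remarks that the corollary ``is nothing but a special case of Proposition~\ref{Delta-A-B}'', and you have spelled out precisely that specialization (taking the first index to be $0$ and using the definition of $\jmath_{d,\bv}$ from (\ref{jD})). The extra paragraph about the $\bv$-twists is unnecessary, since Proposition~\ref{Delta-A-B} is already stated for the renormalized maps $\Delta^{\B}_{\bv}$ and $\Delta_{\bv}$, but it does no harm.
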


\begin{rem}
$\mbf S^{\B}_d$ can be regarded as a `coideal' subalgebra of $\mbf S_d$ in view of
Lemma ~\ref{j-inj} and Corollary ~\ref{D-B-J}.
\end{rem}

\subsection{Type $A$ duality vs type $B$ duality}

In this section, we  use the algebra homomorphism $\jmath_{d,\bv}$ to establish a direct connection between the geometric type $A$ duality in ~\cite{GL92} and
the geometric type $B$ duality in ~\cite{BKLW13}.

For any nonnegative integers $a, b$, we write $1^a 0^b$ for the sequence $(1, \cdots, 1, 0, \cdots, 0)$ containing $a$ copies of $1$'s and $b$ copies of $0$'s.
Similarly, we can define $1^a 0^b 1^c$, etc.

Recall $X_d$, $X_d(\mbf b)$ for $\mbf b\in \Lambda_{d,n}$  from Section ~\ref{vSA} and ~\ref{CoSA}.
We set
\[
\mbf T_{d, n} = \mathcal A_{\G_d} ( X_d \times X_d ( 1^d) ),
\quad \mbox{and} \quad
\mbf H_{A_d} = \mathcal A_{\G_d} (X_d(1^d) \times X_d (1^d)).
\]
By ~\cite{GL92}, we know that $\mbf H_{A_d} $ is a Hecke algebra of type $A_d$ and $\mbf T_{d, n}$  is a tensor space $\mbf V_n^{\otimes d}$ where $\mbf V_n$ is a free $\mathcal A$-module of rank $n$. Now the standard convolution defines commuting actions of $\mbf S_d$ and $\mbf H_{A_d}$ on
$\mbf T_{d,n}$ from the left and the right, respectively, which is captured in the following diagram.
\begin{align}
\label{type-A-duality}
\mbf S_d \times \mbf T_{d,n} \to \mbf T_{d, n} \overset{\psi}{\leftarrow} \mbf T_{d,n} \times \mbf H_{A_d}.
\end{align}
Moreover, the two actions centralize each other.

We shall recall a similar picture in ~\cite{BKLW13} if the $X_d$ is replaced by its $\jmath$-analogue.
Recall $X^{\jmath}_d$, $X^{\jmath}_d(\mbf b)$ for $\mbf b\in \Lambda^{\jmath}_{d,n}$  from Section ~\ref{vSB} and ~\ref{secrenormal}.
We set
\begin{align}
\label{Tj}
\mbf T^{\jmath}_{d, n} = \mathcal A_{\G^{\jmath}_d} ( X^{\jmath}_d \times X^{\jmath}_d ( 1^{2d+1}) )
\quad \mbox{and} \quad
\mbf H_{B_d} = \mathcal A_{\G^{\jmath}_d} (X^{\jmath}_d(1^{2d+1}) \times X^{\jmath}_d (1^{2d+1})).
\end{align}
Then we have that $\mbf T^{\jmath}_{d,n}$ is also isomorphic to the tensor space $\mbf V_n^{\otimes d}$, and the following diagram of commuting actions.
\begin{align}
\label{type-j-duality}
\mbf S^{\jmath}_d \times \mbf T^{\jmath}_{d,n} \to \mbf T^{\jmath}_{d, n} \leftarrow \mbf T^{\jmath}_{d,n} \times \mbf H_{B_d}.
\end{align}

A slight variant of the imbedding $\jmath_{d, \bv}$ yields the following linear map
\[
\zeta'_{d, \mbf b, \bv}: \mathcal A_{\G^{\jmath}_d} (X^{\jmath}_d (\mbf b) \times X^{\jmath}_d (1^{2d+1}) )
\to
\oplus_{\mbf b'' \models \mbf b, \mbf a'' \models 1^{2d+1} }  \mathcal A_{\G_d} (X_d(\mbf b'') \times X_d (\mbf a'')),
\forall \mbf b\in \Lambda^{\jmath}_{d,n},
\]
where $\mbf b'' \models \mbf b$ stands for $b_i = b_i'' + b''_{n+1-i} + \delta_{i, r+1}$ for all $i$.

For $\mbf a'', \mbf b''  \models 1^{2d+1}$, we set
\[
\mbf T_{d, n}^{\mbf a''} = \mathcal A_{\G_d} ( X_d \times X_d ( \mbf a'') )
\quad \mbox{and} \quad
^{\mbf b''} \mbf H_{A_d} = \mathcal A_{\G_d} (X_d(\mbf b'') \times X_d (1^d)).
\]

Let $\zeta_{d, \mbf b, \bv}$ denote the composition of $\zeta'_{\mbf b, \bv}$ with the projection to the components of $\mbf a'' =1^d 0^{d+1}$, i.e.,
\[
\zeta_{d, \mbf b, \bv}: \mathcal A_{\G^{\jmath}_d} (X^{\jmath}_d (\mbf b) \times X^{\jmath}_d (1^{2d+1}) )
\to
\oplus_{\mbf b'' \models \mbf b }  \mathcal A_{\G_d} (X_d(\mbf b'') \times X_d (1^d)),
\]
where we identify $X_d(1^d 0^{d+1})$ with $X_d (1^d)$.
Summing over all $\mbf b\in \Lambda^{\jmath}_{d,n}$, we get a linear map
\[
\zeta_{d, \bv} \equiv \oplus_{\mbf b\in \Lambda_{d, n}} \zeta_{d, \mbf b, \bv}: \mbf T^{\jmath}_{d, n} \to \mbf T_{d, n}.
\]
Take $n=2d+1$, $\mbf b = 1^{2d+1}$, we obtain a linear map
\[
\zeta^1_{d, \bv}: \mbf H_{B_d} \to \oplus_{\mbf b'' \models 1^{2d+1}} \ ^{\mbf b''} \mbf H_{A_d},
\]
which is not necessarily an algebra homomorphism.
Note that we identify $\mbf T_{d, n}^{1^{d} 0^{d+1}}$ and $^{1^d 0^{d+1}} \mbf H_{A_d}$ with $\mbf T_{d, n}$ and $\mbf H_{A_d}$, respectively.


\begin{prop}
\label{typeA-typeB}
We have the following commutative diagram relating the geometric type $A$ duality with the geometric type $B$ duality.
\[
\begin{CD}
\mbf S^{\jmath}_d \times \mbf T^{\jmath}_{d,n} @>>>  \mbf T^{\jmath}_{d, n} @<<< \mbf T^{\jmath}_{d,n} \times \mbf H_{B_d} \\
@V\jmath_{d, \bv} \times \zeta_{d, \bv} VV @V\zeta_{d, \bv}VV @VV \zeta_{d, \bv} \times \zeta^1_{d, \bv}V \\
\mbf S_d \times \mbf T_{d,n} @>>>  \mbf T_{d, n}  @<<\psi_1<  \oplus_{\mbf a'' \models 1^{2d+1} } \mbf T^{\mbf a''}_{d,n} \times  \oplus_{\mbf b'' \models 1^{2d+1}} \ ^{\mbf b''} \mbf H_{A_d} @<<\psi_2< \mbf T_{d, n} \times \mbf H_{A_d}.
\end{CD}
\]
where $\psi_2$ is the natural imbedding and $\psi_2\psi_1 $ is the $\psi$ in (\ref{type-A-duality}).
\end{prop}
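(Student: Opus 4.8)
The plan is to recognize the three vertical maps $\jmath_{d,\bv}$, $\zeta_{d,\bv}$ and $\zeta^1_{d,\bv}$ as specializations of one and the same geometric construction --- the $d'=0$ case of the renormalized comultiplication $\Delta^{\B}_{\bv}$ of Section~\ref{secrenormal}, applied to various pairs of $\G^{\jmath}_d$-stable flag spaces --- and then to deduce both squares from the compatibility of that construction with convolution. That compatibility is exactly the mechanism behind the fact that $\widetilde\Delta^{\B}$ is an algebra homomorphism, behind Proposition~\ref{Delta-A-B}, and behind Corollary~\ref{D-B-J}.

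First I would formalize the construction. Fix a maximal isotropic $\D''\subset\D$ with $\dim\D''=d$ and a decomposition $\D=\D''\oplus T\oplus W$ as in Lemma~\ref{auxi}; since $\D'=(\D'')^{\perp}/\D''$ is one-dimensional we have $\mbf S^{\jmath}_{d'}=\mbf S^{\jmath}_{0}\cong\mcal A$ and the factor coming from $\D'$ drops out. For any two $\G^{\jmath}_d$-stable unions $Y_1,Y_2$ of pieces $X^{\jmath}_d(\mbf a)$, the recipe of Section~\ref{cosb} --- restrict a function on $Y_1\times Y_2$ to the locus of flags split by $\D=\D''\oplus T\oplus W$, push forward to the $d$-dimensional space $\D''$ on whose flags $\G_d$ acts, and multiply by the renormalization $\bv$-power of (\ref{Delta-J}) --- gives a linear map $\nabla_{Y_1,Y_2}\colon\mcal A_{\G^{\jmath}_d}(Y_1\times Y_2)\to\bigoplus\mcal A_{\G_d}(Y_1''\times Y_2'')$, well defined by Lemma~\ref{independent}. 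Then $\nabla_{X^{\jmath}_d,X^{\jmath}_d}=\jmath_{d,\bv}$ by (\ref{jD}); $\nabla_{X^{\jmath}_d,X^{\jmath}_d(1^{2d+1})}=\zeta'_{d,\bv}$, and postcomposing with the projection onto the summands whose $X^{\jmath}_d(1^{2d+1})$-index equals $1^d0^{d+1}$ gives $\zeta_{d,\bv}$; and $\nabla_{X^{\jmath}_d(1^{2d+1}),X^{\jmath}_d(1^{2d+1})}$ (for $n=2d+1$), followed by the same projection, gives $\zeta^1_{d,\bv}$. The $d'=0$ specialization of Proposition~\ref{D(E)-renorm}, i.e.\ Corollary~\ref{J-A}, checks that these identifications are consistent on generators.

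The key step is the convolution compatibility: for $f\in\mcal A_{\G^{\jmath}_d}(Y_1\times Y_2)$ and $g\in\mcal A_{\G^{\jmath}_d}(Y_2\times Y_3)$,
\[
\nabla_{Y_1,Y_3}(f*g)=\nabla_{Y_1,Y_2}(f)*\nabla_{Y_2,Y_3}(g),
\]
the right-hand side being the sum over matching intermediate indices $Y_2''$ of the componentwise convolutions. This is proved verbatim as in the proof that $\widetilde\Delta^{\B}$ is an algebra homomorphism (Section~\ref{cosb}): expand both sides as sums over an intermediate flag and identify them using Lemma~\ref{independent}; the argument never uses that the $Y_i$ are the full flag variety, and the $\bv$-powers add up thanks to the additivity $u(\mbf c,\mbf a)=u(\mbf c,\mbf b)+u(\mbf b,\mbf a)$ recorded after (\ref{Delta-J}) together with the parallel additivity of the $\sum_{i\le j}b_ib_j-a_ia_j$ twist. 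Applying it with $(Y_1,Y_2,Y_3)=(X^{\jmath}_d,X^{\jmath}_d,X^{\jmath}_d(1^{2d+1}))$ and projecting the last index to $1^d0^{d+1}$ yields $\zeta_{d,\bv}(f*g)=\jmath_{d,\bv}(f)*\zeta_{d,\bv}(g)$, which is the left square --- the projection being harmless because $f*g$ acts only on the $X^{\jmath}_d$-slot and leaves untouched the $X^{\jmath}_d(1^{2d+1})$-slot where the projection lives. Applying it with $(Y_1,Y_2,Y_3)=(X^{\jmath}_d,X^{\jmath}_d(1^{2d+1}),X^{\jmath}_d(1^{2d+1}))$, collecting the first index and projecting the last to $1^d0^{d+1}$, yields $\zeta_{d,\bv}(g*h)=\sum_{\mbf a''}\zeta'_{d,\bv}(g)_{\mbf a''}*\zeta^1_{d,\bv}(h)_{\mbf a''}$, which is exactly $\psi_1$ evaluated on $(\zeta'_{d,\bv}(g),\zeta^1_{d,\bv}(h))$; this is the right square, and the composite along the bottom row being the action $\psi$ of (\ref{type-A-duality}) is then immediate from the definitions of $\psi_1$ and $\psi_2$.

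The hard part will be the bookkeeping that identifies the componentwise convolution of the key lemma with $\psi_1$: one must pin down, against the a priori larger $\nabla$, which of $\zeta_{d,\bv}$ and its unprojected refinement $\zeta'_{d,\bv}$ sits at each node of the diagram, and check that the diagonal pairing $\mbf a''=\mbf b''$ built into $\psi_1$ coincides with the matching of intermediate lift-indices produced by $\nabla_{\cdot,\cdot}(g*h)$. This reduces to three elementary facts: a function on $X^{\jmath}_d(1^{2d+1})\times X^{\jmath}_d(1^{2d+1})$ lifts, under the split-flag construction, only to functions on $X_d(\mbf b'')\times X_d(\mbf a'')$ with $\mbf b'',\mbf a''$ of the prescribed complementary $0/1$ shape; every such $X_d(\mbf a'')$ is the complete flag variety of $\D''$, so each convolution appearing in $\psi_1$ is well defined; and convolution preserves this indexing. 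With these in hand, both squares follow from the displayed compatibility with no further computation.
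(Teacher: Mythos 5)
The paper states this proposition without proof; it is treated as a consequence of the constructions in Section~\ref{cosb}--\ref{secrenormal}, and your proposal is a correct way to supply the missing argument. The organizing observation --- that $\jmath_{d,\bv}$, $\zeta_{d,\bv}$ and $\zeta^1_{d,\bv}$ are all instances of the $d'=0$ specialization of $\Delta^{\B}_{\bv}$ applied to functions on various pairs of $\G^{\jmath}_d$-stable pieces, so that both squares are manifestations of a single convolution-compatibility statement --- is exactly the mechanism the paper is invoking when it introduces $\zeta'_{d,\mbf b,\bv}$ as ``a slight variant of the imbedding $\jmath_{d,\bv}$.'' The compatibility lemma you isolate is established by the same argument the paper cites for the algebra-homomorphism property of $\widetilde\Delta^{\B}$ (Lusztig's Proposition 1.5 in [L00]), and the renormalizing $\bv$-powers glue across the intermediate index thanks to the additivity $u(\mbf c,\mbf a)=u(\mbf c,\mbf b)+u(\mbf b,\mbf a)$ together with the telescoping of the $\sum_{i\le j}b'_ib''_j-a'_ia''_j$ twist, as you note.

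One point you flag deserves to be made prominent rather than relegated to the closing remarks: as drawn, the right column of the diagram is labelled $\zeta_{d,\bv}\times\zeta^1_{d,\bv}$, but the paper's $\zeta_{d,\bv}$ has codomain $\mbf T_{d,n}$, not $\oplus_{\mbf a''\models 1^{2d+1}}\mbf T^{\mbf a''}_{d,n}$. With the projected map, $\psi_1$ would only pair the single component $\mbf a''=1^d0^{d+1}$ against $\zeta^1_{d,\bv}(h)$, discarding the other intermediate lifts, and the right square would generically fail. For the diagram to commute the vertical map on the $\mbf T^{\jmath}_{d,n}$-slot of the right column must be the unprojected $\zeta'_{d,\bv}$ (so the summation over $\mbf a''$ built into $\psi_1$ is precisely the sum over intermediate lifts produced by your compatibility lemma), while the middle column carries the projected $\zeta_{d,\bv}$; the projection there is harmless because it acts only on the outer index, which the convolution does not touch. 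Your bookkeeping identifies this correctly; in a write-up it should be stated explicitly as the reading of the diagram. (A further cosmetic point: the final sentence of the proposition should read $\psi_1\psi_2=\psi$, since $\psi_2$ is the imbedding.)
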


We can describe the linear map $\zeta_{d,\bv}$ explicitly.
Let $\Pi_{d, n}$ be the set of $n\times d$ matrices $A$ such that
$a_{ij} \in \{0, 1\}$ and $\sum_{1\leq i \leq n} a_{ij} =1$ for all $1\leq j \leq d$.
Then we have
\begin{align}
\label{Td}
\mbf T_{d, n}  = \mbox{span}_{\mathcal A} \{ \ ^a[A] | A \in \Pi_{d,n}\}
\end{align}
where $^a[A] = v^{d_A} \zeta_A$ and $d_A = \sum_{i\geq k, j < l} a_{ij} a_{kl}$.

Let $\Pi^{\jmath}_{d, n}$ be the subset of $\Pi_{2d+1, n}$ such that $a_{ij} = a_{n+1-i, 2d+2 -j}$ for all $1\leq i \leq n$ and $1\leq j \leq 2d+1$.
(In particular, we have $a_{r+1, d+1}=1$.)
We have
\begin{align}
\label{Tj-standard}
\mbf T^{\jmath}_{d, n} = \mbox{span}_{\mathcal A} \{ [A] | A \in \Pi^{\jmath}_{d, n} \},
\end{align}
where $[A] = v^{\ell_A} \zeta^{\jmath}_A$ and $\ell_A = \frac{1}{2} \left ( \sum_{i \geq j, k < l} a_{ij} a_{kl} - \sum_{i \geq n+1, d+1 >j} a_{ij} \right )$.

Let $J_m$ be the $m\times m$ matrix whose $(i, j)$-th component is $\delta_{i, n+1-j}$ for all $1\leq i, j \leq m$.
To a matrix $A\in \Pi_{d, n}$, we define a matrix
\[
A^J = ( A | \epsilon_{r+1} | J_n A J_d ),
\]
where $\epsilon_{r+1}$ is the column vector whose entries are zero except at $r+1$ which is $1$.
Then the assignment $A \mapsto A^J$ defines a bijection $\Pi_{d, n} \overset{\simeq}{\to} \Pi^{\jmath}_{d,n}$.

\begin{prop}
\label{zeta-standard}
$\zeta_{d, \bv} ( [ A^{J}] ) = \ ^a[A]$, for all $A \in \Pi_{d, n}$.
\end{prop}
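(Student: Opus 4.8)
The plan is to unwind $\zeta_{d,\bv}$ from its definition and evaluate it directly on the standard basis element $[A^{J}]$, then match the resulting power of $\bv$ with the normalization of ${}^a[A]$. By construction $\zeta_{d,\mbf b,\bv}$ arises from the same recipe as $\jmath_{d,\bv}=\Delta^{\jmath}_{\bv}|_{d'=0,\,d''=d}$, but applied to the tensor module $\mathcal A_{\G^{\jmath}_d}(X^{\jmath}_d(\mbf b)\times X^{\jmath}_d(1^{2d+1}))$, followed by the projection onto the $\mbf a''=1^d0^{d+1}$ summand; since $d'=0$ the space $X^{\jmath}_{0}$ is a point, so $\mbf b'=\mbf a'=0$ and the $\Delta^{\jmath}_{\bv}$-twist $\sum b'_ib''_j-a'_ia''_j$ is trivial. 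Writing $\D'':=\mbb F_q^{d}$ for the chosen isotropic subspace and regarding $\check M\in X_{d}(1^{d})$ as a complete flag in $\D''$, I would first record the formula
\[
\zeta_{d,\bv}(g)(P,\check M)=\bv^{\,u(\mbf b'',\,1^{d}0^{d+1})}\sum_{\check L\in Z^{\jmath}_{\ast,\check M}}g(L,\check L),
\]
where $\mbf b''$ is the type of $P$, $L$ is a fixed isotropic flag of type $r(A^{J})$ with $\pi''(L)=P$ (the term being $0$ if none exists), and $Z^{\jmath}_{\ast,\check M}$ is the set of complete isotropic flags $\check L$ in $\mbb F_q^{2d+1}$ with $\pi''(\check L)=\check M$; independence of the choice of $L$ is Lemma \ref{independent}.

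Next I would analyze $Z^{\jmath}_{\ast,\check M}$. Because $\check M$ has type $1^{d}0^{d+1}$, the condition $\pi''(\check L)=\check M$ forces $\check L_{d}=\D''$, whence $\check L_{i}=\check M_{i}$ for $i\le d$ and $\check L_{i}=\check M_{2d+1-i}^{\perp}$ for $i>d$ by the isotropy relation $\check L_{i}=\check L_{2d+1-i}^{\perp}$; so $Z^{\jmath}_{\ast,\check M}$ is a single point. Choosing, via Lemma \ref{auxi}, a decomposition $\D=\D''\oplus T\oplus W$ adapted to $L$ with $T\cong\D'$ and $W$ isotropic and dual to $\D''$, I would check block-by-block that the relative-position matrix of $(L,\check L)$ equals $A^{J}$ if and only if that of $(P,\check M)$ equals $A$: the first $d$ columns agree with $A$ because $\check L_{j}=\check M_{j}\subseteq\D''$ gives $L_{i}\cap\check L_{j}=P_{i}\cap\check M_{j}$; the middle column is $\epsilon_{r+1}$ because $\D''^{\perp}\cap L_{i}=P_{i}\oplus(L_{i}\cap T)$ and the one-dimensional jump of $L_{i}\cap T$ sits at $i=r+1$; and the last $d$ columns are $J_{n}AJ_{d}$ by the perp-duality $|U^{\perp}\cap V^{\perp}|=D-|U|-|V|+|U\cap V|$. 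In particular this also shows the summand is nonzero only when $P$ has type $r(A)$, so $\zeta_{d,\bv}([A^{J}])$ is supported on the single component $\mathcal A_{\G_{d}}(X_{d}(r(A))\times X_{d}(1^{d}))$ and, there, $\zeta_{d,\bv}(\zeta^{\jmath}_{A^{J}})=\bv^{\,u(r(A),\,1^{d}0^{d+1})}\,\zeta_{A}$.

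Finally, since $[A^{J}]=\bv^{\ell_{A^{J}}}\zeta^{\jmath}_{A^{J}}$ and ${}^a[A]=\bv^{d_{A}}\zeta_{A}$, it remains to verify the numerical identity
\[
\ell_{A^{J}}+u\bigl(r(A),\,1^{d}0^{d+1}\bigr)=d_{A},
\]
which is a direct computation from the explicit formulas for $d_A$, $\ell_A$ and $u$ recorded in (\ref{Td}), (\ref{Tj-standard}) and (\ref{s(a,b)}), using $a^{J}_{ij}=a_{n+1-i,\,2d+2-j}$ on the reversed block and $a^{J}_{i,d+1}=\delta_{i,r+1}$ on the middle column. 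I expect the principal obstacle to be the second step — the block-by-block identification of $A^{J}$ with $A$ under $\pi''$, in particular handling the middle and reversed blocks with the adapted decomposition and perp-duality — together with the ensuing $\bv$-exponent bookkeeping; neither is conceptually deep, but both are delicate and prone to index errors.
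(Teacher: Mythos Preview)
Your overall strategy is the same as the paper's: unwind the twisted $\widetilde\Delta^{\jmath}$, show the fibre $Z^{\jmath}_{\ast,\check M}$ is a single point forcing the orbit correspondence $A^{J}\leftrightarrow A$, and then match the $\bv$-powers. The paper carries out exactly these three steps.

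There is, however, a genuine bookkeeping slip in your setup. When $d'=0$ one has $D'=2d'+1=1$, so $X^{\jmath}_{0}$ is a point in a one-dimensional (not zero-dimensional) space; hence $\mbf b'=0^{r}1\,0^{r}\in\Lambda^{\jmath}_{0,n}$ and, on the column side (which has $2d+1$ components rather than $n$), $\mbf a'=0^{d}1\,0^{d}$. Consequently the first twist summand $\sum_{i\le j}b'_ib''_j-\sum_{i\le j}a'_ia''_j$ is \emph{not} zero: it contributes $\sum_{j\ge r+1}b''_j$ on the row side while the column side vanishes because $\mbf a''=1^{d}0^{d+1}$. Relatedly, your final identity $\ell_{A^{J}}+u(r(A),1^{d}0^{d+1})=d_{A}$ is not well-posed as written, since $u(\,\cdot\,,\,\cdot\,)$ in (\ref{s(a,b)}) takes two vectors of the same length, whereas $r(A)$ has $n$ components and $1^{d}0^{d+1}$ has $2d+1$. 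The paper addresses both points by writing the full twist $t_{\mbf b''}$ with the row-side sums indexed up to $n$ and the column-side sums indexed up to $2d+1$ (this is the ``rescaling'' remark following (\ref{z-A})), simplifying it, and then verifying $\ell_{A^{J}}+t_{\ro(A)}=d_{A}$ by splitting $\ell_{A^{J}}$ into its three column-blocks. Once you correct $\mbf b',\mbf a'$ and separate the two index ranges, your argument goes through essentially verbatim and coincides with the paper's.
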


\begin{proof}
Suppose that $\ro(A^J) = \mbf b$.
We set $\mbf a'' = 1^d 0^{d+1}$, $\mbf b' = 0^{r} 1^1 0^r$ and $\mbf a' = 0^d 1^1 0^d$.
Then by the definition of $\zeta_{d, \bv}$, we have
\begin{align}
\label{z-A}
\zeta_{d, \bv} ( [A^J]) = \bv^{t_{\mbf b''} }
\widetilde \Delta^{\jmath}_{\mbf b', \mbf a', \mbf b'',\mbf a'' } ([A^J])
\end{align}
where
\[
t_{\mbf b''} = \sum_{1\leq i \leq j \leq n} b'_i b''_j - \sum_{1\leq i \leq j \leq 2d+1} a'_i a''_i
+ \frac{1}{2} ( \sum_{i+j \geq n+1} b''_i b''_j - \sum_{i\geq r+1} b''_i  - \sum_{i+j \geq 2d+1} a''_i a''_j + \sum_{i \geq d+1} a''_i).
\]
Note that the following formula $t_{\mbf b''}$ is compatible with the twist in (\ref{Delta-J}), since we need to rescale from $n$ components to $2d+1$ components for $\mbf a'$ and $\mbf a''$.
Now using the fact that $\mbf a'' = 1^d 0^{d+1}$, $\mbf b' = 0^r 1^1 0^r$ and $\mbf a' = 0^d 1^1 0^d$, the twist $t_{\mbf b''}$ can be simplified to
\[
t_{\mbf b''} = \frac{1}{2} ( \sum_{i+j \geq n+1} b''_i b''_j - \sum_{i\geq r+1} b''_i).
\]
By the definition of $A^J$, we can also simplify the numeric $\ell_{A^J}$ as follows.
\begin{align}
\begin{split}
\ell_{A^J} & = \frac{1}{2} ( \sum_{i \geq k, j<l} a^J_{ij} a^J_{kl}  - \sum_{i\geq r+1, d+1 >j} a^J_{ij}), \quad A^J=(a^J_{ij}) \\
& = \frac{1}{2}  (  ( \sum_{i \geq k, j<l < d+1} + \sum_{i \geq k, j < d+1 \leq l} + \sum_{i\geq k, d+1 \leq j <l} ) a^J_{ij} a^J_{kl} - \sum_{i\geq r+1, d+1 >j} a_{ij}).
\end{split}
\end{align}
The first sum simplifies to
$
 \sum_{i\geq k, j < l < d+1} a_{ij} a_{kl}.
 $
The third sum simplifies to
\[
  \sum_{i \geq k, d+1 \leq j < l} a^J_{ij} a^J_{kl}
 =  \sum_{i \geq k, j < d+1 \leq l} a_{n+1-i, 2d+2-j} a_{n+1-k, 2d+2-l}
 =\sum_{i\geq k, j < l < d+1} a_{ij} a_{kl} + \sum_{i \geq r+1, d+1 >j} a_{ij}.
\]
The second sum is reduced to
\[
 \sum_{i \geq k, j < d+1 \leq l} a^J_{ij} a^J_{kl}
 =
 \sum_{i +k\geq n+1, j, l < d+1} a_{ij} a_{kl}  + \sum_{i\geq r+1, d+1> j} a_{ij}
 = 2 t_{\ro(A)}.
\]
So we get
$
t_{\mbf b''} - \ell_{A^J} = - d_A + t_{\mbf b''} - t_{\ro(A)},
$
Thus the identity (\ref{z-A}) can be rewritten as
\[
\zeta_{d, \bv} ( [A^J])  =
v^{-d_A + t_{\mbf b''} - t_{\ro(A)}}  \widetilde \Delta^{\jmath}_{\mbf b', \mbf a', \mbf b'',\mbf a''} (\zeta^{\jmath}_{A^J})
\]
where $\zeta^{\jmath}_{A^J}$ denote the characteristic function attached to the $\G^{\jmath}_d$-orbit indexed by $A^J$.

Recall that $\mbf a'' = 1^d 0^{d+1}$. This implies that for any $\tilde L'' \in X_d (\mbf a'') $, we have $Z^{\jmath}_{\tilde L', \tilde L''}$ consists of only one point, i.e., the flag $\tilde L$ such that $\tilde L_i = \tilde L''_{i}$ for all $i\leq r$ and $\tilde L_i = (\tilde L''_{n+1-i})^{\perp}$ for all $i\geq r+1$.
Furthermore, if $(L'', \tilde L'') \in \mathcal O_{A}$, then $(L, \tilde L) \in \mathcal O_{A^J}$
for any $L\in Z^{\jmath}_{L', L''}$ and $\tilde L \in Z^{\jmath}_{\tilde L', \tilde L''}$ because
$L_i \cap \tilde L_j = L''_i \cap \tilde L''_j, \quad \forall j \leq d$.
Hence we have
$
\widetilde \Delta^{\jmath}_{\mbf b', \mbf a', \mbf b'',\mbf a''} (\zeta^{\jmath}_{A^J}) =\delta_{\mbf b'', \ro(A)} \zeta_A.
$
The proposition is proved.
\end{proof}

By Proposition ~\ref{zeta-standard}, we have

\begin{thm}
\label{tensor-positivity}
For all $A \in \Pi_{d,n}$, we have
$\zeta_{d, \bv} (\{A^J\}) = \ ^a \{A\} + \sum_{B^J \prec A^J, \ro(B) \neq \ro(A)} c_{B, A} \ ^a \{B\}$ where $c_{B, A} \in \mbb Z_{\geq 0} [\bv, \bv^{-1}]$.
\end{thm}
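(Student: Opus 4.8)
The plan is to combine the standard-basis computation of Proposition~\ref{zeta-standard} with a hyperbolic-localization argument of the kind already used to prove Proposition~\ref{Pos-S} and Proposition~\ref{pos-tildeDelta-affine}.

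First I would extract the triangular structure. By Proposition~\ref{zeta-standard}, $\zeta_{d,\bv}$ carries the standard basis $\{[A^J]\}_{A\in\Pi_{d,n}}$ of $\mbf T^{\jmath}_{d,n}$ bijectively onto the standard basis $\{{}^a[A]\}_{A\in\Pi_{d,n}}$ of $\mbf T_{d,n}$, and the bijection $A\mapsto A^J$ is compatible with the Bruhat orders (immediate from the explicit shape $A^J=(A\mid\epsilon_{r+1}\mid J_nAJ_d)$ and the definition of $\preceq$). Writing the $\imath$-canonical basis element as $\{A^J\}=[A^J]+\sum_{B^J\prec A^J}P_{A^J,B^J}[B^J]$ with $P_{A^J,B^J}$ the type-$B$ Kazhdan--Lusztig polynomials, applying $\zeta_{d,\bv}$, and then re-expanding each ${}^a[B]$ in the canonical basis of $\mbf T_{d,n}$ (a lower-unitriangular change of basis for $\preceq$), one gets at once
\[
\zeta_{d,\bv}(\{A^J\})={}^a\{A\}+\sum_{B^J\prec A^J}c_{B,A}\,{}^a\{B\},\qquad c_{B,A}\in\mcal A;
\]
so only the positivity of the $c_{B,A}$ and their vanishing when $\ro(B)=\ro(A)$ remain to be shown.

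For positivity I would argue geometrically, mirroring Proposition~\ref{Pos-S}. Pass to $\overline{\mbb F_q}$ and realize $\zeta_{d,\bv}$ as the function-theoretic shadow of the $d'=0$ degeneration of the geometric comultiplication on the type-$B$ partial flag variety: on the variety with $\mbb F_q$-points $X^{\jmath}_d\times X^{\jmath}_d(1^{2d+1})$, take the one-parameter subgroup of the orthogonal group attached to a splitting $\overline{\mbb F_q}^{2d+1}\cong\overline{\mbb F_q}^{d}\oplus\overline{\mbb F_q}\oplus\overline{\mbb F_q}^{d}$, use Lemma~\ref{auxi} to identify the fixed-point components (type-$A$ flags in the tensor normalization) and the attracting sets whose $\mbb F_q$-points recover the fibres $Z^{\jmath}_{L',L''}$, so that $\zeta_{d,\bv}$ becomes $\pi_!\iota^*$ composed with the scalar twist $\bv^{t_{\mbf b''}}$ of the proof of Proposition~\ref{zeta-standard}. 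Since $\{A^J\}$ is the function of the intersection cohomology complex of the orbit closure indexed by $A^J$, Braden's theorem~\cite{B03} shows $\pi_!\iota^*\{A^J\}$ is the function of a semisimple complex, hence a $\mbb Z_{\geq0}[v,v^{-1}]$-combination of the intersection cohomology functions ${}^a\{B\}$; and $\bv^{t_{\mbf b''}}$ is a positive monomial, constant on each left-weight component, so positivity is preserved. This yields $c_{B,A}\in\mbb Z_{\geq0}[v,v^{-1}]$.

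Finally, to see $c_{B,A}=0$ whenever $B\neq A$ and $\ro(B)=\ro(A)$, I would project $\zeta_{d,\bv}(\{A^J\})$ onto the weight-$\ro(A)$ component of $\mbf T_{d,n}$: by Proposition~\ref{zeta-standard} this composite kills $[B^J]$ unless $\ro(B)=\ro(A)$ and is otherwise the reflection bijection $[B^J]\mapsto{}^a[B]$, while on $\{B:\ro(B)=\ro(A)\}$ the two Bruhat orders agree and $P_{A^J,B^J}$ coincides with the type-$A$ polynomial ${}^aP_{A,B}$ (the type-$B$ orbit closure being, near such a stratum, locally isomorphic to a type-$A$ one; alternatively this follows from~\cite{BKLW13}), so the weight-$\ro(A)$ part of $\zeta_{d,\bv}(\{A^J\})$ is precisely ${}^a\{A\}$. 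The main obstacle is the geometric step: making the hyperbolic-localization description of $\zeta_{d,\bv}$ precise in the degenerate $d'=0$ case — in particular matching the renormalizing power $\bv^{t_{\mbf b''}}$ with the attracting-set combinatorics furnished by Lemma~\ref{auxi} so that~\cite{B03} applies verbatim; the Kazhdan--Lusztig comparison in the last step is a secondary point that one can also read off from~\cite{BKLW13}.
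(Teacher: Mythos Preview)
Your Steps 1--2 (triangularity from Proposition~\ref{zeta-standard}; positivity from a Braden-style hyperbolic localization, exactly as in Proposition~\ref{J-2} but on the tensor space) are precisely the paper's argument: its proof is literally the one phrase ``By Proposition~\ref{zeta-standard}'', with positivity understood from the earlier geometric setup.

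Your Step 3 is where you diverge, and it carries a real risk of circularity. To force the $\ro(B)\neq\ro(A)$ restriction you invoke the identity $P_{B^J,A^J}={}^aP_{B,A}$ for $\ro(B)=\ro(A)$ and feed it back into the expansion. But this identity is exactly the Corollary that \emph{follows} the Theorem in the paper and is there \emph{deduced from} it; so you are inverting the paper's logic and must justify the identity independently. Your appeal to~\cite{BKLW13} does not hold up --- that paper does not contain this Kazhdan--Lusztig comparison (it is new here). Your local-isomorphism heuristic is the right direction: the closed embedding $\{\tilde L_d=\D''\}\cong X_d(1^d)\hookrightarrow X^{\jmath}_d(1^{2d+1})$ is the $\overline{\mbf G}_m$-fixed locus, the $P_L$-strata are $\overline{\mbf G}_m$-stable, and one indeed has $\overline{X^L_{A^J}}\cap\{\tilde L_d=\D''\}=\overline{X^{L''}_A}$. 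But Braden's theorem only tells you that $\iota^*\mathrm{IC}_{A^J}$ is \emph{semisimple} with support $\overline{X^{L''}_A}$, not that it is a single shifted $\mathrm{IC}_A$. To get the latter you would need to check that the open repelling neighborhood, restricted to the singular closure $\overline{X^L_{A^J}}$, is still an affine fibration over $\overline{X^{L''}_A}$ --- a Bia\l ynicki-Birula--type statement for a singular variety --- and this you have not verified. Until that is checked, the passage from ``nonnegative combination of ${}^a\{B\}$'s with $B^J\preceq A^J$'' to ``${}^a\{A\}$ plus terms with $\ro(B)\neq\ro(A)$'' remains a gap in your argument.
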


Recall the parabolic Kazhdan-Lusztig  polynomials $P_{B^J, A^J}$ and $P_{B, A}$ of type $B_d$ and $A_d$, respectively.

\begin{cor}
$P_{B^J, A^J} = P_{B, A}$ if $\ro(B) = \ro(A)$.
\end{cor}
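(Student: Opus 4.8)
The plan is to deduce the equality of Kazhdan--Lusztig polynomials purely from Theorem~\ref{tensor-positivity} together with the standard geometric identification of the canonical bases of the tensor spaces with parabolic Kazhdan--Lusztig bases. By the geometric type $A$ duality of \cite{GL92} and the type $B$ duality of \cite{BKLW13}, in the notation of (\ref{Td}) and (\ref{Tj-standard}) one has
\[
{}^a\{A\} = \sum_{B \in \Pi_{d,n}} P_{B, A}\ {}^a[B],
\qquad
\{A^J\} = \sum_{B \in \Pi_{d,n}} P_{B^J, A^J}\ [B^J],
\]
where the second sum is indexed via the bijection $B \mapsto B^J$ of (\ref{Td})--(\ref{Tj-standard}), and $P_{B,A}$ (resp. $P_{B^J,A^J}$) is understood to vanish unless $B \preceq A$ (resp. $B^J \preceq A^J$). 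The crucial structural input is that each ${}^a\{C\}$ is a weight vector for the weight decomposition of $\mbf T_{d,n} \cong \mbf V_n^{\otimes d}$ indexed by $\Lambda_{d,n}$: the bar involution commutes with this decomposition, so the standard-basis expansion of ${}^a\{C\}$ involves only $B$ with $\ro(B) = \ro(C)$.

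First I would apply $\zeta_{d,\bv}$ to the expansion of $\{A^J\}$ and invoke Proposition~\ref{zeta-standard}, which gives $\zeta_{d,\bv}([B^J]) = {}^a[B]$ with no stray power of $\bv$; this yields
\[
\zeta_{d,\bv}(\{A^J\}) = \sum_{B \in \Pi_{d,n}} P_{B^J, A^J}\ {}^a[B].
\]
On the other hand, Theorem~\ref{tensor-positivity} writes $\zeta_{d,\bv}(\{A^J\}) = {}^a\{A\} + \sum_{B^J \prec A^J,\ \ro(B) \neq \ro(A)} c_{B,A}\ {}^a\{B\}$. Expanding every ${}^a\{C\}$ on the right-hand side into the standard basis and using the weight-vector property, the only summand that can contribute to the coefficient of ${}^a[B]$ with $\ro(B) = \ro(A)$ is ${}^a\{A\} = \sum_B P_{B,A}\ {}^a[B]$; each of the remaining ${}^a\{B'\}$ expands in the ${}^a[C]$ with $\ro(C) = \ro(B') \neq \ro(A)$.

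Finally, since $\{\,{}^a[B] \mid B \in \Pi_{d,n}\,\}$ is a basis of $\mbf T_{d,n}$, comparing the coefficient of ${}^a[B]$ for a fixed $B$ with $\ro(B) = \ro(A)$ in the two expressions for $\zeta_{d,\bv}(\{A^J\})$ gives $P_{B^J, A^J} = P_{B,A}$, which is the corollary. The one point requiring care — and the step I expect to be the main obstacle in a fully written proof — is pinning down the bar-involution/weight-vector bookkeeping and confirming that Proposition~\ref{zeta-standard} matches the two standard bases with compatible normalizations; granting these, the corollary reduces to a one-line coefficient comparison.
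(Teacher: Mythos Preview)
Your proposal is correct and matches the paper's approach: the paper treats the corollary as an immediate consequence of Theorem~\ref{tensor-positivity} and Proposition~\ref{zeta-standard} without spelling out the coefficient comparison, and you have filled in precisely the intended argument. The only steps you flagged as needing care---that $\zeta_{d,\bv}$ sends standard basis to standard basis with no rescaling (this is exactly Proposition~\ref{zeta-standard}) and that each ${}^a\{C\}$ lies in the $\ro(C)$-weight space---are both standard and exactly what the paper is relying on.
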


More generally, we have the following commutative diagram of algebras.
\[
\begin{CD}
\mbf H_{B_d}=\mathcal A_{\G^{\jmath}_d} ( X^{\jmath}_d (1^{2d+1}) \times X^{\jmath}_d(1^{2d+1}))
@>\zeta_{d,\bv}>>
\oplus_{\mbf b'', \mbf a'' \models 1^{2d+1}} \mathcal A_{\G_d} (X_d(\mbf b'') \times X_d(\mbf a'')) \\
@VVV @VVV\\
\mbf S_{d, 2d+1}^{\jmath} @>>\jmath_{d,\bv} >
 \mbf S_{d, 2d+1}.
\end{CD}
\]



\subsection{Transfer maps on $\mbf S^{\B}_d$}

The transfer map
\begin{align*}
\phi^{\B}_{d, d-n, \bv}: \mbf S^{\B}_d \to \mbf S^{\B}_{d-n}
\end{align*}
is defined to be  the  composition:
$
\xymatrix{
\mbf S^{\B}_d \ar[r]^-{\widetilde \Delta^{\B}}&\mbf S^{\B}_{d-n} \otimes \mbf S_{n}
\ar[r]^-{1\otimes \chi}&
\mbf S^{\B}_{d - n }\otimes \mathcal A \equiv \mbf S^{\B}_{d-n},}
$
where $\chi$ is in (\ref{chi}).
It is clear that  $\phi^{\B}_{d, d-n, \bv}$ is an algebra homomorphism. Moreover, we have

\begin{prop}
\label{Phi-gene}
$\phi^{\B}_{d, d-n, \bv} (\E_i ) =\E_i'$, $\phi^{\B}_{d, d-n, \bv}(\F_i) =\F_i', $
and  $ \phi^{\B}_{d, d-n, \bv}(\K_i^{\pm 1})=\K_i'^{\pm 1}$, for any $i\in [1, r]$.
\end{prop}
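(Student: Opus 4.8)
The plan is to compute $\phi^{\B}_{d,d-n,\bv}$ on each generator directly from its definition as the composite $(1\otimes\chi)\circ\widetilde\Delta^{\B}$, feeding in the formulas for $\widetilde\Delta^{\B}$ supplied by Proposition \ref{D(E)}. Thus the only new ingredient needed is the value of the algebra homomorphism $\chi\colon\mbf S_n\to\mathcal A$ of $(\ref{chi})$ on the standard generators of $\mbf S_n$, which I would pin down first.

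Recall that $\chi(\eta_M)=\det(M)$ for $M\in\Xi_n$, where $\Xi_n$ consists of matrices with entries in $\mbb Z_{\geq 0}$ summing to $n$. For $a\in[1,n]$ one has $\mbf H_a=\sum_{\mbf c\in\Lambda_{n,n}}\bv^{c_a}\eta_{M_{\mbf c}}$, with $M_{\mbf c}$ the diagonal matrix with diagonal $\mbf c$; since $\det(M_{\mbf c})=\prod_i c_i$ vanishes unless every $c_i\geq 1$, and $\sum_i c_i=n$ then forces $\mbf c=1^n$ in that case, we get $\chi(\mbf H_a)=\bv$, hence $\chi((\mbf H_a)^{-1})=\bv^{-1}$ and $\chi((\mbf K_i)^{\pm1})=\chi((\mbf H_{i+1})^{\pm1})\chi((\mbf H_i)^{\mp1})=1$. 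On the other hand, every $\G_n$-orbit in the support of $\mbf E_i$ (resp. $\mbf F_i$) in $\mbf S_n$ is indexed by a matrix of the form $D+E^{p,q}$ with $D$ diagonal and $\{p,q\}=\{i,i+1\}$; such a matrix is triangular, so its determinant is $\prod_i D_{ii}$, and the constraint $\mathrm{tr}(D)=n-1<n$ forces some $D_{ii}$ to vanish. Hence $\chi(\mbf E_i)=\chi(\mbf F_i)=0$.

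Finally I would substitute these values into the three identities of Proposition \ref{D(E)}. In the formula for $\widetilde\Delta^{\B}(\E_i)$, applying $1\otimes\chi$ kills the two summands carrying the factors $\mbf E_i''$ and $\mbf F_{n-i}''$, and turns the remaining summand into $\E_i'\cdot\chi(\mbf H_{i+1}''(\mbf H_{n-i}'')^{-1})=\E_i'\cdot\bv\cdot\bv^{-1}=\E_i'$; the computation for $\F_i$ is the mirror image, the two summands with $\mbf F_i''$ and $\mbf E_{n-i}''$ dropping out and the surviving one again acquiring the trivial factor $\chi((\mbf H_i'')^{-1}\mbf H_{n+1-i}'')=1$. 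For $\K_i$ one gets $\K_i'\cdot\chi(\mbf K_i''(\mbf K_{n-i}'')^{-1})=\K_i'$, and the assertion for $(\K_i)^{-1}$ follows by applying the same computation to $\widetilde\Delta^{\B}((\K_i)^{-1})=\widetilde\Delta^{\B}(\K_i)^{-1}$, which is legitimate because $\widetilde\Delta^{\B}$ is an algebra homomorphism. The argument is entirely routine; the one step deserving care is the vanishing $\chi(\mbf E_i'')=\chi(\mbf F_i'')=0$, i.e. the observation that no matrix of $\Xi_n$ with a nonzero off-diagonal entry contributes to the determinant character.
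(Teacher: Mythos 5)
Your proposal is correct and follows essentially the same route as the paper's proof: the paper also simply applies $1\otimes\chi$ to the three identities of Proposition~\ref{D(E)}, stating "by definitions" that $\chi(\mbf E''_i)=\chi(\mbf F''_i)=0$ and $\chi(\mbf H''_i)=\bv$, and then observes that the $\mbf E''$- and $\mbf F''$-carrying summands die while the factors $\chi(\mbf H''_{i+1}\mbf H''^{-1}_{n-i})$, $\chi(\mbf H''^{-1}_i\mbf H''_{n+1-i})$, and $\chi(\mbf K''_i\mbf K''^{-1}_{n-i})$ all equal $1$. The only thing you add is the explicit verification of the values of $\chi$ on the generators (the diagonal matrices must be $1^n$ for the determinant to be nonzero, and the matrices $D+E^{p,q}$ supporting $\mbf E_i$ or $\mbf F_i$ are triangular with $\mathrm{tr}(D)=n-1<n$ so have vanishing determinant), which the paper leaves implicit.
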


\begin{proof}
By definitions, we have
$
\chi(\mbf E''_i) =0, \quad
\chi (\mbf F''_i) =0,
\quad \mbox{and}\quad
\chi ( \mbf H''_i) = \bv.
$
So we have
\begin{align*}
\phi^{\B}_{d, d-n, \bv} (\E_i) &=
\E_i' \chi(\mbf H''_{i+1}\mbf H''^{-1}_{n-i})
+ \mbf h'^{-1}_{i+1} \chi (\mbf E_i''  \mbf H''^{-1}_{n-i})
+ \mathbf h'_{i+1} \chi( \mbf F''_{n-i} \mbf H''_{i+1})
=\E'_i,\\
\phi^{\B}_{d, d-n, \bv} (\F_i) &=
\F'_i \chi(\mbf H''^{-1}_{i} \mbf H''_{n+1-i}) + \mbf h'_i\chi( \mbf F''_i \mbf H''_{n+1-i}) + \mbf h'^{-1}_{i} \chi(\mbf E''_{n-i} \mbf H''^{-1}_{i})=\F'_i,\\
\phi^{\B}_{d, d-n, \bv} ( \K_i) &= \mbf k'_i \chi( \mbf K''_i \mbf K''^{-1}_{n+1-i}) = \K'_i.
\end{align*}
The lemma is proved.
\end{proof}

Together with Theorem 3.10 in ~\cite{BKLW13}, we have

\begin{cor}
The homomorphism $\phi^{\B}_{d, d-n, \bv}$ is surjective.
\end{cor}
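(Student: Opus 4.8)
The plan is to show that the image of $\phi^{\B}_{d, d-n, \bv}$ contains a set of algebra generators of $\mbf S^{\B}_{d-n}$. By \cite[Theorem 3.10]{BKLW13} and the monomial basis it provides, the $\jmath$Schur algebra $\mbf S^{\B}_{d-n}$ is generated as an $\mathcal A$-algebra by the Chevalley generators $\E'_i,\F'_i$ $(i\in[1,r])$ together with the idempotents $1_{\mbf a'}$, $\mbf a'\in\Lambda^{\B}_{d-n,n}$. Proposition \ref{Phi-gene} already gives $\phi^{\B}_{d, d-n, \bv}(\E_i)=\E'_i$ and $\phi^{\B}_{d, d-n, \bv}(\F_i)=\F'_i$, so the Chevalley generators lie in the image; it therefore remains only to produce the idempotents $1_{\mbf a'}$ inside the image.

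For this I would compute $\phi^{\B}_{d, d-n, \bv}(1_{\mbf a})$ directly from the definition $\phi^{\B}_{d, d-n, \bv}=(1\otimes\chi)\circ\widetilde\Delta^{\B}$, where for $\mbf a\in\Lambda^{\B}_{d,n}$ the element $1_{\mbf a}\in\mbf S^{\B}_d$ is the characteristic function of the diagonal in $X^{\B}_d(\mbf a)\times X^{\B}_d(\mbf a)$. Unwinding the definition of $\widetilde\Delta^{\B}$ from Section \ref{cosb} and using the direct-sum decomposition $L^{T,W}_i=L''_i\oplus\pi^{-1}(L'_i)\oplus(L''_{n-i})^{\#}$ of the flags constructed there (so that $L^{T,W}$ has weight $\mbf a$ precisely when $\pi^{\natural}(L),\pi''(L)$ have weights $\mbf a',\mbf a''$ with $a_i=a'_i+a''_i+a''_{n+1-i}$, using $|(L''_{n-i})^{\#}/(L''_{n+1-i})^{\#}|=a''_{n+1-i}$), one gets $\widetilde\Delta^{\B}(1_{\mbf a})=\sum 1_{\mbf a'}\otimes 1_{\mbf a''}$, the sum over $\mbf a'\in\Lambda^{\B}_{d-n,n}$ and $\mbf a''\in\Lambda_{n,n}$ with $a_i=a'_i+a''_i+a''_{n+1-i}$ for all $i$. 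Since $\chi(1_{\mbf a''})=\det(M_{\mbf a''})=\prod_i a''_i$, where $M_{\mbf a''}$ is the diagonal matrix with diagonal $\mbf a''$, this number vanishes unless $\mbf a''=(1,\dots,1)$, in which case it equals $1$. Hence the sum collapses to $\phi^{\B}_{d, d-n, \bv}(1_{\mbf a})=1_{\mbf a-(2,\dots,2)}$ when $\mbf a-(2,\dots,2)\in\Lambda^{\B}_{d-n,n}$ and $0$ otherwise. As every $\mbf a'\in\Lambda^{\B}_{d-n,n}$ equals $\mbf a-(2,\dots,2)$ for $\mbf a:=\mbf a'+(2,\dots,2)\in\Lambda^{\B}_{d,n}$, all idempotents of $\mbf S^{\B}_{d-n}$ lie in the image of $\phi^{\B}_{d, d-n, \bv}$, and surjectivity follows at once.

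I do not expect a serious obstacle: the two genuine inputs, namely Proposition \ref{Phi-gene} and the generation statement of \cite[Theorem 3.10]{BKLW13}, are already available, so the corollary is essentially formal. The only step needing attention is the bookkeeping in the identity $\widetilde\Delta^{\B}(1_{\mbf a})=\sum 1_{\mbf a'}\otimes 1_{\mbf a''}$, and that is immediate from the explicit construction of $L^{T,W}$ in Lemma \ref{auxi} and Section \ref{cosb} together with the transitivity statement of Lemma \ref{independent}, which guarantees well-definedness. (One could alternatively observe that $\phi^{\B}_{d, d-n, \bv}(\mbf h_a)=\bv^{2}\,\mbf h'_a$ by the formula $\widetilde\Delta^{\B}(\mbf h_a)=\mbf h'_a\otimes\mbf H''_a\mbf H''_{n+1-a}$ from the proof of Proposition \ref{D(E)} and $\chi(\mbf H''_a)=\bv$, so that the entire toral part of $\mbf S^{\B}_{d-n}$ is in the image; but the idempotent computation above is both cleaner and sufficient on its own.)
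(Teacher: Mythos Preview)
Your argument is correct and follows exactly the paper's (one-line) approach: invoke Proposition~\ref{Phi-gene} for the Chevalley generators and the monomial-basis generation statement of \cite[Theorem 3.10]{BKLW13} for the target; your explicit computation $\phi^{\B}_{d,d-n,\bv}(1_{\mbf a})=1_{\mbf a-(2,\dots,2)}$ supplies the idempotent step that the paper leaves implicit. One small refinement: the monomial basis of \cite{BKLW13} is built from divided powers $\E_i^{(a)},\F_i^{(a)}$ rather than first powers, so over $\mathcal A$ you should add that $\phi^{\B}_{d,d-n,\bv}\bigl(\E_i^{(a)}\bigr)=(\E'_i)^{(a)}$ (and similarly for $\F$), which follows from Proposition~\ref{Phi-gene} by cancelling the nonzero scalar $[a]!$ inside the free $\mathcal A$-module $\mbf S^{\B}_{d-n}$.
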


\subsection{Generic version} \label{generic-B}

Recall the definition of $\bv=\sqrt{q}$ from (\ref{A}) and $\mbb A=\mbb Z[v,  v^{-1}]$.
Recall from  ~\cite{BKLW13} that one can construct an associative algebra $\mbb S^{\B}_d$ over $\mbb A$ such that
\[
\mbf S^{\B}_d =\mathcal A \otimes_{\mbb A} \mbb S^{\B}_d,
\]
where $\mathcal A$ is regarded as an $\mbb A$-module with $v$ acting as $\bv$.
Let us make the algebra $\mbb S^{\B}_d$ more precise. Recall $\Xi_d$ from Section ~\ref{Transfer}.
Consider the set
\begin{align}
\label{Xij}
\Xi_d^{\jmath} =\{ M\in \Xi_d | m_{ij} = m_{n+1-i, n+1-j}, \forall 1\leq i, j\leq n\}.
\end{align}
Then $\mbb S^{\B}_d$ is a free $\mbb A$-module with basis $\zeta^{\jmath}_M$ for any $M\in \Xi_d^{\jmath}$ whose multiplication is defined by the condition that  if
$\zeta^{\jmath}_{M_1} \zeta^{\jmath}_{M_2} = \sum_{M \in \Xi_d^{\jmath}} h^M_{M_1, M_2}(v) \zeta^{\jmath}_M$, where $h^M_{M_1, M_2}(v) \in \mbb A$,
then
$\eta^{\jmath}_{M_1} \eta^{\jmath}_{M_2} = \sum_{M \in \Xi_d^{\jmath}} h^M_{M_1, M_2}(v)|_{v=\bv}  \eta^{\jmath}_M$, in $\mbf S_d^{\B}$,
where $\eta^{\jmath}_M$ is the characteristic function of the $\G^{\B}_d$-orbit in $X^{\B}_d\times X^{\B}_d$ indexed by $M$ via (\ref{para}).
Let $\mbb S^{\jmath}_d(\mbf b, \mbf a)=\mbox{span}_{\mbb A} \{ \zeta_M | \ro (M) =\mbf b, \co (M) = \mbf a\}$. We have
$\mbb S^{\jmath}_d(\mbf c, \mbf b')\mbb S^{\jmath}_d(\mbf b, \mbf a) \subseteq \delta_{\mbf b', \mbf b} \mbb S^{\jmath}_d(\mbf c, \mbf a)$.

By using the monomial basis in ~\cite[Theorem 3.10]{BKLW13}, one can show that $\mbb S^{\B}_d$ enjoys the same results for $\mbf S^{\B}_d$ from the previous sections. The following is a generic version of Proposition ~\ref{D(E)-renorm}.

\begin{prop}
\label{J-1}
There is  a unique algebra homomorphism
\begin{align}
\label{Generic-D-B}
\mbb \Delta^{\B}: \mbb S^{\B}_d \to \mbb S^{\B}_{d'} \otimes \mbb S_{d''}
\end{align}
such that
$\mathcal A\otimes_{\mbb A} \mbb  \Delta^{\B} = \Delta^{\B}_{\bv}$ and
\begin{align*}
\begin{split}
\mbb \Delta^{\B} ( \E_i) & = \E_i' \otimes  \mbf K''_{i} + 1 \otimes \mbf E_i''   +  \mbf k_i'  \otimes \mbf F''_{n-i} \mbf K''_i, \\
\mbb \Delta^{\B}  (\F_i) & = \F'_i \otimes \mbf K''_{n-i}  +  \K'^{-1}_{i} \otimes \mbf K''_{n-i}  \mbf F''_i  + 1 \otimes \mbf E''_{n-i}, \\
\mbb \Delta^{\B} (\mbf k_i) & = \mbf k'_i \otimes \mbf K''_i \mbf K''^{-1}_{n-i}, \quad \forall 1 \leq i \leq r.
\end{split}
\end{align*}
\end{prop}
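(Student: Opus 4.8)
The plan is to descend the $\mathcal A$-algebra homomorphism $\Delta^{\B}_{\bv}$ of (\ref{Delta-J}) to an $\mbb A$-algebra homomorphism $\mbb \Delta^{\B}$, exactly as $\Delta_{\bv}$ descends to $\mbb \Delta$ in Proposition~\ref{Generic-Delta_A}. Recall from ~\cite[Theorem 3.10]{BKLW13} that $\mbb S^{\B}_d$ has a monomial basis $\{m^{\jmath}_M \mid M\in \Xi^{\jmath}_d\}$, each $m^{\jmath}_M$ being a word in the generators $\E_i,\F_i,\mbf k_i^{\pm 1}$ with $m^{\jmath}_M=\zeta^{\jmath}_M+(\text{lower terms})$; in particular the transition matrix between $\{\zeta^{\jmath}_M\}$ and $\{m^{\jmath}_M\}$ is unitriangular with entries in $\mbb A$ that do not depend on $q$, and $\mbb S^{\B}_d$ is generated as an $\mbb A$-algebra by $\E_i,\F_i,\mbf k_i^{\pm 1}$. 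The latter fact gives uniqueness immediately: an algebra homomorphism out of $\mbb S^{\B}_d$ is determined by its values on these generators, which the three displayed formulas prescribe, with $\mbb S_{d''}$ the generic Schur algebra of Section~\ref{Generic-SD}, whose elements $\mbf E''_i,\mbf F''_i,(\mbf K''_i)^{\pm 1}$ make the right-hand sides well-defined in $\mbb S^{\B}_{d'}\otimes\mbb S_{d''}$.

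For existence I would first show that, for every odd prime power $q$, $\Delta^{\B}_{\bv}(\zeta^{\jmath}_M)$ is the specialization $v\mapsto\bv$ of a fixed element of $\mbb S^{\B}_{d'}\otimes\mbb S_{d''}$ independent of $q$. Writing $\zeta^{\jmath}_M=\sum_N p_{M,N}(v)\,m^{\jmath}_N$ with $p_{M,N}\in\mbb A$ as above, and using that $\Delta^{\B}_{\bv}$ is an algebra homomorphism, $\Delta^{\B}_{\bv}(m^{\jmath}_N)$ is the word $m^{\jmath}_N$ evaluated on $\Delta^{\B}_{\bv}(\E_i),\Delta^{\B}_{\bv}(\F_i),\Delta^{\B}_{\bv}(\mbf k_i^{\pm 1})$, which by Proposition~\ref{D(E)-renorm} are the $v\mapsto\bv$ specializations of fixed elements of $\mbb S^{\B}_{d'}\otimes\mbb S_{d''}$; expanding the product in the standard basis of $\mbb S^{\B}_{d'}\otimes\mbb S_{d''}$ introduces only structure constants in $\mbb A$ that are again $q$-independent. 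Hence $\Delta^{\B}_{\bv}(\zeta^{\jmath}_M)=\bigl(\sum_N p_{M,N}(v)\,m^{\flat}_N\bigr)\big|_{v=\bv}$ for the corresponding generic words $m^{\flat}_N$, and I would define $\mbb \Delta^{\B}(\zeta^{\jmath}_M):=\sum_N p_{M,N}(v)\,m^{\flat}_N$. By construction $\mathcal A\otimes_{\mbb A}\mbb \Delta^{\B}=\Delta^{\B}_{\bv}$ for every $q$, and the claimed generator formulas hold: they are exactly Proposition~\ref{D(E)-renorm} after specialization, hence over $\mbb A$ since the two sides are Laurent polynomials in $v$ agreeing at $v=\bv$ for infinitely many $q$. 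Multiplicativity follows the same way: $\mbb \Delta^{\B}(xy)-\mbb \Delta^{\B}(x)\,\mbb \Delta^{\B}(y)$ is an $\mbb A$-combination of basis elements of $\mbb S^{\B}_{d'}\otimes\mbb S_{d''}$ whose coefficients are Laurent polynomials vanishing at $v=\bv$ for every odd prime power $q$, hence identically zero.

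The main obstacle is the uniformity bookkeeping: one must be careful that the monomial-basis theorem ~\cite[Theorem 3.10]{BKLW13} really supplies a single $\mbb A$-valued, $q$-independent change of basis, and that Proposition~\ref{D(E)-renorm} genuinely expresses $\Delta^{\B}_{\bv}$ on generators as $v\mapsto\bv$ of one assignment into the generic algebra $\mbb S^{\B}_{d'}\otimes\mbb S_{d''}$. Once these are in place, everything else — well-definedness, $\mbb A$-linearity, associativity, and compatibility with the $(\mbf b,\mbf a)$-grading needed later — is forced by specializing at infinitely many $q$, so the proof requires no new geometric input beyond the computations already carried out in Propositions~\ref{D(E)} and~\ref{D(E)-renorm}.
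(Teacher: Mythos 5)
Your overall strategy — use the monomial basis of \cite[Theorem 3.10]{BKLW13} together with the specialization-at-infinitely-many-$q$ argument to descend $\Delta^{\B}_{\bv}$ to a generic $\mbb A$-algebra homomorphism — is exactly what the paper intends; the paper disposes of this step with the single sentence ``by using the monomial basis in \cite[Theorem 3.10]{BKLW13}, one can show that $\mbb S^{\B}_d$ enjoys the same results for $\mbf S^{\B}_d$ from the previous sections,'' and you have fleshed that sketch out.

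However, there is a real gap in how you use the monomial basis. You assert that each $m^{\jmath}_M$ is ``a word in the generators $\E_i,\F_i,\mbf k_i^{\pm 1}$'' and, in particular, that $\mbb S^{\B}_d$ is generated as an $\mbb A$-algebra by $\E_i,\F_i,\mbf k_i^{\pm 1}$. This is not what the monomial basis theorem supplies. In the BLM-style construction (and its $\jmath$-analogue in BKLW13), the monomials are products of standard basis elements $[A]$ for $A$ \emph{almost-diagonal}, i.e.\ $A-aE^{h,h\pm 1}$ diagonal for arbitrary $a\geq 1$; these are divided-power-like elements, and for $a\geq 2$ they are not $\mbb A$-expressible in terms of $\E_i,\F_i$ alone since $[a]!$ is not a unit in $\mbb A=\mbb Z[v,v^{-1}]$ (nor in $\mcal A=\mbb Z[\bv,\bv^{-1}]$ for $q$ odd, e.g.\ $[2]=v+v^{-1}$ specializes to $q^{-1/2}(q+1)$). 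Consequently your existence step --- evaluating the word $m^{\jmath}_N$ on $\Delta^{\B}_{\bv}(\E_i),\Delta^{\B}_{\bv}(\F_i),\Delta^{\B}_{\bv}(\mbf k_i^{\pm 1})$ --- does not apply, because Proposition \ref{D(E)-renorm} only computes $\Delta^{\B}_{\bv}$ on the ``$a=1$'' almost-diagonal elements; you additionally need to know $\Delta^{\B}_{\bv}$ on the higher almost-diagonal elements, and this requires either a further geometric count analogous to Proposition \ref{D(E)} or an appeal to the $q$-independence of the point-counts defining $\widetilde\Delta^{\B}$. The uniqueness is salvageable without the false generation claim: the constraint $\mathcal A\otimes_{\mbb A}\mbb \Delta^{\B}=\Delta^{\B}_{\bv}$ for infinitely many odd prime powers $q$ already pins down the Laurent-polynomial matrix coefficients, which is the specialization argument you correctly invoke for multiplicativity. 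So the missing idea is precisely the $q$-independence of $\Delta^{\B}_{\bv}$ on the full set of almost-diagonal generators, not just on $\E_i,\F_i,\mbf k_i^{\pm 1}$.
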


Recall the canonical basis $\{ \{ M \}_d | M \in \Xi_d^{\jmath} \}$ of $\mbb S^{\B}_d$ from ~\cite[3.6]{BKLW13}.
We have the following positivity result of the canonical basis of $\mbb S^{\B}_d$ with respect to the coproduct $\Delta^{\B}$.

\begin{prop}
\label{J-2}
If
$
\mbb \Delta^{\B} ( \{M\}_d) = \sum_{M'\in \Xi_{d'}^{\jmath}, M'' \in \Xi_{d''}} h_M^{M', M''} \{M'\}_{d'} \otimes \{M''\}_{d''},
$
then we have $h_M^{M', M''} \in \mbb Z_{\geq 0} [v, v^{-1}]$.
\end{prop}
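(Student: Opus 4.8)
The plan is to follow the template of Proposition \ref{Pos-S}: realize the generic $\jmath$-coproduct as (the trace function of) a hyperbolic-localization functor on type $B$ partial flag varieties and invoke Braden's theorem \cite{B03}.

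\textbf{Step 1 (Reduction to the sheaf level and to the unnormalized coproduct).} By the monomial-basis argument at the start of Section \ref{generic-B} (\cite[Theorem 3.10]{BKLW13}), exactly as $\mbb\Delta$ is produced in Proposition \ref{Generic-Delta_A}, both $\widetilde\Delta^{\B}$ and its renormalization lift to the generic algebra $\mbb S^{\B}_d$, and it suffices to prove the positivity statement there. On the graded component sending $\mbb S^{\B}_d(\mbf b,\mbf a)$ to $\mbb S^{\B}_{d'}(\mbf b',\mbf a')\otimes\mbb S_{d''}(\mbf b'',\mbf a'')$, the map $\mbb\Delta^{\B}$ differs from the generic version of $\widetilde\Delta^{\B}$ only by the single scalar $v^{\sum_{i\le j}b_i'b_j''-a_i'a_j''+u(\mbf b'',\mbf a'')}\in v^{\mbb Z}$ (see (\ref{Delta-J}) and Proposition \ref{J-1}). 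Since multiplication by a power of $v$ preserves $\mbb Z_{\ge0}[v,v^{-1}]$, the proposition reduces to showing that the generic $\widetilde\Delta^{\B}_{\mbf b',\mbf a',\mbf b'',\mbf a''}(\{M\}_d)$ is a nonnegative combination of the $\{M'\}_{d'}\otimes\{M''\}_{d''}$; and for this, as in the proof of Proposition \ref{Pos-S}, it is enough to work with the corresponding $\overline{\mbb F_q}$-valued functions.

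\textbf{Step 2 (Geometric model).} Pass to $\overline{\mbb F_q}$; let $\overline{\mbf G}^{\B}_d$ be the (odd) orthogonal group, $\overline{\mbf X}^{\B}_d(\mbf a)$ the projective variety of isotropic partial flags, and $\overline{\mbf X}_{d''}(\mbf a'')$ the ordinary one. Fix $V\in X^{\B}_d(\mbf b)$ and put $P_{\mbf b}=\mathrm{Stab}(V)$; the restriction isomorphism $i^*_{\mbf b,\mbf a}\colon \mathcal A_{\G^{\B}_d}(X^{\B}_d(\mbf b)\times X^{\B}_d(\mbf a))\xrightarrow{\sim}\mathcal A_{P_{\mbf b}}(X^{\B}_d(\mbf a))$ carries $\{M\}_d$ to the IC function of the $P_{\mbf b}$-orbit closure indexed by $M$. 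Using the decomposition $\D=\D''\oplus T\oplus W$ of Lemma \ref{auxi}, with $\D''$ and $W$ isotropic and in perfect pairing (condition (c)) and $T\cong\D'$ carrying the induced nondegenerate form, introduce the one-parameter subgroup $\lambda\colon\mathrm{GL}(1,\overline{\mbb F_q})\to P_{\mbf b}$ acting with weight $+1$ on $\D''$, weight $0$ on $T$, and weight $-1$ on $W$; condition (c) is precisely what places $\lambda$ in the orthogonal group. Then the $\lambda$-fixed locus in $\overline{\mbf X}^{\B}_d(\mbf a)$ is $\bigsqcup_{(\mbf a',\mbf a'')}\overline{\mbf X}^{\B}_{d'}(\mbf a')\times\overline{\mbf X}_{d''}(\mbf a'')$ (via $L\mapsto(\pi^{\natural}(L),\pi''(L))$), and the associated attracting set is the variety whose $\mbb F_q$-points are governed by the sets $Z^{\B}_{L',L''}$ of (\ref{Z}); the three cases (i)--(iii) in Proposition \ref{D(E)}, including the role of the $i\leftrightarrow n+1-i$ symmetry, are exactly the strata of this attracting set.

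\textbf{Step 3 (Hyperbolic localization and conclusion).} As in Lemma \ref{Delta-ik}, one checks that under $i^*_{\mbf b,\mbf a}$ and $i^*_{\mbf b',\mbf a'}\otimes i^*_{\mbf b'',\mbf a''}$ the generic $\widetilde\Delta^{\B}_{\mbf b',\mbf a',\mbf b'',\mbf a''}$ becomes the function-theoretic shadow of $\pi_!\iota^*$ for the diagram $\overline{\mbf X}^{\B}_d(\mbf a)\xleftarrow{\iota}(\text{attracting set})\xrightarrow{\pi}\overline{\mbf X}^{\B}_{d'}(\mbf a')\times\overline{\mbf X}_{d''}(\mbf a'')$, i.e. the trace function of the hyperbolic-localization functor attached to $(\overline{\mbf X}^{\B}_d(\mbf a),\lambda)$ in \cite{B03}. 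Since the orthogonal partial flag varieties are projective, no lattice-chain truncation is needed (in contrast with the affine situation), and Braden's theorem applies verbatim: the hyperbolic localization of the simple perverse sheaf $i^*_{\mbf b,\mbf a}(\{M\}_d)$ is a semisimple complex, so its trace function is a $\mbb Z_{\ge0}[v,v^{-1}]$-combination of IC functions of $P_{\mbf b'}\times P_{\mbf b''}$-orbit closures, namely of the $\{M'\}_{d'}\otimes\{M''\}_{d''}$. This gives the positivity of $\widetilde\Delta^{\B}$, hence of $\mbb\Delta^{\B}$ by Step 1. The main obstacle is Step 2: verifying that in the orthogonal setting $\pi_!\iota^*$ genuinely is hyperbolic localization for $\lambda$, i.e. that the $\lambda$-fixed locus and attracting set match the splitting of Lemma \ref{auxi} and reproduce the case analysis (i)--(iii) of Proposition \ref{D(E)}. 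Once this dictionary is in place, Braden's theorem is a black box and the rest is formal, just as in Proposition \ref{Pos-S}.
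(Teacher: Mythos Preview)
Your proposal is correct and follows essentially the same route as the paper: reduce to the unnormalized $\widetilde\Delta^{\B}$ by the monomial $v$-twist, realize it geometrically as the function version of the hyperbolic localization $\pi^{\jmath}_{!}\iota^{*}_{\jmath}$ for the $\mathrm{GL}(1)$-action with weights $(+1,0,-1)$ on the decomposition $\D''\oplus T\oplus W$ of Lemma~\ref{auxi}, and then invoke Braden's theorem. The paper carries out exactly this argument, making the fixed locus $\sqcup_{(\mbf a',\mbf a'')\vdash\mbf a}\overline{\mbf X}^{\jmath}_{d'}(\mbf a')\times\overline{\mbf X}_{d''}(\mbf a'')$ and the attracting set $\overline{\mbf X}^{\jmath+}_{\mbf a,\mbf a',\mbf a''}$ explicit; your additional remark linking the strata to cases (i)--(iii) of Proposition~\ref{D(E)} is a nice sanity check but not needed for the proof.
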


\begin{proof}
The proof is similar to that of Proposition ~\ref{Pos-S}.
We consider the orthogonal group $\overline{\mbf G}_d^{\jmath}$   and the isotropic flag variety $\overline{\mbf X}_d^{\jmath}(\mbf a)$ over $\overline{\mbb F_q}$, whose $\mbb F_q$-points are exactly $\G_d^{\jmath}$ and  $X_d^{\jmath} (\mbf a)$, respectively.
The linear form $Q^{\jmath}$ can be extended naturally to a form $\mbf Q^{\jmath}$ on $\overline{\mbb F_q}^D$.
Suppose that $\mathcal D''$ is isotropic with respect to $\mbf Q^{\jmath}$.
We can fix a decomposition $\overline{\mbb F_q}^D = \D'' \oplus T \oplus W$ such that Lemma ~\ref{auxi} (a)-($\text{c}$) hold.
With respect to the bases in Lemma ~\ref{auxi} ($\text{c}$) and fix a basis for $T$, we can further assume that the associated matrix of $\mbf Q^{\jmath}$ is of the form
$
\begin{pmatrix}
0 & 0 & 1 \\
0 & 1 & 0\\
1 & 0 & 0
\end{pmatrix}
$,
since $\mbf Q^{\jmath}$ is defined over an algebraic closed field.
Recall that $\overline{\mbf G}_m = \mrm{GL}(1, \overline{\mbb F_q})$.
We define an imbedding $\overline{\mbf G}_m \to \overline{\mbf G}^{\jmath}_d$ by
$t \mapsto
\begin{pmatrix}
0 & 0 & t. 1\\
0 & 1 & 0\\
t^{-1}. 1& 0 & 0
\end{pmatrix}
$
where the $1$s denote the identity matrix of the desired rank.
Then the $\overline{\mbf G}_m$-fixed-point set of $\overline{\mbf X}^{\jmath}_d(\mbf a)$ consists of all flags $L$ such that
$L_i = (L_i \cap \D'' ) \oplus (L_i \cap T) \oplus (L_i \cap W)$ for all $i$, hence
is $\sqcup_{(\mbf a', \mbf a'')\vdash \mbf a} \overline{\mbf X}_{d'}^{\jmath} (\mbf a') \times \overline{\mbf X}_{d''} (\mbf a'')$.
Furthermore,  the attracting set of  $\overline{\mbf X}_{d'}^{\jmath} (\mbf a') \times \overline{\mbf X}_{d''} (\mbf a'')$, for all $(\mbf a', \mbf a'') \vdash \mbf a$,
is the algebraic variety $\overline{\mbf X}^{\jmath+}_{\mbf a, \mbf a', \mbf a''}$ whose $\mbb F_q$-point set  is
$X^{\jmath +}_{\mbf a, \mbf a', \mbf a''}
= \{ L \in X^{\jmath}_d | (\pi^{\natural} (L), \pi''(L)) \in  X_{d'}^{\jmath} (\mbf a') \times  X_{d''} (\mbf a'')\}$.
Thus we have a diagram
\[
\begin{CD}
\overline{\mbf X}^{\jmath}_d(\mbf a) @<\iota_{\jmath}<<  \overline{\mbf X}^{\jmath+}_{\mbf a, \mbf a', \mbf a''}
@> \pi^{\jmath} >>
\overline{ \mbf X}_{d'}^{\jmath} (\mbf a') \times \overline{\mbf X}_{d''} (\mbf a''),
\end{CD}
\]
where the first arrow is an inclusion and the second is induced by the definitions. Arguing in a similar way as Lemma
~\ref{Delta-ik}, we see that $\Delta^{\jmath}_{\mbf b', \mbf a', \mbf b'', \mbf a''}$ is the function version of the hyperbolic localization  functor  $\pi^{\jmath}_! \iota^*_{\jmath}$.
Now applying Braden's result ~\cite{B03}, we are done.
\end{proof}

\begin{rem}
Note that the rank of the vector bundle $\pi^{\jmath}$ is
$$\sum_{1 \leq i < j \leq n} a_i' a_j''  + \frac{1}{2} \left ( \sum_{i + j > n+1} a_i'' a_j''  - \sum_{i > r +1} a_i'' \right ). $$
This provides an explanation of the twist in (\ref{Delta-J}).
\end{rem}

Moreover, we have the following generic version of Proposition ~\ref{Phi-gene}.

\begin{prop}
\label{J-3}
There is  a unique  algebra homomorphism
\[
\phi^{\B}_{d, d-n}: \mbb S^{\B}_d \to \mbb S^{\B}_{d-n}
\]
such that $\mathcal A \otimes_{\mbb A} \phi^{\B}_{d, d-n} = \phi^{\B}_{d, d-n, \bv}$ and
\[
\phi^{\B}_{d, d-n} (\E_i ) =\E_i',\
\phi^{\B}_{d, d-n}(\F_i) =\F_i',\
\ \mbox{and}  \
\phi^{\B}_{d, d-n}(\K^{\pm 1}_i)=\K'^{\pm 1}_i,  \forall 1 \leq i \leq r.
\]
\end{prop}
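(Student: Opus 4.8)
The plan is to obtain $\phi^{\B}_{d, d-n}$ by descending the already-constructed map $\phi^{\B}_{d,d-n,\bv}$ from $\mathcal A$ down to $\mbb A$, in the same spirit as Proposition~\ref{Generic-Transfer-A}. First I would dispose of uniqueness: by the monomial basis theorem \cite[Theorem 3.10]{BKLW13}, the algebra $\mbb S^{\B}_d$ is generated over $\mbb A$ by the elements $\E_i,\F_i,\K^{\pm 1}_i$ with $1\le i\le r$, so there is at most one $\mbb A$-algebra homomorphism out of $\mbb S^{\B}_d$ taking the prescribed values on these generators; the same theorem for $\mbb S^{\B}_{d-n}$ will be used below.

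For existence I would argue as follows. Since monomials in $\E_i,\F_i,\K^{\pm 1}_i$ span $\mbb S^{\B}_d$ over $\mbb A$, it suffices to show that every $\mbb A$-linear relation $\sum_k c_k(v)\,m_k=0$ in $\mbb S^{\B}_d$ among such monomials is carried to a valid relation in $\mbb S^{\B}_{d-n}$ under $\E_i\mapsto\E'_i$, $\F_i\mapsto\F'_i$, $\K^{\pm1}_i\mapsto\K'^{\pm1}_i$; that is, that $z:=\sum_k c_k(v)\,m_k(\E',\F',\K')$ vanishes in $\mbb S^{\B}_{d-n}$. Fix an odd prime power $q$ and specialise $v\mapsto\bv=\sqrt q$: using $\mbf S^{\B}_{\bullet}=\mathcal A\otimes_{\mbb A}\mbb S^{\B}_{\bullet}$ and that $\zeta^{\B}_M$ specialises to the characteristic function $\eta^{\B}_M$, the relation becomes $\sum_k c_k(\bv)\,m_k=0$ in $\mbf S^{\B}_d$. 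Applying the honest algebra homomorphism $\phi^{\B}_{d,d-n,\bv}$ of Proposition~\ref{Phi-gene}, which by construction sends $\E_i,\F_i,\K^{\pm1}_i$ to $\E'_i,\F'_i,\K'^{\pm1}_i$ and hence each monomial $m_k$ to $m_k(\E',\F',\K')$, shows that the specialisation of $z$ is $0$ in $\mbf S^{\B}_{d-n}$. Writing $z=\sum_{M\in\Xi^{\B}_{d-n}}g_M(v)\,\zeta^{\B}_M$ in the standard basis, this says $g_M(\bv)=0$ for every $M$ and every odd prime power $q$; as the numbers $\sqrt q$ are infinitely many distinct elements and each $g_M$ is a Laurent polynomial, we get $g_M=0$, so $z=0$. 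This yields the desired $\mbb A$-algebra homomorphism $\phi^{\B}_{d,d-n}$ with $\phi^{\B}_{d,d-n}(\E_i)=\E'_i$, $\phi^{\B}_{d,d-n}(\F_i)=\F'_i$, $\phi^{\B}_{d,d-n}(\K^{\pm1}_i)=\K'^{\pm1}_i$; and $\mathcal A\otimes_{\mbb A}\phi^{\B}_{d,d-n}=\phi^{\B}_{d,d-n,\bv}$ then follows since both are $\mathcal A$-algebra homomorphisms agreeing on generators.

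The step I expect to require the most care is the structural input that monomials in $\E_i,\F_i,\K^{\pm1}_i$ actually span $\mbb S^{\B}_d$ over $\mbb A$; this is precisely \cite[Theorem 3.10]{BKLW13} (which also underlies uniqueness), so once that is invoked everything else is a routine specialisation argument. As an alternative to the descent, one could instead build $\phi^{\B}_{d,d-n}$ directly as the composite of the generic coproduct $\mbb\Delta^{\B}$ of Proposition~\ref{J-1} (after cancelling the unit $v$-power twists built into \eqref{Delta-J}, which are genuine integer powers since $u(\mbf b'',\mbf a'')\in\mbb Z$) with a generic version of the signed representation $\chi$ of \cite[1.8]{L00}, landing in $\mbb S^{\B}_{d-n}\otimes\mbb A\cong\mbb S^{\B}_{d-n}$, and then verify the generator formulas exactly as in the proof of Proposition~\ref{Phi-gene} but over $\mbb A$, using Proposition~\ref{J-1}.
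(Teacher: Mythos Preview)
Your proposal is correct and follows essentially the same approach as the paper: the paper does not write out a proof of Proposition~\ref{J-3} but simply declares it the generic version of Proposition~\ref{Phi-gene}, obtained by the blanket remark at the start of Section~\ref{generic-B} that \cite[Theorem 3.10]{BKLW13} allows all results for $\mbf S^{\B}_d$ to be lifted to $\mbb S^{\B}_d$. You have merely made explicit the standard specialisation argument (monomial basis $+$ vanishing at infinitely many $\bv=\sqrt q$) that this remark encodes, and your alternative construction via the generic $\mbb\Delta^{\B}$ composed with a generic $\chi$ is equally valid.
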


By setting $d'=0$ in (\ref{Generic-D-B}), we have the following generic version of Corollary ~\ref{J-A}.

\begin{prop}
\label{J-4}
There is a unique algebra imbedding
\[
\jmath_d: \mbb S^{\B}_d \to \mbb S_{d}
\]
such that
$\mathcal A\otimes_{\mbb A} \jmath_d = \jmath_{d, \bv}$ and
\begin{align*}
\begin{split}
\jmath_d ( \E_i) & =  \mbf  E_i + \mbf K_i \mbf F_{n-i}, \\
\jmath_d (\F_i) & = \mbf F_i \mbf K_{n-i} + \mbf E_{n-i} , \\
\jmath_d (\mbf k_i ) & = v^{\delta_{i, r}} \mbf K_i \mbf K^{-1}_{n-i}, \quad \forall 1\leq i\leq r.\\
%
\end{split}
\end{align*}
\end{prop}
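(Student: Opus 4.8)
The plan is to obtain Proposition \ref{J-4} as the specialization at $d' = 0$ of the generic coproduct $\mbb \Delta^{\B}$ of Proposition \ref{J-1}. First I would make $\mbb S^{\B}_0$ explicit: the isotropic flag variety underlying $\mbb S^{\B}_0$ consists of the single flag whose only nonzero jump is $|V_{r+1}/V_r| = 1$, so $\mbb S^{\B}_0$ is free of rank one over $\mbb A$, spanned by its identity, and we may identify $\mbb S^{\B}_0 \simeq \mbb A$. Reading off the generators inside this copy of $\mbb A$ gives $\E'_i = \F'_i = 0$ for $1 \leq i \leq r$, since these operators require a distinct isotropic flag differing in one of the positions $1,\dots,r$, of which there is none; and from $\K'_i = \mbf h'_{i+1}(\mbf h'_i)^{-1}$ together with $\mbf h'_a = v^{\delta_{a, r+1}}$ one gets the scalar $\K'_i = v^{\delta_{i,r}}$ for $1 \leq i \leq r$.

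Next I would apply Proposition \ref{J-1} with $(d', d'') = (0, d)$: composing the resulting algebra homomorphism $\mbb \Delta^{\B} \colon \mbb S^{\B}_d \to \mbb S^{\B}_0 \otimes \mbb S_d$ with the canonical identification $\mbb S^{\B}_0 \otimes_{\mbb A} \mbb S_d \simeq \mbb S_d$ defines $\jmath_d$, which is thereby an algebra homomorphism over $\mbb A$. It lifts $\jmath_{d, \bv}$ because base change of $\mbb \Delta^{\B}|_{d'=0}$ along $\mbb A \to \mathcal A$ returns $\Delta^{\B}_{\bv}|_{d'=0}$, which is by definition $\jmath_{d, \bv}$ in (\ref{jD}). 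Substituting $\E'_i = \F'_i = 0$ and $\K'_i = v^{\delta_{i,r}}$ into the generator formulas of Proposition \ref{J-1}, and rewriting the monomials $\mbf F_{n-i}\mbf K_i$ and $\mbf K_{n-i}\mbf F_i$ by the $q$-commutation relations in $\mbb S_d$ exactly as in the proof of Corollary \ref{J-A}, yields the displayed formulas for $\jmath_d$. Uniqueness is then immediate, since the $\E_i, \F_i, \K_i^{\pm 1}$ together with the idempotents $1_\lambda$ generate $\mbb S^{\B}_d$ by \cite[Theorem 3.10]{BKLW13} --- the same input that underlies Proposition \ref{J-1}.

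The only assertion not already contained in Proposition \ref{J-1} is that $\jmath_d$ is injective, and here I would transport the argument of Lemma \ref{j-inj} to the generic level. The leading-term identity $\jmath_d(m^{\B}_A) = \sum_{B \in \Xi_d(A)} m_B + (\text{lower-degree terms})$, for the monomial basis $\{m^{\B}_A\}$ of \cite[Theorem 3.10]{BKLW13}, is an identity between $\mbb A$-linear combinations of monomial-type elements of $\mbb S_d$; it holds after every specialization $v \mapsto \bv = \sqrt q$ by Lemma \ref{j-inj}, hence holds identically over $\mbb A$. Since $\Xi_d(A) \cap \Xi_d(A') = \emptyset$ for $A \neq A'$ and the elements $\sum_{B \in \Xi_d(A)} m_B$ are linearly independent (using the $\mbb Z[I]$-degree filtration as in \emph{loc. cit.}), it follows that $\{\jmath_d(m^{\B}_A)\}$ is linearly independent over $\mbb A$, whence $\jmath_d$ is injective. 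I expect this descent step --- ensuring the geometric identity of Lemma \ref{j-inj} is a genuine polynomial identity that may be read back over $\mbb Z[v, v^{-1}]$ --- to be the only point requiring care; all the rest is bookkeeping inside the already-established Proposition \ref{J-1}.
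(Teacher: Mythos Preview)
Your proposal is correct and follows essentially the same approach as the paper: the paper obtains Proposition~\ref{J-4} by setting $d'=0$ in the generic coproduct $\mbb\Delta^{\B}$ of Proposition~\ref{J-1}, exactly as you do, and relies on the blanket statement at the start of Section~\ref{generic-B} that the monomial basis of \cite[Theorem~3.10]{BKLW13} transports all results from $\mbf S^{\B}_d$ to $\mbb S^{\B}_d$, which covers injectivity via Lemma~\ref{j-inj}. One small remark: your descent of the leading-term identity via specializations is fine but unnecessary---the proof of Lemma~\ref{j-inj} uses only the generator formulas of Corollary~\ref{J-A}, which you have already upgraded to $\mbb A$ via Proposition~\ref{J-1}, so the entire argument runs verbatim over $\mbb A$ without appeal to specialization.
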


Suppose that $\{B\}_d$ is a canonical basis element in $\mbb S^{\B}_d$, we have
\begin{align}
\label{j-D-g}
\jmath_d (\{B\}_d) = \sum g_{B, A} \{A\}_d
\end{align}
where the sum runs over the set of canonical basis elements $\{ A\}_d$ in $\mbb S_d$.
By Propositions \ref{J-2} and \ref{J-4}, we have

\begin{cor}
\label{J-5}
$g_{B,A} \in \mbb Z_{\geq 0}[v, v^{-1}]$.
\end{cor}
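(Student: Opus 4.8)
The idea is to read off Corollary \ref{J-5} from Proposition \ref{J-2} by specializing to the degenerate case $d'=0$.

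First one records the trivial nature of $\mbb S^{\B}_0$. When $d'=0$ the orthogonal space is one dimensional, so $X^{\jmath}_0$ is a single point, the set $\Xi^{\jmath}_0$ consists of a single matrix $M_\emptyset$, and the standard basis element $\zeta^{\jmath}_{M_\emptyset}=[M_\emptyset]$ is the unit of $\mbb S^{\B}_0$; since there is nothing below $M_\emptyset$ in the Bruhat order, $[M_\emptyset]$ is also the unique canonical basis element $\{M_\emptyset\}_0$. Under the algebra isomorphism $\mbb S^{\B}_0\cong\mbb A$ this element maps to $1$, and therefore the resulting identification $\mbb S^{\B}_0\otimes_{\mbb A}\mbb S_d\cong\mbb S_d$ sends the external product $\{M_\emptyset\}_0\otimes\{A\}_d$ to $\{A\}_d$ for every $A\in\Xi_d$.

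Next, by Proposition \ref{J-4} (equivalently, by the very definition of $\jmath_{d,\bv}$ given just before Corollary \ref{J-A} as $\Delta^{\B}_{\bv}$ with $d'=0$), the map $\jmath_d$ is nothing but $\mbb \Delta^{\B}$ of (\ref{Generic-D-B}) taken with $d'=0$, $d''=d$, followed by the identification $\mbb S^{\B}_0\otimes\mbb S_d\cong\mbb S_d$ above. Applying Proposition \ref{J-2} with this choice of $(d',d'')$ gives, for a canonical basis element $\{B\}_d$ of $\mbb S^{\B}_d$,
\[
\mbb \Delta^{\B}(\{B\}_d)=\sum_{A\in\Xi_d} h_B^{M_\emptyset,A}\,\{M_\emptyset\}_0\otimes\{A\}_d,\qquad h_B^{M_\emptyset,A}\in\mbb Z_{\geq 0}[v,v^{-1}].
\]
Transporting through $\mbb S^{\B}_0\otimes\mbb S_d\cong\mbb S_d$ yields $\jmath_d(\{B\}_d)=\sum_A h_B^{M_\emptyset,A}\{A\}_d$, and comparison with (\ref{j-D-g}) identifies $g_{B,A}=h_B^{M_\emptyset,A}$, which is therefore in $\mbb Z_{\geq 0}[v,v^{-1}]$.

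I do not expect a genuine obstacle here: the only thing demanding care is the bookkeeping verifying that the canonical basis of the rank-one algebra $\mbb S^{\B}_0$ is its unit and that the external tensor-product canonical basis appearing in Proposition \ref{J-2} degenerates compatibly with the isomorphism $\mbb S^{\B}_0\cong\mbb A$ — both are immediate once unwound. All the substantive content, namely the positivity coming from hyperbolic localization, has already been absorbed into Proposition \ref{J-2} via Braden's theorem.
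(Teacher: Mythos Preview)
Your proof is correct and follows exactly the approach indicated by the paper, which simply cites Propositions~\ref{J-2} and~\ref{J-4}: you specialize the positivity of $\mbb\Delta^{\B}$ to the degenerate case $d'=0$, $d''=d$, using that $\mbb S^{\B}_0\cong\mbb A$ with canonical basis the unit. The extra care you take in unwinding the identification $\mbb S^{\B}_0\otimes\mbb S_d\cong\mbb S_d$ and checking that $\{M_\emptyset\}_0$ is the unit is exactly the bookkeeping the paper leaves implicit.
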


\section{Positivity  for the modified coideal subalgebra  $\dot{\mbb U}^{\B}$}

\subsection{The coideal subalgebra $\mbb U^{\jmath}$}
\label{coideal}

By definition, $\mbb U^{\B}\equiv \mbb U^{\jmath} (\mathfrak{sl}_n)$ is an associative algebra over $\mbb Q(v)$ generated by
$\mbb e_i$, $\mbb f_i$, $\mbb k_{\pm i}$ for $1 \leq i \leq r$ and subject to the following defining relations.
For  any $1\leq i, j \leq r$ and $a_{ij} = 2 \delta_{i, j} - \delta_{i, j+1}  - \delta_{i, j -1} $,
\allowdisplaybreaks
\begin{eqnarray*}
\mbb k_i \mbb k_j   &= & \mbb k_j \mbb k_i,\\
\mbb k_i \mbb k^{-1}_{i} &= & \mbb k^{-1}_{i} \mbb K_i =1,\\
\mbb k_i \mbb e_j & = & v^{ a_{ij}  + \delta_{i,r}\delta_{j,r} } \mbb e_j \mbb k_i, \\
\mbb k_i \mbb f_j & = & v^{-a_{ij}    - \delta_{i,r}\delta_{j,r} } \mbb f_j \mbb k_i,\\
\mbb  e_i \mbb f_j - \mbb f_j \mbb e_i & =& \delta_{i, j} \frac{\mbb k_i - \mbb k^{-1}_{i} }{v-v^{-1}},  \quad \hspace{1.75cm} \,   {\rm if} \ (i, j)\neq (r,r),\\
\mbb e^2_r \mbb f_r + \mbb f_r \mbb e_r^2 & = & (v+v^{-1}) (\mbb e_r \mbb f_r \mbb e_r - \mbb e_r (v \mbb k_r +v^{-1} \mbb k^{-1}_{r} )),\\
\mbb f^2_r \mbb e_r + \mbb e_r \mbb f_r^2 & = & (v+v^{-1}) (\mbb f_r \mbb e_r \mbb f_r -(v \mbb k_r +v^{-1} \mbb k^{-1}_{r} ) \mbb f_r),\\
\mbb e_i \mbb e_j & = & \mbb e_j \mbb e_i,  \quad \hspace{3.2cm}  {\rm if}\ |i-j|>1,\\
\mbb f_i \mbb f_j & = & \mbb f_j \mbb f_i, \quad \hspace{3.25cm}  {\rm if}\ |i-j|>1,\\
\mbb e_i^2 \mbb e_j + \mbb e_j \mbb e_i^2 & =&  (v+v^{-1}) \mbb e_i \mbb e_j \mbb e_i ,\quad \hspace{1.22cm}  {\rm if}\ |i-j|=1,\\
\mbb f_i^2 \mbb f_j + \mbb f_j \mbb f_i^2 & =& (v+v^{-1}) \mbb f_i \mbb f_j \mbb f_i,\quad \hspace{1.3cm}  {\rm if}\ |i-j|=1.
\end{eqnarray*}

Recall the algebra $\mbb U$ from Section ~\ref{Type-A-positivity} from ~\cite[Proposition 4.5]{BKLW13}, see also ~\cite{BW13}, we have an injective algebra homomorphism
\begin{align*}
\jmath: \mbb U^{\jmath} \to \mbb U,
\end{align*}
defined by
\begin{align*}
\begin{split}
\jmath ( \mbb e_i)  =  \mbb  E_i + \mbb K_i \mbb F_{n-i}, \
\jmath (\mbb f_i)  = \mbb F_i \mbb K_{n-i} + \mbb E_{n-i} , \
\jmath (\mbb k_i )  = v^{\delta_{i, r}} \mbb K_i \mbb K^{-1}_{n-i}, \quad \forall 1\leq i\leq r.\\
%
\end{split}
\end{align*}
Here $n=2r+1$.
By composing $\jmath$ with $\Delta$ in (\ref{D-K}), we have an algebra homomorphism
$
\mbb \Delta^{\B} : \mbb U^{\jmath} \to \mbb U^{\jmath} \otimes \mbb U
$
defined by
\begin{align*}
\begin{split}
\mbb \Delta^{\B} ( \mbb e_i) & = \mbb e_i \otimes  \mbb K_{i} + 1 \otimes \mbb E_i   +  \mbb k_i  \otimes \mbb F_{n-i} \mbb K_i, \\
\mbb \Delta^{\B}  (\mbb f_i) & = \mbb f_i \otimes \mbb K_{n-i}  +  \mbb k^{-1}_{i} \otimes \mbb K_{n-i}  \mbb F_i  + 1 \otimes \mbb E_{n-i}, \\
\mbb \Delta^{\B} (\mbb k_i) & = \mbb k_i \otimes \mbb K_i \mbb K^{-1}_{n-i}, \quad \forall 1 \leq i \leq r.
\end{split}
\end{align*}

\subsection{The algebra $\dot{\mbb U}^{\jmath}$}
On $\mbb Z^n$, we define an equivalence relation ``$\approx$'' by $\mu \approx \lambda$ if and only if
$\mu - \lambda = m ( 2, \cdots, 2)$ for some $m\in \mbb Z$.
Let $\widehat \mu$ denote the equivalence class of $\mu$ with respect to $\approx$.
Consider the following subset in the set  $\mbb Z^n \! /\! \approx$ of equivalence classes.
\begin{align}
\label{Xj}
\mbb X^{\jmath} = \{\widehat  \mu \in \mbb Z^n \! / \! \approx  | \mu_i = \mu_{n +1 - i}, \forall 1\leq i\leq n, \mu_{r+1} \ \mbox{is odd}\}.
\end{align}
We define
\begin{align*}
\begin{split}
\dot{\mbb U}^{\jmath} & = \oplus_{\widehat \mu, \widehat  \lambda \in \mbb X^{\jmath}} \  _{\widehat \mu}\mbb U^{\jmath}_{ \widehat \lambda}, \\
_{\widehat \mu} \mbb U^{\jmath} _{\widehat \lambda}
&=  \mbb U^{\jmath}  / \left ( \sum_{1 \leq i \leq r}  (\mbb k_i - v^{-\mu_i + \mu_{i+1}} ) \mbb U^{\jmath}   +
\sum_{1\leq i \leq r} \mbb U^{\jmath} ( \mbb k_i - v^{- \lambda_i +  \lambda_{i+1}}) \right ).
\end{split}
\end{align*}
The algebra $\dot{\mbb U}^{\jmath}$ is the modified form of $\mbb U^{\jmath}$ (see ~\cite[4.6]{BKLW13} for the $\mathfrak{gl}_n$ version).
Let
\[
\pi_{\widehat \mu, \widehat \lambda} : \mbb U^{\jmath} \to \ _{\widehat \mu}\mbb U^{\jmath}_{ \widehat \lambda}
\]
be the natural projection.

Recall the set $\mbb X^{\jmath}$ from (\ref{Xj}) and $s_i$ the $i$-standard basis element of $\mbb Z^n$. We define an abelian group structure on $\mbb X^{\jmath}$ by
$
\widehat \mu + \widehat \lambda = \widehat{\pi},
\quad\mbox{with} \  \pi=\mu + \lambda - s_{r+1}.
$
We set
\[
I^{\jmath} =\{ 1, \cdots, r\}.
\]
The assignment
$
i \mapsto   -s_i + s_{i+1} + s_{n - i} - s_{n+1-i} + s_{r+1} , \quad \forall i\in I^{\jmath},
$
defines an embedding of abelian groups
$
\mbb Z[I^{\jmath}] \hookrightarrow \mbb X^{\jmath}.
$
We shall identify elements in $\mbb Z [I^{\jmath}]$ with their images in $\mbb X^{\jmath}$.
Then the algebra $\mbb U^{\jmath}$ in Section ~\ref{coideal} admits a $\mbb Z[I^{\jmath}]$-graded decomposition
$\mbb U^{\jmath} = \oplus_{\omega \in \mbb Z[I^{\jmath}]} \mbb U^{\jmath}(\omega)$ defined by
\[
\mbb e_i \in \mbb U^{\jmath} (i),\
\mbb f_i \in \mbb U^{\jmath} (-i),\
\mbb k^{\pm 1}_{i} \in \mbb U^{\jmath}(0),\
\mbb U^{\jmath} (\omega) \mbb U^{\jmath} (\omega') \subseteq
\mbb U^{\jmath}(\omega+\omega'),
\quad \forall i\in I^{\jmath}.
\]
Let $_{\widehat \mu} \mbb U^{\jmath}_{\widehat \lambda} (\omega) = \pi_{\widehat \mu, \widehat \lambda} ( \mbb U^{\jmath}(\omega) )$.
By a standard argument, we have

\begin{lem}
\label{j-w-0}
$_{\widehat \mu} \mbb U^{\jmath}_{\widehat \lambda} (\omega) =0$ unless $\widehat \mu - \widehat \lambda = \omega \in \mbb Z[I^{\jmath}] \subseteq \mbb X^{\jmath}$.
\end{lem}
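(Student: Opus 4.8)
The plan is to run the standard weight-decomposition argument, exactly parallel to the type~$A$ case in Section~\ref{Type-A-positivity}, but keeping careful track of the shifted addition on $\mbb X^{\jmath}$ (the $-s_{r+1}$ correction) and the twisted commutation relation $\mbb k_i \mbb e_j = v^{a_{ij} + \delta_{i,r}\delta_{j,r}} \mbb e_j \mbb k_i$. First I would recall that, by construction, $\mbb U^{\jmath} = \oplus_{\omega \in \mbb Z[I^{\jmath}]} \mbb U^{\jmath}(\omega)$ is a $\mbb Z[I^{\jmath}]$-graded algebra with $\mbb e_i \in \mbb U^{\jmath}(i)$, $\mbb f_i \in \mbb U^{\jmath}(-i)$, $\mbb k_i^{\pm 1} \in \mbb U^{\jmath}(0)$, so it suffices to analyse $\pi_{\widehat\mu,\widehat\lambda}(x)$ for a homogeneous element $x \in \mbb U^{\jmath}(\omega)$. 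By definition $_{\widehat\mu}\mbb U^{\jmath}_{\widehat\lambda}$ is the quotient of $\mbb U^{\jmath}$ by the left ideal generated by $\mbb k_i - v^{-\mu_i+\mu_{i+1}}$ (for $1\le i\le r$) plus the right ideal generated by $\mbb k_i - v^{-\lambda_i+\lambda_{i+1}}$.

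The key step is to push a generator $\mbb k_i$ past a homogeneous element $x\in\mbb U^{\jmath}(\omega)$: using the relations $\mbb k_i\mbb e_j = v^{a_{ij}+\delta_{i,r}\delta_{j,r}}\mbb e_j\mbb k_i$ and $\mbb k_i\mbb f_j = v^{-a_{ij}-\delta_{i,r}\delta_{j,r}}\mbb f_j\mbb k_i$, one gets $\mbb k_i\, x = v^{\langle i^\vee, \omega\rangle}\, x\,\mbb k_i$ for an appropriate integer pairing reading off the $\mbb k_i$-weight of $\mbb U^{\jmath}(\omega)$. Concretely, if $\omega = \sum_j \omega_j\, j$ then the exponent is $\sum_j \omega_j(a_{ij}+\delta_{i,r}\delta_{j,r})$, and one checks this equals $-\mu_i'+\mu_{i+1}'$ where $\mu' = \mu - \sum_j \omega_j(-s_j+s_{j+1}+s_{n-j}-s_{n+1-j}+s_{r+1})$ is the shift of $\mu$ by $\omega$ under the embedding $\mbb Z[I^{\jmath}]\hookrightarrow\mbb X^{\jmath}$ — i.e. the exponent is exactly $\langle \text{(image of }\omega),\ -\rangle$ evaluated so that $\widehat{\mu'} = \widehat\mu - \omega$ in $\mbb X^{\jmath}$. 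Then for $x\in\mbb U^{\jmath}(\omega)$ one computes, modulo the defining left ideal,
\[
(\mbb k_i - v^{-\lambda_i+\lambda_{i+1}})\, x \equiv x\,(\mbb k_i - v^{-\lambda_i+\lambda_{i+1}}) + (v^{-\mu_i+\mu_{i+1}} - v^{-\mu'_i+\mu'_{i+1}}-\text{shift})\, x,
\]
so that $\pi_{\widehat\mu,\widehat\lambda}(x)$ forces $\widehat\lambda = \widehat{\mu'} = \widehat\mu - \omega$; if $\widehat\mu - \widehat\lambda \ne \omega$ then the scalar discrepancy is a nonzero element of $\mbb Q(v)$ times $x$, and combined with $x \equiv $ (its image) this shows the image of $x$ is zero. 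Summing over homogeneous components gives $_{\widehat\mu}\mbb U^{\jmath}_{\widehat\lambda}(\omega) = 0$ unless $\widehat\mu - \widehat\lambda = \omega$, and since the grading group $\mbb Z[I^{\jmath}]$ is identified with its image in $\mbb X^{\jmath}$, the conclusion $\widehat\mu - \widehat\lambda = \omega \in \mbb Z[I^{\jmath}]\subseteq\mbb X^{\jmath}$ follows.

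The main obstacle I expect is purely bookkeeping: verifying that the exponent $\sum_j \omega_j(a_{ij}+\delta_{i,r}\delta_{j,r})$ produced by the twisted commutation relations matches the $i$-th coordinate difference $-\nu_i+\nu_{i+1}$ of the image $\nu = \sum_j\omega_j(-s_j+s_{j+1}+s_{n-j}-s_{n+1-j}+s_{r+1})$ of $\omega$ under the chosen embedding $\mbb Z[I^{\jmath}]\hookrightarrow\mbb X^{\jmath}$ — in particular that the $+\delta_{i,r}\delta_{j,r}$ correction in the relation is exactly accounted for by the $+s_{r+1}$ term in the embedding and the symmetrized shape $-s_j+s_{j+1}+s_{n-j}-s_{n+1-j}$. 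This is a finite check: for $i<r$ it reduces to the ordinary type~$A$ computation on the pair of coordinates $(i,i+1)$ (where only $a_{ij}$ contributes), and for $i=r$ one must additionally track the extra $\delta_{j,r}$ term against the fact that $j\mapsto -s_j+\cdots+s_{r+1}$ puts weight on the $(r+1)$-coordinate. Once this identity is confirmed, the rest is the routine ``standard argument'' alluded to in the statement, identical in form to Lemma~\ref{Delta-U-S}'s supporting computation and to \cite[23.1.1]{L93}.
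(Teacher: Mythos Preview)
Your proposal is correct and is precisely the ``standard argument'' the paper alludes to without spelling out: push $\mbb k_i$ past a homogeneous $x\in\mbb U^{\jmath}(\omega)$ using the twisted commutation relations, compare the resulting scalar with the ones imposed by the left and right ideals, and conclude $\pi_{\widehat\mu,\widehat\lambda}(x)=0$ unless the exponents match. The bookkeeping you flag as the main obstacle --- that $\sum_j\omega_j(a_{ij}+\delta_{i,r}\delta_{j,r})$ equals the $i$-th coordinate difference of the image of $\omega$ in $\mbb X^{\jmath}$ --- does indeed check out (the $\delta_{i,r}\delta_{j,r}$ correction exactly matches the contribution of the $s_{n-j}$ term when $i=j=r$), and with that in hand the argument is complete.
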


\subsection{Positivity with respect to $\mbb \Delta^{\jmath}$}

We introduce the following notations to simplify the presentation.
For any $\mu, \mu', \mu'' \in \mbb Z^n$, we write
\begin{align}
\label{vdash}
(\mu', \mu'') \vdash \mu,
\end{align}
if and only if
$\mu_i' + \mu_i'' + \mu_{n+1-i}'' = \mu_i$, for all  $1\leq i \leq n$.
If $\mu' = s_{r+1}$, we simply write $\mu'' \vdash \mu$.

Assume that  $(\mu', \mu'') \vdash \mu$, then by definition we have
\begin{align*}
\begin{split}
\mDj  ( \mk_i - v^{-\mu_i+ \mu_{i+1} } ) = &
(\mk_i - v^{-\mu_i' + \mu_{i+1}'}) \otimes \mK_i \mK^{-1}_{n-i}
+
v^{-\mu_i' + \mu_{i+1}'}
\otimes (\mK_i - v^{-\mu_i'' + \mu_{i+1}''} )  \mK^{-1}_{n-i} \\
&
+ v^{-\mu_i'  -\mu_i''  + \mu_{i+1}' + \mu_{i+1}''}  \otimes (\mK^{-1}_{n-i} - v^{\mu_{n-i}'' - \mu_{n+1-i}'' }).
\end{split}
\end{align*}
This induces   a unique linear map
\begin{align*}
\mDj_{\widehat{\mu'}, \widehat{\lambda'}, \overline{\mu''}, \overline{\lambda''}} :
\ _{\widehat{\mu}} \mUj_{\widehat{\lambda}} \to \  _{\widehat{\mu'}} \mUj_{\widehat{\lambda'}}
\otimes \ \! _{\overline{\mu''}} \mU_{\overline{\lambda''}},
\quad  \forall (\mu', \mu'') \vdash \mu, (\lambda',\lambda'') \vdash \lambda
\end{align*}
such that  the following diagram commutes.
\begin{align}
\label{mDj-comm}
\begin{CD}
\mUj @> \mDj >> \mUj \otimes \mU \\
@VVV @VVV \\
_{\widehat{\mu}} \mUj_{\widehat{\lambda}}
@> \mDj_{\widehat{\mu'}, \widehat{\lambda'}, \overline{\mu''}, \overline{\lambda''}} >>
_{\widehat{\mu'}} \mUj_{\widehat{\lambda'}}
\otimes \ \! _{\overline{\mu''}} \mU_{\overline{\lambda''}}
\end{CD}
\end{align}

Recall  $\mbb B$ is the canonical basis for $\dot{\mU}$.
Let $\mbb B^{\jmath}$ be the canonical basis for $\dot{\mU}^{\jmath}$ defined in ~\cite{LW14}.

\begin{thm}
\label{J-conj-25.4.2}
Let $a\in \mBj$. If
$\mDj_{\widehat{\mu'}, \widehat{\lambda'}, \overline{\mu''}, \overline{\lambda''}} (a) =\sum_{b\in \mBj, c\in \mB} n^{b, c}_a b\otimes c$, then
$n^{b, c}_a \in \mbb Z_{\geq 0} [v, v^{-1}]$.
\end{thm}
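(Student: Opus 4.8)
The plan is to reduce Theorem~\ref{J-conj-25.4.2} to the positivity result already proved on the $\jmath$Schur algebra level (Proposition~\ref{J-2}), by the same ``diagram-chasing to a large enough $d$'' strategy used to deduce Theorem~\ref{conj-25.4.2} from Proposition~\ref{Pos-S}. First I would set up the finite-rank picture: for a fixed $d$ with $D=2d+1$, recall the surjective algebra homomorphism $\widetilde\phi^{\B}_d : \dot{\mbb U}^{\jmath} \to {}_{\mbb Q(v)}\mbb S^{\B}_d$ (the $\jmath$-analogue of $\widetilde\phi_d$; this is essentially \cite[Lemma 4.3]{LW14}, whose geometric proof is promised via Proposition~\ref{Phi-gene}), together with the ordinary $\widetilde\phi_{d''}: \dot{\mbb U} \to {}_{\mbb Q(v)}\mbb S_{d''}$. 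The key structural input is a commutative square relating $\mDj_{\widehat{\mu'},\widehat{\lambda'},\overline{\mu''},\overline{\lambda''}}$ on $\dot{\mbb U}^{\jmath}$ with $\mbb\Delta^{\B}$ on the generic $\jmath$Schur algebras of Proposition~\ref{J-1}, namely
\begin{align*}
\begin{CD}
{}_{\widehat{\mbf b}} \dot{\mbb U}^{\jmath}_{\widehat{\mbf a}}
@> \mDj_{\widehat{\mbf b'},\widehat{\mbf a'},\overline{\mbf b''},\overline{\mbf a''}} >>
{}_{\widehat{\mbf b'}} \dot{\mbb U}^{\jmath}_{\widehat{\mbf a'}} \otimes {}_{\overline{\mbf b''}}\dot{\mbb U}_{\overline{\mbf a''}} \\
@V \widetilde\phi^{\B}_d VV @VV \widetilde\phi^{\B}_{d'} \otimes \widetilde\phi_{d''} V \\
{}_{\mbb Q(v)}\mbb S^{\B}_d(\mbf b,\mbf a)
@> \mbb\Delta^{\B}_{\mbf b',\mbf a',\mbf b'',\mbf a''} >>
{}_{\mbb Q(v)}\mbb S^{\B}_{d'}(\mbf b',\mbf a') \otimes {}_{\mbb Q(v)}\mbb S_{d''}(\mbf b'',\mbf a'').
\end{CD}
\end{align*}
I would obtain this exactly as Proposition~\ref{U-dot-S-A} was obtained: assemble a cube whose front face is the commutation of $\mbb\Delta^{\B}$ with $\widehat\phi_d$ on the non-idempotented level (checked on generators using the explicit formulas in Section~\ref{coideal} and Proposition~\ref{J-1}, the two being matched by construction), whose top face is the definition \eqref{mDj-comm} of $\mDj$ on the idempotented level, whose side faces express compatibility of the gradings with $\widetilde\phi^{\B}_d$ and $\widetilde\phi_{d''}$, and whose bottom face is the obvious projection square; since the relevant projection $\pi_{\widehat{\mbf b},\widehat{\mbf a}}$ is surjective and all faces but the back commute, the back face commutes.

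With this square in hand, the proof of the theorem itself is short. Given $a\in\mbb B^{\jmath}$, the set $\mathcal I=\{(b,c) : n^{b,c}_a\neq 0\}$ is finite, so by the $\jmath$-analogue of \cite[Proposition 7.8]{M10} (stabilization of canonical basis elements of $\dot{\mbb U}^{\jmath}$ and $\dot{\mbb U}$ to canonical basis elements of the Schur algebras — this is where I would invoke that the $\jmath$Schur algebra stabilization matches the canonical basis, as in \cite{LW14,BKLW13}) I can choose $d,d',d''$ with $d'+d''=d$ large enough that $\widetilde\phi^{\B}_d(a)=\{B\}_d$, $\widetilde\phi^{\B}_{d'}(b)=\{B'\}_{d'}$, $\widetilde\phi_{d''}(c)=\{C\}_{d''}$ for all $(b,c)\in\mathcal I$, with these being genuine canonical basis elements. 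Chasing $a$ around the commutative square gives
\[
\sum_{(b,c)\in\mathcal I} n^{b,c}_a \, \{B'\}_{d'}\otimes\{C\}_{d''} = \mbb\Delta^{\B}_{\mbf b',\mbf a',\mbf b'',\mbf a''}(\{B\}_d) = \sum h^{B',C}_B \, \{B'\}_{d'}\otimes\{C\}_{d''},
\]
so $n^{b,c}_a = h^{B',C}_B$, which lies in $\mbb Z_{\geq 0}[v,v^{-1}]$ by Proposition~\ref{J-2}. This completes the argument.

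The main obstacle, as I see it, is the stabilization step: I must know both that the $\jmath$transfer maps $\phi^{\B}_{d,d-n}$ are compatible with the canonical bases of $\mbb S^{\B}_d$ in a way that lets canonical basis elements of $\dot{\mbb U}^{\jmath}$ stabilize (so that $\widetilde\phi^{\B}_d(a)$ is eventually a canonical basis element, not merely a standard-like element), and that this compatibility is preserved under $\mbb\Delta^{\B}$. The first part is precisely the content of \cite[Lemma 4.3]{LW14} and the surrounding discussion, now furnished with a geometric proof via Proposition~\ref{Phi-gene} together with Corollary~\ref{J-5}-type positivity for the transfer maps; I would need the analogue of McGerty's stabilization statement \cite[Proposition 7.8]{M10} adapted to the $\jmath$ setting, which should follow from the monomial/canonical basis comparison in \cite[Theorem 3.10]{BKLW13}. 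Everything else — the generator-by-generator check making the front face of the cube commute, and the diagram chase — is routine once the correct normalizations (the twist $u(\mbf b'',\mbf a'')$ of Section~\ref{secrenormal}, matched against the vector bundle rank in the remark after Proposition~\ref{J-2}) are in place, exactly as in Sections~\ref{Type-A-positivity} and the affine case.
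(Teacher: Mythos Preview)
Your proposal is correct and follows essentially the same route as the paper: the paper assembles precisely the cube you describe (diagram~(\ref{mDj-cube})), extracts its back face as Proposition~\ref{mDj-U-dot-S-A}, and then concludes by the same finite-$\mathcal I$/large-$d$ stabilization argument as in Theorem~\ref{conj-25.4.2}, with the $\jmath$-side stabilization supplied by~\cite{LW14} (cf.\ the parallel explicit argument for Theorem~\ref{j-positivity}). Your identification of the stabilization step as the only nontrivial input beyond Proposition~\ref{J-2} is exactly right.
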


The rest of this section is devoted to the proof of Theorem ~\ref{J-conj-25.4.2}.

For $\omega, \omega' \in \mbb Z[I^{\jmath}]$ and $\nu\in \mbb Z[I]$, we write
\begin{align}
\label{models}
(\omega', \nu) \models \omega,
\end{align}
if and only if
$\omega_i' + \nu_i - \nu_{n-i} = \omega_i,$ for all $1\leq i \leq r$.
If $\omega' =0$, we simply write $\nu \models \omega$.
By the definition of $\mDj$, we have

\begin{lem}
\label{mDj-w}
For any $\omega \in \mbb Z[I^{\jmath}]$,
$
\mDj ( \mbb U^{\jmath} (\omega) ) \subseteq
\oplus_{(\omega', \nu) \models \omega}
\mUj (\omega') \otimes \mbb U(\nu)
$.
\end{lem}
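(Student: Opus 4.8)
The plan is to use that $\mDj$ is an algebra homomorphism, so that it suffices to check the asserted containment on the algebra generators $\me_i,\mf_i,\mk_i^{\pm 1}$ ($i\in I^{\jmath}$) of $\mUj$ and then to propagate it through arbitrary products of generators.

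Two elementary observations make the propagation work. First, the relation $\models$ of (\ref{models}) is additive in all slots: if $(\omega_1',\nu_1)\models\omega_1$ and $(\omega_2',\nu_2)\models\omega_2$, then $(\omega_1'+\omega_2',\nu_1+\nu_2)\models\omega_1+\omega_2$, since the defining identity $\omega_i'+\nu_i-\nu_{n-i}=\omega_i$ is linear. Second, give $\mUj\otimes\mU$ the $\mbb Z[I^{\jmath}]\times\mbb Z[I]$-bigrading induced by the $\mbb Z[I^{\jmath}]$-grading of $\mUj$ and the $\mbb Z[I]$-grading of $\mU$; whatever $v$-power twist enters the algebra structure on $\mUj\otimes\mU$, the multiplication is bigraded, namely $(\mUj(\omega_1')\otimes\mU(\nu_1))\cdot(\mUj(\omega_2')\otimes\mU(\nu_2))\subseteq\mUj(\omega_1'+\omega_2')\otimes\mU(\nu_1+\nu_2)$. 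Combining these, the subspace $\bigoplus_{(\omega',\nu)\models\omega}\mUj(\omega')\otimes\mU(\nu)$ is multiplicative in $\omega$, so a product of generators of total degree $\omega$ maps into it as soon as each factor does; since such products span $\mUj(\omega)$, the lemma follows.

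The generator check is a short computation from the explicit formulas for $\mDj$ on $\me_i,\mf_i,\mk_i$ recorded in Section~\ref{coideal}. For $\me_i\in\mUj(i)$, the three summands $\me_i\otimes\mK_i$, $1\otimes\mE_i$, $\mk_i\otimes\mF_{n-i}\mK_i$ have bidegrees $(i,0)$, $(0,i)$, $(0,-(n-i))$ in $\mbb Z[I^{\jmath}]\times\mbb Z[I]$, and one verifies $(\omega',\nu)\models i$ in each case. The one point needing care is that for $1\le j\le r$ the index $n-j$ lies in $[r+1,n-1]$, disjoint from $[1,r]$, so the Kronecker deltas $\delta_{j,n-i}$ and $\delta_{n-j,i}$ vanish over exactly the range of indices that $\models$ constrains; granting this, $\omega_j'+\nu_j-\nu_{n-j}=\delta_{i,j}$ holds for each of the three terms. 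The verification for $\mf_i\in\mUj(-i)$ is the mirror image, and for $\mk_i^{\pm1}\in\mUj(0)$ it is immediate since $\mDj(\mk_i)=\mk_i\otimes\mK_i\mK_{n-i}^{-1}$ has bidegree $(0,0)$. There is no genuine obstacle; the only subtlety is precisely this index-range bookkeeping, which is what guarantees that the ``off-diagonal'' contributions $\mE_i$ and $\mF_{n-i}$ in $\mDj(\me_i)$ and $\mDj(\mf_i)$ do not spoil the containment.
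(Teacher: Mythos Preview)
Your proof is correct and is precisely the argument the paper leaves implicit: the paper states the lemma with only the phrase ``By the definition of $\mDj$, we have'' preceding it, meaning the intended proof is exactly the generator check you carry out, together with the multiplicativity of the bigraded pieces. Your explicit verification of the index-range bookkeeping for the three summands of $\mDj(\me_i)$ and $\mDj(\mf_i)$ is accurate and is the only point where a reader might hesitate.
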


The following lemma is a refinement of (\ref{mDj-comm}), and  follows from
Lemmas ~\ref{mDj-w} and ~\ref{j-w-0}.

\begin{lem}
\label{mDj-U-U-dot}
Assume that $\widehat \mu, \widehat \lambda ,  \widehat{\mu'}, \widehat{\lambda'} \in \mbb X^{\jmath}$,
$\omega, \omega'  \in \mbb Z[I^{\jmath}]$,
$\overline{\mu''}, \overline{\lambda''} \in \mbb X$, and $\nu \in \mbb Z[I]$  such that
$
\widehat \mu - \widehat \lambda =\omega,
\widehat{\mu'} - \widehat{\lambda'}  =\omega',
\overline{\mu''} - \overline{\lambda''} = \nu,
(\mu', \mu'') \vdash \mu,
(\lambda', \lambda'') \vdash \lambda,
(\omega', \nu) \models \omega.
$
The following diagram commutes.
\begin{align*}
\begin{CD}
\mbb U^{\jmath} (\omega)  @>\mDj_{\omega, \nu}>> \mUj(\omega') \otimes  \mU(\nu)\\
@V\pi_{\widehat \mu, \widehat \lambda} VV @VV\pi_{\widehat{\mu'}, \widehat{\lambda'}} \otimes \pi_{\overline{\mu''}, \overline{\lambda''}}V\\
_{\widehat \mu} \mbb U^{\jmath}_{\widehat \lambda}
@>\mDj_{\widehat \mu, \widehat \lambda, \overline{\mu''}, \overline{\lambda''}}>>
_{\widehat{\mu'}} \mUj_{\widehat{\lambda'}}  \otimes  \ \!
_{\overline{\mu''}}  \mU_{\overline{\lambda''}}
\end{CD}
\end{align*}
\end{lem}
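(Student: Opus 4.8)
The statement is the coideal counterpart of the commutative square (\ref{U-U-dot}), and I would prove it by the same kind of diagram chase that yields Proposition~\ref{U-dot-S-A}. Fix $x \in \mbb U^{\jmath}(\omega)$. Going down and then across, $\pi_{\widehat{\mu}, \widehat{\lambda}}(x) \in {}_{\widehat{\mu}}\mUj_{\widehat{\lambda}}$, and by the very definition (\ref{mDj-comm}) of the idempotented map,
$\mDj_{\widehat{\mu}, \widehat{\lambda}, \overline{\mu''}, \overline{\lambda''}}\bigl(\pi_{\widehat{\mu}, \widehat{\lambda}}(x)\bigr) = (\pi_{\widehat{\mu'}, \widehat{\lambda'}} \otimes \pi_{\overline{\mu''}, \overline{\lambda''}})\bigl(\mDj(x)\bigr)$.
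So the whole point is to identify this right-hand side with the image of $\mDj_{\omega,\nu}(x)$ under $\pi_{\widehat{\mu'}, \widehat{\lambda'}} \otimes \pi_{\overline{\mu''}, \overline{\lambda''}}$.

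Here is where I would use the grading. By Lemma~\ref{mDj-w}, $\mDj(x)$ lies in $\bigoplus_{(\omega_1', \nu_1) \models \omega} \mUj(\omega_1') \otimes \mbb U(\nu_1)$; write $\mDj(x) = \sum y_{\omega_1', \nu_1}$ accordingly, so that by construction $\mDj_{\omega,\nu}(x) = y_{\omega', \nu}$ for the particular pair $(\omega',\nu)$ with $\omega' = \widehat{\mu'} - \widehat{\lambda'}$ and $\nu = \overline{\mu''} - \overline{\lambda''}$. Now apply $\pi_{\widehat{\mu'}, \widehat{\lambda'}} \otimes \pi_{\overline{\mu''}, \overline{\lambda''}}$ term by term: by Lemma~\ref{j-w-0} the first factor kills $\mUj(\omega_1')$ unless $\omega_1' = \widehat{\mu'} - \widehat{\lambda'}$, and by the analogous standard fact for $\dot{\mbb U}$ (\cite[23.1]{L93}) the second factor kills $\mbb U(\nu_1)$ unless $\nu_1 = \overline{\mu''} - \overline{\lambda''}$. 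Hence only the summand $y_{\omega',\nu} = \mDj_{\omega,\nu}(x)$ survives, giving $(\pi_{\widehat{\mu'}, \widehat{\lambda'}} \otimes \pi_{\overline{\mu''}, \overline{\lambda''}})(\mDj(x)) = (\pi_{\widehat{\mu'}, \widehat{\lambda'}} \otimes \pi_{\overline{\mu''}, \overline{\lambda''}})(\mDj_{\omega,\nu}(x))$, which is exactly the across-then-down composite applied to $x$.

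Finally I would observe that this equality holds for all $x \in \mbb U^{\jmath}(\omega)$, and that $\pi_{\widehat{\mu}, \widehat{\lambda}}$ restricts to a surjection $\mbb U^{\jmath}(\omega) \twoheadrightarrow {}_{\widehat{\mu}}\mUj_{\widehat{\lambda}}$ when $\widehat{\mu} - \widehat{\lambda} = \omega$ (the other graded pieces project to $0$ by Lemma~\ref{j-w-0}), while both composites in the square vanish identically when $\widehat{\mu} - \widehat{\lambda} \neq \omega$; either way the square commutes. The only genuinely fiddly point — what I expect to be the main obstacle — is the weight-lattice bookkeeping: one must verify that the hypotheses $\widehat{\mu} - \widehat{\lambda} = \omega$, $\widehat{\mu'} - \widehat{\lambda'} = \omega'$, $\overline{\mu''} - \overline{\lambda''} = \nu$ together with $(\mu',\mu'') \vdash \mu$ and $(\lambda',\lambda'') \vdash \lambda$ force $(\omega',\nu) \models \omega$, so that $\mDj_{\omega,\nu}$ is the correct component to use above. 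This is a direct but slightly tedious computation with the embeddings $\mbb Z[I^{\jmath}] \hookrightarrow \mbb X^{\jmath}$ and $\mbb Z[I] \hookrightarrow \mbb X$, entirely parallel to the elementary checks underlying (\ref{U-U-dot}).
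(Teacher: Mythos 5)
Your proof is correct and follows the same route the paper sketches when it says the lemma "follows from Lemmas \ref{mDj-w} and \ref{j-w-0}": decompose $\mDj(x)$ into its graded pieces via Lemma \ref{mDj-w}, then use Lemma \ref{j-w-0} and its standard $\dot{\mbb U}$-analogue to see that $\pi_{\widehat{\mu'},\widehat{\lambda'}} \otimes \pi_{\overline{\mu''},\overline{\lambda''}}$ kills every summand except the $(\omega',\nu)$-component. One small remark: the condition $(\omega',\nu)\models\omega$ is listed among the hypotheses of the lemma, so you do not actually need to re-derive it from the others, although your observation that it is forced by them (so the hypotheses are not independent) is correct.
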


The assignment of sending generators of $\mbb U^{\jmath}$ to the respective generators of $_{\mbb Q(v)} \mbb S^{\jmath}_d$ defines an algebra homomorphism, denoted by
\begin{align}
\phi_d^{\jmath}: \mbb U^{\jmath} \to \ _{\mbb Q(v)} \mbb S^{\jmath}_d.
\end{align}
Moreover, this algebra homomorphism is compatible with the gradings. In particular,
\begin{align} \label{j-Phi-S}
\phi_d^{\jmath} (\mbb U^{\jmath} (\omega) )
\subseteq
\oplus_{\mbf b, \mbf a \in \Lambda^{\jmath}_{d, n}: \overline{\mbf b} -\overline{\mbf a} = \omega} \ _{\mbb Q(v)} \mbb S^{\jmath}_d(\mbf b, \mbf a),
\quad \forall \omega \in \mbb Z[I^{\jmath}].
\end{align}
On the other hand, we have
\begin{align}
\label{mDj-Sj-S}
\mDj  ( \ \! \! _{\mbb Q(v)} \mbb S^{\jmath}_d (\mbf b, \mbf a) ) \subseteq
\bigoplus_{
\substack{(\mbf b', \mbf b'') \vdash \mbf b \\ (\mbf a', \mbf a'')\vdash \mbf a}
}
\ _{\mbb Q(v)} \mbb S_{d'}^{\jmath}(\mbf b', \mbf a') \otimes
\!\! \ _{\mbb Q(v)} \mbb S_{d''}(\mbf b'', \mbf a'')
\end{align}
where $d= d' + d''$.
By Lemma  ~\ref{mDj-w},  (\ref{j-Phi-S}), and (\ref{mDj-Sj-S}), we have the following diagram.

\begin{lem}
\label{mDj-Delta-U-S}
Assume that $\mbf b, \mbf a \in \Lambda^{\jmath}_{d, n}$, $\omega, \omega' \in \mbb Z[I^{\jmath}]$,
$\nu\in \mbb Z[I]$ such that $(\omega', \nu) \models \omega$. The following diagram commutes.
\begin{align*}
\begin{CD}
\mUj (\omega) @> \mDj_{\omega', \nu} >> \mUj (\omega') \otimes  \mU(\nu) \\
@V\phi^{\jmath}_d VV @VV \phi^{\jmath}_{d'} \otimes \phi_{d''} V\\
\oplus_{
\substack{
\mbf b, \mbf a \in \Lambda^{\jmath}_{d, n}\\
\widehat{\mbf b} -\widehat{\mbf a} = \omega
}
}
\ \! _{\mbb Q(v)} \mbb S^{\jmath}_d (\mbf b, \mbf a)
@>\mDj >>
\oplus_{\substack{ (\mbf b', \mbf b'') \vdash \mbf b,
(\mbf a', \mbf a'') \vdash \mbf a\\
\widehat{\mbf b'} - \widehat{\mbf a'} = \omega',
\overline{\mbf b''} -\overline{\mbf a''} = \nu} }\
_{\mbb Q(v)} \mbb S^{\jmath}_{d'}  (\mbf b', \mbf a') \otimes \ \! \!
_{\mbb Q(v)} \mbb S_{d''} (\mbf b'', \mbf a'')
\end{CD}
\end{align*}
\end{lem}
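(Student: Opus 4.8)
The plan is to obtain the square in two stages: first prove commutativity of the corresponding \emph{coarse} square of algebra homomorphisms, before passing to graded summands, and then descend to the $(\omega',\nu)$-components by a formal diagram chase using the grading compatibilities already recorded.

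First I would show that the two composites
\[
(\phi^{\jmath}_{d'}\otimes \phi_{d''})\circ \mDj \quad\text{and}\quad \mDj\circ \phi^{\jmath}_d \;:\; \mUj \longrightarrow \mbb S^{\jmath}_{d'}\otimes \mbb S_{d''}
\]
(with all three Schur algebras taken over $\mbb Q(v)$) coincide. Here $\mDj$ on the source is the coproduct of Section ~\ref{coideal}, on $\mbb S^{\jmath}$ it is the homomorphism of Proposition ~\ref{J-1} extended over $\mbb Q(v)$, and $\phi^{\jmath}_d,\phi^{\jmath}_{d'},\phi_{d''}$ are the defining surjections onto the Schur algebras. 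Each composite is an algebra homomorphism, so it suffices to compare them on the generators $\me_i,\mf_i,\mk_i^{\pm 1}$ ($1\leq i\leq r$) of $\mUj$. Since $\phi^{\jmath}_d(\me_i)=\E_i$, $\phi^{\jmath}_{d'}(\me_i)=\E_i'$, $\phi_{d''}(\mE_i)=\mbf E_i''$, $\phi_{d''}(\mK_i)=\mbf K_i''$, $\phi_{d''}(\mF_{n-i})=\mbf F_{n-i}''$ and $\phi^{\jmath}_{d'}(\mk_i)=\mbf k_i'$, applying $\phi^{\jmath}_{d'}\otimes\phi_{d''}$ to the formula for $\mDj(\me_i)$ in Section ~\ref{coideal} gives $\E_i'\otimes\mbf K_i''+1\otimes\mbf E_i''+\mbf k_i'\otimes\mbf F_{n-i}''\mbf K_i''$, which is exactly $\mDj(\E_i)=\mDj(\phi^{\jmath}_d(\me_i))$ by the first formula of Proposition ~\ref{J-1}; the generators $\mf_i$ and $\mk_i^{\pm 1}$ are handled identically by comparing the two lists of formulas. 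This is the $\jmath$-analogue of the computation producing (\ref{U-S-raw}) in Section ~\ref{Type-A-positivity}.

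Second I would refine this to the graded statement. All maps in play respect the relevant gradings: $\phi^{\jmath}_d$ is $\mbb Z[I^{\jmath}]$-graded by (\ref{j-Phi-S}) and $\phi^{\jmath}_{d'}\otimes\phi_{d''}$ is $(\mbb Z[I^{\jmath}]\times\mbb Z[I])$-graded by its tensor analogue; $\mDj$ on $\mUj$ carries $\mUj(\omega)$ into $\bigoplus_{(\omega',\nu)\models\omega}\mUj(\omega')\otimes\mU(\nu)$ by Lemma ~\ref{mDj-w}; and $\mDj$ on $\mbb S^{\jmath}_d$ respects the finer $\Lambda^{\jmath}_{d,n}$-gradings by (\ref{mDj-Sj-S}). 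Restricting the coarse square to $\mUj(\omega)$ and projecting onto the summand indexed by $(\omega',\nu)$ with $(\omega',\nu)\models\omega$, commutativity of the stated square follows once one checks the purely combinatorial fact that $(\mbf b',\mbf b'')\vdash\mbf b$, $(\mbf a',\mbf a'')\vdash\mbf a$ together with $\widehat{\mbf b'}-\widehat{\mbf a'}=\omega'$ and $\overline{\mbf b''}-\overline{\mbf a''}=\nu$ force $\widehat{\mbf b}-\widehat{\mbf a}=\omega$, so that the Schur-side summands attached to such quadruples are precisely those occurring in (\ref{j-Phi-S}) for $\omega$; the directness of every decomposition then turns the projection of a commuting square into a commuting square. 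This is the $\jmath$-analogue of the diagram chase behind (\ref{U-S}) and is a companion to Lemma ~\ref{mDj-U-U-dot}.

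The only genuine work is the last bookkeeping step: matching the $\vdash$-relations at the level of $\Lambda^{\jmath}$- and $\Lambda$-weights with the $\models$-relations at the level of the root lattices $\mbb Z[I^{\jmath}]$ and $\mbb Z[I]$, and verifying that no summand is lost under the projections. This is finite combinatorics with no conceptual difficulty; the only extra care compared with the type-$A$ argument in Section ~\ref{Type-A-positivity} comes from the asymmetry introduced by the folding $i\leftrightarrow n+1-i$.
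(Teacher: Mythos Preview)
Your proposal is correct and follows essentially the same route as the paper: first obtain the ungraded commutative square by comparing the generator formulas for $\mDj$ in Section~\ref{coideal} against those in Proposition~\ref{J-1} (the $\jmath$-analogue of the derivation of (\ref{U-S-raw})), and then descend to the $(\omega',\nu)$-component using exactly the three ingredients the paper cites --- Lemma~\ref{mDj-w}, (\ref{j-Phi-S}), and (\ref{mDj-Sj-S}). The combinatorial bookkeeping you flag at the end is indeed routine and is the only place where the folding $i\leftrightarrow n+1-i$ shows up beyond the type-$A$ argument.
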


Recall from ~\cite{LW14} that we have an algebra homomorphism
\begin{align}
\widetilde \phi^{\jmath}_d: \dot{\mbb U}^{\jmath} \to \ _{\mbb Q(v)} \mbb S^{\jmath}_d
\end{align}
defined by
\begin{align}
\begin{split}
\widetilde \phi^{\jmath}_d( 1_{\widehat \lambda}) & =
\begin{cases}
\zeta^{\jmath}_{M_{\mbf a}}, & \mbox{if} \ \widehat \lambda =\widehat{\mbf a}, \   \mbf a \in \Lambda^{\jmath}_{d, n},\\
0, & \mbox{o.w.}
\end{cases}\\
\widetilde \phi^{\jmath}_d( \mbb e_i 1_{\widehat \lambda}) & =
\begin{cases}
\mbf e_i \zeta^{\jmath}_{M_{\mbf a}}, & \mbox{if} \ \widehat \lambda =\widehat{\mbf a}, \ \mbf a \in \Lambda^{\jmath}_{d, n},\\
0, & \mbox{o.w.}
\end{cases} \
\widetilde \phi^{\jmath}_d( \mbb f_i 1_{\widehat \lambda})  =
\begin{cases}
\mbf f_i \zeta^{\jmath}_{M_{\mbf a}}, & \mbox{if} \ \widehat \lambda =\widehat{\mbf a}, \ \mbf a \in \Lambda^{\jmath}_{d, n},\\
0, & \mbox{o.w.}
\end{cases}
\end{split}
\end{align}
By restricting to $_{\widehat \mu} \mbb U^{\jmath}_{\widehat \lambda}$, it induces a linear map
\[
\widetilde \phi^{\jmath}_d :
 \ _{\widehat \mu} \mbb U^{\jmath}_{\widehat \lambda}
\to
\ _{\mbb Q(v)} \mbb S^{\jmath}_d(\mbf b, \mbf a),
\ \mbox{if}\
\widehat \mu =\widehat{\mbf b}, \widehat \lambda =\widehat{\mbf a}.
\]
In particular, we have the following lemma.

\begin{lem}
Assume that
$\mbf b^0, \mbf a^0 \in \Lambda^{\jmath}_{d, n}$ such that
$\widehat{\mbf b^0} -\widehat{\mbf a^0} = \omega \in \mbb Z[I^{\jmath}]$.
The following diagram is commutative.
\begin{align}
\label{j-U-U-dot-S}
\begin{CD}
\mbb U^{\jmath}(\omega) @> \pi_{\widehat{\mbf b^0},\widehat{\mbf a^0}} >> _{\widehat{\mbf b^0}} \mbb U^{\jmath}_{\widehat {\mbf a^0}} \\
@V\widetilde \phi^{\jmath}_d VV @VV\widetilde \phi^{\jmath}_dV\\
\oplus_{\widehat{\mbf b} -\widehat{\mbf a} =\omega} \ _{\mbb Q(v)} \mbb S^{\jmath}_d (\mbf b, \mbf a)
@>>>
_{\mbb Q(v)} \mbb S^{\jmath}_d (\mbf b^0, \mbf a^0)
\end{CD}
\end{align}
where the arrow in the bottom is the natural projection.
\end{lem}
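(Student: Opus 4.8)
The plan is to reduce the commutativity of the square to a direct verification on monomials, exploiting that $\phi^{\jmath}_d$ and $\widetilde\phi^{\jmath}_d$ are both algebra homomorphisms. Since $\mbb U^{\jmath}$ is generated by the homogeneous elements $\mbb e_i,\mbb f_i,\mbb k^{\pm1}_i$, the graded piece $\mbb U^{\jmath}(\omega)$ is spanned over $\mbb Q(v)$ by monomials $m=x_1x_2\cdots x_k$ in these generators of total degree $\omega$, so by $\mbb Q(v)$-linearity it suffices to prove, for each such $m$, that $\widetilde\phi^{\jmath}_d\big(\pi_{\widehat{\mbf b^0},\widehat{\mbf a^0}}(m)\big)$ equals the projection of $\phi^{\jmath}_d(m)$ to the summand $_{\mbb Q(v)}\mbb S^{\jmath}_d(\mbf b^0,\mbf a^0)$. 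First I would record the elementary fact that the assignment $\mbf a\mapsto\widehat{\mbf a}$ is injective on $\Lambda^{\jmath}_{d,n}$: two symmetric compositions of $2d+1$ differing by an element of $\mbb Z(2,\dots,2)$ would have coordinate sums differing by a nonzero multiple of $2n$, impossible. Consequently the $\Lambda^{\jmath}_{d,n}$-weight of a nonzero element of $_{\mbb Q(v)}\mbb S^{\jmath}_d$ is recovered from its class in $\mbb X^{\jmath}$, which is what makes the $\mbb X^{\jmath}$-bookkeeping in $\dot{\mbb U}^{\jmath}$ compatible with the $\Lambda^{\jmath}_{d,n}$-bookkeeping in $_{\mbb Q(v)}\mbb S^{\jmath}_d$.

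Next I would rewrite $\pi_{\widehat{\mbf b^0},\widehat{\mbf a^0}}(m)$ inside $\dot{\mbb U}^{\jmath}$. Using the standard bimodule description of the modified algebra, $\pi_{\widehat{\mbf b^0},\widehat{\mbf a^0}}(m)=1_{\widehat{\mbf b^0}}\,m\,1_{\widehat{\mbf a^0}}$, and inserting idempotents between consecutive factors gives $1_{\widehat{\mbf b^0}}\,m\,1_{\widehat{\mbf a^0}}=(x_1 1_{\widehat\mu_1})(x_2 1_{\widehat\mu_2})\cdots(x_k 1_{\widehat\mu_k})$, where $\widehat\mu_k=\widehat{\mbf a^0}$ and $\widehat\mu_{j-1}=\widehat\mu_j+\deg(x_j)$, so that $\widehat\mu_0=\widehat{\mbf b^0}$ because $\deg(m)=\omega$ and $\widehat{\mbf b^0}-\widehat{\mbf a^0}=\omega$. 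Applying the algebra homomorphism $\widetilde\phi^{\jmath}_d$ and its defining formulas on $1_{\widehat\lambda}$, $\mbb e_i 1_{\widehat\lambda}$, $\mbb f_i 1_{\widehat\lambda}$, together with $\mbb k^{\pm1}_i 1_{\widehat\lambda}=v^{\mp(\lambda_i-\lambda_{i+1})}1_{\widehat\lambda}$, each factor $\widetilde\phi^{\jmath}_d(x_j 1_{\widehat\mu_j})$ is $0$ unless some (necessarily unique, by the injectivity above) $\mbf a_j\in\Lambda^{\jmath}_{d,n}$ represents $\widehat\mu_j$, in which case it is exactly the graded component of the generator $\phi^{\jmath}_d(x_j)\in{}_{\mbb Q(v)}\mbb S^{\jmath}_d$ in the block $(\mbf b_j,\mbf a_j)$ with $\widehat{\mbf a_j}=\widehat\mu_j$ and $\widehat{\mbf b_j}-\widehat{\mbf a_j}=\deg(x_j)$ — here I would use the scalar match $\mbf k^{\pm1}_i\,\zeta^{\jmath}_{M_{\mbf a}}=v^{\mp(a_i-a_{i+1})}\zeta^{\jmath}_{M_{\mbf a}}$ and the defining identities for $\phi^{\jmath}_d$.

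Finally I would compare with $\phi^{\jmath}_d(m)=\phi^{\jmath}_d(x_1)\cdots\phi^{\jmath}_d(x_k)$. Expanding each $\phi^{\jmath}_d(x_j)$ into its $\Lambda^{\jmath}_{d,n}$-graded components and multiplying out, the component of the product in the block $(\mbf b^0,\mbf a^0)$ is the sum over all compatible chains $\mbf a^0=\mbf a_k,\mbf a_{k-1},\dots,\mbf a_0=\mbf b^0$ in $\Lambda^{\jmath}_{d,n}$ of the corresponding products of graded components; by injectivity of $\mbf a\mapsto\widehat{\mbf a}$ such a chain is unique when it exists — it is forced by $\widehat{\mbf a_j}=\widehat\mu_j$ — and then it coincides with the chain produced in the previous step, while if no such chain exists both expressions vanish. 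Hence the two sides agree, and the diagram commutes. The only point needing mild care, and the closest thing to an obstacle, is verifying that the idempotent bookkeeping in $\dot{\mbb U}^{\jmath}$ mirrors exactly the weight-space bookkeeping in $_{\mbb Q(v)}\mbb S^{\jmath}_d$, i.e.\ that $\widetilde\phi^{\jmath}_d$ of each generator-times-idempotent lands in a single graded block equal to the matching block of the image of that generator under $\phi^{\jmath}_d$; this is immediate from the two sets of defining formulas together with the injectivity of $\Lambda^{\jmath}_{d,n}\hookrightarrow\mbb X^{\jmath}$.
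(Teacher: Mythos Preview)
Your proposal is correct and essentially fills in the details that the paper omits: the paper records this lemma with only the phrase ``In particular, we have the following lemma'' (mirroring the non-$\jmath$ analogue, which is stated ``By definition''), and gives no further argument. Your verification on monomials, together with the injectivity of $\Lambda^{\jmath}_{d,n}\hookrightarrow\mbb X^{\jmath}$ to pin down the unique compatible weight chain, is exactly the kind of routine check the authors are implicitly invoking; note also that the left vertical arrow in the diagram should be $\phi^{\jmath}_d$ rather than $\widetilde\phi^{\jmath}_d$ (compare the analogous diagram~(\ref{U-U-dot-S})), and your reading reflects this.
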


By putting together Lemmas \ref{mDj-Delta-U-S}, \ref{mDj-U-U-dot} and (\ref{j-U-U-dot-S}),  we have the following cube.
\begin{align}
\label{mDj-cube}
\begin{split}
\xymatrixrowsep{.1in}
\xymatrixcolsep{.1in}
\xymatrix{
& _{\widehat \mu}\mbb U^{\jmath}_{\widehat \lambda}  \ar@{->}[rr]  \ar@{.>}[dd]
&& _{\widehat{\mu'}}\mUj_{\widehat{\lambda'}} \otimes \ \! \!  _{\overline{\mu''}}\mbb U_{\overline{\lambda''}}
\ar@{->}[dd] \\
\mbb U^{\jmath} (\omega) \ar@{->}[ur]^{\pi_{\widehat \mu, \widehat \lambda}}  \ar@{->}[rr]  \ar@{->}[dd]
& & \mUj (\omega') \otimes \mU(\nu)  \ar@{->}[ur] \ar@{->}[dd] & \\
& \mbb S_d^{\jmath} (\mbf b, \mbf a)  \ar@{.>}[rr]  && \mbb S^{\jmath}_{d'}(\mbf b', \mbf a') \otimes \mbb S_{d''} (\mbf b'', \mbf a'') \\
\oplus  \mbb S_d^{\jmath} (\mbf b, \mbf a) \ar@{.>}[ur] \ar@{->}[rr]  && \oplus   \mbb S^{\jmath}_{d'}(\mbf b', \mbf a') \otimes \mbb S_{d''} (\mbf b'', \mbf a'') \ar@{->}[ur] &
}
\end{split}
\end{align}
where the sum on the bottom left is over all $\mbf b, \mbf a \in \Lambda^{\jmath}_{d, n}$ such that
$\widehat{\mbf b} -\widehat{\mbf a} = \omega$, while the sum on the bottom right is over
$\mbf b', \mbf a' \in \Lambda^{\jmath}_{d', n}$ and $\mbf b'', \mbf a'' \in \Lambda_{d'', n}$ such that
$(\mbf b', \mbf b'' ) \vdash \mbf b$, $(\mbf a', \mbf a'') \vdash \mbf a$, $\widehat{\mbf b'} -\widehat{\mbf a'} =\omega$,
and $\overline{\mbf b''} -\overline{\mbf a''} = \nu$.

From (\ref{mDj-cube}) and the surjectivity of $\pi_{\widehat\mu, \widehat\lambda}$,  we have the following proposition.

\begin{prop}
\label{mDj-U-dot-S-A}
The square in the back of (\ref{mDj-cube}) is commutative.
\begin{align*}
\begin{split}
\xymatrix{
& & _{\widehat \mu}\mbb U^{\jmath}_{\widehat \lambda}
\ar@{->}[rr]   \ar@{->}[d]
&&  _{\widehat{\mu'}}\mUj_{\widehat{\lambda'}} \otimes \ \! \!  _{\overline{\mu''}}\mbb U_{\overline{\lambda''}}
\ar@{->}[d] \\
&& \mbb S_d^{\jmath} (\mbf b, \mbf a)  \ar@{->}[rr]  &&
\mbb S^{\jmath}_{d'}(\mbf b', \mbf a') \otimes \mbb S_{d''} (\mbf b'', \mbf a'')  \\
}
\end{split}
\end{align*}
\end{prop}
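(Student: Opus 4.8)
The plan is to obtain Proposition~\ref{mDj-U-dot-S-A} by exactly the same diagram chase through the cube~(\ref{mDj-cube}) that produced Proposition~\ref{U-dot-S-A} from the cube~(\ref{cube}) in the quantum $\mathfrak{sl}_n$ setting. First I would record which faces of~(\ref{mDj-cube}) are already known to commute: the front face is Lemma~\ref{mDj-Delta-U-S}; the top face is Lemma~\ref{mDj-U-U-dot}; the two side faces are the square~(\ref{j-U-U-dot-S}) on the $\jmath$-factor and its type-$A$ counterpart~(\ref{U-U-dot-S}) on the ordinary factor (with $d$ replaced by $d''$); and the bottom face commutes tautologically, both composites there being the inclusion of a single graded summand into $\bigoplus_{\mbf b,\mbf a}\ _{\mbb Q(v)}\mbb S^{\jmath}_d(\mbf b,\mbf a)$ followed by the restricted maps $\mDj_{\mbf b',\mbf a',\mbf b'',\mbf a''}$. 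The remaining, not-yet-established face is the back face, which is precisely the square asserted in the proposition, with left vertical map $\widetilde\phi^{\jmath}_d$ and right vertical map $\widetilde\phi^{\jmath}_{d'}\otimes\widetilde\phi_{d''}$.

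Next I would run the chase. Given $x\in\ _{\widehat\mu}\mUj_{\widehat\lambda}$, use surjectivity of $\pi_{\widehat\mu,\widehat\lambda}:\mUj(\omega)\to\ _{\widehat\mu}\mUj_{\widehat\lambda}$ to choose a lift $\tilde x\in\mUj(\omega)$ with $\pi_{\widehat\mu,\widehat\lambda}(\tilde x)=x$. Chasing $\tilde x$ through the top, front and two side faces shows that the two ways of going around the back face from $\ _{\widehat\mu}\mUj_{\widehat\lambda}$ to $\ _{\mbb Q(v)}\mbb S^{\jmath}_{d'}(\mbf b',\mbf a')\otimes\ _{\mbb Q(v)}\mbb S_{d''}(\mbf b'',\mbf a'')$ agree after precomposition with $\pi_{\widehat\mu,\widehat\lambda}$; since $\pi_{\widehat\mu,\widehat\lambda}$ is surjective, they agree on $x$. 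Throughout one must keep the grading and weight bookkeeping threaded consistently — the constraints $\widehat\mu-\widehat\lambda=\omega$, $\overline{\mu''}-\overline{\lambda''}=\nu$, $(\omega',\nu)\models\omega$, $(\mu',\mu'')\vdash\mu$, $(\lambda',\lambda'')\vdash\lambda$ — so that each map restricts to the stated graded piece and the chase terminates in the intended summand on the bottom right; this is exactly what~(\ref{j-Phi-S}), Lemma~\ref{mDj-w}, Lemma~\ref{j-w-0} and~(\ref{mDj-Sj-S}) provide, and all of these are already in place.

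I do not anticipate a genuine obstacle: the argument is purely formal, and the analogous statement in Section~\ref{Type-A-positivity} was dispatched in a single sentence. The one point deserving attention is the indexing — the side face~(\ref{j-U-U-dot-S}) is stated for a single pair $(\mbf b^{0},\mbf a^{0})$, so one applies it once to $(\mbf b',\mbf a')$ on the $\jmath$-side and once, in its type-$A$ form~(\ref{U-U-dot-S}), to $(\mbf b'',\mbf a'')$, and then tensors the two resulting commutative squares. No positivity input is needed for this proposition; the geometric content (hyperbolic localization and Braden's theorem) is already isolated in Proposition~\ref{J-2}, and the role of Proposition~\ref{mDj-U-dot-S-A} is simply to carry the $\mbb S^{\jmath}$-level compatibility of $\mDj$ up to the modified algebra $\dot{\mU}^{\jmath}$, so that it can be combined with Proposition~\ref{J-2} in the proof of Theorem~\ref{J-conj-25.4.2}.
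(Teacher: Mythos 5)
Your proposal is correct and is exactly the paper's argument: assemble the cube from Lemma~\ref{mDj-Delta-U-S} (front), Lemma~\ref{mDj-U-U-dot} (top), the side squares (\ref{j-U-U-dot-S}) and its type-$A$ analogue (\ref{U-U-dot-S}) tensored together, and the tautological bottom, then use surjectivity of $\pi_{\widehat\mu,\widehat\lambda}$ to transfer commutativity to the back face. The paper dispatches this in one sentence; your version merely makes explicit the weight bookkeeping and the fact that the right-hand side face is the tensor of the $\jmath$-side and ordinary-side compatibilities, which is a fair clarification of what the paper leaves implicit.
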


By using Proposition ~\ref{mDj-U-dot-S-A}, we deduce
Theorem \ref{J-conj-25.4.2} via a similar argument for Theorem ~\ref{conj-25.4.2}.

\subsection{Positivity with respect to $\jmath$}

Notice that for $\mu, \mu'' \in \mbb Z^n$  such that $\mu'' \vdash \mu$,
we have
\[
\jmath ((\mbb k_i - v^{-\mu_i + \mu_{i+1}} ) \mbb U^{\jmath}  ) \subseteq
\sum_{1\leq i\leq n} (\mbb K_i - v^{-\mu_i'' + \mu_{i+1}'' }) \mbb U.
\]
Similarly, for any $\lambda, \lambda'' \in \mbb Z^n$  such that $\lambda'' \vdash \lambda$
 we have
\[
\jmath (\mbb U^{\jmath}  (\mbb k_i - v^{-\lambda_i + \lambda_{i+1}} )  ) \subseteq
\sum_{1\leq i\leq n}  \mbb U
( \mbb K_i - v^{-\lambda_i'' + \lambda_{i+1}'' }).
\]
The above  observations  induce a linear map
\begin{align}
\label{U-jmath}
\jmath_{\widehat  \mu, \widehat  \lambda, \overline{\mu''}, \overline{\lambda''}}:
\ _{\widehat  \mu} \mbb U^{\jmath} _{\widehat \lambda}
\longrightarrow
_{\overline{\mu''}} \mbb U  _{\overline{ \lambda''}},
\quad  \forall \mu'' \vdash \mu, \lambda'' \vdash \lambda
\end{align}
such that the following diagram commutes.
\begin{align}
\label{j-comm}
\begin{CD}
\mbb U^{\jmath} @>\jmath >> \mbb U \\
@V\pi_{\widehat \mu, \widehat \lambda} VV @VV\pi_{\overline{\mu''}, \overline{\lambda''}} V\\
_{\widehat \mu} \mbb U^{\jmath} _{\widehat \lambda}
@> \jmath_{\widehat \mu, \widehat \lambda, \overline{\mu''}, \overline{\lambda''}} >>
_{\overline{\mu''}} \mbb U  _{\overline{ \lambda''}}.
\end{CD}
\end{align}


\begin{thm}
\label{j-positivity}
Let $b \in \mBj$. If
$\jmath_{\widehat \mu, \widehat  \lambda, \overline{\mu''}, \overline{\lambda''}}  (b)
=\sum_{a\in \mB}  g_{b, a} a$, then
$g_{b, a} \in \mbb Z_{\geq 0}[v, v^{-1}]$.
\end{thm}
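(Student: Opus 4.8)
The plan is to run the argument of Theorem \ref{J-conj-25.4.2} in the degenerate case $d'=0$ of $\mbb\Delta^{\B}$, feeding in Corollary \ref{J-5} in place of Proposition \ref{J-2}. When $d'=0$ we have $\mbb S^{\jmath}_{0}\cong\mcal A$, and by Proposition \ref{J-4} the map $\mbb\Delta^{\B}$ then degenerates to the imbedding $\jmath_d: \mbb S^{\jmath}_d\to\mbb S_d$; the homomorphism $\jmath: \mUj\to\mU$ is the matching degeneration at the quantum-group level (Section \ref{coideal}). From the formulas for $\jmath$ on the generators $\mbb e_i,\mbb f_i,\mbb k_i^{\pm1}$ one reads off $\jmath(\mUj(\omega))\subseteq\bigoplus_{\nu\models\omega}\mU(\nu)$, which is the $\omega'=0$ instance of Lemma \ref{mDj-w}.

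First I would assemble the cube of the shape of (\ref{mDj-cube}), specialized to $d'=0$. Its front face is
\[
\begin{CD}
\mUj(\omega) @>\jmath_{\omega,\nu}>> \mU(\nu)\\
@V\phi^{\jmath}_d VV @VV\phi_d V\\
\bigoplus_{\widehat{\mbf b}-\widehat{\mbf a}=\omega}{}_{\mbb Q(v)}\mbb S^{\jmath}_d(\mbf b,\mbf a) @>>> \bigoplus_{\mbf b'',\mbf a''}{}_{\mbb Q(v)}\mbb S_d(\mbf b'',\mbf a''),
\end{CD}
\]
with the bottom arrow equal to $\jmath_d$ followed by the projection onto the summands of weight $\nu$; commutativity is checked on the generators using Proposition \ref{J-4} (for instance $\phi_d\jmath(\mbb e_i)=\mbf E_i+\mbf K_i\mbf F_{n-i}=\jmath_d\phi^{\jmath}_d(\mbb e_i)$, and similarly for $\mbb f_i,\mbb k_i$). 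The top face is the defining square (\ref{j-comm}) of $\jmath_{\widehat\mu,\widehat\lambda,\overline{\mu''},\overline{\lambda''}}$; the two side faces are the $\jmath$- and $A$-versions of the square (\ref{U-U-dot-S}) comparing $\phi_d$, $\widetilde\phi_d$ with the projections onto idempotented quotients; the bottom face consists of natural projections of graded pieces, hence commutes trivially. As $\pi_{\widehat\mu,\widehat\lambda}$ is surjective and all faces but the back one commute, a diagram chase gives commutativity of the back face
\[
\begin{CD}
{}_{\widehat\mu}\mUj_{\widehat\lambda} @>\jmath_{\widehat\mu,\widehat\lambda,\overline{\mu''},\overline{\lambda''}}>> {}_{\overline{\mu''}}\mU_{\overline{\lambda''}}\\
@V\widetilde\phi^{\jmath}_d VV @VV\widetilde\phi_d V\\
{}_{\mbb Q(v)}\mbb S^{\jmath}_d(\mbf b,\mbf a) @>>> {}_{\mbb Q(v)}\mbb S_d(\mbf b'',\mbf a''),
\end{CD}
\]
where $\mbf b''\vdash\mbf b$, $\mbf a''\vdash\mbf a$, $\widehat\mu=\widehat{\mbf b}$, $\widehat\lambda=\widehat{\mbf a}$, $\overline{\mu''}=\overline{\mbf b''}$, $\overline{\lambda''}=\overline{\mbf a''}$, and the lower arrow is $\jmath_d$ composed with the projection to the $(\mbf b'',\mbf a'')$-component.

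Next comes the stabilization step, exactly as in the proof of Theorem \ref{conj-25.4.2}: for $b\in\mBj$ the set $\{a\in\mB\mid g_{b,a}\neq0\}$ is finite, so by the $\jmath$-analogue of \cite[Proposition 7.8]{M10} (available from \cite{LW14}) we may pick $d$ so large that $\widetilde\phi^{\jmath}_d(b)=\{B\}_d\in\mbb S^{\jmath}_d$ and, for every relevant $a$, $\widetilde\phi_d(a)=\{A\}_d\in\mbb S_d$ are canonical basis elements, with distinct $a$'s mapping to distinct $\{A\}_d$. Applying $\widetilde\phi_d$ to $\jmath_{\widehat\mu,\widehat\lambda,\overline{\mu''},\overline{\lambda''}}(b)=\sum_a g_{b,a}\,a$, using the back-face square above and the expansion (\ref{j-D-g}), the linear independence of the canonical basis of $\mbb S_d$ forces $g_{b,a}=g_{B,A}$, which lies in $\mbb Z_{\geq0}[v,v^{-1}]$ by Corollary \ref{J-5}. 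This proves the theorem.

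The step I expect to be the main obstacle is the compatibility in the side faces of the cube on the idempotents $1_{\widehat\lambda}$, equivalently the identity $\jmath_d(\zeta^{\jmath}_{M_{\mbf a}})=\sum_{\mbf b''\vdash\mbf a}\zeta_{M_{\mbf b''}}$, saying that the degenerate coproduct carries a $\jmath$-parabolic idempotent to the sum of the $A$-parabolic idempotents refining it. I would extract this by evaluating $\widetilde\Delta^{\B}$ on diagonal matrices through Lemma \ref{independent}, observing that the renormalizing twist in (\ref{Delta-J}) vanishes on diagonal entries. Once this and the generator computations of Proposition \ref{J-4} are in hand, the remainder is a formal repetition of the proof of Theorem \ref{conj-25.4.2}.
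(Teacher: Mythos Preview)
Your proposal is correct and follows essentially the same approach as the paper: the paper explicitly states that the proof of Theorem~\ref{j-positivity} is the degenerate version of the proof of Theorem~\ref{J-conj-25.4.2}, then assembles exactly the cube you describe (Lemmas~\ref{j-w}, \ref{j-U-U-dot}, \ref{j-Delta-U-S}, the cube~(\ref{j-cube}), and Proposition~\ref{j-U-dot-S-A}), and finishes with the same stabilization argument via \cite{LW14}, \cite{M10} to identify $g_{b,a}=g_{B,A}$ and invoke Corollary~\ref{J-5}. The idempotent compatibility you flag as the main obstacle is not singled out in the paper; it is absorbed into the commutativity of the side faces, which hold by the definitions of $\widetilde\phi_d$ and $\widetilde\phi^{\jmath}_d$ on the $1_{\widehat\lambda}$.
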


The proof of Theorem ~\ref{j-positivity} is a degenerate version of the proof of Theorem ~\ref{J-conj-25.4.2}.
For the sake of completeness, we provide it here.
The following lemma is due to the fat that
\[
\jmath (\mbb e_i) \in \mbb U(i) + \mbb U (-(n-i)),\
\jmath (\mbb f_i) \in  \mbb U(-i) + \mbb U(n-i),\
\jmath (\mbb k^{\pm 1}_{i}) \in \mbb U(0).
\]

\begin{lem}
\label{j-w}
For any $\omega \in \mbb Z[I^{\jmath}]$,
$
\jmath ( \mbb U^{\jmath} (\omega) ) \subseteq
\oplus_{\nu \models \omega}
\mbb U(\nu).
$
\end{lem}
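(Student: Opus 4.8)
The plan is to verify the assertion on the algebra generators of $\mbb U^{\jmath}$ and then extend it multiplicatively, using that $\jmath$ is an algebra homomorphism and that both the $\mbb Z[I^{\jmath}]$-grading on $\mbb U^{\jmath}$ and the $\mbb Z[I]$-grading on $\mbb U$ are multiplicative. As recorded just before the statement, the defining formulas $\jmath(\mbb e_i)=\mbb E_i+\mbb K_i\mbb F_{n-i}$, $\jmath(\mbb f_i)=\mbb F_i\mbb K_{n-i}+\mbb E_{n-i}$, $\jmath(\mbb k_i^{\pm 1})=v^{\pm\delta_{i,r}}\mbb K_i^{\pm 1}\mbb K_{n-i}^{\mp 1}$, combined with $\mbb E_j\in\mbb U(j)$, $\mbb F_j\in\mbb U(-j)$ and $\mbb K_j^{\pm 1}\in\mbb U(0)$, place $\jmath(\mbb e_i)\in\mbb U(i)\oplus\mbb U(-(n-i))$, $\jmath(\mbb f_i)\in\mbb U(-i)\oplus\mbb U(n-i)$ and $\jmath(\mbb k_i^{\pm 1})\in\mbb U(0)$ for $1\le i\le r$; here $n-i\in I=[1,n-1]$ because $i\le r$ and $n=2r+1$.

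The one substantive check is that each weight appearing above satisfies the relation $\models$ of (\ref{models}) against the $\mbb Z[I^{\jmath}]$-degree of the corresponding generator. Take $\mbb e_i$, whose degree is the basis vector $i\in\mbb Z[I^{\jmath}]$, so $\omega_j=\delta_{ij}$ for $1\le j\le r$. For the basis vector $\nu=i\in\mbb Z[I]$ and $1\le j\le r$ one has $\nu_j-\nu_{n-j}=\delta_{ij}-\delta_{i,n-j}=\delta_{ij}$, since $n-j\ge r+1>r\ge i$; for $\nu=-(n-i)\in\mbb Z[I]$ one has $\nu_j-\nu_{n-j}=-\delta_{j,n-i}+\delta_{n-j,n-i}=-\delta_{j,n-i}+\delta_{ij}=\delta_{ij}$, using $n-i\ge r+1>r\ge j$. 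Hence $i\models i$ and $-(n-i)\models i$, so $\jmath(\mbb e_i)\in\bigoplus_{\nu\models i}\mbb U(\nu)$. The analogous computations for $\mbb f_i$ (degree $-i$, with $-i\models -i$ and $n-i\models -i$) and for $\mbb k_i^{\pm 1}$ (degree $0$, with $0\models 0$) are identical and I would omit them.

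To finish, I would observe that $\models$ is additive: if $\nu'\models\omega'$ and $\nu''\models\omega''$ then $\nu'+\nu''\models\omega'+\omega''$, directly from (\ref{models}). Since $\mbb U^{\jmath}(\omega)$ is spanned by monomials $x_1\cdots x_m$ in the generators whose $\mbb Z[I^{\jmath}]$-degrees $\omega^{(1)},\dots,\omega^{(m)}$ add up to $\omega$, and $\jmath$ is an algebra map, $\jmath(x_1\cdots x_m)=\jmath(x_1)\cdots\jmath(x_m)$ lies in $\bigoplus\mbb U(\nu^{(1)})\cdots\mbb U(\nu^{(m)})\subseteq\bigoplus\mbb U(\nu^{(1)}+\dots+\nu^{(m)})$ with each $\nu^{(k)}\models\omega^{(k)}$; by additivity $\nu^{(1)}+\dots+\nu^{(m)}\models\omega$, which is the claim.

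I do not anticipate a real obstacle: the entire content is the index bookkeeping in the middle step, where one must keep straight the two different root lattices $\mbb Z[I^{\jmath}]$ (rank $r$) and $\mbb Z[I]$ (rank $n-1$) together with their embeddings into $\mbb X^{\jmath}$ and $\mbb X$, and exploit $n=2r+1$ so that the indices $n-i$ and $n+1-i$ occurring in the coideal formulas never coincide with an index in $[1,r]$. Once that is set up, the relation $\models$ in (\ref{models}) is visibly the precise combinatorial shadow of $\jmath$ on weights, and the lemma drops out.
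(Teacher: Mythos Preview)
Your proposal is correct and follows the same approach as the paper, which simply records the three containments $\jmath(\mbb e_i)\in\mbb U(i)+\mbb U(-(n-i))$, $\jmath(\mbb f_i)\in\mbb U(-i)+\mbb U(n-i)$, $\jmath(\mbb k_i^{\pm1})\in\mbb U(0)$ immediately before the lemma and leaves the multiplicative extension implicit. Your write-up is more explicit in verifying the relation $\models$ on generators and in spelling out the additivity argument, but the underlying idea is identical.
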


From Lemmas ~\ref{j-w} and ~\ref{j-w-0}, we have
the following refinement of (\ref{j-comm}).

\begin{lem}
\label{j-U-U-dot}
Assume that $\widehat \mu, \widehat \lambda \in \mbb X^{\jmath}$,  $\omega \in \mbb Z[I^{\jmath}]$,
$\overline{\mu''}, \overline{\lambda''} \in \mbb X$, and $\nu \in \mbb Z[I]$  such that
\[
\widehat \mu - \widehat \lambda =\omega,
\overline{\mu''} - \overline{\lambda''} = \nu,
\mu'' \vdash \mu, \lambda'' \vdash \lambda, \nu \models \omega .
\]
The following diagram commutes.
\begin{align*}
\begin{CD}
\mbb U^{\jmath} (\omega)  @>\jmath_{\omega, \nu}>> \mbb U(\nu)\\
@V\pi_{\widehat \mu, \widehat \lambda} VV @VV\pi_{\overline{\mu''}, \overline{\lambda''}}V\\
_{\widehat \mu} \mbb U^{\jmath}_{\widehat \lambda}
@>\jmath_{\widehat \mu, \widehat \lambda, \overline{\mu''}, \overline{\lambda''}}>> _{\overline{\mu''}}
\mbb U_{\overline{\lambda''}}
\end{CD}
\end{align*}
where $\jmath_{\omega, \nu}$ is the one induced from $\jmath$ by restricting to $\mbb U^{\jmath}(\omega)$ and projecting down to $\mbb U(\nu)$.
\end{lem}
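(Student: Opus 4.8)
The plan is to prove Lemma~\ref{j-U-U-dot} by a diagram chase entirely parallel to the proof of Lemma~\ref{mDj-U-U-dot}, feeding in three ingredients: the grading compatibility of $\jmath$ recorded in Lemma~\ref{j-w}, the vanishing statement of Lemma~\ref{j-w-0}, its evident analogue for $\dot{\mbb U}$ (that ${}_{\overline{\mu''}}\mbb U_{\overline{\lambda''}}(\nu')=0$ unless $\overline{\mu''}-\overline{\lambda''}=\nu'$), and the already-established commuting square~(\ref{j-comm}).

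First I would verify that each arrow in the refined square is defined under the stated hypotheses. The top arrow $\jmath_{\omega,\nu}$ is $\jmath$ restricted to $\mbb U^{\jmath}(\omega)$ composed with the projection onto the $\mbb U(\nu)$-summand, which is legitimate exactly because $\nu\models\omega$ by Lemma~\ref{j-w}. The bottom arrow $\jmath_{\widehat\mu,\widehat\lambda,\overline{\mu''},\overline{\lambda''}}$ is the map already constructed in~(\ref{U-jmath}), whose existence needs $\mu''\vdash\mu$ and $\lambda''\vdash\lambda$; both are assumed. Since $\widehat\mu-\widehat\lambda=\omega$, Lemma~\ref{j-w-0} collapses ${}_{\widehat\mu}\mbb U^{\jmath}_{\widehat\lambda}$ onto its degree-$\omega$ piece, so $\pi_{\widehat\mu,\widehat\lambda}$ maps $\mbb U^{\jmath}(\omega)$ onto ${}_{\widehat\mu}\mbb U^{\jmath}_{\widehat\lambda}$; likewise on the right.

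Next I would chase an element $x\in\mbb U^{\jmath}(\omega)$. The unrefined square~(\ref{j-comm}) gives $\jmath_{\widehat\mu,\widehat\lambda,\overline{\mu''},\overline{\lambda''}}(\pi_{\widehat\mu,\widehat\lambda}(x))=\pi_{\overline{\mu''},\overline{\lambda''}}(\jmath(x))$, whereas the top-then-right route yields $\pi_{\overline{\mu''},\overline{\lambda''}}$ applied to the $\mbb U(\nu)$-component of $\jmath(x)$ in the decomposition of Lemma~\ref{j-w}. These coincide because, when $\overline{\mu''}-\overline{\lambda''}=\nu$, the projection $\pi_{\overline{\mu''},\overline{\lambda''}}$ kills $\mbb U(\nu')$ for all $\nu'\neq\nu$, by the $\dot{\mbb U}$-analogue of Lemma~\ref{j-w-0}. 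Because $\pi_{\widehat\mu,\widehat\lambda}$ is onto ${}_{\widehat\mu}\mbb U^{\jmath}_{\widehat\lambda}$, this proves commutativity of the square.

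I do not expect a genuine obstacle here: the argument is formal once Lemma~\ref{j-w} is in hand, and the substantive content — that $\jmath$ shifts the $\mbb Z[I^{\jmath}]$-grading to the $\mbb Z[I]$-grading according to $\models$ — has already been extracted from the explicit formulas for $\jmath$ in Section~\ref{coideal}. The only thing to watch is the bookkeeping: one must confirm that the constraints $\nu\models\omega$, $\mu''\vdash\mu$, $\lambda''\vdash\lambda$ are consistent with $\widehat\mu-\widehat\lambda=\omega$ and $\overline{\mu''}-\overline{\lambda''}=\nu$, which is immediate from~(\ref{vdash}), (\ref{models}), and the embedding $\mbb Z[I^{\jmath}]\hookrightarrow\mbb X^{\jmath}$.
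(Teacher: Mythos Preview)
Your proposal is correct and matches the paper's approach: the paper simply states that the lemma follows from Lemmas~\ref{j-w} and~\ref{j-w-0} as a refinement of~(\ref{j-comm}), which is exactly the diagram chase you describe in more detail. The only difference is that you have spelled out the argument explicitly (including the $\dot{\mbb U}$-analogue of Lemma~\ref{j-w-0}), whereas the paper leaves it as a one-line remark.
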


Note that  we have
\begin{align}
\label{j-Sj-S}
\jmath_d ( \ \! _{\mbb Q(v)} \mbb S^{\jmath}_d (\mbf b, \mbf a) ) \subseteq
\bigoplus_{
\substack{
\mbf b'' \vdash \mbf b,
\mbf a'' \vdash \mbf a
}}
\ _{\mbb Q(v)} \mbb S_d(\mbf b'', \mbf a''),
\end{align}
By Lemma  ~\ref{j-w},  (\ref{j-Phi-S}) and (\ref{j-Sj-S}), we have the following commutative diagram.

\begin{lem}
\label{j-Delta-U-S}
Assume that $\mbf b, \mbf a \in \Lambda^{\jmath}_{d, n}$, $\omega \in \mbb Z[I^{\jmath}]$,
$\nu\in \mbb Z[I]$ such that
$\nu_i - \nu_{n-i} = \omega_i$ for any $i\in I^{\jmath}$. The following diagram commutes.
\begin{align*}
\begin{CD}
\mbb U^{\jmath} (\omega) @> \jmath_{\omega, \nu} >> \mbb U(\nu) \\
@V\phi^{\jmath}_d VV @VV \phi_d V\\
\oplus_{\substack{
\mbf b, \mbf a \in \Lambda^{\jmath}_{d, n}\\
\widehat{\mbf b} -\widehat{\mbf a} = \omega}}
\ _{\mbb Q(v)} \mbb S^{\jmath}_d (\mbf b, \mbf a) @>\jmath_d>>
\oplus_{\substack{\mbf b'', \mbf a'' \in \Lambda_{d, n}: (\star) }} \
_{\mbb Q(v)} \mbb S_d(\mbf b'', \mbf a'')
\end{CD}
\end{align*}
where the condition ($\star$) is
$\overline{\mbf b''}-\overline{\mbf a''} =\nu$,
$\mbf b'' \vdash \mbf b$ and $\mbf a'' \vdash \mbf a$.
\end{lem}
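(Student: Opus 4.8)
The plan is to deduce Lemma~\ref{j-Delta-U-S} from the ungraded commutativity $\phi_d \circ \jmath = \jmath_d \circ \phi^{\jmath}_d$ of algebra homomorphisms $\mbb U^{\jmath} \to {}_{\mbb Q(v)}\mbb S_d$, and then to refine this identity to the stated graded square by tracking weights via Lemma~\ref{j-w}, (\ref{j-Phi-S}) and (\ref{j-Sj-S}). For the ungraded statement, note that $\jmath$ (Section~\ref{coideal}), $\jmath_d$ (Proposition~\ref{J-4}), $\phi^{\jmath}_d$ and $\phi_d$ are all algebra homomorphisms, so it suffices to compare $\phi_d \circ \jmath$ and $\jmath_d \circ \phi^{\jmath}_d$ on the generators $\mbb e_i, \mbb f_i, \mbb k_i^{\pm 1}$ ($1 \leq i \leq r$) of $\mbb U^{\jmath}$. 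Using the formula for $\jmath$ in Section~\ref{coideal} together with $\phi_d(\mbb E_i) = \mbf E_i$, $\phi_d(\mbb F_i) = \mbf F_i$ and $\phi_d(\mbb K_i^{\pm 1}) = \mbf K_i^{\pm 1}$, one gets $\phi_d(\jmath(\mbb e_i)) = \mbf E_i + \mbf K_i \mbf F_{n-i}$, while Proposition~\ref{J-4} gives $\jmath_d(\phi^{\jmath}_d(\mbb e_i)) = \jmath_d(\E_i) = \mbf E_i + \mbf K_i \mbf F_{n-i}$; the analogous comparison for $\mbb f_i$ and $\mbb k_i^{\pm 1}$ (the factor $v^{\delta_{i,r}}$ attached to $\mbb k_i$ appearing identically on both sides) then yields $\phi_d \circ \jmath = \jmath_d \circ \phi^{\jmath}_d$.

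For the refinement, fix $\omega \in \mbb Z[I^{\jmath}]$ and $\nu \in \mbb Z[I]$ with $\nu_i - \nu_{n-i} = \omega_i$, i.e.\ $\nu \models \omega$. By Lemma~\ref{j-w}, $\jmath$ maps $\mbb U^{\jmath}(\omega)$ into $\bigoplus_{\nu' \models \omega} \mbb U(\nu')$; composing with $\phi_d$, which sends $\mbb U(\nu')$ into $\bigoplus_{\overline{\mbf b''} - \overline{\mbf a''} = \nu'} {}_{\mbb Q(v)}\mbb S_d(\mbf b'', \mbf a'')$, shows that $\phi_d \circ \jmath$ carries $\mbb U^{\jmath}(\omega)$ into the sum of the ${}_{\mbb Q(v)}\mbb S_d(\mbf b'', \mbf a'')$ over those $(\mbf b'', \mbf a'')$ with $\overline{\mbf b''} - \overline{\mbf a''} \models \omega$. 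On the other side, $\phi^{\jmath}_d(\mbb U^{\jmath}(\omega)) \subseteq \bigoplus_{\widehat{\mbf b} - \widehat{\mbf a} = \omega} {}_{\mbb Q(v)}\mbb S^{\jmath}_d(\mbf b, \mbf a)$ by~(\ref{j-Phi-S}), and $\jmath_d$ sends each summand into $\bigoplus_{\mbf b'' \vdash \mbf b,\, \mbf a'' \vdash \mbf a} {}_{\mbb Q(v)}\mbb S_d(\mbf b'', \mbf a'')$ by~(\ref{j-Sj-S}), which lies in the same space. Finally, $\jmath_{\omega, \nu}$ and the bottom map $\jmath_d$ in the diagram are, by definition, the components of $\jmath$ and of $\jmath_d$ obtained by restricting to $\mbb U^{\jmath}(\omega)$ and projecting onto the $\nu$-weight part, respectively onto the summands ${}_{\mbb Q(v)}\mbb S_d(\mbf b'', \mbf a'')$ with $\overline{\mbf b''} - \overline{\mbf a''} = \nu$; projecting the ungraded identity onto this part gives the asserted commutative square.

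I expect the only delicate point to be the weight bookkeeping: verifying that $\nu_i - \nu_{n-i} = \omega_i$ is exactly the compatibility between the $\mbb Z[I^{\jmath}]$-grading on $\mbb U^{\jmath}$ and the $\mbb Z[I]$-grading on $\mbb U$ induced by $\jmath$, and that the relation $\vdash$ on $\Lambda^{\jmath}_{d,n}$ versus $\Lambda_{d,n}$ --- with the $s_{r+1}$-shift built into the definition of $\mbb X^{\jmath}$ --- is the combinatorial shadow of this compatibility under $\phi^{\jmath}_d$, $\phi_d$ and $\jmath_d$. No new input beyond the generator formulas for $\jmath$ and $\jmath_d$ and the three containment statements Lemma~\ref{j-w}, (\ref{j-Phi-S}), (\ref{j-Sj-S}) is needed, so once the conventions are lined up the argument is a routine diagram chase.
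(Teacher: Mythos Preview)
Your proposal is correct and follows essentially the same approach as the paper. The paper's proof is extremely terse (it merely cites Lemma~\ref{j-w}, (\ref{j-Phi-S}) and (\ref{j-Sj-S}) and asserts the diagram commutes), while you make explicit the underlying ungraded identity $\phi_d \circ \jmath = \jmath_d \circ \phi^{\jmath}_d$ via a generator check against Proposition~\ref{J-4} and then carry out the graded refinement; this is exactly the content the paper leaves implicit.
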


By putting together Lemmas \ref{j-Delta-U-S}, \ref{j-U-U-dot} and (\ref{j-U-U-dot-S}),  we have the following cube.
\begin{align}
\label{j-cube}
\begin{split}
\xymatrixrowsep{.1in}
\xymatrixcolsep{.1in}
\xymatrix{
 & _{\widehat \mu}\mbb U^{\jmath}_{\widehat \lambda}  \ar@{->}[rr]  \ar@{.>}[dd]
&&  _{\overline{\mu''}}\mbb U_{\overline{\lambda''}}
\ar@{->}[dd] \\
 \mbb U^{\jmath} (\omega) \ar@{->}[ur]^{\pi_{\widehat \mu, \widehat \lambda}}  \ar@{->}[rr]  \ar@{->}[dd]
& & \mbb U(\nu)  \ar@{->}[ur] \ar@{->}[dd] & \\
& \mbb S_d^{\jmath} (\mbf b, \mbf a)  \ar@{.>}[rr]  &&\mbb S_d (\mbf b'', \mbf a'') \\
\oplus_{\widehat{\mbf b}-\widehat{\mbf a} =\omega}  \mbb S_d^{\jmath} (\mbf b, \mbf a) \ar@{.>}[ur] \ar@{->}[rr]  && \oplus_{\overline{\mbf b''} -\overline{\mbf  a''} =\nu''}  \mbb S_d (\mbf b'', \mbf a'') \ar@{->}[ur] &
}
\end{split}
\end{align}

From (\ref{j-cube}) and the surjectivity of $\pi_{\widehat\mu, \widehat\lambda}$,  we have the following proposition.

\begin{prop}
\label{j-U-dot-S-A}
The square in the back of (\ref{j-cube}) is commutative.
\begin{align*}
\begin{split}
\xymatrix{
& & _{\widehat \mu}\mbb U^{\jmath}_{\widehat \lambda}
\ar@{->}[rr]^{\jmath_{\widehat \mu, \widehat \lambda,  \overline{\mu''}, \overline{\lambda''}}}   \ar@{->}[d]
&&  _{\overline{\mu''}}\mbb U_{\overline{\lambda''}}
\ar@{->}[d] \\
&& \mbb S_d^{\jmath} (\mbf b, \mbf a)  \ar@{->}[rr]^{\jmath_d}  &&\mbb S_d (\mbf b'', \mbf a'') \\
}
\end{split}
\end{align*}
\end{prop}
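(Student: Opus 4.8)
The plan is to prove Proposition~\ref{j-U-dot-S-A} as the commutativity of the back face of the cube~(\ref{j-cube}), by the same diagram chase used to deduce Proposition~\ref{U-dot-S-A} and Proposition~\ref{mDj-U-dot-S-A} from their respective cubes: one uses the surjectivity of $\pi_{\widehat\mu,\widehat\lambda}$ together with the commutativity of the other five faces. Five of the six faces are already available. The top face is Lemma~\ref{j-U-U-dot}, the front face is Lemma~\ref{j-Delta-U-S}, the left side face is~(\ref{j-U-U-dot-S}), and the right side face is its type~$A$ counterpart~(\ref{U-U-dot-S}). The bottom face commutes because $\jmath_d$ is compatible with the grading decompositions: by~(\ref{j-Sj-S}) it carries $\mbb S^{\jmath}_d(\mbf b,\mbf a)$ into $\bigoplus_{\mbf b''\vdash\mbf b,\ \mbf a''\vdash\mbf a}\mbb S_d(\mbf b'',\mbf a'')$, and since $\mbf b$ is uniquely determined by $\mbf b''$ through $\mbf b''\vdash\mbf b$ (and likewise $\mbf a$ by $\mbf a''$), projecting and then applying $\jmath_d$ gives the same map as applying $\jmath_d$ and then projecting. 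So only the back face, which is the statement of the proposition, remains.

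The first step I would carry out is to fix compatible indices so that the five commuting squares above are literally the five faces of a single cube: one chooses $\widehat\mu,\widehat\lambda\in\mbb X^{\jmath}$, $\overline{\mu''},\overline{\lambda''}\in\mbb X$, $\omega\in\mbb Z[I^{\jmath}]$, $\nu\in\mbb Z[I]$, $\mbf b,\mbf a\in\Lambda^{\jmath}_{d,n}$ and $\mbf b'',\mbf a''\in\Lambda_{d,n}$ subject to $\widehat\mu-\widehat\lambda=\omega$, $\overline{\mu''}-\overline{\lambda''}=\nu$, $\mu''\vdash\mu$, $\lambda''\vdash\lambda$, $\nu\models\omega$, $\widehat{\mbf b}=\widehat\mu$, $\widehat{\mbf a}=\widehat\lambda$, $\mbf b''\vdash\mbf b$, $\mbf a''\vdash\mbf a$, $\overline{\mbf b''}=\overline{\mu''}$ and $\overline{\mbf a''}=\overline{\lambda''}$. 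Under these constraints the hypotheses of Lemmas~\ref{j-U-U-dot} and~\ref{j-Delta-U-S} and of~(\ref{j-U-U-dot-S}),~(\ref{U-U-dot-S}),~(\ref{j-Sj-S}) hold simultaneously, so the squares glue into the cube~(\ref{j-cube}).

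Then comes the chase. Let $y\in{}_{\widehat\mu}\mbb U^{\jmath}_{\widehat\lambda}$. By Lemma~\ref{j-w-0} the map $\pi_{\widehat\mu,\widehat\lambda}\colon\mbb U^{\jmath}(\omega)\to{}_{\widehat\mu}\mbb U^{\jmath}_{\widehat\lambda}$ is surjective, so pick $\tilde y\in\mbb U^{\jmath}(\omega)$ with $\pi_{\widehat\mu,\widehat\lambda}(\tilde y)=y$. On one side, the left side face followed by the bottom face give that $\jmath_d\bigl(\widetilde\phi^{\jmath}_d(y)\bigr)$ is the projection to $\mbb S_d(\mbf b'',\mbf a'')$ of $\jmath_d\bigl(\phi^{\jmath}_d(\tilde y)\bigr)$, which by the front face (Lemma~\ref{j-Delta-U-S}) is the projection of $\phi_d\bigl(\jmath_{\omega,\nu}(\tilde y)\bigr)$. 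On the other side, the top face (Lemma~\ref{j-U-U-dot}) gives $\jmath_{\widehat\mu,\widehat\lambda,\overline{\mu''},\overline{\lambda''}}(y)=\pi_{\overline{\mu''},\overline{\lambda''}}\bigl(\jmath_{\omega,\nu}(\tilde y)\bigr)$, and then the right side face~(\ref{U-U-dot-S}) gives that $\widetilde\phi_d$ of this is again the projection to $\mbb S_d(\mbf b'',\mbf a'')$ of $\phi_d\bigl(\jmath_{\omega,\nu}(\tilde y)\bigr)$. Hence $\jmath_d\circ\widetilde\phi^{\jmath}_d$ and $\widetilde\phi_d\circ\jmath_{\widehat\mu,\widehat\lambda,\overline{\mu''},\overline{\lambda''}}$ agree on $y$, which is the commutativity of the back face.

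The argument is purely formal, and the only place requiring care — the main obstacle, such as it is — is the index bookkeeping of the second step, i.e.\ checking that the constraints attached to the five cited squares are mutually consistent so that they genuinely assemble into the cube~(\ref{j-cube}); this is slightly more involved here than in the $\mathfrak{sl}_n$ case because of the extra relations $\mbf b''\vdash\mbf b$, $\nu\models\omega$ and $\mu''\vdash\mu$ intertwining the type $B$ and type $A$ data. Once Proposition~\ref{j-U-dot-S-A} is proved, Theorem~\ref{j-positivity} follows by the argument of Theorem~\ref{J-conj-25.4.2}: given $b\in\mBj$ and the finitely many $a\in\mB$ occurring with nonzero coefficient, take $d$ large enough that $\widetilde\phi^{\jmath}_d(b)$ and each $\widetilde\phi_d(a)$ are canonical basis elements $\{B\}_d$ of $\mbb S^{\jmath}_d$ and $\{A\}_d$ of $\mbb S_d$, then use Proposition~\ref{j-U-dot-S-A} to identify $g_{b,a}$ with the structure constant $g_{B,A}$ of $\jmath_d$ from~(\ref{j-D-g}), which lies in $\mbb Z_{\geq 0}[v,v^{-1}]$ by Corollary~\ref{J-5}.
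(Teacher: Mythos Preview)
Your proposal is correct and follows essentially the same approach as the paper: the paper simply states that the proposition follows ``from (\ref{j-cube}) and the surjectivity of $\pi_{\widehat\mu,\widehat\lambda}$,'' and your argument spells out precisely this diagram chase, identifying the five commuting faces (Lemmas~\ref{j-U-U-dot}, \ref{j-Delta-U-S}, diagrams (\ref{j-U-U-dot-S}), (\ref{U-U-dot-S}), and the grading compatibility of $\jmath_d$) and using surjectivity of $\pi_{\widehat\mu,\widehat\lambda}$ to deduce commutativity of the back face.
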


Recall that  for any $b\in \mbb B^{\jmath}$, we suppose that
\[
\jmath_{\widehat \mu, \widehat \lambda,  \overline{\mu''}, \overline{\lambda''}}
 (b) = \sum_{a \in \mbb  B} g_{b, a} a,
\]
where $g_{b, a} \in \mbb Z[ v, v^{-1}]$ is zero except for finitely many terms.
Let $S = \{ a | g_{b, a} \neq 0\}$.
Since the set $S$ is finite, we can find a large enough $d$ using ~\cite{LW14} and ~\cite{M10} such that
\[
\phi^{\jmath}_d (b)=\{ B\}_d,
\phi_d ( a) =\{A\}_d,
\quad \forall a\in S
\]
where $\{B\}_d$ and $\{A\}_d$ are certain  canonical basis elements
in $\mbb S^{\B}_d$ and $\mbb S_d$, respectively.
Applying $\phi^{\jmath}_d$ and Lemma ~\ref{j-U-dot-S-A}, we have
\[
\jmath_d ( \{B\}_d)
= \jmath_d (\phi^{\jmath}_d (b) )
= \phi_d \jmath_{\widehat \mu, \widehat \lambda,  \overline{\mu''}, \overline{\lambda''}} (b)
= \sum_{a \in \mbb  B} g_{b, a} \phi_d( a)
=\sum_{a \in \mbb B} g_{b, a} \{A\}_d.
\]
By comparing  the above with (\ref{j-D-g}), we have $g_{b, a} = g_{B,A}$.
So we have $g_{b, a} \in \mbb Z_{\geq 0} [v, v^{-1}]$ by Corollary ~\ref{J-5}. Theorem \ref{j-positivity} follows.

\subsection{The imbedding $\tilde \jmath$}

For any pair $(\widehat \mu, \widehat \lambda) $ in $\mbb X^{\jmath}$, we define
\begin{align*}
 \jmath_{\widehat \mu, \widehat \lambda}\equiv \prod
 \jmath_{\widehat \mu, \widehat \lambda, \overline{\mu''}, \overline{\lambda''}}:
\  _{\widehat \mu} \mbb U^{\jmath} _{\widehat \lambda}
 \longrightarrow
 \prod \ _{\overline{\mu''}} \mbb U  _{\overline{ \lambda''}},
\end{align*}
where  the product runs over all $\overline{\mu''}, \overline{\lambda''}$ in $\mbb X$ such that
$\mu'' \vdash \mu$ and $\lambda'' \vdash \lambda$.
We set
\begin{align*}
\widetilde  \jmath \equiv \bigoplus_{\widehat \mu, \widehat \lambda \in \mbb X^{\jmath}}  \jmath_{\widehat \mu, \widehat \lambda}:
\dot{\mbb U}^{\jmath} \longrightarrow
\bigoplus_{\widehat \mu, \widehat \lambda \in \mbb X^{\jmath}}  \prod \ _{\overline{\mu''}}
\mbb U  _{\overline{ \lambda''}}.
\end{align*}

\begin{prop}
$\widetilde \jmath$ is injective.
\end{prop}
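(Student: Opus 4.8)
The plan is to reduce the injectivity of $\widetilde\jmath$ to the injectivity of the imbeddings $\jmath_d\colon\mbb S^{\jmath}_d\to\mbb S_d$ (Lemma~\ref{j-inj}) together with the standard faithfulness of the family $\{\widetilde\phi^{\jmath}_d\}_d$ of $\jmath$Schur algebra quotients of $\dot{\mbb U}^{\jmath}$ coming from \cite{LW14,M10}. First, the target $\bigoplus_{\widehat\mu,\widehat\lambda}\prod\,_{\overline{\mu''}}\mbb U_{\overline{\lambda''}}$ of $\widetilde\jmath$ is a direct sum indexed by $(\widehat\mu,\widehat\lambda)\in\mbb X^{\jmath}\times\mbb X^{\jmath}$, and $\widetilde\jmath$ carries the summand $\,_{\widehat\mu}\mbb U^{\jmath}_{\widehat\lambda}$ into the matching summand via $\jmath_{\widehat\mu,\widehat\lambda}$; hence it suffices to show that each $\jmath_{\widehat\mu,\widehat\lambda}=\prod_{\mu''\vdash\mu,\ \lambda''\vdash\lambda}\jmath_{\widehat\mu,\widehat\lambda,\overline{\mu''},\overline{\lambda''}}$ is injective.

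Fix $(\widehat\mu,\widehat\lambda)$ and suppose $\bar x\in\,_{\widehat\mu}\mbb U^{\jmath}_{\widehat\lambda}$ satisfies $\jmath_{\widehat\mu,\widehat\lambda,\overline{\mu''},\overline{\lambda''}}(\bar x)=0$ for all admissible $(\mu'',\lambda'')$. Fix $d$. If there are $\mbf b,\mbf a\in\Lambda^{\jmath}_{d,n}$ with $\widehat{\mbf b}=\widehat\mu$ and $\widehat{\mbf a}=\widehat\lambda$, then $\widetilde\phi^{\jmath}_d(\bar x)\in\,_{\mbb Q(v)}\mbb S^{\jmath}_d(\mbf b,\mbf a)$; expanding $\jmath_d$ along the decomposition (\ref{j-Sj-S}) and using the commuting square of Proposition~\ref{j-U-dot-S-A} in each component gives
\[
\jmath_d\big(\widetilde\phi^{\jmath}_d(\bar x)\big)=\sum_{\mbf b''\vdash\mbf b,\ \mbf a''\vdash\mbf a}\widetilde\phi_d\big(\jmath_{\widehat\mu,\widehat\lambda,\overline{\mbf b''},\overline{\mbf a''}}(\bar x)\big)=0,
\]
because each pair $(\overline{\mbf b''},\overline{\mbf a''})$ occurring here is one of the pairs over which the product defining $\jmath_{\widehat\mu,\widehat\lambda}$ runs. (If no such $\mbf b,\mbf a$ exist, $\widetilde\phi^{\jmath}_d(\bar x)=0$ by the very definition of $\widetilde\phi^{\jmath}_d$.) Since $\jmath_d$ is injective, $\widetilde\phi^{\jmath}_d(\bar x)=0$ for every $d$. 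Writing $\bar x=\sum_{b\in\mBj}c_b\,b$, the results of \cite{LW14} together with \cite{M10} imply that for $d$ large enough the finitely many $b$ with $c_b\neq0$ are sent by $\widetilde\phi^{\jmath}_d$ to pairwise distinct canonical basis elements of $\mbb S^{\jmath}_d$; hence $\widetilde\phi^{\jmath}_d(\bar x)=0$ forces all $c_b=0$, so $\bar x=0$. This proves each $\jmath_{\widehat\mu,\widehat\lambda}$, and therefore $\widetilde\jmath$, is injective.

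The only genuinely non-routine point is the displayed identity: one must check that, under the dictionary $\mbf b\leftrightarrow\widehat\mu$, $\mbf a\leftrightarrow\widehat\lambda$, the weight/degree pairs $(\overline{\mbf b''},\overline{\mbf a''})$ produced by unpacking $\jmath_d$ on $\,_{\mbb Q(v)}\mbb S^{\jmath}_d(\mbf b,\mbf a)$ are exactly the admissible indices $\{(\overline{\mu''},\overline{\lambda''})\colon\mu''\vdash\mu,\ \lambda''\vdash\lambda\}$ of the product defining $\jmath_{\widehat\mu,\widehat\lambda}$ --- i.e. the compatibility of the relations $\vdash$ and $\models$ with the $\mbb Z[I^{\jmath}]$-gradings already exploited throughout Section~4. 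A purely algebraic alternative would instead lift $\bar x$ to $x\in\mbb U^{\jmath}(\omega)$ with $\omega=\widehat\mu-\widehat\lambda$, use the square (\ref{j-comm}) to get $\pi_{\overline{\mu''},\overline{\lambda''}}(\jmath(x))=0$ for all admissible $(\mu'',\lambda'')$, and then conclude $\jmath(x)=0$ --- whence $x=0$ by injectivity of $\jmath\colon\mbb U^{\jmath}\to\mbb U$ and $\bar x=0$ --- via a Zariski-density argument over the torus of weights; in that route the density step is the main obstacle, and it is why the Schur-algebraic argument above is preferable.
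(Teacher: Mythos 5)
Your proof is correct but takes a somewhat different path through the cube~(\ref{j-cube}) than the paper's. The paper lifts $x$ to an element $u\in\mbb U^\jmath(\omega)$ of the \emph{unmodified} algebra, uses the injectivity of $\jmath\colon\mbb U^\jmath\to\mbb U$ plus the faithfulness of the family $\{\phi_d\}$ on $\mbb U$ to locate a nonzero $(\mbf b'',\mbf a'')$-component of $\phi_d(\jmath(u)_\nu)$, and then chases the top and right faces of the cube. You stay inside $\dot{\mbb U}^\jmath$, reduce to the injectivity of the Schur-algebra map $\jmath_d\colon\mbb S^\jmath_d\to\mbb S_d$ (Lemma~\ref{j-inj}) plus the faithfulness of $\{\widetilde\phi^\jmath_d\}$ on $\dot{\mbb U}^\jmath$, and chase the back face (Proposition~\ref{j-U-dot-S-A}). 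Your route is a bit tighter at the last step: once $d$ is chosen so that $\widetilde\phi^\jmath_d(\bar x)\neq0$, every nonzero $(\mbf b'',\mbf a'')$-component of $\jmath_d(\widetilde\phi^\jmath_d(\bar x))$ automatically has $\mbf b''\vdash\mbf b$ with $\widehat{\mbf b}=\widehat\mu$ (and likewise for $\mbf a''$), so the admissibility of $(\overline{\mbf b''},\overline{\mbf a''})$ --- the ``non-routine point'' you flag, which is indeed routine for the reason you give --- comes for free. In the paper's version one still has to see that the nonzero component found from $\phi_d(\jmath(u)_\nu)\neq0$ lands in an admissible summand; since $\phi^\jmath_d(u)$ can a priori have nonzero components in blocks $\mbb S^\jmath_d(\mbf b,\mbf a)$ with $\widehat{\mbf b}\neq\widehat\mu$, this deserves a sentence that the paper elides under ``chasing along the cube.'' Your assessment of the Zariski-density alternative is also apt: the density step is not clearly available, which is exactly why the Schur-quotient route (either yours or the paper's) is preferable.
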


\begin{proof}
It suffices to show that for any  nonzero element $x$ in $_{\widehat \mu} \mbb U^{\jmath} _{\widehat \lambda} $,
there is $\overline{\mu''}$ and $\overline {\lambda''}$ such that
$ \jmath_{\widehat \mu, \widehat \lambda, \overline{\mu''}, \overline{\lambda''}} (x)$ is nonzero.
Suppose that $\widehat \mu - \widehat \lambda =\omega$. Let us pick an element $u \in \mbb U^{\jmath}(\omega)$ such that $\pi_{\widehat \mu, \widehat \lambda} (u) = x$.
Since $\jmath $ is injective, we have $\jmath (u) \neq 0 \in \oplus_{\nu\in \mbb Z[I]} \mbb U(\nu)$.
Thus there is $\nu$ such that the $\nu$-component $\jmath(u)_{\nu}$ of $\jmath (u)$ is nonzero.
It is well-known (see \cite{L00}) that we can then find a large enough $d$ such that $\phi_d ( \jmath(u)_{\nu}) \neq 0$.
In particular, there is a pair $\mbf b''$, $\mbf a''$ in $\Lambda_{d,n}$ such that
the $(\mbf b'', \mbf a'')$-component of $\phi_d(\jmath(u)_{\nu})$ is nonzero. Take $\overline{\mu''}=\overline{\mbf b''}$ and $\overline{\lambda''} =\overline{\mbf a''}$. By chasing along the cube (\ref{j-cube}), we see immediately that
$\jmath_{\widehat \mu, \widehat \lambda, \overline{\mu''}, \overline{\lambda''}} (x) \neq 0$.
\end{proof}

\begin{rem}
$\widetilde \jmath$ can be regarded as an idempotented version of $\jmath$.
\end{rem}

\section{$\imath$-version}

\label{i-version}

In this section, we show the positivity of the  i-canonical basis of the modified coideal subalgebra of quantum $\mathfrak{sl}_{\ell}$ for $\ell$ even.
Since the arguments are more or less the same as the $n$ odd situation, the presentation will be brief.

\subsection{$\imath$-Schur algebras and related results}

Recall   $n=2r+1$ and $D=2d+1$. We set
\[
\ell = n-1.
\]

Recall $\Xi^{\jmath}_d$ from (\ref{Xij}). Let
$\Xi^{\imath}_d =\{ A\in \Xi^{\jmath}_d | a_{r+1, j} = \delta_{j, r+1}, a_{i, r+1} = \delta_{i, r+1} \}$.
Let  $\bj=\sum [A]_d$ where the sum runs over all diagonal matrices  in $\Xi^{\imath}_d$.
Let $\cSi = \bj \Sj \bj$. It is a subalgebra in $\Sj$ and admits a basis $[A]_d$ for all $A \in \Xi^{\imath}_d$.
In particular, $\cSi$ contains the following elements.
\begin{align}
\begin{split}
\check \E_{i, d} & = \bj  \E_i  \bj, \quad
\check \F_{i, d}  = \bj \F_i \bj, \quad
\check \K_{i, d}  = \bj \K_i \bj, \quad \forall i\in [1, r-1],\quad
\check \bh_{a, d}  = \bj \bh_a \bj, \quad \forall a \in [1, r],\\
\check \bt_d &  =\bj ( \F_r \E_r + \frac{\K_r - \K_r^{-1}}{v-v^{-1}} )\bj.
\end{split}
\end{align}

Similarly, we consider the subset $\Xi_{d, \ell}$ of $\Xi_d$ defined by the condition $a_{r+1, j}=0$ and $a_{i, r+1} =0$ for all $i, j$.
Let $\bJ=\sum [A]_d$  where the sum runs over all diagonal matrices $A \in \Xi_{d,\ell}$.
Then $\mbb S_{d, \ell} = \mbf J \mbb S_d \mbf J$ is a subalgebra of $\mbb S_d$ with a basis $[A]_d$ indexed by $\Xi_{d, \ell}$.
$\mbb S_{d, \ell}$ contains the following.
\begin{align*}
\check \bE_{i, d} & =
\begin{cases}
\bJ \bE_i \bJ, & \mbox{if} \  i \in [1, r-1],\\
\bJ \bE_{r+1} \bE_{r} \bJ, &\mbox{if} \ i = r, \\
\bJ \bE_{i+1} \bJ, & \mbox{if} \ i \in [r+1,  \ell -1 ].
\end{cases}
\quad
\check \bF_{i, d} =
\begin{cases}
\bJ \bF_i \bJ, & \mbox{if} \  i \in [1, r-1],\\
\bJ \bF_r \bF_{r+1} \bJ, &\mbox{if} \ i = r, \\
\bJ \bF_{i+1} \bJ, & \mbox{if} \ i \in [r+1, \ell -1].
\end{cases}
\\
\check \bK_{i, d} & =
\begin{cases}
\bJ \bK_i \bJ,  & \mbox{if} \  i\in [1, r-1],\\
\bJ \bK_r \bK_{r+1} \bJ, & \mbox{if} \ i = r, \\
\bJ \bK_{i+1} \bJ , & \mbox{if} \ i \in [r+1, \ell -1].
\end{cases}
\quad
\check \bH_{a,d}  =
\begin{cases}
\bJ \bH_a \bJ, & \mbox{if} \  a \in  [1, r],\\
\bJ \bH_{a+1} \bJ, & \mbox{if} \ a \in [r+1, \ell].
\end{cases}
\end{align*}

Notice that we have $\bJ \bH_{r+1} \bJ =1$ and $\check \bK_{r, d} = \check \bH^{-1}_{r, d} \check \bH_{r+1, d}$.

\begin{lem}
\label{tDelta}
Let $d' + d'' =d$.
$\tDj (\cSi) \subseteq \mbb S^{\imath}_{d', \ell } \otimes \mbb S_{d'', \ell}$.
Moreover, for all  $i \in [1, r-1]$.
\begin{align}
\begin{split}
\tDj ( \check \E_{i, d}) &
= \check \E_{i, d'} \otimes \check \bH_{i+1, d''} \check \bH^{-1}_{\ell -i, d''}
+ \check \bh^{-1}_{i+1, d'} \otimes \check \bE_{i, d''} \check \bH^{-1}_{\ell -i, d''}
+ \check \bh_{i+1, d'} \otimes \check \bF_{\ell -i, d''} \check \bH_{i+1, d''}.\\
\tDj (\check \F_{i, d} )
& = \check \F_{i, d'} \otimes \check \bH^{-1}_{i, d''} \check \bH_{\ell +1 -i, d''}
+ \check \bh_{i, d'} \otimes \check \bF_{i, d''} \check \bH_{\ell +1-i, d''}
+ \check \bh^{-1}_{i, d'} \otimes \check \bE_{\ell -i, d''} \check \bH^{-1}_{i, d''}.\\
\tDj (\check \K_{i, d} ) & = \check \K_{i, d'} \otimes \check \bK_{i, d''} \check \bK^{-1}_{\ell -i, d''}.\\
\tDj (\check \bt_d)
& =  \check \bt_{d'} \otimes \check \bK_{r, d''}
+ v^2 \check \K^{-1}_{r, d'} \otimes \check \bH_{r+1, d''} \check \bF_{r, d''}
+ v^{-2} \check \K_{r, d'} \otimes \check \bH^{-1}_{r, d''} \check \bE_{r, d''}.
\end{split}
\end{align}
\end{lem}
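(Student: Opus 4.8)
The plan is to reduce the statement to the already-known behaviour of $\tDj$ on the generators of $\mbb S^{\jmath}_d$, recorded in Proposition~\ref{D(E)} (the generic version being obtained by the usual specialization argument used elsewhere in the paper), together with the idempotent presentations $\mbb S^{\imath}_{d,\ell}=\bj\,\mbb S^{\jmath}_d\,\bj$ and $\mbb S_{d,\ell}=\bJ\,\mbb S_d\,\bJ$, and the fact that $\tDj$ is an algebra homomorphism.

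First I would compute $\tDj(\bj)$. Writing $\bj=\sum_{\mbf a}1_{\mbf a}$, the sum over the diagonal matrices $M_{\mbf a}\in\Xi^{\imath}_d$, i.e.\ over $\mbf a\in\Lambda^{\jmath}_{d,n}$ with $a_{r+1}=1$, one checks directly from the definition of $\tDj$ that $\tDj(1_{\mbf a})=\sum_{(\mbf a',\mbf a'')\vdash\mbf a}1_{\mbf a'}\otimes 1_{\mbf a''}$ (only the diagonal orbit of $X^{\jmath}_d(\mbf a)$ contributes, and with multiplicity one, in the notation of~(\ref{vdash})). The decisive numerical point is that, for $\mbf a$ with $a_{r+1}=1$, the relation $a_i=a_i'+a_i''+a_{n+1-i}''$ at $i=r+1$ (where $n+1-i=r+1$) reads $a_{r+1}'+2a_{r+1}''=1$, forcing $a_{r+1}''=0$ and $a_{r+1}'=1$; hence every $1_{\mbf a'}$ occurring is a summand of $\bj_{d'}$ and every $1_{\mbf a''}$ a summand of $\bJ_{d''}$, and conversely every admissible pair occurs, so $\tDj(\bj)=\bj_{d'}\otimes\bJ_{d''}$. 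The inclusion $\tDj(\mbb S^{\imath}_{d,\ell})\subseteq\mbb S^{\imath}_{d',\ell}\otimes\mbb S_{d'',\ell}$ is then immediate: for any $x\in\mbb S^{\jmath}_d$,
\[
\tDj(\bj\,x\,\bj)=(\bj_{d'}\otimes\bJ_{d''})\,\tDj(x)\,(\bj_{d'}\otimes\bJ_{d''})\in\bj_{d'}\mbb S^{\jmath}_{d'}\bj_{d'}\otimes\bJ_{d''}\mbb S_{d''}\bJ_{d''}=\mbb S^{\imath}_{d',\ell}\otimes\mbb S_{d'',\ell}.
\]

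For the generator formulas I would take $x$ to be the element whose $\bj$-conjugate is the generator in question and substitute Proposition~\ref{D(E)}. For $\check\E_{i,d},\check\F_{i,d},\check\K_{i,d}$ with $i\in[1,r-1]$ the corresponding $x$ is $\E_i,\F_i,\K_i$, which moves weight only between the $i$-th and $(i+1)$-st components, both in $[1,r]$; hence conjugating each of the three summands of $\tDj(\E_i)$ (etc.) by $\bj_{d'}\otimes\bJ_{d''}$ simply passes $\bj_{d'}$ and $\bJ_{d''}$ through, replacing each leg-factor by its $\bj$- (resp.\ $\bJ$-) conjugate. Unwinding the definitions of the $\check{}$-elements — in particular $\check\bH_{\ell-i,d''}=\bJ\bH_{n-i}\bJ$ and $\check\bF_{\ell-i,d''}=\bJ\bF_{n-i}\bJ$, which hold since $\ell-i\in[r+1,\ell-1]$ — produces exactly the asserted formulas for $\tDj(\check\E_{i,d})$, $\tDj(\check\F_{i,d})$, $\tDj(\check\K_{i,d})$.

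The genuine obstacle is the formula for $\tDj(\check\bt_d)$: here $x=\F_r\E_r+\tfrac{\K_r-\K_r^{-1}}{v-v^{-1}}$, so after applying the homomorphism $\tDj$ and conjugating by $\bj_{d'}\otimes\bJ_{d''}$ one must expand the nine products coming from the three-term expansions of $\tDj(\F_r)$ and $\tDj(\E_r)$ together with $\tfrac{\tDj(\K_r)-\tDj(\K_r)^{-1}}{v-v^{-1}}$, keeping in mind that $n-r=r+1$ collapses several $\bH''$-factors. The key structural remark is that conjugation by $\bj_{d'}\otimes\bJ_{d''}$ annihilates every summand whose first leg changes the $(r+1)$-st weight nontrivially, because $\bj_{d'}\E_r\bj_{d'}=\bj_{d'}\F_r\bj_{d'}=0$, while on the second leg $\bJ_{d''}$ collapses the $(r+1)$-st component to zero and one uses $\bJ_{d''}\bH_{r+1}\bJ_{d''}=1$; thus only the weight-preserving pieces survive (those built from $\F_r'\E_r'$, from products of two $\bh'$-factors, and from the Cartan term). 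I expect that, after simplifying these surviving pieces with the definitions $\check\bt_{d'}=\bj(\F_r\E_r+\tfrac{\K_r-\K_r^{-1}}{v-v^{-1}})\bj$, $\check\bE_{r,d''}=\bJ\bE_{r+1}\bE_r\bJ$, $\check\bF_{r,d''}=\bJ\bF_r\bF_{r+1}\bJ$, $\check\bK_{r,d''}=\check\bH^{-1}_{r,d''}\check\bH_{r+1,d''}$, together with the commutation relations between the $\K$'s and $\E_r,\F_r$ in $\mbb S^{\jmath}_d$ (which is what produces the factors $v^{\pm2}$), they assemble into the three-term right-hand side. Tracking these nine terms and pinning down the exact $v$-powers is the only laborious step; everything else is bookkeeping with idempotents.
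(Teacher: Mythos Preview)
Your approach is correct and essentially identical to the paper's: both establish $\tDj(\bj)=\bj'\otimes\bJ''$, read off the first three identities from Proposition~\ref{D(E)} by conjugation, and for $\check\bt_d$ expand $\tDj(\F_r\E_r)$, use $\bj\F_r\bj=\bj\E_r\bj=0$ to kill four of the nine cross-terms, and check that the five survivors together with the Cartan piece assemble into the stated three-term formula (the paper writes the leftover as a remainder $R$ and shows $R=0$). One small correction: the factors $v^{\pm2}$ arise not from commutation relations but from evaluating $\bh_{r+1}^{\pm1}$ on the $\imath$-idempotent, where $a_{r+1}=1$ forces e.g.\ $\bj\,\bh_r\bh_{r+1}\,\bj=v^{2}\check\bk_r^{-1}$ and $\bj\,\bh_r^{-1}\bh_{r+1}^{-1}\,\bj=v^{-2}\check\bk_r$.
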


\begin{proof}
For convenience, we shall drop the subscript $d$ and replace $d', d''$ by superscript $'$ and $''$ respectively in the proof.
The first three equalities are from definitions and $\widetilde \Delta^{\jmath} (\mbf j) = \mbf j' \otimes \mbf J''$. We now show the last one.
By using $ \bj \F_r \bj =0$ and $\bj \E_r \bj=0$, we have
\begin{align*}
\begin{split}
\tDj (\bj \F_r \E_r \bj )
& = \bj \F_r \E_r \bj \otimes \bJ \bH^{-1}_r \bH_{r+2} \bJ
+ \bj \bh_r \bh^{-1}_{r+1} \bj \otimes \bJ \bF_r \bH_{r+2} \bE_r \bH^{-1}_{r+1} \bJ \\
&
+ \bj \bh_r \bh_{r+1} \bj \otimes \bJ \bF_r \bH_{r+2} \bF_{r+1} \bH_{r+1} \bJ
+ \bj \bh^{-1}_r \bh_{r+1} \bj \otimes \bJ \bE_{r+1}  \bH^{-1}_r \bF_{r+1} \bH_{r+1} \bJ \\
&+ \bj \bh^{-1}_r \bh^{-1}_{r+1} \bj \otimes \bJ \bE_{r+1} \bH^{-1}_r \bE_r \bH^{-1}_{r+1} \bJ.
\end{split}
\end{align*}
We observe that
$
\bj \bh^{-1}_r \bh_{r+1} \bj = \check \bk_r
$
and
$
\bj \bh_r \bh_{r+1} \bj = v^2 \check \bk^{-1}_r .
$
We further  observe that
\begin{align*}
\begin{split}
\bJ \bF_r \bH_{r+2} \bE_r \bH^{-1}_{r+1} \bJ & = \check \bH_{r+1} \frac{\check \bH_r - \check \bH^{-1}_r}{v-v^{-1}}, \\
\bJ \bF_r \bH_{r+2} \bF_{r+1} \bH_{r+1} \bJ & = \check \bH_{r+1} \check \bF_r, \\
\bJ \bE_{r+1} \bH^{-1}_r \bF_{r+1} \bH_{r+1} \bJ &= \check \bH^{-1}_r \frac{\check \bH_{r+1} - \check \bH^{-1}_{r+1}}{v-v^{-1}}, \\
\bJ \bE_{r+1} \bH^{-1}_r \bE_r \bH^{-1}_{r+1} \bJ &= \check \bH^{-1}_r \check \bE_r.
\end{split}
\end{align*}
So we have
\begin{align*}
\tDj (\check \bt ) & = \check \bt' \otimes \check \bK''_r
+ v^2 \check \K'^{-1}_r \otimes \check \bH''_{r+1} \check \bF''_r
+ v^{-2} \check \K'_r \otimes \check \bH''^{-1}_r \check \bE''_r + R,
\end{align*}
where the remainder  $R$ is equal to
\begin{align*}
\begin{split}
& - \frac{\check \bk_r - \check \bk^{-1}_r}{v-v^{-1}}  \otimes \check \bK_r
+ \check \bk^{-1}_r \otimes \check \bH_{r+1} \frac{\check \bH_r - \check \bH^{-1}_r}{v-v^{-1}}
+ \check \bk_r \otimes \check \bH^{-1}_r   \frac{\check \bH_{r+1} - \check \bH^{-1}_{r+1}}{v-v^{-1}}  \\
& +\frac{ \check \bk_r \otimes \check \bH_r \check \bH^{-1}_{r+1} - \check \bk^{-1}_r \otimes \check \bH_r \check \bH_{r+1}}{v-v^{-1}}.
\end{split}
\end{align*}
We combine the terms with $\check \bk_r$ together and we get zero.
So is the case when we combine the terms with $\check \bk^{-1}_r$.
Hence $R$ is zero. Therefore, we have the last equality in the lemma.
\end{proof}

We define the transfer map
\begin{align}
\cphi : \cSi \to \cSii
\end{align}
to be the composition
$
\cSi \overset{\tDj}{\longrightarrow} \cSii \otimes \mbb S_{\ell, \ell}  \overset{1\times \chi_{\ell}}{\longrightarrow} \cSii,
$
where $\chi_{\ell}: \mbb S_{\ell, \ell} \to \mbb A$ is the signed representation.
By Lemma ~\ref{tDelta}, we have

\begin{lem}
\label{i-transfer}
$\cphi (\check{\E}_{i, d}) = \check{\E}_{i, d-\ell}$, $\cphi (\check{\F}_{i, d}) =\check{\F}_{i, d-\ell}$, $\cphi (\K^{\pm 1}_{i, d}) = \K^{\pm 1}_{i, d-\ell}$
and $\cphi (\check \bt_d) = \check \bt_{d-\ell}$ for all $i\in [1, r-1]$.
\end{lem}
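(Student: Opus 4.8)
The plan is to deduce Lemma~\ref{i-transfer} directly from the structure formulas for $\tDj$ recorded in Lemma~\ref{tDelta}, exactly in the spirit of the proof of Proposition~\ref{Phi-gene} for the $\jmath$-case. First I would recall that the signed representation $\chi_{\ell}: \mbb S_{\ell,\ell}\to\mbb A$ satisfies $\chi_{\ell}(\check\bE_{i,\ell})=0$, $\chi_{\ell}(\check\bF_{i,\ell})=0$, $\chi_{\ell}(\check\bK_{i,\ell})=1$, and $\chi_{\ell}(\check\bH_{a,\ell})=\bv$ (or the appropriate $v$-power dictated by the definition in \cite[1.8]{L00}); since $\check\bt_{\ell}$ is built from $\check\bF_r\check\bE_r$ plus a scalar multiple of $\check\bK_r-\check\bK_r^{-1}$, and $\chi_{\ell}$ is an algebra homomorphism, one gets $\chi_{\ell}(\check\bt_{\ell})=\chi_{\ell}(\check\bF_r)\chi_{\ell}(\check\bE_r)+(\chi_{\ell}(\check\bK_r)-\chi_{\ell}(\check\bK_r^{-1}))/(v-v^{-1})=0$. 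The key point is that $\check\bt_{d-\ell}$ contributes nothing to the second tensor factor when $\chi_{\ell}$ is applied, which is why the last equality survives.

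Next I would simply apply $(1\otimes\chi_{\ell})$ to each of the four displayed formulas in Lemma~\ref{tDelta}, with $d''$ set equal to $\ell$ and $d'$ set equal to $d-\ell$. For $\check\E_{i,d}$ the three terms on the right become $\check\E_{i,d-\ell}\otimes\chi_{\ell}(\check\bH_{i+1,\ell}\check\bH^{-1}_{\ell-i,\ell})$, a term with a factor $\chi_{\ell}(\check\bE_{i,\ell})=0$, and a term with a factor $\chi_{\ell}(\check\bF_{\ell-i,\ell})=0$; since $\chi_{\ell}(\check\bH_{i+1,\ell}\check\bH^{-1}_{\ell-i,\ell})=1$ (the $\bv$-powers cancel, because the signed representation assigns the same value to both $\check\bH$'s on the relevant diagonal matrices, just as in the computation in Proposition~\ref{Phi-gene}), only the first term survives and equals $\check\E_{i,d-\ell}$. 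The computations for $\check\F_{i,d}$ and $\check\K_{i,d}$ are entirely analogous: in the $\check\F$-case two of the three terms die because of $\chi_{\ell}(\check\bF)=\chi_{\ell}(\check\bE)=0$, while the surviving term has $\chi_{\ell}$-value $1$; in the $\check\K$-case the single term gives $\check\K_{i,d-\ell}\otimes\chi_{\ell}(\check\bK_{i,\ell}\check\bK^{-1}_{\ell-i,\ell})=\check\K_{i,d-\ell}$. Finally, for $\check\bt_d$ the three terms on the right contribute $\check\bt_{d-\ell}\otimes\chi_{\ell}(\check\bK_{r,\ell})=\check\bt_{d-\ell}$, plus $v^2\check\K^{-1}_{r,d-\ell}\otimes\chi_{\ell}(\check\bH_{r+1,\ell}\check\bF_{r,\ell})=0$, plus $v^{-2}\check\K_{r,d-\ell}\otimes\chi_{\ell}(\check\bH^{-1}_{r,\ell}\check\bE_{r,\ell})=0$, so $\cphi(\check\bt_d)=\check\bt_{d-\ell}$.

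I would also note that $\cphi$ is a well-defined algebra homomorphism because it is the composite of the algebra homomorphism $\tDj$ restricted to $\cSi$ (using the inclusion $\tDj(\cSi)\subseteq\cSii\otimes\mbb S_{\ell,\ell}$ from Lemma~\ref{tDelta}) with $1\otimes\chi_{\ell}$, both of which are algebra homomorphisms; this is the same argument as for $\phi^{\B}_{d,d-n,\bv}$ preceding Proposition~\ref{Phi-gene}. The only genuinely delicate point — and the one I would check most carefully — is the bookkeeping of $\bv$-powers (equivalently $v$-powers) hidden inside $\chi_{\ell}(\check\bH_{a,\ell})$ and inside the definition of $\check\bt$: one must confirm that $\chi_{\ell}$ evaluated on the products $\check\bH_{i+1,\ell}\check\bH^{-1}_{\ell-i,\ell}$, $\check\bH^{-1}_{i,\ell}\check\bH_{\ell+1-i,\ell}$, and $\check\bK_{i,\ell}\check\bK^{-1}_{\ell-i,\ell}$ really is $1$ and not some nontrivial power of $v$, and that the $v^{\pm2}$ prefactors in the $\check\bt$ formula multiply terms that genuinely vanish so that no leftover power of $v$ appears in front of $\check\bt_{d-\ell}$. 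Granting the normalization conventions of \cite[1.8]{L00} for the signed representation, these checks are routine, and the lemma follows.
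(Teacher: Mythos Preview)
Your proposal is correct and follows exactly the approach the paper intends: the paper's proof is simply the phrase ``By Lemma~\ref{tDelta}'', and you have filled in the routine computation of applying $1\otimes\chi_{\ell}$ to each of the four formulas there, just as in Proposition~\ref{Phi-gene}. One small remark: the computation of $\chi_{\ell}(\check\bt_{\ell})=0$ in your first paragraph is superfluous (the second tensor factors in the $\check\bt_d$ formula are $\check\bK_{r,\ell}$, $\check\bH_{r+1,\ell}\check\bF_{r,\ell}$, and $\check\bH^{-1}_{r,\ell}\check\bE_{r,\ell}$, not $\check\bt_{\ell}$ itself), but this does not affect the argument.
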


Now we handle the case of $\mbb \Dj$.

\begin{prop}
\label{Dj-cSi}
For $i\in [1, r-1]$,
\begin{align}
\begin{split}
\mbb \Dj (\check \E_{i, d} ) & =\check \E_{i, d'} \otimes \check \bK_{i, d''} + 1 \otimes \check \bE_{i, d''} + \check  \bk_{i, d'} \otimes \check \bF_{\ell -i, d''} \check  \bK_{i, d''}. \\
\mbb \Dj (\check \F_{i, d} ) & = \check \F_{i, d'} \otimes \check \bK_{\ell -i, d''} + \check \bk^{-1}_{i, d'} \otimes \check \bK_{\ell -i, d''} \check \bF_{i, d''} + 1 \otimes \check \bE_{\ell -i, d''}.
\\
\mbb \Dj (\check \K_{i, d} ) & = \check \K_{i, d'} \otimes \check \bK_{i, d''} \check \bK^{-1}_{\ell - i, d''}. \\
\mbb \Dj (\check \bt_d) & = \check \bt_{d'} \otimes \check \bK_{r, d''} + 1 \otimes v \check \bK_{r, d''} \check \bF_{r, d''} + 1 \otimes \check \bE_{r, d''}.
\end{split}
\end{align}
\end{prop}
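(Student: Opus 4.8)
The plan is to obtain all four formulas from Lemma~\ref{tDelta} by applying the renormalization twist, in exact parallel with the deduction of Proposition~\ref{D(E)-renorm} from Proposition~\ref{D(E)}. Recall (see~(\ref{Delta-J}) and Proposition~\ref{J-1}) that $\mbb \Dj$ is obtained from $\tDj$ by multiplying its $(\mbf b', \mbf a', \mbf b'', \mbf a'')$-homogeneous component by $v^{\sum_{1 \leq i \leq j \leq n} b'_i b''_j - a'_i a''_j}\, v^{u(\mbf b'', \mbf a'')}$, with $u$ as in~(\ref{s(a,b)}). Since Lemma~\ref{tDelta} already provides the inclusion $\tDj(\cSi) \subseteq \mbb S^{\imath}_{d', \ell} \otimes \mbb S_{d'', \ell}$ together with the action of $\tDj$ on the generators $\check \E_{i, d}, \check \F_{i, d}, \check \K_{i, d}$ (for $i \in [1, r-1]$) and on $\check \bt_d$, the proof reduces to decomposing each right-hand side of Lemma~\ref{tDelta} into weight-homogeneous summands and rescaling each by the appropriate power of $v$. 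That $\mbb \Dj$ remains an algebra homomorphism after the twist is automatic from the cocycle identity $u(\mbf c, \mbf a) = u(\mbf c, \mbf b) + u(\mbf b, \mbf a)$, exactly as in Section~\ref{secrenormal}.

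For $\check \E_{i, d}$, $\check \F_{i, d}$ and $\check \K_{i, d}$ with $i \in [1, r-1]$, each of the three summands appearing in Lemma~\ref{tDelta} is supported on a single component $(\mbf b', \mbf a', \mbf b'', \mbf a'')$, determined by which box of the weight is moved, and the exponent $\sum_{i \leq j} b'_i b''_j - a'_i a''_j + u(\mbf b'', \mbf a'')$ on that component is evaluated exactly as in the proof of Proposition~\ref{D(E)-renorm}; the frozen $(r+1)$-st row and column of an element of $\cSi$ contribute nothing to it. One finds that the twist sends $\check \E_{i, d'} \otimes \check \bH_{i+1, d''} \check \bH^{-1}_{\ell - i, d''}$ to $\check \E_{i, d'} \otimes \check \bK_{i, d''}$, sends $\check \bh^{-1}_{i+1, d'} \otimes \check \bE_{i, d''} \check \bH^{-1}_{\ell - i, d''}$ to $1 \otimes \check \bE_{i, d''}$, and sends $\check \bh_{i+1, d'} \otimes \check \bF_{\ell - i, d''} \check \bH_{i+1, d''}$ to $\check \bk_{i, d'} \otimes \check \bF_{\ell - i, d''} \check \bK_{i, d''}$, with the analogous assignments for $\check \F_{i, d}$ and $\check \K_{i, d}$; these are the first three asserted formulas.

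The substantive case is $\check \bt_d = \bj(\F_r \E_r + \frac{\K_r - \K_r^{-1}}{v - v^{-1}})\bj$. By Lemma~\ref{tDelta} its $\tDj$-image is the sum of $\check \bt_{d'} \otimes \check \bK_{r, d''}$, $v^2 \check \K^{-1}_{r, d'} \otimes \check \bH_{r+1, d''} \check \bF_{r, d''}$ and $v^{-2} \check \K_{r, d'} \otimes \check \bH^{-1}_{r, d''} \check \bE_{r, d''}$. Decomposing each of these into its weight-homogeneous components in $\mbb S^{\imath}_{d', \ell} \otimes \mbb S_{d'', \ell}$ and applying the corresponding twist scalar, one checks: on the first term the twist exponent vanishes identically (all its components have $\mbf b' = \mbf a'$ and $\mbf b'' = \mbf a''$), so $\check \bt_{d'} \otimes \check \bK_{r, d''}$ is unchanged; on the second and third terms the twist exponent, combined with the relations $\check \bK_{r, d''} = \check \bH^{-1}_{r, d''} \check \bH_{r+1, d''}$ and $\bJ \bH_{r+1} \bJ = 1$ and with the scalars by which $\check \K^{\pm 1}_{r, d'}$ act on the relevant weights, converts $v^2 \check \K^{-1}_{r, d'} \otimes \check \bH_{r+1, d''} \check \bF_{r, d''}$ into $1 \otimes v\, \check \bK_{r, d''} \check \bF_{r, d''}$ and $v^{-2} \check \K_{r, d'} \otimes \check \bH^{-1}_{r, d''} \check \bE_{r, d''}$ into $1 \otimes \check \bE_{r, d''}$. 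This produces the fourth asserted formula. I expect this last accounting of powers of $v$ for $\check \bt_d$ to be the only delicate point of the argument: one must match the explicit $v^{\pm 2}$ produced in Lemma~\ref{tDelta} against both the twist exponent and the weight-dependent scalars of $\check \K^{\pm 1}_{r, d'}$ and $\check \bH^{\pm 1}_{r, d''}$, which takes care but is routine, of the same nature as the power-of-$v$ bookkeeping in the proofs of Proposition~\ref{D(E)-renorm} and Lemma~\ref{tDelta}.
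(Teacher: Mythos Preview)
Your proposal is correct and takes essentially the same approach as the paper: derive all four formulas from Lemma~\ref{tDelta} by applying the renormalization twist, with the first three following exactly as in Proposition~\ref{D(E)-renorm} and only the $\check\bt_d$ case requiring fresh attention. The paper's proof differs only in that it carries out explicitly the $v$-power bookkeeping you describe as routine, computing for the quadruples $(\mbf b',\mbf a',\mbf b'',\mbf a'')$ with $b''_k=a''_k\pm(\delta_{k,r}-\delta_{k,r+2})$ that the twist exponents are $-(a'_{r+2}+1)-a''_r$ and $a'_{r+2}+a''_r$ respectively, which against the scalars of $\check\K^{\pm1}_{r,d'}$ and $\check\bH^{\pm1}_{\cdot,d''}$ yield exactly the terms $1\otimes v\,\check\bK_{r,d''}\check\bF_{r,d''}$ and $1\otimes\check\bE_{r,d''}$.
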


\begin{proof}
Again only the last equality is nontrivial, and we drop the subscript $d$ and $d', d''$ are replaced by $'$ and $''$, respectively.
Suppose that we have a quadruple $(\mbf b', \mbf a', \mbf b'', \mbf a'')$ such that $ b'_k = a'_k$ and $b''_k = a''_k$ for all $k$, then the twists $\sum_{1 \leq k \leq j \leq n} b'_k b''_j - a'_k a''_j$ and $u(\mbf b'', \mbf a'')$ are zero.
Hence we have the first term $\check \bt \otimes \check \bK_r$ after the twist.

Suppose that we have a quadruple $(\mbf b', \mbf a', \mbf b'', \mbf a'')$ such that
$b'_k = a'_k$ and $b''_k = a''_k + \delta_{k, r} - \delta_{k, r+2}$, then we have
$\sum_{1 \leq k \leq j \leq n} b'_k b''_j - a'_k a''_j= - (a'_{r+2} +1)$ and $u(\mbf b'', \mbf a'') =  - a''_r$.
So after the twist, we have
$
v^2 \check \K'^{-1}_r \otimes \check \bH''_{r+1} \check \bF''_r |_{\mbf b', \mbf a', \mbf b'', \mbf a''}  v^{-(a'_{r+2} +1) - a''_r}
= 1 \otimes v \check \bK_r \check \bF_r |_{\mbf b', \mbf a', \mbf b'', \mbf a''}
$,
hence we have the second term.

Suppose that we have a quadruple $(\mbf b', \mbf a', \mbf b'', \mbf a'')$ such that
$b'_k = a'_k$ and $b''_k = a''_k - \delta_{k, r} + \delta_{k, r+2}$. Then we have
$\sum_{1 \leq k \leq j \leq n} b'_k b''_j - a'_k a''_j= a'_{r+2} +1$ and $u(\mbf b'', \mbf a'') = a''_r -1$.
Thus, adding the twists, we have
$ v^{-2} \check \K'_r \otimes \check \bH''^{-1}_r \check \bE''_r|_{\mbf b', \mbf a', \mbf b'', \mbf a''} v^{a'_{r+2} +1 + a''_r -1} =  1 \otimes \check \bE_r |_{\mbf b', \mbf a', \mbf b'', \mbf a''}$.
Whence we obtain the third term.

By the above analysis, we have the last equality. The proposition is proved.
\end{proof}

Now we take care of the degenerate version when $d'=0$ and $d'' = d$.
In this case, $\Dj$ degenerates to an algebra homomorphism
\[
 \imath_d: \cSi \to  \cbS,
\]
since $\mbb S^{\imath}_{0, n-1} \simeq \mathcal A$.
Observe that $\check \E_{i, 0} =0$, $\check \F_{i, 0} =0$ and $\check \K_{i, 0} = v^{\delta_{i, r}}$, for all $i\in [1, r]$, and $\bt_{0}=1$ in $\mbb S^{\imath}_{0, n-1}$
from which the statements in  Proposition ~\ref{Dj-cSi} now read as follows.

\begin{cor}
For all $i\in [1, r-1]$,
\begin{align}
\begin{split}
\imath_d (\check \E_{i, d}) & = \check \bE_{i, d} + \check \bK_{i, d} \check \bF_{\ell -i, d},  \quad
\imath_d (\check \F_{i, d} )  = \check \bE_{\ell -i, d} +
\bK_{\ell -i, d} \check \bF_{i, d}  , \quad
\imath_d  (\check \K_{i, d} )  =  \check \bK_{i, d} \check \bK^{-1}_{\ell - i, d}. \\
\imath_d (\check \bt_d) & =   \check \bE_{r, d} +   v \check \bK_{r, d} \check \bF_{r, d} + \check \bK_{r, d} .
\end{split}
\end{align}
\end{cor}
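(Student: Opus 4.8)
The plan is to obtain this corollary as the specialization of Proposition~\ref{Dj-cSi} to the degenerate case $d'=0$, $d''=d$, and the work is essentially to carry out that substitution cleanly. First I would record that $\mbb S^{\imath}_{0, n-1}$ is free of rank one over $\mathcal A$: the flag space attached to $D=1$ is a single point, so the only basis element of $\mbb S^{\imath}_{0, n-1}$ is its unit and $\mbb S^{\imath}_{0, n-1}\simeq\mathcal A$. Under this identification the map $\mbb\Dj\colon\cSi\to\mbb S^{\imath}_{0,n-1}\otimes\cbS$ with $d'=0$, $d''=d$ becomes precisely the algebra homomorphism $\imath_d\colon\cSi\to\cbS$ introduced just before the corollary, and $\mathcal A\otimes_{\mathcal A}\cbS\simeq\cbS$ lets me drop the left tensor factor once it has been evaluated in $\mbb S^{\imath}_{0,n-1}$.

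Next I would read off the left-hand factors appearing in the four formulas of Proposition~\ref{Dj-cSi} from the observations recorded just above the corollary: in $\mbb S^{\imath}_{0,n-1}$ one has $\check\E_{i,0}=0$, $\check\F_{i,0}=0$, $\check\K_{i,0}=v^{\delta_{i,r}}$ and $\check\bt_0=1$, and for the range $i\in[1,r-1]$ relevant to the first three identities one further has $\delta_{i,r}=0$, hence $\check\K_{i,0}=1$ and $\check\bk_{i,0}=1$. Substituting into Proposition~\ref{Dj-cSi}, each summand with left factor $\check\E_{i,0}$ or $\check\F_{i,0}$ vanishes, each $1\otimes(-)$ summand survives verbatim, each $\check\bk_{i,0}\otimes(-)$ summand loses its left factor, and the $\check\bt$-identity comes directly from $\check\bt_0=1$. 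This yields $\imath_d(\check\E_{i,d})=\check\bE_{i,d}+\check\bF_{\ell-i,d}\check\bK_{i,d}$, and analogously for $\check\F_{i,d}$, $\check\K_{i,d}$, $\check\bt_d$. A final cosmetic rearrangement puts these in the displayed form: $\check\bF_{\ell-i,d}\check\bK_{i,d}=\check\bK_{i,d}\check\bF_{\ell-i,d}$ because for $i\in[1,r-1]$ the indices $i$ and $\ell-i$ differ by $2(r-i)\geq 2$ and the corresponding generators commute in $\cbS=\mbb S_{d,\ell}$; the same remark handles the $\check\F$ and $\check\bt$ formulas.

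I do not anticipate a genuine obstacle, since the statement is a routine evaluation of the already-proved Proposition~\ref{Dj-cSi}. The only two points deserving a line of justification are the rank-one description of $\mbb S^{\imath}_{0,n-1}$ together with the identification of $\mbb\Dj\big|_{d'=0}$ with $\imath_d$, and the equality $\check\bt_0=1$, which follows by unwinding $\check\bt_d=\bj\bigl(\F_r\E_r+(\K_r-\K_r^{-1})/(v-v^{-1})\bigr)\bj$ in the one-dimensional algebra $\mbb S^{\imath}_{0,n-1}$: the term $\F_r\E_r$ contributes $0$ and $(\K_r-\K_r^{-1})/(v-v^{-1})$ contributes $1$ since $\check\K_{r,0}=v$.
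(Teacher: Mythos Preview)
Your approach is correct and matches the paper's: specialize Proposition~\ref{Dj-cSi} at $d'=0$, $d''=d$ using $\check\E_{i,0}=\check\F_{i,0}=0$, $\check\K_{i,0}=v^{\delta_{i,r}}$, and $\check\bt_0=1$. One small caution: your closing remark that ``the same remark handles the $\check\F$ and $\check\bt$ formulas'' is unnecessary and, for $\check\bt$, misleading---the output of Proposition~\ref{Dj-cSi} already matches the displayed formulas with no reordering required, and in fact $\check\bK_{r,d}$ and $\check\bF_{r,d}$ do \emph{not} commute, so you should not invoke that commutation there.
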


\begin{prop}
$\imath_d$ is injective.
\end{prop}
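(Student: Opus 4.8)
The plan is to follow the proof of Lemma~\ref{j-inj}. By \cite{LW14} (cf. \cite[Theorem 3.10]{BKLW13}) the algebra $\cSi$ admits a monomial basis $\{m^{\imath}_A \mid A\in\Xi^{\imath}_d\}$, where each $m^{\imath}_A$ is a product of the generators $\check\E_{i,d},\check\F_{i,d}$ ($i\in[1,r-1]$), $\check\bt_d$ and the diagonal idempotents, with $m^{\imath}_A=[A]_d+\sum_{A'\prec A}(\ast)[A']_d$. It therefore suffices to prove that $\{\imath_d(m^{\imath}_A)\mid A\in\Xi^{\imath}_d\}$ is linearly independent in $\cbS=\mbf J\,\mbb S_d\,\mbf J$, which is a Schur algebra of type $\mathbf A_{\ell-1}$. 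As in Lemma~\ref{j-inj} I would introduce degree functions: on $\cSi$ put $\deg(1_{\widehat\lambda})=0$, $\deg(\check\E_{i,d}1_{\widehat\lambda})=i$, $\deg(\check\F_{i,d}1_{\widehat\lambda})=\ell-i$ ($i\in[1,r-1]$) and $\deg(\check\bt_d1_{\widehat\lambda})=r$; on $\cbS$ use the weight grading $\cbS=\bigoplus\mbb S_{d,\ell}(\mbf b,\mbf a)$ with the componentwise partial order on $\mbb Z[\{1,\dots,\ell-1\}]$.

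By the preceding Corollary, $\imath_d(\check\E_{i,d})=\check\bE_{i,d}+\check\bK_{i,d}\check\bF_{\ell-i,d}$, $\imath_d(\check\F_{i,d})=\check\bE_{\ell-i,d}+\check\bK_{\ell-i,d}\check\bF_{i,d}$, and $\imath_d(\check\bt_d)=\check\bE_{r,d}+v\check\bK_{r,d}\check\bF_{r,d}+\check\bK_{r,d}$. In each case the summand in the top grade --- respectively $\check\bE_{i,d},\check\bE_{\ell-i,d},\check\bE_{r,d}$ --- is a generator of $\cbS$, and the remaining summands sit in strictly lower grades, since $-(\ell-i)<i$, $-i<\ell-i$, $0<r$ and $-r<r$ in $\mbb Z[\{1,\dots,\ell-1\}]$. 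Hence, writing $\deg(m^{\imath}_A)=\nu\in\mbb Z_{\geq0}[\{1,\dots,\ell-1\}]$, the element $\imath_d(m^{\imath}_A)$ lies in the $\nu$-graded piece plus strictly lower ones, and its $\nu$-component is a product of the $\check\bE_{j,d}$'s. By \cite[Proposition 3.9]{BLM90} applied to $\cbS$, this product equals $\sum_{B\in\Xi_{d,\ell}(A)}m_B+(\text{terms in lower weights})$ for an explicit set $\Xi_{d,\ell}(A)\subseteq\Xi_{d,\ell}$ --- the $\imath$-analogue of the set $\Xi_d(A)$ of Lemma~\ref{j-inj} --- and these sets are pairwise disjoint for distinct $A$. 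Running the degree induction of Lemma~\ref{j-inj} verbatim --- in a putative relation $\sum_A c_A\imath_d(m^{\imath}_A)=0$ take the maximal $\nu$ occurring, use linear independence of $\{\sum_{B\in\Xi_{d,\ell}(A)}m_B\}$ in $\cbS$ to kill those $c_A$, and iterate --- forces all $c_A=0$, proving injectivity. (Alternatively, one can check that $\jmath_d$ restricts on $\cSi=\mbf j\,\mbb S^{\jmath}_d\,\mbf j$ to the composite $\cSi\xrightarrow{\imath_d}\cbS\hookrightarrow\mbb S_d$ and invoke Proposition~\ref{J-4}; but this equality again reduces to the generators, the $\check\bt_d$ case being essentially the computation in Proposition~\ref{Dj-cSi}.)

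The step I expect to be the main obstacle is the bookkeeping for the quadratic generator $\check\bt_d$: its image $\imath_d(\check\bt_d)$ has three homogeneous summands lying in three distinct grades rather than two, so one must confirm that the grade-$r$ piece $\check\bE_{r,d}=\mbf J\mbf E_{r+1}\mbf E_r\mbf J$ genuinely dominates, and then trace how this length-two monomial in the $\mbf E$'s of $\mbb S_d$ contributes to the monomials $m_B$ of $\cbS$ --- i.e. pin down the index sets $\Xi_{d,\ell}(A)$ and their disjointness. Establishing the monomial basis of $\cSi$ with the correct triangularity, should it not be quotable directly from \cite{LW14}, is of comparable difficulty and would be treated by the same BLM-type argument.
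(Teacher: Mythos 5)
The paper's proof is exactly your parenthetical alternative, and it is even shorter than you suggest. By construction $\mbb\Delta^{\imath}$ is nothing more than the restriction of $\mbb\Delta^{\jmath}$ to $\cSi$, and Lemma~\ref{tDelta} (together with the fact that the twist is just a $v$-power scaling) guarantees that this restriction lands in $\mbb S^{\imath}_{d',\ell}\otimes\mbb S_{d'',\ell}$. Specializing $d'=0$ therefore gives directly $\imath_d=\jmath_d|_{\cSi}$, corestricted to $\cbS$. There is no generator-by-generator identity to verify and no need to revisit the $\check{\mbf t}_d$ computation of Proposition~\ref{Dj-cSi} — the identification of $\imath_d$ with $\jmath_d|_{\cSi}$ is built into the definitions, and injectivity is inherited immediately from Proposition~\ref{J-4}. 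So you had the right idea but underestimated how cleanly it closes.

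Your main argument — reproving injectivity from scratch via a monomial basis of $\cSi$ and a degree filtration on $\cbS$ — is a genuinely different and more self-contained route. It should go through: your degree assignments are consistent (the leading summand of $\imath_d$ on each generator, including $\check{\mbf t}_d\mapsto\check\bE_{r,d}+\text{lower}$, is a single $\check\bE_j$ in the expected grade), and the triangularity then lets you run the same elimination as in Lemma~\ref{j-inj}. The price is that you must actually produce the monomial basis of $\cSi$ with the right triangularity (a nontrivial input, though plausibly quotable from~\cite{LW14}) and you must nail down the index sets $\Xi_{d,\ell}(A)$ and their pairwise disjointness, which you only gesture at. Both of these are exactly the bookkeeping that the restriction argument lets you avoid. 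In short: your proof is not wrong, but it re-derives what Proposition~\ref{J-4} already hands you for free once you notice $\imath_d=\jmath_d|_{\cSi}$.
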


\begin{proof}
This is because $\jmath_{d}$  is injective by Proposition ~\ref{J-4}.
\end{proof}

Let $\mbb \Di: \cSi \to \mbb S^{\imath}_{d', \ell} \otimes \mbb S_{d'', \ell}$ be the  homomorphism induced from $\mbb \Delta^{\jmath}$.
By Proposition ~\ref{J-2},

\begin{prop}
\label{I-2}
Let $M\in \Xi^{\imath}_d$.
If
$
\mbb \Delta^{\imath} ( \{M\}_d) = \sum_{M'\in \Xi_{d'}^{\imath}, M'' \in \Xi_{d'', \ell}} h_M^{M', M''} \{M'\}_{d'} \otimes \{M''\}_{d''},
$
then we have $h_M^{M', M''} \in \mbb Z_{\geq 0} [v, v^{-1}]$.
\end{prop}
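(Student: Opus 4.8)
The plan is to deduce Proposition \ref{I-2} from its $\jmath$-counterpart, Proposition \ref{J-2}, by exploiting the fact that $\cSi$, $\mbb S^{\imath}_{d', \ell}$ and $\mbb S_{d'', \ell}$ are idempotent truncations of $\Sj$, $\mbb S^{\jmath}_{d'}$ and $\mbb S_{d''}$ by the idempotents $\bj$ and $\bJ$, each of which is a sum of standard (equivalently, canonical) basis elements indexed by diagonal matrices. First I would record that $\mDj(\bj) = \bj \otimes \bJ$: since $\bj$ and $\bJ$ are sums of $[A]_d$ over diagonal $A$, this follows from the identity $\tDj(\mbf j) = \mbf j' \otimes \mbf J''$ used in the proof of Lemma \ref{tDelta}, together with the fact that the renormalizing $v$-twist built into $\mDj$ (cf. (\ref{Delta-J})) acts as the identity on components whose row and column types coincide. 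Consequently $\mDj$ carries $\bj \Sj \bj$ into $(\bj \otimes \bJ)\big(\mbb S^{\jmath}_{d'} \otimes \mbb S_{d''}\big)(\bj \otimes \bJ) = \mbb S^{\imath}_{d', \ell} \otimes \mbb S_{d'', \ell}$, and the restriction is precisely the homomorphism $\mDi$ of the statement.

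The second step is the compatibility of canonical bases with these truncations. For an algebra equipped with a bar-invariant, $\preceq$-triangular canonical basis, and a truncating idempotent that is a bar-invariant sum of canonical basis elements, the canonical basis of the truncation is exactly the subset of canonical basis elements fixed on both sides by the idempotent; this is the mechanism by which the canonical bases $\{\{M\}_d : M \in \Xi^{\imath}_d\}$ of $\cSi$ and $\{\{M\}_d : M\in \Xi_{d,\ell}\}$ of $\mbb S_{d,\ell}$ are defined in \cite{BKLW13} and \cite{LW14}. Thus, for $M \in \Xi^{\imath}_d \subseteq \Xi^{\jmath}_d$, the element $\{M\}_d \in \cSi$ coincides with the canonical basis element $\{M\}_d \in \Sj$, and likewise $\{M''\}_{d''} \in \mbb S_{d'',\ell}$ equals $\{M''\}_{d''} \in \mbb S_{d''}$ for $M'' \in \Xi_{d'', \ell}$, and similarly for the $d'$-factor.

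Putting these together: for $M \in \Xi^{\imath}_d$ one computes $\mDi(\{M\}_d)$ as $\mDj(\{M\}_d)$ inside $\mbb S^{\jmath}_{d'} \otimes \mbb S_{d''}$ and then applies $\bj \otimes \bJ$. Since $\bj \otimes \bJ$ fixes each basis vector $\{M'\}_{d'} \otimes \{M''\}_{d''}$ with $M' \in \Xi^{\imath}_{d'}$, $M'' \in \Xi_{d'', \ell}$ and annihilates all the others, the coefficients $h_M^{M', M''}$ occurring in $\mDi(\{M\}_d)$ form a subfamily of the structure constants of $\mDj$ relative to the canonical bases of $\Sj$, $\mbb S^{\jmath}_{d'}$ and $\mbb S_{d''}$. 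By Proposition \ref{J-2} these lie in $\mbb Z_{\geq 0}[v, v^{-1}]$, which is the assertion.

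The delicate point is the idempotent-truncation compatibility of canonical bases invoked in the second step; it rests on $\bj$, $\bJ$ being bar-invariant sums of canonical basis elements, so that the bar involution and $\preceq$-triangularity descend unchanged, and it is implicit in the constructions of \cite{BKLW13} and \cite{LW14}, though I would spell it out. If a fully self-contained argument is preferred, one can instead mimic the geometric proof of Proposition \ref{J-2} directly: replace the isotropic flag varieties of Section \ref{vSB} by their analogues carrying the extra constraint $a_{r+1,\bullet} = \delta_{\bullet, r+1}$ (equivalently $L_r = L_{r+1}$), introduce the same $\overline{\mbf G}_m$-action, and apply Braden's hyperbolic localization \cite{B03}; the twist in the renormalized coproduct is again the rank of the attracting vector bundle.
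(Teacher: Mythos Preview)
Your proposal is correct and is essentially the same approach as the paper's own proof, which consists of the single phrase ``By Proposition~\ref{J-2}''; you have simply spelled out the implicit reasoning (idempotent truncation by $\bj$ and $\bJ$, compatibility of canonical bases, and the identity $\mDj(\bj)=\bj'\otimes\bJ''$) that the paper leaves to the reader.
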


Write
$
\imath_d (\{B\}_d) = \sum_{A\in \Xi_{d, \ell}}  g_{B, A} \{A\}_d.
$
By Propositions \ref{I-2}, we have

\begin{cor}
\label{I-5}
$g_{B,A} \in \mbb Z_{\geq 0}[v, v^{-1}]$.
\end{cor}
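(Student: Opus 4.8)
The plan is to deduce Corollary \ref{I-5} from Proposition \ref{I-2} by specializing the coproduct $\mbb \Delta^{\imath}$ to the degenerate case $d' = 0$, $d'' = d$, in exact parallel with the way Corollary \ref{J-5} is read off from Proposition \ref{J-2}. Recall that $\imath_d\colon \cSi \to \mbb S_{d, \ell}$ is by definition nothing but $\mbb \Delta^{\imath}\colon \cSi \to \mbb S^{\imath}_{0, \ell} \otimes \mbb S_{d, \ell}$ followed by the canonical identification $\mbb S^{\imath}_{0, \ell} \otimes \mbb S_{d, \ell} \cong \mathcal A \otimes \mbb S_{d, \ell} = \mbb S_{d, \ell}$ coming from $\mbb S^{\imath}_{0, \ell} \simeq \mathcal A$.

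First I would note that $\mbb S^{\imath}_{0, \ell}$ is free of rank one over $\mathcal A$, so that $\Xi^{\imath}_0$ has a single element $M_0$ and the canonical basis of $\mbb S^{\imath}_{0, \ell}$ is the singleton $\{M_0\}_0 = 1$; under the identification above, the canonical basis elements $\{M_0\}_0 \otimes \{A\}_d$ of $\mbb S^{\imath}_{0, \ell} \otimes \mbb S_{d, \ell}$, for $A \in \Xi_{d, \ell}$, correspond to the canonical basis elements $\{A\}_d$ of $\mbb S_{d, \ell}$. Next I would apply Proposition \ref{I-2} with $d' = 0$, $d'' = d$ and $M' = M_0$, and push the resulting expansion through this identification, obtaining
\begin{align*}
\imath_d(\{B\}_d) \;=\; \sum_{A \in \Xi_{d, \ell}} h_B^{M_0, A}\, \{A\}_d ,
\end{align*}
so that $g_{B, A} = h_B^{M_0, A}$. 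Since Proposition \ref{I-2} gives $h_B^{M_0, A} \in \mbb Z_{\geq 0}[v, v^{-1}]$, this proves Corollary \ref{I-5}.

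The argument has no real obstacle at this stage: the substantive input, the positivity of $\mbb \Delta^{\imath}$ on canonical bases, is already Proposition \ref{I-2}, which in turn is reduced via the idempotents $\bj$ and $\bJ$ to Proposition \ref{J-2}, and thence to Braden's hyperbolic-localization theorem \cite{B03} on the isotropic type-$B$ flag varieties. The only things left to verify are the two bookkeeping points used above — that $\imath_d$ really is the $d' = 0$ specialization of $\mbb \Delta^{\imath}$, and that a rank-one tensor factor contributes only its unit element to the canonical basis — both of which are immediate from the definitions. Should one prefer not to invoke Proposition \ref{I-2} as a black box, the alternative would be to rerun the cube and diagram-chase of Theorem \ref{j-positivity}, but that machinery is not needed at the $\imath$-Schur-algebra level considered here.
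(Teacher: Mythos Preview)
Your proposal is correct and follows exactly the paper's own argument: the paper simply writes ``By Propositions~\ref{I-2}, we have'' before stating Corollary~\ref{I-5}, and your proof spells out the intended degeneration $d'=0$, $d''=d$ together with the identification $\mbb S^{\imath}_{0,\ell}\simeq\mbb A$ that makes $g_{B,A}=h_B^{M_0,A}$. There is nothing to add beyond the minor notational point that the ground ring in the generic setting is $\mbb A=\mbb Z[v,v^{-1}]$ rather than $\mathcal A$ (the paper itself is slightly loose here).
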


Recall $\mbf T^{\jmath}_{d,n}$ and $\Pi^{\jmath}_{d, n}$ from (\ref{Tj-standard}).
Note that $\mbf T^{\jmath}_{d, n}$ is defined over $\mcal A$, but can be lifted to its generic version $\mbb T^{\jmath}_{d, n}$.
Let $\Pi^{\imath}_{d, \ell}$ be the subset of $\Pi^{\jmath}_{d, n}$ defined by $a_{r+1, d+1} =1$.
Let $\mbb T^{\imath}_{d, \ell}$ be the space of $\mbb T^{\jmath}_{d, n}$ spanned by $[A]_d$ where $A\in \Pi^{\imath}_{d, \ell}$.
In the same fashion, 
let $\mbb T_{d, n}$ be the generic version of $\mbf T_{d, n}$ in (\ref{Td}), and let $\Pi_{d, \ell}$ be the subset of $\Pi_{d, n}$ defined by
$a_{r+1, d+1} =1$. Similarly, we have $\mbb T^{\mbf a''}_{d, \ell}$.
Let $\mbb H_{A_d}$ and $\mbb H_{B_d}$ be the generic version of the Hecke algebra
$\mbf H_{A_d}$ and $\mbf H_{B_d}$ used in Proposition ~\ref{typeA-typeB}.
The following is the $\imath$-analogue of Proposition ~\ref{typeA-typeB}, obtained by restricting the digram therein to the desired subspaces.

\begin{prop}
\label{typeA-typeB-ii}
We have the following commutative diagram.
\[
\begin{CD}
\cSi \times \mbb T^{\imath}_{d, \ell} @>>>  \mbb T^{\imath}_{d, \ell} @<<< \mbb T^{\imath}_{d,\ell} \times \mbb H_{B_d} \\
@V\imath_{d, \ell} \times \zeta_{d} VV @V\zeta_{d}VV @VV \zeta_{d} \times \zeta^1_{d}V \\
\mbb S_{d, \ell} \times \mbb T_{d, \ell} @>>>  \mbb T_{d, \ell}  @<<\psi^{\imath}_1<  
\oplus_{\mbf a'' \models 1^{2d+1} } \mbb T^{\mbf a''}_{d,\ell} \times  \oplus_{\substack{\mbf b'' \models 1^{2d+1}\\b''_{d+1}=0}} \ ^{\mbf b''} \mbb H_{A_d} @<<\psi^{\imath}_2< \mbb T_{d, \ell} \times \mbb H_{A_d}.
\end{CD}
\]
Moreover, $\zeta_{d}( [A^J]_d) = [A]_d$ for all $A\in \Pi^{\imath}_{d, \ell}$.
\end{prop}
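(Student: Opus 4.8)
The proof of Proposition \ref{typeA-typeB-ii} is designed to be a routine restriction of the results already obtained in the $\jmath$-setting, so the plan is to identify the $\imath$-objects as the appropriate subspaces of the $\jmath$-objects and then invoke Proposition \ref{typeA-typeB} together with Proposition \ref{zeta-standard} verbatim. First I would observe that the $\imath$-Schur algebra $\cSi = \bj \Sj \bj$ sits inside $\Sj$, and likewise $\mbb S_{d, \ell} = \bJ \mbb S_d \bJ$ sits inside $\mbb S_d$; the tensor spaces $\mbb T^{\imath}_{d, \ell}$ and $\mbb T_{d, \ell}$ are cut out of $\mbb T^{\jmath}_{d, n}$ and $\mbb T_{d, n}$ by the single condition $a_{r+1, d+1} = 1$ on the indexing matrices (equivalently, by applying the idempotent $\bj$, resp.\ $\bJ$, on the left); and the Hecke algebras $\mbb H_{B_d}$, $\mbb H_{A_d}$ are unchanged. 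The key point is that all the structure maps appearing in the diagram of Proposition \ref{typeA-typeB} — the convolution actions, the map $\psi$, and the linear map $\zeta_{d, \bv}$ (in its generic incarnation $\zeta_d$) — are compatible with these idempotents, so they restrict to maps among the $\imath$-objects.

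The second step is to verify this compatibility for $\zeta_d$ concretely. By Proposition \ref{zeta-standard} (generic version), $\zeta_d([A^J]) = \ ^a[A]$ for all $A \in \Pi_{d,n}$, and the bijection $A \mapsto A^J$ restricts to a bijection $\Pi_{d, \ell} \xrightarrow{\sim} \Pi^{\imath}_{d, \ell}$ precisely because $A^J$ has a $1$ in position $(r+1, d+1)$ exactly when we append the column $\epsilon_{r+1}$, which happens for every $A$; thus the condition $a^J_{r+1, d+1} = 1$ is automatic and the $\imath$-constraint on $A^J$ translates to no extra constraint on $A$ beyond $A \in \Pi_{d, \ell}$. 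Hence $\zeta_d$ maps the spanning set $\{[A^J]_d : A \in \Pi^{\imath}_{d, \ell}\}$ of $\mbb T^{\imath}_{d, \ell}$ bijectively onto the spanning set $\{\ ^a[A]_d : A \in \Pi_{d, \ell}\}$ of $\mbb T_{d, \ell}$, giving the asserted formula $\zeta_d([A^J]_d) = [A]_d$ and showing $\zeta_d$ restricts as claimed. The analogous statement for $\zeta^1_d$ on $\mbb H_{B_d}$ follows by taking $n = 2d+1$ and restricting to the components with $b''_{d+1} = 0$, which is what the target $\oplus_{\mbf b'' \models 1^{2d+1}, b''_{d+1} = 0} \ ^{\mbf b''}\mbb H_{A_d}$ records.

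The third step is to check that the horizontal arrows — the two convolution actions and the maps $\psi^{\imath}_1$, $\psi^{\imath}_2$ — are simply the restrictions of the corresponding maps in Proposition \ref{typeA-typeB} to the idempotent-cut subspaces. For the left squares this is immediate since $\cSi$ acts on $\mbb T^{\imath}_{d, \ell}$ and $\mbb S_{d, \ell}$ acts on $\mbb T_{d, \ell}$ by the same convolution formula (\ref{eq30}), now with the idempotents $\bj$, $\bJ$ inserted, and Proposition \ref{J-4}/Lemma \ref{tDelta} guarantee that $\imath_d$ and $\jmath_d$ are compatible. For the right squares one notes $\psi^{\imath}_2$ is again the natural imbedding and $\psi^{\imath}_2 \psi^{\imath}_1$ recovers the $\imath$-restriction of $\psi$; the commutativity then follows by restricting the commutative diagram of Proposition \ref{typeA-typeB}. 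The main (and only real) obstacle is purely bookkeeping: making sure the idempotent constraints on the matrix indices ($a_{r+1, j} = \delta_{j, r+1}$, $a_{i, r+1} = \delta_{i, r+1}$ for $\Xi^{\imath}_d$, and $a_{r+1, j} = 0 = a_{i, r+1}$ for $\Xi_{d, \ell}$, plus $a_{r+1, d+1} = 1$, resp.\ $b''_{d+1} = 0$, for the tensor/Hecke factors) all match up correctly under the bijection $A \mapsto A^J$ and under the decompositions $\mbf b'' \models \mbf b$; once the dictionary is fixed, each face of the diagram is literally the restriction of a face already proved commutative, and there is nothing further to compute.
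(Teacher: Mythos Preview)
Your approach is exactly the one the paper takes: the paper states that the proposition is ``obtained by restricting the diagram [of Proposition~\ref{typeA-typeB}] to the desired subspaces,'' and provides no further argument. Your write-up is a faithful (and more detailed) elaboration of that restriction, correctly identifying the $\imath$-objects as idempotent-cut subspaces and invoking Proposition~\ref{zeta-standard} for the explicit formula $\zeta_d([A^J]_d) = [A]_d$.
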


\subsection{Positivity in the projective limit}

Consider the projective system $(\cSi, \cphi)_{d\in \mbb Z_{\geq 0}}$ of associative algebras.
We define an element $\mbb e_i$ in the projective system whose $d$-th component  is $\E_{i, d}$ for all $d\in \mbb Z_{\geq 0}$.
This is well-defined by Lemma ~\ref{i-transfer}.
Similarly, we can define $\mbb f_i$, $\mbb t$ and $\mbb k_i^{\pm 1}$.

Let $\mbb U^{\imath}_{\ell}$ be the subalgebra of the projective system generated
by  $\mbb e_i$, $\mbb f_i$, $\mbb k_i^{\pm 1}$ for all $i\in [1, r-1]$ and $\mbb t$.
By ~\cite{LW14}, this is a coideal subalgebra of the quantum $\mathfrak{sl}_n$ for $n$ even.
A presentation of this algebra by generators and relations can also be found in ~\cite{LW14}.

Set $\Xi^{\imath}_{\infty, \ell} = \sqcup_{d\in \mbb Z_{\geq 0}} \Xi^{\imath}_{d, \ell}$.
We say two matrices are equivalent if they differ by an even multiply of the identity matrix $I_{\ell}$.
We denote $\Xi^{\imath}_{\infty, \ell}/\approx$ for the set of equivalence classes.

By ~\cite{LW14}, we know that $\cphi (\{A\}_d)=\{ A- 2I_{\ell}\}_{d - \ell} $ if the diagonal entries of $A$ are large enough.
To an element $\widehat A \in \Xi^{\imath}_{\infty, \ell}/\approx$,
we define an element $b_{\widehat A}$ in the projective system whose $d$-th component is $\{A+pI_{\ell} \}_d$ for some $p$ if $d$ is big enough.

Let $\dot{\mbb U}^{\imath}\equiv \dot{\mbb U}^{\imath}_{\ell}$ be the space spanned by $b_{\widehat A}$ for $\widehat A \in \Xi^{\imath}_{\infty, \ell}/\approx$.
By ~\cite{LW14}, $\dot{\mbb U}^{\imath}$ is an associative algebra, the idempotented version of $\mbb U^{\imath}_{\ell}$ and $b_{\widehat A}$ forms the canonical basis $\mBi$ of $\dot{\mbb U}^{\imath}$ defined in ~\cite{LW14}.
Let $\mbb X^{\imath}_{\ell}$ be the subset of $\widehat A \in \Xi^{\imath}_{\infty, \ell}/\approx$ parametrized by the diagonal matrices.
This algebra admits a decomposition
$\dot{\mbb U}^{\imath}  = \oplus_{\widehat \mu, \widehat  \lambda \in \mbb X^{\imath}} \  _{\widehat \mu}\mbb U^{\imath}_{ \widehat \lambda}$
where $_{\widehat \mu}\mbb U^{\imath}_{ \widehat \lambda}
=b_{\widehat \mu} \dot{ \mbb U}^{\imath} b_{ \widehat \lambda}
$.

Replace the projective system $(\cSi, \cphi)$ by $(\mbb S_{d, \ell}, \phi_{d, d-\ell})$, we can define the elements
$\check{ \mbb E}_i, \check{ \mbb F}_i$ and $\check{\mbb K}_i^{\pm 1}$ in this  projective system and they generate
over $\mbb Q(v)$ the quantum $\mathfrak{sl}_l$: $\mbb U_{\ell}$.

Set $\Xi_{\infty, \ell} = \sqcup_{d\in \mbb Z_{\geq 0}} \Xi_{d, \ell}$.
We say two matrices are equivalent if they differ by a multiply of the identity matrix $I_{\ell}$.
We denote $\Xi_{\infty, \ell}/\sim$ for the set of equivalence classes.
To an element $\overline A \in \Xi_{\infty, \ell}/\sim$,
we define an element $b_{\overline A}$ in the projective system whose $d$-th component is $\{A+pI_{\ell} \}_d$ for some $p$ if $d$ is big enough.
Then the space $\dot{\mbb U}_{\ell}$ spanned by $b_{\overline A}$ for all $\overline A \in \Xi_{\infty, \ell}/\sim$ is an associative algebra,  the idempotented version of $\mbb U_{\ell}$
by ~\cite{M10} and $b_{\overline A}$ forms the canonical basis $\mB_{\ell}$.
Let $\mbb X_{\ell}$ be the subset of $\Xi_{\infty, \ell}/\sim$ consisting of all diagonal matrices.
$\dot{\mbb U}_{\ell} = \oplus_{\overline \mu, \overline \lambda \in \mbb X_{\ell}} \ _{\overline \mu} \mbb U_{\overline \lambda}$,
where $_{\overline \mu} \mbb U_{\overline \lambda} = b_{\overline \mu} \dot{\mbb U} b_{\overline \lambda}$.

The linear map $\Delta^{\imath}$ on the $\imath$Schur algebra level induces a linear map
\begin{align}
\label{mDi-comm}
\mDi_{\widehat{\mu'}, \widehat{\lambda'}, \overline{\mu''}, \overline{\lambda''}} :
\ _{\widehat{\mu}} \mUi_{\widehat{\lambda}} \to \  _{\widehat{\mu'}} \mUi_{\widehat{\lambda'}}
\otimes \ \! _{\overline{\mu''}} \mU_{\overline{\lambda''}},
\quad  \forall (\mu', \mu'') \vdash \mu, (\lambda',\lambda'') \vdash \lambda
\end{align}
where $\vdash$ is defined similar to  (\ref{vdash}) with row vectors replaced by diagonal matrices.
Write $\mDi_{\widehat{\mu'}, \widehat{\lambda'}, \overline{\mu''}, \overline{\lambda''}} (a) =
\sum_{b \in \mBi, c\in \mB} n^{b, c}_a b\otimes c$,  for all $a \in \mBi$, then we have the $\imath$-analogue of Theorem ~\ref{J-conj-25.4.2}, whose proof is the same as the Theorem using Proposition ~\ref{I-2}.

\begin{thm}
\label{I-conj-25.4.2}
$n^{b, c}_a \in \mbb Z_{\geq 0} [v, v^{-1}]$.
\end{thm}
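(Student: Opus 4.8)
The plan is to run the same diagram-chase that proves Theorem~\ref{J-conj-25.4.2}, now in the projective-limit/$\imath$-setting for $\ell$ even, feeding in the positivity input of Proposition~\ref{I-2} in place of Proposition~\ref{J-2}. The geometric core is already done: Proposition~\ref{I-2} asserts that $\mbb\Delta^{\imath}$ is positive on the canonical basis of $\mbb S^{\imath}_{d,\ell}$, and this was obtained by restricting the hyperbolic-localization argument of Proposition~\ref{J-2} to the $\bj$-invariant pieces. So all that remains is to lift this positivity from the $\imath$Schur-algebra level to the idempotented algebra $\dot{\mbb U}^{\imath}$, exactly mimicking how Theorem~\ref{conj-25.4.2} and Theorem~\ref{J-conj-25.4.2} were deduced.

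First I would record the grading compatibilities needed for the cube. As in Section~4, one checks that $\imath_d$ (and hence the family $(\cSi,\cphi)$) respects the $\mbb Z[I^{\imath}]$-grading, that $\mbb\Delta^{\imath}$ on $\cSi$ decomposes as $\oplus\mbb\Delta^{\imath}_{\mbf b',\mbf a',\mbf b'',\mbf a''}$ with $(\mbf b',\mbf b'')\vdash\mbf b$, $(\mbf a',\mbf a'')\vdash\mbf a$ (diagonal-matrix version of $\vdash$), and that the map $\widetilde\phi^{\imath}_d:\dot{\mbb U}^{\imath}\to{}_{\mbb Q(v)}\cSi$ sending generators to generators intertwines everything. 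These are the $\imath$-analogues of Lemmas~\ref{mDj-w}, \ref{j-w-0}, \ref{mDj-Delta-U-S}, \ref{mDj-U-U-dot} and diagram~(\ref{j-U-U-dot-S}); each follows by the same ``trace along the generators'' computation, using the explicit formulas for $\mbb\Di$ in Proposition~\ref{Dj-cSi} and for $\imath_d$ in its corollary. Assembling these gives an $\imath$-version of the cube~(\ref{mDj-cube}), and since the projection $b_{\widehat\mu}\,(\,\cdot\,)\,b_{\widehat\lambda}$ onto ${}_{\widehat\mu}\mbb U^{\imath}_{\widehat\lambda}$ is surjective while every other face commutes, the back face
\[
\begin{CD}
{}_{\widehat\mu}\mbb U^{\imath}_{\widehat\lambda} @>\mDi_{\widehat{\mu'},\widehat{\lambda'},\overline{\mu''},\overline{\lambda''}}>> {}_{\widehat{\mu'}}\mbb U^{\imath}_{\widehat{\lambda'}}\otimes{}_{\overline{\mu''}}\mbb U_{\overline{\lambda''}}\\
@V\widetilde\phi^{\imath}_d VV @VV\widetilde\phi^{\imath}_{d'}\otimes\widetilde\phi_{d''}V\\
{}_{\mbb Q(v)}\cSi(\mbf b,\mbf a) @>\mbb\Delta^{\imath}_{\mbf b',\mbf a',\mbf b'',\mbf a''}>> {}_{\mbb Q(v)}\mbb S^{\imath}_{d',\ell}(\mbf b',\mbf a')\otimes{}_{\mbb Q(v)}\mbb S_{d'',\ell}(\mbf b'',\mbf a'')
\end{CD}
\]
commutes by diagram-chasing, giving the $\imath$-analogue of Proposition~\ref{mDj-U-dot-S-A}.

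Finally, given $a\in\mBi$, the set $\{(b,c):n^{b,c}_a\neq0\}$ is finite, so by ~\cite{LW14} and ~\cite{M10} one can choose $d,d',d''$ large enough that $\widetilde\phi^{\imath}_d(a)$, $\widetilde\phi^{\imath}_{d'}(b)$, $\widetilde\phi_{d''}(c)$ are honest canonical-basis elements $\{B\}_d$, $\{B'\}_{d'}$, $\{B''\}_{d''}$ for every relevant $(b,c)$ (here I use that $\cphi$ and $\phi_{d,d-\ell}$ send canonical basis to canonical basis up to the shift by $2I_\ell$, resp.\ $I_\ell$, as recorded before Theorem~\ref{I-conj-25.4.2}). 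Applying $\widetilde\phi^{\imath}_{d'}\otimes\widetilde\phi_{d''}$ to $\mDi_{\widehat{\mu'},\widehat{\lambda'},\overline{\mu''},\overline{\lambda''}}(a)$ and using the commutative square above yields
\[
\sum_{b,c} n^{b,c}_a\,\{B'\}_{d'}\otimes\{B''\}_{d''}=\mbb\Delta^{\imath}_{\mbf b',\mbf a',\mbf b'',\mbf a''}(\{B\}_d),
\]
so $n^{b,c}_a$ equals the structure constant $h_B^{B',B''}$ of Proposition~\ref{I-2}, which lies in $\mbb Z_{\geq0}[v,v^{-1}]$. I do not expect a genuine obstacle here — the geometry (Braden hyperbolic localization) was already dealt with in Proposition~\ref{I-2}. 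The only mildly delicate bookkeeping is verifying the grading statements and the ``large $d$'' stabilization in the even case, i.e.\ checking that the shift by $2I_\ell$ under $\cphi$ is compatible with the identifications $\mbb X^{\imath}_\ell$, $\Xi^{\imath}_{\infty,\ell}/\!\approx$ exactly as in Section~4; since the excerpt already sets all of this up, the proof is, as the paper says, ``the same as the Theorem using Proposition~\ref{I-2}.''
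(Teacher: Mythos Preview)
Your proposal is correct and follows essentially the same approach as the paper: the paper explicitly states that the proof of Theorem~\ref{I-conj-25.4.2} is the same as that of Theorem~\ref{J-conj-25.4.2}, using Proposition~\ref{I-2} in place of Proposition~\ref{J-2}, and your cube-diagram argument with stabilization via \cite{LW14} and \cite{M10} is exactly this transplant.
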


The linear map $\imath_d$ induces a  linear map
\begin{align}
\label{U-imath}
\imath_{\widehat  \mu, \widehat  \lambda, \overline{\mu''}, \overline{\lambda''}}:
\ _{\widehat  \mu} \mbb U^{\imath} _{\widehat \lambda}
\longrightarrow
\ _{\overline{\mu''}} \mbb U_{\overline{ \lambda''}},
\quad  \forall \mu'' \vdash \mu, \lambda'' \vdash \lambda
\end{align}
We have the $\imath$-analogue of Theorem \ref{j-positivity} by using Corollary ~\ref{I-5}.

\begin{thm}
\label{i-positivity}
Let $b \in \mBi$. If
$\imath_{\widehat \mu, \widehat  \lambda, \overline{\mu''}, \overline{\lambda''}}  (b)
=\sum_{a\in \mB}  g_{b, a} a$, then
$g_{b, a} \in \mbb Z_{\geq 0}[v, v^{-1}]$.
\end{thm}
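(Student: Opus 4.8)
The plan is to mimic the proof of Theorem~\ref{j-positivity} exactly, replacing $\jmath$-objects by their $\imath$-analogues throughout. Concretely, Theorem~\ref{i-positivity} follows by transporting the positivity of $\imath_d$ on the $\imath$-Schur algebra level (Corollary~\ref{I-5}) up to the projective-limit algebra $\dot{\mbb U}^{\imath}$, using the compatibility of $\imath_d$ with the transfer maps $\cphi$ and with the canonical bases.

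First I would assemble the analogues of the three commuting squares used in the proof of Theorem~\ref{j-positivity}: (i) the $\imath$-version of Lemma~\ref{j-Delta-U-S}, expressing the compatibility of $\imath_d$ with the algebra homomorphisms $\phi^{\imath}_d: \mbb U^{\imath}_{\ell} \to \ _{\mbb Q(v)}\cSi$ and $\phi_d: \mbb U_{\ell} \to \ _{\mbb Q(v)}\mbb S_{d,\ell}$ (this uses that $\imath$ is the degeneration $d'=0$ of $\mbb \Di$, together with Lemma~\ref{j-w} adapted to the generator $\mbb t$); (ii) the $\imath$-version of Lemma~\ref{j-U-U-dot}, the refinement of~(\ref{U-imath}) through the idempotented quotients, which is a standard weight-space argument using the $\imath$-analogue of Lemma~\ref{j-w-0}; and (iii) the $\imath$-version of~(\ref{j-U-U-dot-S}), the compatibility of $\widetilde\phi^{\imath}_d$ with the projections $\pi_{\widehat\mu,\widehat\lambda}$. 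Stacking these into the cube~(\ref{j-cube}) and using the surjectivity of $\pi_{\widehat\mu,\widehat\lambda}$ gives, by diagram chasing, the $\imath$-analogue of Proposition~\ref{j-U-dot-S-A}: the square relating $\imath_{\widehat\mu,\widehat\lambda,\overline{\mu''},\overline{\lambda''}}$ on $_{\widehat\mu}\mbb U^{\imath}_{\widehat\lambda}$ to $\imath_d$ on $\cSi$ commutes.

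Next I would run the limiting argument verbatim from the last paragraph of the proof of Theorem~\ref{j-positivity}: write $\imath_{\widehat\mu,\widehat\lambda,\overline{\mu''},\overline{\lambda''}}(b)=\sum_{a\in\mB} g_{b,a}\, a$, let $S=\{a \mid g_{b,a}\neq 0\}$ be the (finite) support, and invoke the stabilization results of~\cite{LW14} and~\cite{M10} to choose $d$ large enough that $\phi^{\imath}_d(b)=\{B\}_d$ and $\phi_d(a)=\{A\}_d$ are honest canonical basis elements for all $a\in S$. Applying $\phi^{\imath}_d$ and the commutative square just established yields $\imath_d(\{B\}_d)=\sum_{a} g_{b,a}\{A\}_d$; comparing with the expansion $\imath_d(\{B\}_d)=\sum_A g_{B,A}\{A\}_d$ gives $g_{b,a}=g_{B,A}$, and then $g_{b,a}\in\mbb Z_{\geq0}[v,v^{-1}]$ by Corollary~\ref{I-5}. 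This is the content signalled by the sentence ``We have the $\imath$-analogue of Theorem~\ref{j-positivity} by using Corollary~\ref{I-5}'' already in the excerpt.

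I expect the only genuine obstacle to be bookkeeping rather than mathematics: one must check that the extra generator $\mbb t$ (and its $\imath$Schur-algebra incarnation $\check{\mbb t}_d$) behaves correctly in the weight-grading arguments — i.e.\ that $\imath(\mbb t)$ lies in the appropriate sum of weight spaces $\mbb U(\nu)$, which is visible from the formula $\imath_d(\check\bt_d)=\check\bE_{r,d}+v\check\bK_{r,d}\check\bF_{r,d}+\check\bK_{r,d}$ and its generic lift. Everything else — surjectivity of the relevant projections, existence of $\phi^{\imath}_d$ and $\widetilde\phi^{\imath}_d$, and the stabilization of canonical bases — is either recorded earlier in this section or quoted from~\cite{LW14,M10}, so the proof is a faithful translation of the $n$ odd case with $r$ replaced by $r-1$ and $n$ by $\ell=n-1$.
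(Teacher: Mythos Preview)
Your proposal is correct and matches the paper's approach exactly: the paper gives no proof beyond the sentence ``We have the $\imath$-analogue of Theorem~\ref{j-positivity} by using Corollary~\ref{I-5},'' and you have faithfully spelled out what that sentence entails, including the only new bookkeeping needed for the extra generator $\mbb t$.
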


\end{document}